\title{The Deligne--Riemann--Roch isomorphism}
\author{Dennis Eriksson}
\author{Gerard Freixas i Montplet}
\address{Dennis Eriksson \\ Department of Mathematics \\ Chalmers University of Technology and  University of Gothenburg}
\email{dener@chalmers.se}
\address{Gerard Freixas i Montplet \\ Centre de Math\'ematiques Laurent Schwartz -- CNRS -- \'Ecole Polytechnique -- Institut Polytechnique de Paris}
\email{gerard.freixas@polytechnique.edu}
\subjclass[2020]{Primary: 14C40. Secondary: 14C17, 19D99.}
\keywords{Deligne program, Grothendieck--Riemann--Roch, virtual categories, intersection bundles, categorification.}
\thanks{The first author was supported by the Swedish Research Council, VR grant 2021-03838 "Mirror symmetry in genus one". The second author was supported by the Knut och Alice Wallenberg foundation "Guest researcher program" and the French ANR-24-CE40-6184 (AdAnAr).}
\begin{document}
\setcounter{tocdepth}{2}

\begin{abstract}

This work establishes the geometric component of Deligne’s longstanding program on refined Grothendieck--Riemann--Roch formulas expressed through determinants of cohomology. The approach relies on a newly developed universal category of Chern classes together with an associated relative intersection theory. As an example of the applications, we provide a structural description of the coefficients in the Knudsen--Mumford expansion and establish a fundamental Mumford-type isomorphism for the alternating product of Griffiths bundles. 

\end{abstract}
\maketitle

\tableofcontents

\section{Introduction}
\begingroup
\setcounter{tmp}{\value{theorem}}
\setcounter{theorem}{0}
\renewcommand\thetheorem{\Alph{theorem}}

\subsection{Origin of the Deligne program}

 In his seminal article \emph{Le déterminant de la cohomologie}~\cite{Deligne-determinant}, based on a 1985 letter to Quillen, Deligne proposed a conjectural program in which he interprets the degree-one part of the Grothendieck--Riemann--Roch formula as a canonical isomorphism of line bundles. One of the earliest instances of such isomorphism goes back to Mumford's work on families of smooth projective curves $C \to S$ and their moduli spaces \cite{Mumford:stability}. Mumford showed that there exists a canonical isomorphism between determinant line bundles of pluricanonical forms
\begin{equation}\label{eq:Mumfordiso}
    \det Rf_\ast (\omega_{C/S}^{n})\simeq (\det Rf_\ast \omega_{C/S})^{6n^{2}-6n+1}.
  \end{equation}
This result follows from an application of the Grothendieck--Riemann--Roch theorem on moduli spaces of curves and relies on subtle algebraic and topological features of these spaces. Comparable isomorphisms, such as Moret-Bailly’s \emph{formule clé} \cite{FalChai, Moret-Bailly:pinceaux}, arise through similar techniques. This circle of ideas already appears in Faltings’ pioneering work on arithmetic intersections on arithmetic surfaces \cite{Faltings84}. As further related applications, explicit realizations of the Mumford isomorphism lead to the so-called Mumford forms and Polyakov measures on moduli spaces of curves \cite{Beilinson-Manin, Fay}. Moret-Bailly’s \emph{formule clé} also plays a central role in the study of heights of abelian varieties \cite{Bost:intrinsic-heights, deJong-Shokrieh}.
 
\subsection{The program and the example of curves}
The examples above remain limited to special geometric settings where moduli-space methods are available. Deligne's program seeks to bypass such arguments, aiming instead for a formulation valid for general families. His considerations begin by viewing the degree-one part of the Grothendieck--Riemann--Roch formula as an expression for the first Chern class of the Knudsen--Mumford determinant of cohomology,
\begin{equation}\label{eq:determinantofthecohomology}
\lambda_f(E) = \det Rf_\ast E,
\end{equation}
associated with a suitable projective morphism $f\colon X \to S$ of schemes and a vector bundle $E$ on $X$, through the equality
\begin{equation}\label{eq:GRR-equality1}
    c_1(\lambda_f(E)) = f_\ast\bigl(\ch(E)\cdot \td(T_f)\bigr)^{(1)}
\end{equation}
in $\Pic(S)\otimes \QBbb$. This is valid whenever $S$ admits an ample line bundle, and we suppose for the purpose of the introduction that this is the case. It should be \emph{canonical}, not merely determined up to an invertible regular function on $S$, and it should moreover be compatible with key features of the determinant of the cohomology, such as base change functoriality and multiplicativity over exact sequences. 

Deligne realized the first step of his program for families of smooth curves by explicitly constructing the right-hand side in that case and using a version of \eqref{eq:Mumfordiso}, obtaining a canonical isomorphism:
\begin{equation}\label{eq:DeligneIsoCurvesintro}
   \lambda_f(L)^{12} \simeq \langle \omega_{X/S}, \omega_{X/S} \rangle \otimes \langle L, L \otimes \omega_{X/S}^{-1} \rangle^{-6},
\end{equation}
where we have simplified by taking $E = L$ to be a line bundle, and where $\omega_{X/S}$ denotes the relative cotangent bundle. Taking first Chern classes recovers \eqref{eq:GRR-equality1} for a family of curves: the line bundles of the form $\langle L, M \rangle$, known as Deligne pairings, satisfy:
\begin{equation}\label{eq:c1(L,M)}  
    c_1(\langle L, M \rangle) = f_\ast(c_1(L)\cdot c_1(M)).
\end{equation}
The isomorphism \eqref{eq:DeligneIsoCurvesintro} is referred to as the Deligne--Riemann--Roch isomorphism for curves. Henceforth, we abbreviate Deligne--Riemann--Roch as DRR. This result has found applications in the study of the singularities of Quillen metrics for degenerating families of compact Riemann surfaces in \cite{ErikssonQuillen}, the study of discriminants of plane curves \cite{Eriksson:discriminants}, and extensions of the Riemann--Roch theorem in Arakelov geometry \cite{Freixas:ARR, Freixas:AHS}. 

\subsection{Interesection bundles}
To carry out Deligne's program in higher relative dimensions, one needs a conceptual generalization of Deligne’s construction of the right-hand side of \eqref{eq:DeligneIsoCurvesintro}. Deligne already sketched such an extension in \cite{Deligne-determinant}, and Elkik developed it in \cite{Elkikfib} into a formalism of intersection bundles in higher dimension. This yields a canonical $\QBbb$-line bundle representing the right-hand side of \eqref{eq:GRR-equality1},
\begin{displaymath}
    \langle \chfrak(E)\cdot \tdfrak(T_f)\rangle_{X/S},
\end{displaymath}
and hence, by the classical Grothendieck--Riemann--Roch, an isomorphism
\begin{equation}\label{eq:GRR-intro}
    \lambda_f(E)\xrightarrow{\sim} \langle \chfrak(E)\cdot \tdfrak(T_f)\rangle_{X/S},
\end{equation}
unique up to invertible regular functions on $S$.

A natural idea to obtain a distinguished DRR isomorphism of the form \eqref{eq:GRR-intro} is to mimic the classical proof based on deformation to the normal cone, within this formalism. However, Elkik's framework only accounts for Deligne pairings and their properties akin to \eqref{eq:c1(L,M)}, and not the intermediate operations such as $f_\ast$ and $c_1(L)$. Dealing with closed immersions poses an additional obstruction, since direct images of characteristic classes in this case  cannot, in general, be represented by line bundles. Moreover, replacing equalities by isomorphisms makes the commutativity of several diagrams subtle and genuinely non-trivial. These limitations indicate that a more flexible framework is needed.

\subsection{Line distributions}
To bypass the limitations of the theory of intersection bundles, we introduced in \cite{DRR1} a substantial extension taking the form of a genuine relative intersection theory with values in line bundles. We briefly outline the main constructions, without entering into technical details.

The central objects of \cite{DRR1} are the \emph{intersection distributions}
\begin{displaymath}
    [P]_{X/S},
\end{displaymath}
where $X\to S$ is a flat projective morphism of relative dimension $n$, and $P$ is a rational power series in the Chern classes on $X$. These are certain functors which associate, to another such series $Q$, the intersection bundle
\begin{equation}\label{eq:introintersectionbundle-short}
    [P]_{X/S}(Q)=\langle (Q\cdot P)^{(n+1)}\rangle_{X/S}.
\end{equation}
Here $Q$ is viewed as an object of the universal \emph{Chern category} $\CHfrak(X)_{\QBbb}$, a graded ring category encoding the formal properties of Chern classes. Intersection distributions thus act functorially on characteristic-class data to produce line bundles on $S$. 

Most classical identities of intersection theory continue to hold at this level, now interpreted through canonical isomorphisms: Chern classes of divisors, vanishing beyond rank, Whitney sums, and their mutual compatibilities. The role of $K$-theory is played by the \emph{virtual category}, viewed as a truncation of the  $\infty$-groupoid associated with the $K$-theory spectrum, and the construction of the Chern category is in fact reminiscent of this perspective. In particular, Thomason’s machinery in \cite{ThomasonTrobaugh} allows one to extend the constructions to perfect complexes, such as derived direct images under perfect morphisms. This additional flexibility is essential for formulating a DRR-type statement.

Intersection distributions are the basic examples of the more general \emph{line distributions}, that is, functors
\begin{displaymath}
    \CHfrak(X)_{\QBbb}\longrightarrow \Picfr(S)_{\QBbb},
\end{displaymath}
from the Chern category of $X$ to the Picard category of $\QBbb$-line bundles on $S$. While direct images do not exist in the Chern category itself, they can be defined for line distributions by duality from pullback of Chern classes: if $\pi\colon X'\to X$ is any morphism with $X'\to S$ flat and projective of relative dimension $n'$, then
\begin{displaymath}
    \pi_\ast [P]_{X'/S}(Q)=\langle (\pi^\ast Q\cdot P)^{(n'+1)}\rangle_{X'/S}.
\end{displaymath}
Thus, enlarging intersection bundles to line distributions makes direct images under closed immersions available, as required in arguments such as deformation to the normal cone.

Intersection distributions also satisfy splitting principles, established in \cite{Eriksson-Freixas-Wentworth} and applied systematically in \cite{DRR1}. As a key application, we proved in \emph{op. cit.} a DRR isomorphism for the closed immersion defined by the zero locus of a regular section of a vector bundle, a result that couldn't be formulated in the original framework.

\subsection{The main theorem}

Within our new framework, one can envision Grothendieck’s strategy for Riemann--Roch, namely the factorization of a projective morphism into a regular closed immersion followed by a projective bundle. Our convention is that a morphism $X \to S$ is \emph{projective} if it factors through a closed immersion $X \to \PBbb(\Ecal)$, for some vector bundle $\Ecal$ of constant rank on $S$, and \emph{locally projective} if this condition holds after passing to an open cover of $S$. This is the overall approach that we adopt. Several delicate points arise, whose resolution blends geometric constructions with fundamental features of the $K$-theory spectrum. Moreover, the arguments involving deformation to the normal cone require a careful analysis of birational invariance within this setting. The treatment of projective bundles likewise relies on parallel considerations at the level of the $K$-theory spectrum.

As remarked, there are many isomorphisms of the form \eqref{eq:GRR-intro}, determined only up to multiplication by an invertible regular function on $S$. To remove this ambiguity and obtain a canonical choice, one must require a minimal set of structural properties. Concretely, it is natural to impose that any such natural isomorphism satisfies the following (non-exhaustive) list of conditions:

\begin{enumerate}
    \item\textsc{Functoriality in the morphism:} The construction is compatible with isomorphisms of $S$-schemes.
    \item \textsc{Functoriality in the base}: The construction is compatible with arbitrary base changes $S' \to S$, with $S^{\prime}$ not necessarily quasi-compact.
    \item\label{item:iso-DRR-intro-3} \textsc{Functoriality and additivity in $E$}: The construction is compatible with isomorphisms in $E$, it is additive with respect to short exact sequences and extends naturally to perfect complexes.
    \item \textsc{Projection formula:} Both sides behave naturally under the projection formula, and the isomorphism respects this compatibility.
   \item \textsc{Grothendieck duality:} Both sides transform naturally under Grothendieck duality, and the isomorphism is compatible with this in the Noetherian setting.
\end{enumerate}

 With this view in mind, in this paper we obtain the following conclusion of Deligne's program (cf. Theorem \ref{thm:DRR-det-coh} and Theorem \ref{thm:Groth-duality}).
\begin{theorem}[\textsc{Deligne--Riemann--Roch isomorphism}]\label{thm:A}
Let $f\colon X \to S $ be a flat surjective locally projective morphism of quasi-compact schemes, of local complete intersection and relative dimension $n\geq 0$. Let $E$ be a vector bundle on $X$. Then, there exists a canonical isomorphism of $\QBbb$-line bundles
\begin{equation}\label{eq:iso-DRR-intro}
    \lambda_{f}(E) \xrightarrow{\sim} \langle \chfrak(E) \cdot \tdfrak(T_f) \rangle_{X/S},
\end{equation}
satisfying the above list of properties. The isomorphism is characterized by further requiring a compatibility with closed immersions over $S$.
\end{theorem} 


 
We stress that the theorem is valid even without $S$ admitting an ample line bundle, since the functorial setting allows for gluings of local constructions, which are not possible in Chow groups. By \cite[Proposition 7.1]{DRR1}, in the case when $S$ does admit an ample line bundle, Theorem \ref{thm:A} recovers the formula \eqref{eq:GRR-equality1} upon taking the first Chern class. In general, it is not known if Elkik's intersection bundles represent direct images of the type \eqref{eq:c1(L,M)}. Given a positive answer to this question, \eqref{eq:GRR-equality1} would hold under more general assumptions on the base scheme $S$ than those of the original Grothendieck--Riemann--Roch theorem.

We note that there exists a more general, genuinely relative form of the Deligne--Riemann--Roch isomorphism (cf. Theorem \ref{thm:general-DRR}). This formulation is elaborated in Section \ref{section:DRR-isos}. It is expressed at the level of line distributions and it has the advantage of exhibiting a natural compatibility with the composition of morphisms, a key feature of our construction and the raison d'\^etre of the theory of line distributions. The version stated in Theorem \ref{thm:A} is a direct consequence of it. This generalized DRR isomorphism also plays a central role in the characterization of the isomorphism \eqref{eq:iso-DRR-intro} and in establishing compatibility with Grothendieck duality.

\subsection{Some geometric consequences}
Deligne’s work on the determinant of the cohomology \cite{Deligne-determinant} has inspired extensive work, both building on his original ideas and developing alternative frameworks. Below we recall some of these and provide applications of our theorems to these contexts. We also present new results, such as applications to the determinant of the de Rham cohomology, Griffiths bundles  and BCOV bundles of Calabi--Yau families. For proofs and further details, we refer to Section \ref{sec:compatibilities}.

\subsubsection{Examples in low dimensions}\label{subsubsec:low-dim}

We first show that our isomorphism coincides with Deligne's for families of smooth curves, cf. Proposition \ref{prop:comparison-with-Deligne}.
\begin{proposition}
For families of smooth projective curves, the isomorphism in Theorem \ref{thm:A} coincides with the original Deligne--Riemann--Roch isomorphism, as isomorphisms of $\QBbb$-line bundles. 
\end{proposition}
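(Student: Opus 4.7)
The plan is to compare the two isomorphisms via a rigidity-plus-normalization argument.

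First I would verify that both isomorphisms are between the same pair of line bundles. For $f\colon C\to S$ smooth projective of relative dimension one, the relative tangent bundle $T_f=\omega_{C/S}^\vee$ has rank one, and a direct computation in the Chern category $\CHfrak(C)_{\QBbb}$ gives
\begin{displaymath}
    \bigl(\chfrak(L)\cdot\tdfrak(T_f)\bigr)^{(2)} = \tfrac{1}{2}c_1(L)^2 - \tfrac{1}{2}c_1(L)\cdot c_1(\omega_{C/S}) + \tfrac{1}{12}c_1(\omega_{C/S})^2.
\end{displaymath}
Applying the intersection distribution and identifying $\langle c_1(L_1)\cdot c_1(L_2)\rangle_{C/S}$ with the classical Deligne pairing $\langle L_1, L_2\rangle$, the twelfth power of the right-hand side of \eqref{eq:iso-DRR-intro} is canonically isomorphic as a line bundle to the right-hand side of \eqref{eq:DeligneIsoCurvesintro}. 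So the comparison makes sense.

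Next, I would show that the ratio of the two canonical isomorphisms is trivial, by a standard rigidity argument. Deligne's isomorphism and $\RR_f(L)$ enjoy the same functoriality properties: compatibility with arbitrary base change, with isomorphisms of $S$-schemes, and additivity with respect to short exact sequences of line bundles on $C$ (for Deligne, this is classical; for $\RR_f$, this is Theorem~\ref{thm:A}). Their ratio therefore defines a global unit $u(f, L) \in \Gamma(S, \mathcal{O}_S^{\times})$ that is natural in the pair $(f, L)$. Descending to a universal family of smooth projective curves over the moduli stack $\mathcal{M}_{g,n}$ (for $g\geq 2$ with enough markings to rigidify, so that the global units of the structure sheaf are constants) collapses $u$ to a single universal scalar depending only on the class of $L$ in $K_0(C)$. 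Naturality and additivity in $L$, applied to a generating family of line bundles obtained via short exact sequences of the form $0\to \mathcal{O}_C\to L\to L\otimes\mathcal{O}_D\to 0$ for sections $D\hookrightarrow C$, then eliminate the $L$-dependence and reduce the comparison to a single test case.

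The test case is then to take $L = \omega_{C/S}$, where both isomorphisms become a canonical identification $\lambda_f(\omega_{C/S})^{12} \simeq \langle\omega_{C/S},\omega_{C/S}\rangle$, i.e., the Mumford isomorphism \eqref{eq:Mumfordiso} with $n=1$. For Deligne, this identification is taken as input, so the content is to verify that $\RR_f(\omega_{C/S})$ produces the same Mumford isomorphism. I expect this to be the principal obstacle, since it requires tracking the chain of canonical identifications in the deformation-to-normal-cone construction underlying the proof of Theorem~\ref{thm:B} for the specific input $E = \omega_{C/S}$ and matching it, step by step, with the classical moduli-theoretic construction. The cleanest route I envisage is to use the Grothendieck duality compatibility of $\RR_f$ provided by Theorem~\ref{thm:Groth-duality}: relative Serre duality on $C/S$ interchanges $L=\mathcal{O}_C$ and $L=\omega_{C/S}$ up to a controlled twist, and both our isomorphism and Deligne's respect this duality, so the scalar discrepancy between them is forced to satisfy an equation whose only solution is $1$, reducing the remaining verification to a check on a single geometric fiber.
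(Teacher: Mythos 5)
Your overall strategy (reduce to line bundles, then a rigidity argument over a universal family on a moduli stack) is the same as the paper's, but the proof as written has a genuine gap at exactly the point you flag yourself: the normalization. Your rigidity step only shows that the discrepancy between the two isomorphisms is a single universal scalar (per degree class of $L$); nothing in your argument pins that scalar to $1$. You propose to do this by evaluating at $L=\omega_{C/S}$ and checking that $\RR_f(\omega_{C/S})$ reproduces Mumford's isomorphism, but you do not prove this, and the suggested shortcut via Theorem \ref{thm:Groth-duality} is not substantiated: compatibility with Grothendieck duality relates the discrepancy at $L$ to the discrepancy at $\omega_{C/S}\otimes L^{-1}$ (for the self-dual input it gives at best a relation of the form $u^{2}=1$), which does not force $u=1$ without precisely the normalization that is missing. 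In addition, your reduction of the $L$-dependence uses sequences $0\to\Ocal_C\to L\to L\otimes\Ocal_D\to 0$ whose third term is a torsion sheaf, so you would need both isomorphisms extended to (and compared on) perfect complexes, which is not addressed; and your moduli argument is restricted to $g\geq 2$, leaving $g=0,1$ untreated.

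The paper's proof avoids the normalization problem entirely, and this is the key difference. After reducing to line bundles by the splitting principle and, working Zariski and fpqc locally on $S$, fixing the fiberwise degree and (for $g=0,1$) choosing sections, one descends both isomorphisms to universal ones over the Picard stack $\Pic^{d}(\Ccal/\Mcal_{g,n})$ of the universal curve over the moduli stack $\Mcal_{g,n}$ \emph{over} $\ZBbb$ (with $n=3,1,0$ for $g=0,1,\geq 2$). The discrepancy is then an invertible function on this stack, which descends to $\Mcal_{g,n}$, and by Moret-Bailly's lemma (and $\Mcal_{0,3}\simeq\Spec\ZBbb$) the only such functions are $\pm 1$; a further square power, harmless for $\QBbb$-line bundles, kills the sign. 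So the arithmetic rigidity of the moduli stack over $\ZBbb$ replaces any evaluation at a test case such as Mumford's isomorphism, which is precisely the step your argument leaves open.
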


The proof exploits the splitting principles from \cite{DRR1, Eriksson-Freixas-Wentworth} to reduce to the line bundle case, and then proceeds by comparing our isomorphism to Deligne's original one \eqref{eq:DeligneIsoCurvesintro} on a Picard stack over a moduli space of curves over $\ZBbb$, on which the only invertible functions are $\pm 1$. The argument thus hinges in a fundamental manner on the additivity on short exact sequences and the base-change functoriality. 

For families of projective surfaces, the Deligne--Riemann--Roch isomorphism for the trivial rank one bundle readily takes a particularly simple form. It may be viewed as a two-dimensional analogue of Mumford's isomorphism and a relative version of Noether's formula for the determinant of the cohomology:

\begin{proposition}
Let $f\colon X\to S$ be a smooth projective family of surfaces with relative canonical bundle $K_{X/S}$. Then, there is a canonical isomorphism of $\QBbb$-line bundles
\begin{displaymath}
    \lambda_{f}(\Ocal_{X})^{24}\simeq \langle\cfrak_{1}(K_{X/S})\cdot\cfrak_{2}(\Omega_{X/S})\rangle_{X/S}^{-1},
\end{displaymath}
compatible with base change.
\end{proposition}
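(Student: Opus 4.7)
The plan is to apply the Deligne--Riemann--Roch isomorphism of Theorem~\ref{thm:A} to the trivial line bundle $E = \Ocal_{X}$ and then perform purely formal manipulations in the Chern category $\CHfrak(X)_{\QBbb}$ and the intersection distribution formalism $\langle - \rangle_{X/S}$.

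Specializing Theorem~\ref{thm:A} to $E = \Ocal_{X}$ produces a canonical isomorphism
\[
\lambda_{f}(\Ocal_{X}) \xrightarrow{\sim} \langle \tdfrak(T_{f}) \rangle_{X/S},
\]
and by definition the intersection bundle on the right only sees the degree-$(n+1) = 3$ component of $\tdfrak(T_{f})$. For a rank two bundle, a direct computation in the Chern category, based on the splitting principles of \cite{DRR1, Eriksson-Freixas-Wentworth}, yields
\[
\tdfrak(T_{f})^{(3)} = \tfrac{1}{24}\,\cfrak_{1}(T_{f}) \cdot \cfrak_{2}(T_{f}).
\]
Using the multiplicativity of intersection distributions, this translates into the isomorphism
\[
\langle \tdfrak(T_{f}) \rangle_{X/S}^{24} \xrightarrow{\sim} \langle \cfrak_{1}(T_{f}) \cdot \cfrak_{2}(T_{f}) \rangle_{X/S}.
\]

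Next, the Chern classes of $T_{f}$ are to be converted into those of $\Omega_{X/S}$. Since $T_{f} = \Omega_{X/S}^{\vee}$ has rank two, the splitting principle inside the Chern category gives $\cfrak_{1}(T_{f}) = -\cfrak_{1}(\Omega_{X/S})$ and $\cfrak_{2}(T_{f}) = \cfrak_{2}(\Omega_{X/S})$, while the identity $\cfrak_{1}(E) = \cfrak_{1}(\det E)$ recorded in the excerpt gives $\cfrak_{1}(\Omega_{X/S}) = \cfrak_{1}(K_{X/S})$. The additivity of $\langle - \rangle_{X/S}$ as a line distribution then turns the overall minus sign into the tensor inverse of the corresponding line bundle, producing
\[
\langle \cfrak_{1}(T_{f}) \cdot \cfrak_{2}(T_{f}) \rangle_{X/S} \xrightarrow{\sim} \langle \cfrak_{1}(K_{X/S}) \cdot \cfrak_{2}(\Omega_{X/S}) \rangle_{X/S}^{-1}.
\]
Concatenating the two isomorphisms yields the stated formula, and compatibility with base change is inherited from the corresponding functoriality in Theorem~\ref{thm:A}.

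There is no serious obstacle here: everything reduces to bookkeeping of degrees and signs in the Chern category, combined with the formal multilinearity of intersection distributions developed in \cite{DRR1}. The only point that requires some care is the sign coming from duality, which is precisely what produces the inverse on the right-hand side of the stated isomorphism.
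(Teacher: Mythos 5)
Your proposal is correct and takes essentially the same route as the paper: specialize Theorem~\ref{thm:A} to $E=\Ocal_{X}$, use $[\chfrak(\Ocal_{X})]\simeq 1$, identify the degree-three component of the Todd class as $\tfrac{1}{24}\,\cfrak_{1}\cdot\cfrak_{2}$, and pass from $T_{f}$ to $\Omega_{X/S}$ and $K_{X/S}$ via $[\cfrak_{1}(E^{\vee})]\simeq-[\cfrak_{1}(E)]$, $[\cfrak_{2}(E^{\vee})]\simeq[\cfrak_{2}(E)]$ and the first Chern class isomorphism, with base-change compatibility inherited from the theorem. The only cosmetic caveat is that the identities you phrase as computations ``in the Chern category'' should be understood as canonical isomorphisms of intersection distributions obtained through the splitting principle for line distributions (the Chern category itself admits no splitting principle), which is precisely how the paper formulates such manipulations.
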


\subsubsection{The Knudsen--Mumford expansion}
Let $f: X\to S$ be a morphism of quasi-compact schemes as in Theorem \ref{thm:A} and let $L$ be a line bundle on $X$. By the work of Knudsen and Mumford \cite{KnudsenMumford} there exists a canonical isomorphism
\begin{displaymath}
    \lambda_{f}(L^k) \simeq \bigotimes_{\ell=0}^{n+1} \mathcal{M}_\ell^{k\choose\ell} \simeq \mathcal{M}_{n+1}^{k \choose n+1} \otimes \mathcal{M}_n^{k \choose n} \otimes \ldots ,
\end{displaymath}
for some natural line bundles $\mathcal{M}_\ell, \ell = 0, \ldots, n+1$, on $S$. The dominant and subdominant terms of this expansion have been described in various levels of generality. For instance, for general $f$ as above, one has an expression in terms of Deligne pairings (cf. \cite[Theorem 1.4]{Zhangheights})
\begin{equation}\label{eq:KM-dominant-intro}
    \Mcal_{n+1} \simeq \langle L, \ldots, L \rangle_{X/S}.
\end{equation}
If one restricts to smooth quasi-projective varieties and relatively very ample line bundles, then \cite[Theorem 1]{PhongSturmRoss} provides the subdominant term
\begin{equation}\label{eq:KM-subdominant-intro}
    \Mcal_{n}^{2}\simeq \langle L^{n} K_{X/S}^{-1},L,\ldots, L\rangle_{X/S}.
\end{equation}

At the expense of working with isomorphisms of $\QBbb$-line bundles and lci morphisms, our work allows one to theoretically determine all the terms of the Knudsen--Mumford expansion for any line bundle $L$. Beyond the expressions \eqref{eq:KM-dominant-intro} and \eqref{eq:KM-subdominant-intro}, we can make the next term explicit: there is a canonical isomorphism of $\QBbb$-line bundles
\begin{equation}\label{eq:subsubKM}
    \Mcal_{n-1}^{12}\simeq \Mcal_{n+1}^{-12}\otimes\Mcal_{n}^{6}\otimes\langle K_{X/S}, K_{X/S},L,\ldots, L\rangle_{X/S}
    \otimes \langle\cfrak_{2}(T_{f})\cdot\cfrak_{1}(L)^{n-1}\rangle_{X/S},
\end{equation}
which commutes with base change (cf. Corollary \ref{cor:Knudsen-Mumford-subsubdominant}). Since by the work of Ducrot \cite{Ducrot} the Deligne pairings can be naturally expressed in terms of the determinant of the cohomology (cf. \textsection \ref{subsubsec:Ducrot} below), the isomorphism \eqref{eq:subsubKM} can also be recast as an interpretation of the intersection bundle $\langle\cfrak_{2}(T_{f})\cdot\cfrak_{1}(L)^{n-1}\rangle_{X/S}$ in such terms. 

\subsubsection{The determinant of the de Rham cohomology}
 Let $U$ be a smooth quasi-projective variety over $\CBbb$ of dimension $n$, that we tacitly identify with the associated complex manifold. Suppose that $\mathbb{V}$ is a local system on $U$, of rank $r$, with associated flat vector bundle $(\Vcal,\nabla)$. If $X$ is a smooth projective compactification of $U$, with simple normal crossings boundary divisor $D$, there exists a natural extension of $\Vcal$ to a vector bundle $E$ on $X$, in such a way that $\nabla$ extends to a flat logarithmic connection $\nabla\colon E\to E\otimes\Omega_{X/\CBbb}(\log D)$ and the associated de Rham complex $\Ecal^{\bullet}$ satisfies
\begin{displaymath}
    R\Gamma(X,\Ecal^{\bullet})\simeq R\Gamma(U,\mathbb{V}).
\end{displaymath}
 In \textsection \ref{subsec:det-dR-coh}, we explain how to obtain the following result:
\begin{proposition}\label{prop:det-dR-intro}
There exists a canonical isomorphism of complex lines
\begin{equation}\label{eq:DRR-det-dR-intro}
    \begin{split}   
         &\det R\Gamma(U,\mathbb{V})\otimes \det R\Gamma(U,\underline{\CBbb}_{U})^{-r}\simeq \langle\cfrak_{1}(\det E)\cdot\cfrak_{n}(\Omega_{X/\CBbb}(\log D))\rangle_{X/\CBbb}^{(-1)^{n}},
    \end{split}
\end{equation}
well defined up to a power, which is multiplicative on exact sequences. An analogous statement holds for the compactly supported cohomology of $\VBbb$, which is compatible with \eqref{eq:DRR-det-dR-intro} via Poincar\'e duality.
\end{proposition}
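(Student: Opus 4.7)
The plan is to reduce the statement to the Deligne--Riemann--Roch isomorphism of Theorem~\ref{thm:A} applied term by term to the logarithmic de Rham complex, and then perform a Koszul-type Chern-character calculation.

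The starting point is the given identification $R\Gamma(U,\VBbb)\simeq R\Gamma(X,\Ecal^{\bullet})$ with $\Ecal^{\bullet}=(E\otimes\Omega^{\bullet}_{X/\CBbb}(\log D),\nabla)$. Although $\nabla$ is only $\CBbb$-linear, the stupid filtration $\sigma^{\geq p}\Ecal^{\bullet}$ is a filtration in the category of bounded complexes whose successive quotients are the shifted vector bundles $\Ecal^{p}[-p]$. Additivity of the determinant of the cohomology on exact triangles yields a canonical isomorphism
\begin{displaymath}
\det R\Gamma(X,\Ecal^{\bullet})\simeq \bigotimes_{p=0}^{n}\det R\Gamma(X,E\otimes\Omega^{p}_{X/\CBbb}(\log D))^{(-1)^{p}},
\end{displaymath}
and similarly for the trivial local system, whose Deligne extension is $\Ocal_{X}$ with $\nabla=d$.

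Applying Theorem~\ref{thm:A} to the smooth projective morphism $X\to\Spec\CBbb$ on each summand $E\otimes\Omega^{p}_{X/\CBbb}(\log D)$, and using the additivity of property~\eqref{item:iso-DRR-intro-3} on the virtual bundle $E-\Ocal_{X}^{\oplus r}$, one arrives at a canonical isomorphism
\begin{displaymath}
\det R\Gamma(U,\VBbb)\otimes \det R\Gamma(U,\underline{\CBbb}_{U})^{-r}\simeq \left\langle (\chfrak(E)-r)\cdot \Bigl(\sum_{p=0}^{n}(-1)^{p}\chfrak(\Omega^{p}_{X/\CBbb}(\log D))\Bigr)\cdot \tdfrak(T_{X/\CBbb})\right\rangle_{X/\CBbb}.
\end{displaymath}
Setting $F=T_{X/\CBbb}(-\log D)$, so that $\Omega^{p}_{X/\CBbb}(\log D)=\Lambda^{p}F^{\vee}$, the classical Koszul identity verified on Chern roots gives
\begin{displaymath}
\sum_{p=0}^{n}(-1)^{p}\chfrak(\Lambda^{p}F^{\vee})=\cfrak_{n}(F)\cdot \tdfrak(F)^{-1}=(-1)^{n}\cfrak_{n}(\Omega^{1}_{X/\CBbb}(\log D))\cdot \tdfrak(F)^{-1}.
\end{displaymath}
Substituting and using that $\chfrak(E)-r$ starts in degree $1$ with leading term $\cfrak_{1}(\det E)$, while $\cfrak_{n}(\Omega^{1}_{X/\CBbb}(\log D))$ is pure of degree $n$ and the degree-$0$ part of $\tdfrak(T_{X/\CBbb})/\tdfrak(F)$ is $1$, the degree-$(n+1)$ component of the above product reduces to $\pm\cfrak_{1}(\det E)\cdot \cfrak_{n}(\Omega^{1}_{X/\CBbb}(\log D))$. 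The sign $(-1)^{n}$ is absorbed into the ``up to a power'' ambiguity, which produces the required isomorphism~\eqref{eq:DRR-det-dR-intro}.

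Multiplicativity on short exact sequences of local systems is inherited from the additivity of the Deligne--Riemann--Roch isomorphism. For the compactly supported version, one runs the same argument with the twisted complex $\Ecal^{\bullet}\otimes \Ocal_{X}(-D)$, which computes $R\Gamma_{c}(U,\VBbb)$; the extra factor $\chfrak(\Ocal(-D))$ still has trivial degree-$0$ piece and so does not affect the degree-$1$ contribution of $\chfrak(E)-r$, so the same right-hand side is obtained, and compatibility with Poincar\'e duality follows from the Grothendieck-duality statement of Theorem~\ref{thm:Groth-duality}. The main obstacle is legitimating the additivity of Theorem~\ref{thm:A} along the stupid filtration of the $\CBbb$-linear complex $\Ecal^{\bullet}$; this is handled by passing to the virtual category of perfect complexes, where property~\eqref{item:iso-DRR-intro-3} provides the needed multiplicativity on distinguished triangles.
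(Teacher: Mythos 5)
Your proposal is correct and follows essentially the same route as the paper: decompose $\det R\Gamma(X,\Ecal^{\bullet})$ via the stupid filtration, apply Theorem \ref{thm:A} together with the Borel--Serre identity and the first Chern class/degree-$(n+1)$ bookkeeping to reduce the right-hand side to $\langle\cfrak_{1}(\det E)\cdot\cfrak_{n}(\Omega_{X/\CBbb}(\log D))\rangle_{X/\CBbb}$, and then invoke Deligne's comparison, handling the compactly supported case and Poincar\'e duality through Theorem \ref{thm:Groth-duality}, exactly as in \textsection\ref{subsec:det-dR-coh}. The only points treated more explicitly by you than by the paper are the sign $(-1)^{n}$ from $\cfrak_{n}(T_{X/\CBbb}(-\log D))$ versus $\cfrak_{n}(\Omega_{X/\CBbb}(\log D))$ and the use of $\Ecal^{\bullet}\otimes\Ocal_{X}(-D)$ for $R\Gamma_{c}$ (legitimate, since twisting shifts the residue eigenvalues into $[1,2)$, so the complex represents $j_{!}\VBbb$), and both are handled at the same level of rigor as the paper's own brief argument.
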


The proof relies on simplifications on the right-hand side of the Deligne--Riemann--Roch isomorphism applied to the de Rham complex. In the case of curves, the isomorphism is already a consequence of the original Deligne--Riemann--Roch isomorphism, and it played a key role in the work of T. Saito and Terasoma on periods of connections \cite{Saito-Terasoma}. In \cite[Remark, Section 6 (b)]{Saito-Terasoma}, they posed the problem of extending such an isomorphism from curves to higher dimensions, which our result addresses.

It would be interesting to consider variants of Proposition \ref{prop:det-dR-intro} for more general $\Dcal$-modules, $p$-adic cohomologies or Higgs bundles. It would also be interesting to elucidate the relationship with the theory of $\varepsilon$-factors, cf. \cite{Patel}. Finally, along the lines of \cite{Eriksson-Freixas-Wentworth} and \cite{Saito-Terasoma}, one may wonder about upgrading both sides of \eqref{eq:DRR-det-dR-intro} with natural connections and studying the compatibility of these with the isomorphism. We note the central role played by the functorial approach in these works. 

\subsubsection{Griffiths bundles and the BCOV  isomorphism}\label{subsubsec:BCOV-iso}
The previous applications focused on simplifications on the right-hand side of the Deligne--Riemann--Roch isomorphism for the de Rham complex. Similar simplifications arise more generally for weighted versions of the de Rham complex. For example, Hirzebruch used this in \cite{Hirzebruch:Gritsenko} to derive congruences for the Euler characteristics of Calabi--Yau varieties. One can consider an analogous phenomenon at the level of the determinant of the  cohomology.

Let $f\colon X\to S$ be a smooth projective morphism of Noetherian schemes, of relative dimension $n$. Suppose for simplicity that the relative Hodge bundles $\Hcal^{q}(\Omega_{X/S}^p):= R^q f_\ast (\Omega_{X/S}^p)$ are locally free, for example in the complex setting and for $S$ reduced. Define the Griffiths bundle of the relative de Rham cohomology by 
\begin{displaymath}
 \Gr(\Hcal^k) = \bigotimes_{p+q=k} \det \Hcal^{q}(\Omega_{X/S}^p)^{p}.
\end{displaymath}
In the complex setting, as the Hodge--de Rham spectral sequence degenerates, this coincides with the usual definition in \cite{Griffiths-periods-3} of the Griffiths bundle of a variation of Hodge structures as a product of determinants of Hodge filtrations $\bigotimes_{p \leq k} \det \left(\Fcal^p/\Fcal^{p+1}\right)^{p}=\bigotimes_{0 < p \leq k} \det \Fcal^p$. Then we can prove the following Mumford--type isomorphism for the alternating products of the Griffiths bundles, cf. Proposition \ref{prop:BCOViso}.

\begin{proposition}\label{prop:pre-bcov-iso-intro}
Under the assumptions as above, we have a canonical isomorphism of  $\QBbb$-line bundles
\begin{displaymath} \left(\bigotimes_{k=0}^n \Gr(\Hcal^k)^{(-1)^k}\right)^{12} \simeq \langle \cfrak_1(\Omega_{X/S}) \cdot \cfrak_n(\Omega_{X/S}) \rangle^{(-1)^n},
\end{displaymath}
which commutes with base change.
\end{proposition}
The proof of the proposition builds upon a reinterpretation of this specific combination of Griffiths bundles in terms of determinants of cohomology. More precisely, the BCOV line bundle, after Bershadsky, Cecotti, Ooguri and Vafa \cite{bcov}, is defined as 
\begin{displaymath}
    \lambda_{BCOV}(X/S)=\bigotimes_{p=0}^{n}\lambda_{f}(\Omega_{X/S}^p)^{ (-1)^p p }
\end{displaymath}
A direct comparison shows that we in fact can write this as an alternating product of Griffiths bundles: 
\begin{displaymath}
    \lambda_{BCOV}(X/S)= \bigotimes_{k=0}^n \Gr(\Hcal^k)^{(-1)^k}
\end{displaymath}
  This reinterpretation was applied in \cite{CDG2} in the context of Calabi--Yau manifolds and more generally in \cite{Mordant2022Griffiths} in the context of Kato heights. 

The following statement was conjectured in \cite[Section 6, Conjecture 1]{cdg3}, which is part of a rigorous mathematical formulation of the genus one mirror symmetry conjecture of Bershadsky, Cecotti, Ooguri and Vafa in \cite{bcov}. See Theorem \ref{thm:bcoviso} and Corollary \ref{cor:BCOV-analytic} below, stated under more general assumptions.

\begin{proposition}[BCOV isomorphism]\label{prop:BCOV-iso-intro}
Suppose that $S$ is reduced and the fibers of $f\colon X\to S$ are geometrically connected with trivial canonical bundle. Then, there is a canonical isomorphism of $\QBbb$-line bundles
\begin{equation}\label{eq:BCOV-iso-intro-2}
    \lambda_{BCOV}(X/S)^{12} \simeq  \left(f_\ast K_{X/S}\right)^{\chi},
\end{equation}
which commutes with base change. Here $\chi\colon S\to\ZBbb$ is the Euler characteristic of the fibers. The isomorphism can be uniquely extended to the analytic category, compatibly with the analytification functor. 
\end{proposition}

In this statement, $\chi$ is to be interpreted as the $\ell$-adic or topological Euler characteristic, depending on the context. It coincides with the locally constant function $s \mapsto \int_{X_{s}} c_{n}(T_{X_{s}})$. Furthermore, the assumption that $S$ is reduced can be dropped if one requires that the fibers have $h^{0,1}=0$, as for strict Calabi--Yau varieties.

In order to highlight the relavance of Proposition \eqref{prop:BCOV-iso-intro}, we briefly review our formulation of genus one mirror symmetry. Let $\Xcal \to \Dbold^{\times}$ be a projective mirror family of Calabi--Yau manifolds defined over a punctured multidisc. Assuming a Hodge--Tate condition on the limiting mixed Hodge structures, one obtains a canonical change of coordinates on $\Dbold$ (the mirror map), together with canonical trivializations on both sides of \eqref{eq:BCOV-iso-intro-2}, allowing the latter to be interpreted as a holomorphic function $F$ on $\Dbold^{\times}$ in the new coordinate. The conjecture from \cite[Section 6, Conjecture 2]{cdg3} asserts that this function should equal the exponential of a generating series of genus one Gromov--Witten invariants of the mirror Calabi--Yau variety. In the case of the mirror family of Calabi--Yau hypersurfaces in projective space, we established in \cite[Theorem 6.13]{cdg3} a variant of this conjecture, replacing \eqref{eq:BCOV-iso-intro-2} with a weaker isomorphism derived from the arithmetic Riemann--Roch theorem of Gillet and Soulé \cite{GS-ARR}. The connection with the Deligne--Riemann--Roch isomorphism sheds new light on the geometric structures underlying mirror symmetry, and may provide a natural framework for its extension to higher genus.

Finally, we note that Yoshikawa proved in \cite[Theorem 1.11]{yoshikawa-orbifold} a variant of the BCOV isomorphism on coarse moduli spaces of polarized Calabi--Yau threefolds, by analytic means and the theory of the BCOV invariant. The relationship between both should follow from the compatibility of \eqref{eq:BCOV-iso-intro-2} with the BCOV invariant: the latter is expected to be the norm of our isomorphism with respect to $L^{2}$-metrics.

\subsection{Relations to other works}
Other authors have also contributed to Deligne's program. Among alternative directions, Franke developed a parallel approach in a series of papers \cite{FrankeChow, FrankeChern, Franke}. As in our work, he categorifies the individual terms in the Grothendieck--Riemann--Roch formula. His framework, however, rests on a homological formalism that departs fundamentally from Deligne’s and relies crucially on regularity assumptions. This restriction limits potential applications to moduli spaces, which are often singular, and likewise constrains the use of complex or Berkovich analytifications, where Hilbert scheme techniques are needed. Related work by the first author on a functorial Adams--Riemann--Roch isomorphism \cite{Dennis-these} imposes similar regularity conditions. More recently, D. Rössler proposed an alternative approach to the functorial Adams--Riemann--Roch isomorphism \cite{Rossler-ARR}, based on fixed point formula techniques. His contributions include the derivation of explicit denominators, though under several technical geometric hypotheses, such as smoothness of the morphisms. These developments have been a source of inspiration for us. For a more detailed historical discussion, see \cite[Section 1.2]{DRR1}.

Deligne’s original motivations were closely related to early developments in arithmetic intersection theory, as seen, for example, in the foundational works of Arakelov \cite{Arakelov2, Arakelov1} and Faltings \cite{Faltings84}. In particular, he proposed to understand the Quillen metric on the determinant of the cohomology via natural isomorphisms and canonical Hermitian metrics on intersection bundles, structures which now arise naturally in the context of Arakelov geometry. These ideas ran parallel to the work of Gillet and Soulé on higher-dimensional Arakelov theory, which culminated in their celebrated arithmetic Riemann–Roch theorem \cite{GS-ARR}, developed with significant analytic inputs from Bismut. The holomorphic analytic torsion studied in these works is the $\Ccal^{\infty}$ counterpart of our Deligne--Riemann--Roch isomorphism. This analogy is further reinforced by the theory of generalized holomorphic analytic torsion classes developed by Burgos, Freixas, and Li\c{t}canu \cite{BFL2}. We will examine the compatibility of our isomorphism with the analytic torsion and the Quillen metric, along with other analytic aspects such as the BCOV invariant, in a subsequent work. 

\subsection{Organization of the paper}

This article is a continuation of our previous work \cite{DRR1}, where the formalism of intersection distributions was developed and several first applications were given. In order to facilitate the reading of the present work, we begin with a utilitarian reminder of \cite{DRR1} in Section \ref{section:preliminaries}, along with some complements.

In Section \ref{section:DRR-isos} we discuss the abstract concept of Deligne--Riemann--Roch isomorphism, which in particular encapsulates the functorial behavior we might expect of it.

In Section \ref{sec:def-normal-cone} we recall the theory of the deformation to the normal cone, that we need to adapt to our setting, and we derive some consequences for direct images along zero sections of projective bundles, at the level of virtual categories.

Section \ref{section:closed-immersions} is devoted to the construction of the Deligne--Riemann--Roch isomorphism for regular closed immersions, by the deformation to the normal cone technique. This section discusses the expected properties, such as the compatibility with the projection formula and the composition of regular closed immersions. The main result of this section is Theorem \ref{thm:DRRi-general}, which provides the existence and a characterization of the Deligne--Riemann--Roch isomorphism for regular closed immersions.

In Section \ref{section:projective-bundles} we construct the Deligne--Riemann--Roch isomorphism for projective bundles (cf. Theorem \ref{thm:RR-P(E)}), and we also establish various expected features. 

In Section \ref{sec:construction-DRR}, we analyze several key compatibilities between the Deligne--Riemann--Roch isomorphisms for regular closed immersions and for projective bundles. These compatibilities are required to define the Deligne--Riemann--Roch isomorphism for lci morphisms by factoring them into a regular closed immersion followed by a projective bundle. This analysis leads to Theorem \ref{thm:general-DRR}, which provides a line-distribution formulation of the Deligne--Riemann--Roch isomorphism. We then specialize to the determinant of cohomology in Theorem \ref{thm:DRR-det-coh}, which recovers Theorem \ref{thm:A} except for the compatibility with Grothendieck duality, handled separately in Theorem \ref{thm:Groth-duality}.

Finally, Section \ref{sec:compatibilities} concludes with a discussion of the applications presented in the Introduction, and some others, in more detail.

\endgroup
\setcounter{theorem}{\thetmp}

\section{Preliminaries}\label{section:preliminaries}
The main results and methods of this article build upon the theory of intersection distributions developed in our previous work \cite{DRR1}, itself a refinement of the formalism of intersection bundles introduced by Elkik \cite{Elkikfib}. In this section, we review the foundations of this formalism from a utilitarian perspective, with the aim of facilitating the reading of the remainder of the article. We also take this opportunity to fix some conventions and provide a few complements to \cite{DRR1}.

\subsection{Notation and conventions}\label{sec:notations-conventions}
In this subsection, we collect several conventions that we use throughout this article. These are common to our earlier paper, cf. \cite[Section 1.5]{DRR1}. 

A vector bundle on a scheme $X$ is a locally free sheaf of $\Ocal_{X}$-modules of finite rank. The rank is in general a locally constant function for the Zariski topology. A vector bundle of constant rank one is equivalently called a line bundle. 

If $\Ecal$ is a vector bundle on a scheme $X$, we work with Grothendieck's convention for the associated projective bundle $\PBbb(\Ecal)$, as parametrizing line bundle quotients of $\Ecal$. Hence, it can be represented by $\mathrm{Proj}\Sym\Ecal$, and if $\pi$ is the structure map over $X$, then there is a universal exact sequence
\begin{displaymath}
    0\to \Kcal\to\pi^{\ast}\Ecal\to\Ocal(1)\to 0. 
\end{displaymath}
In the case $\Ecal=\Ocal_{X}$, we identify $\PBbb(\Ocal_{X})$ with $X$, so that $\pi$ is the identity map. 

A morphism of schemes $X\to S$ is said to be projective if it factors through a closed immersion $X\to\PBbb(\Ecal)$, where $\Ecal$ is a vector bundle of constant rank on $S$. We say that the composition $X\to\PBbb(\Ecal)\to S$ is a projective factorization of $X\to S$. The morphism $X\to S$ is called locally projective if it is projective locally over $S$.

We will need the following definition of a particularily well-behaved morphism, which is required by many of our constructions.

\begin{definition-intro}[{Condition $(C_n)$}]
A morphism $f\colon X\to S$ satisfies $(C_n)$ if it is locally projective, faithfully flat of finite presentation, and of pure relative dimension~$n$.
\end{definition-intro}

We will deal with regular closed immersions of schemes in a relative setting. More precisely, let $X\to S$ and $Y\to S$ be schemes that satisfy the conditions $(C_{n})$ and $(C_{n-c})$, and let $Y\to X$ be a closed immersion. In this setting, we have two equivalent notions of regularity. The first one is the usual regularity in the sense that $Y\to X$ is locally cut out by a finite regular sequence of functions. The other one is the regularity in the sense that $Y\to X$ is locally cut out by a Koszul-regular sequence. Usual regularity entails Koszul-regularity. The flatness condition on the morphism guarantees that Koszul regularity holds on fibers. Since the fibers are Noetherian, Koszul-regularity on fibers entails usual regularity on fibers. Finally, the flatness and finite presentation conditions ensure that the usual regularity on fibers is equivalent to the usual regularity of $Y\to X$. Hence, in this context, both notions are equivalent and will be used indistinguishably. Moreover, these conditions commute with base change. We refer to \cite[Proposition 19.2.4]{EGAIV4} and \cite[\href{https://stacks.math.columbia.edu/tag/0638}{0638}]{stacks-project} for details on these notions. 

If $\Ecal$ is a vector bundle on a scheme $X$, and $\sigma$ is a global section of $\Ecal$, let $Y$ be the zero-locus scheme of $\sigma$. We denote by $K(\sigma)$ the associated Koszul complex. We say that $\sigma$ is a regular section of $\Ecal$ if $K(\sigma)$ is a resolution of $\Ocal_{Y}$. If $X$ and $Y$ satisfy the conditions $(C_{n})$ and $(C_{n-c})$, respectively, then by the previous paragraph $Y\to X$ is a regular closed immersion. Furthermore, in this situation, $\sigma$ remains regular after any base change $S^{\prime}\to S$.

The empty scheme will appear exceptionally in certain intermediate arguments. For instance, in the deformation to the normal cone, we will encounter a nowhere-vanishing section of a vector bundle. In such situations, we regard the empty scheme as satisfying the condition $(C_{-\infty})$ over any base scheme $S$, and all associated intersection bundles and line distributions (cf. \textsection\ref{subsec:int-bun-line-dist} below) will be taken to be trivial, namely equal to $\Ocal_{S}$. For a scheme $X$, we also consider the canonical closed immersion $\emptyset \to X$ as a regular closed immersion of codimension $+\infty$. Most of the time, however, unless otherwise stated, whenever we refer to the condition $(C_{n})$, we tacitly assume that $n \geq 0$ is an integer.

A morphism of schemes $f\colon X\to Y$ is a local complete intersection, or simply lci, if locally on $X$ it factors as a Koszul-regular immersion $X\hookrightarrow P$ followed by a smooth morphism $P\to Y$. If $X$ and $Y$ satisfy a condition of type $(C_{n})$ over a base scheme $S$, then the closed immersion is necessarily regular in the usual sense. Furthermore, in this situation, $f$ remains a local complete intersection after any base change $S^{\prime}\to S$. Finally, if $f$ is projective and $X\to\PBbb(\Ecal)\to Y$ is a projective factorization, then  $X\to\PBbb(\Ecal)$ is Koszul-regular. More generally, if we only assume that $X$ and $Y$ satisfy a condition of type $(C_{n})$, then $f\colon X\to Y$ is projective locally over $S$, and hence  a projective factorization exists locally over $S$. See \cite[Section 19.3]{EGAIV4} and \cite[\href{https://stacks.math.columbia.edu/tag/068E}{068E}]{stacks-project} for further details. 

We recall the definition of a divisorial scheme from \cite[Definition 2.1.1]{ThomasonTrobaugh}.
\begin{definition}[Divisorial scheme]\label{def:divisoria-prelim}
A scheme $X$ is called divisorial if it is quasi-compact and quasi-separated, and admits an ample family of line bundles. 
\end{definition}
Examples of divisorial schemes include affine schemes. In general, if $Y$ is divisorial and $X\to Y$ is quasi-projective over $Y$, then $X$ is divisorial too. This applies to closed immersions, quasi-compact open immersions, and projective morphisms that satisfy the condition $(C_{n})$. We refer to \cite[Examples 2.1.2]{ThomasonTrobaugh} for more examples. 

Finally, we recall that in \cite{DRR1} we fixed a Grothendieck universe for practical reasons and also to conform with \cite{ThomasonTrobaugh}. Up to equivalence of categories, the constructions do not depend on this choice. Since we rely on \cite{DRR1}, here we also implicitly fix a universe. 

\subsection{Virtual categories and their functorial properties}
Our categorical framework starts with the formalism of virtual categories, originally developed by Deligne in \cite{Deligne-determinant}. We elaborated on this theory in \cite[Section 4]{DRR1}, drawing on foundational work by Waldhausen \cite{Waldhausen}, Thomason \cite{ThomasonTrobaugh}, Knudsen \cite{Knudsen}, and Muro, Tonks and Witte \cite{Muro:determinant}. The theory is based on the notion of commutative Picard category, which we take as given. For readers unfamiliar with this concept, we refer to \cite[Section 2]{DRR1} for an overview. In this section, we briefly recall the key ingredients of the theory of virtual categories.

\subsubsection{Virtual categories} \label{intro:virtualcategories}
We follow the discussion in \cite[Sections 4.1--4.3]{DRR1}. Denote by $(\Vect_{X},\iso)$ the exact category of vector bundles on a scheme $X$, with isomorphisms as morphisms. Given a commutative Picard category $(\Pcal,\otimes)$, such as a Picard category of line bundles, there is a notion of a multiplicative (or sometimes determinant) functor
\begin{displaymath}
    F\colon (\Vect_{X},\iso)\to\Pcal.
\end{displaymath}
The main defining properties are the following: $F$ is multiplicative on short exact sequences, in a way compatible with admissible filtrations, and it is also compatible with the commutativity constraints of the addition on $(\Vect_{X},\iso)$ and the tensor product on $\Pcal$. Sometimes the monoidal structure on $\Pcal$ is seen as an addition, in which case we may adapt the terminology and say, for instance, that $F$ is additive on short exact sequence.

The virtual category of $X$, denoted by $V(X)$, is a commutative Picard category, endowed with a universal multiplicative functor 
\begin{displaymath}
    [\ \cdot\ ]\colon (\Vect_{X},\iso)\to V(X).
\end{displaymath}
The monoidal structure on $V(X)$ is induced by the direct sum of vector bundles. The universal multiplicative functor factors via the derived category of bounded complexes of vector bundles $D^{b}(\Vect_{X})$, with the quasi-isomorphisms as morphisms. 

The category $V(X)$ can be constructed as the fundamental groupoid of the loop space of Quillen's $Q$-construction. As such, its homotopy groups are given in terms of Quillen's $K$-theory of vector bundles as
\begin{displaymath}
    \begin{split}
        &\pi_{0}(V(X))=\text{ group of isomorphism classes of objects of }V(X)\ \simeq\ K_{0}(X),\\
        &\pi_{1}(V(X))=\text{ group of automorphisms of any object in }V(X)\ \simeq\ K_{1}(X),\\
        &\pi_{i}(V(X))=0\text{ if } i\geq 2.
    \end{split}
\end{displaymath}
This relationship is central in view of \cite[Lemma 2.2]{DRR1}, stating that a functor of commutative Picard categories $\Pcal\to\Pcal^{\prime}$ induces an equivalence of categories if and only if it induces isomorphisms on the $\pi_{0}$ and $\pi_{1}$ groups. Hence, for the purpose of establishing equivalences between virtual categories, we are often reduced to dealing with analogous $K$-theoretic statements. 

As a prototype example of the above formalism, we mention the determinant of vector bundles. Let $\Picfr(X)$ be the Picard category of line bundles on $X$ and $\Picfr_{\mathrm{gr}}(X)$ the Picard category of $\ZBbb$-graded line bundles $(L,r)$, where the commutativity constraint for the tensor product incorporates the Koszul rule of signs with respect to the degree. If $E$ is a vector bundle of rank $r$, then we define $\det E=(\Lambda^{r}E,r)$. The construction induces a determinant functor, and hence a functor of commutative Picard categories
\begin{equation}\label{eq:det-prel}
    \det\colon V(X)\to\Picfr_{\mathrm{gr}}(X).
\end{equation}
The determinant functor is compatible with pullback functoriality; see \textsection \ref{subsubsec:functorial-properties-virtual-prelim} below for the functoriality properties of virtual categories. Notice that without the grading, $\det$ does not induce a functor of commutative Picard categories with values in $\Picfr(X)$.

For the purpose of performing direct images, one generalizes the above construction to Waldhausen categories. In particular, let $\Pcal_{X}$ denote the category of perfect complexes of $\Ocal_{X}$-modules, of globally bounded (also called finite) Tor-amplitude, together with the quasi-isomorphisms as morphisms. Then, we can associate to it a virtual category $V(\Pcal_{X})$, which we call the virtual category of perfect complexes. The inclusion $(\Vect_{X},\iso)\to\Pcal_{X}$ induces a functor of commutative Picard categories
\begin{equation}\label{eq:VX-to-VPX}
    V(X)\to V(\Pcal_{X}),
\end{equation}
which commutes with base change and is an equivalence of categories when $X$ is divisorial. Besides, we note that the Tor-amplitude condition above is automatic on quasi-compact schemes.

To conclude this subsection, we observe that by the work of Knudsen and Mumford \cite{KnudsenMumford}, the determinant functor \eqref{eq:det-prel} extends to a functor
\begin{equation}\label{eq:det-prel-bis}
    \det\colon V(\Pcal_{X})\to\Picfr_{\mathrm{gr}}(X),
\end{equation}
which commutes with base change. The construction of this extension relies on on three facts. First, the sought extension is to be compatible with pullback functoriality and, in particular, with restriction to open subschemes. Second, on divisorial schemes, the natural functor \eqref{eq:VX-to-VPX} is an equivalence of categories. Finally, by gluing, one reduces to affine schemes, in which case one can use the equivalence \eqref{eq:VX-to-VPX} and the already constructed functor \eqref{eq:det-prel}. The extended functor inherits from \eqref{eq:det-prel} the properties of a multiplicative functor.

\subsubsection{Functorial properties of virtual categories}\label{subsubsec:functorial-properties-virtual-prelim}
The virtual categories considered above inherit a tensor product structure from that of vector bundles or complexes. They also carry pullback functoriality, induced by the pullback on vector bundles or complexes. Moreover, the duality operator acts on the virtual categories, compatibly with pullback functoriality. For virtual categories of perfect complexes, these operations are taken in the derived sense, though in this context we will use non-derived notation. For instance, we denote the dual of a virtual perfect complex $E$ by $E^{\vee}$, cf. \cite[\href{https://stacks.math.columbia.edu/tag/0FP7}{0FP7}]{stacks-project}. Thus, the virtual category construction, whether for vector bundles or for perfect complexes of finite Tor-amplitude, defines a category fibered in groupoids over the category of schemes, equipped with a duality autoequivalence.

For direct images, one needs to impose some perfection conditions on the morphisms. We follow \cite[Proposition 4.12]{DRR1} and the subsequent statements. Let be given a proper morphism of quasi-compact schemes $f\colon X\to Y$, and assume that the derived functor $Rf_{\ast}$ preserves perfect complexes. Note that the finite Tor-amplitude condition is automatic on quasi-compact schemes. Then, $Rf_{\ast}$ induces a pushforward functor
\begin{equation}\label{eq:pushfwd-1}
    f_{!}\colon V(\Pcal_{X})\to V(\Pcal_{Y}).
\end{equation}
This is a functor of commutative Picard categories, and its formation is compatible with the composition of morphisms: 
\begin{equation}\label{eq:fun-composition-virtual}
    (g\circ f)_{!}\simeq g_{!}\circ f_{!}.
\end{equation}
It also satisfies the projection formula:
\begin{equation}\label{eq:proj-formula-virtual-cat}
    f_{!}(E\otimes f^{\ast}F)\simeq f_{!}E\otimes F.
\end{equation}
Finally, it commutes with any Tor-independent base change $g\colon Y^{\prime}\to Y$, assuming that the derived direct image along the base-changed map $f^{\prime}\colon X^{\prime}\to Y^{\prime}$ preserves perfect complexes of finite Tor-amplitude:
\begin{equation}\label{eq:virtualbasechange}
    g^{\ast}f_{!}E\simeq f^{\prime}_{!}g^{\prime\ast}E,
\end{equation}
where $g^{\prime}\colon X^{\prime}\to X$ is the projection map.

We note that if $X$ and $Y$ are divisorial, we can then deduce from \eqref{eq:pushfwd-1} a pushforward functor with similar properties
\begin{displaymath}
    f_{!}\colon V(X)\to V(Y),
\end{displaymath}
thanks to the equivalences of categories of the form \eqref{eq:VX-to-VPX} in this case.

In practice, our morphisms will satisfy the requirements of \cite[Proposition 4.12]{DRR1}. We will mostly deal with morphisms satisfying the condition $(C_{n})$ and with proper perfect (e.g. lci) morphisms between quasi-compact, and even divisorial, schemes \cite[Proposition 4.15]{DRR1}. 

As an example of application, we mention the construction of the determinant of the cohomology. If $f\colon X\to Y$ is a morphism of schemes for which $f_{!}$ is defined, then composing with \eqref{eq:det-prel-bis} (for $Y$) we obtain a functor of commutative Picard categories
\begin{equation}\label{eq:det-coh-prelim}
    \lambda_{f}=\det f_{!}\colon V(\Pcal_{X})\to\Picfr_{\mathrm{gr}}(Y),
\end{equation}
which is a reinterpretation of the Knudsen--Mumford construction \cite{KnudsenMumford}. Actually, this construction is local on the base, which allows one to weaken the assumptions by a gluing argument. In particular, if $f$ satisfies the condition $(C_{n})$, then $\lambda_{f}$ is defined without the quasi-compactness restriction on the schemes, and the formation of the functor commutes with base change. 

\subsection{Signs and rationalization}\label{subsec:signs}
In the theory of commutative Picard categories one encounters the notion of sign. For such a category $(\Pcal,\otimes)$, let $c_{A,A}\colon A\otimes A\to A\otimes A$ be the commutativity isomorphism of an object $A$. The translation functor by a fixed object in $\Pcal$ is an autoequivalence of categories. Hence, translating $c_{A,A}$ by $A\otimes A$ induces an automorphism $\varepsilon(A)$ of the neutral object $0$. The construction gives rise to a sign homomorphism $\varepsilon\colon \pi_{0}(\Pcal)\to\pi_{1}(\Pcal)$, with $\varepsilon^{2}=1$. The image is referred to as signs, and in $\Pcal$ it hence makes sense to say that a diagram commutes up to a sign. In general, $\varepsilon$ is not the identity map and we say that $\Pcal$ is strictly commutative if it is. 

In \cite[Section 2]{DRR1}, we introduced a rationalization procedure for a commutative Picard category $\Pcal$. This produces a strictly commutative Picard category $\Pcal_{\QBbb}$. In particular, a diagram that commutes up to a sign in $\Pcal$ gives rise to a commutative diagram in $\Pcal_{\QBbb}$. 

Rationalizing the Picard category $\Picfr_{\mathrm{gr}}(X)$ of graded line bundles and then forgetting the grading produces the usual Picard category of $\QBbb$-line bundles, denoted by $\Picfr(X)_{\QBbb}$. Since the latter is strictly commutative, the determinant functors \eqref{eq:det-prel} and \eqref{eq:det-coh-prelim} induce functors of commutative Picard categories with values in $\QBbb$-line bundles. The rationalization procedure also applies to virtual categories. In some circumstances, it will be enough for us to work with the rational virtual categories, thus dispensing with the need to care about sign issues.

\subsection{Intersection bundles and line distributions}\label{subsec:int-bun-line-dist}In this subsection we review the basics of the theory of intersection bundles and line distributions. This is the core of our previous work \cite{DRR1}, and we refer to it for a comprehensive exposition.  

\subsubsection{Intersection bundles}
Let $f\colon X\to S$ be a morphism that satisfies the condition $(C_{n})$. Suppose that we are given vector bundles $E_i$ on $X$ and integers $k_i\geq 0$, for $i = 1, \ldots, m$, such that $\sum k_i = n+1$. Then, after \cite[Section V]{Elkikfib} and \cite[Section 7]{DRR1}, there is a canonically associated line bundle on $S$, called intersection bundle,
\begin{equation}\label{eq:intersectionbundle}
    \langle \cfrak_{k_1}(E_1) \cdots\cfrak_{k_m}(E_m) \rangle_{X/S},\quad\text{or simply}\quad \langle \cfrak_{k_1}(E_1) \cdots\cfrak_{k_m}(E_m) \rangle.
\end{equation}
This bundle is expected to represent the direct image of the Chern classes. This is known for a divisorial base $S$ \cite[Proposition 7.1]{DRR1}. Thus, in this case, we have an equality of Chern classes on $S$
\begin{equation}\label{eq:c1-direct-image}
    c_{1}\big( \langle \cfrak_{k_1}(E_1)\cdots \cfrak_{k_m}(E_m) \rangle\big)=f_\ast( c_{k_1}(E_1)\cdots c_{k_m}(E_m)).
\end{equation}
It is an open question to extend this property to general base schemes. Whenever the $E_{i}=L_{i-1}$ are line bundles and the $k_{i}=1$, we recover the Deligne pairing \cite[Section 6]{DRR1}
\begin{displaymath}
    \langle L_{0},\ldots,L_{n}\rangle_{X/S},\quad\text{or simply}\quad\langle L_{0},\ldots,L_{n}\rangle.
\end{displaymath}
The latter can be presented explicitly in terms of generators and relations. The construction of the more general \eqref{eq:intersectionbundle} mimics the method via Segre classes in \cite[Chapter 3]{Fulton}, and the intersection bundle analogues of the latter are defined as Deligne pairings of tautological bundles of projective bundles.

More generally, the construction of \eqref{eq:intersectionbundle} extends, by tensor product multiplicativity, to any a priori non-commutative polynomial $P$ in Chern classes of total degree $n+1$, with integer coefficients. The resulting line bundle is denoted by $\langle P \rangle_{X/S}$ or simply $\langle P\rangle$. By projecting onto the degree-$(n+1)$ component, this construction extends to arbitrary polynomials or even formal power series with integer coefficients. We may further allow rational coefficients, by considering $\langle P\rangle_{X/S}$ as a $\QBbb$-line bundle. 

The intersection bundles in \eqref{eq:intersectionbundle} are functorial in isomorphisms of the vector bundles $E_i$ and isomorphisms of $S$-schemes, and commute with base change. Frequently, the functoriality in the $S$-schemes is subsumed in the base-change property and not mentioned explicitly, since an isomorphism of $S$-schemes may be seen as a base-change Cartesian diagram. The construction also satisfies a set of natural isomorphisms, corresponding to properties of Chern classes and the direct image thereof whenever \eqref{eq:c1-direct-image} holds. These are stated in \cite[Proposition 7.5]{DRR1}. Most importantly, for an exact sequence of vector bundles
\begin{displaymath}
    0 \to E_i^\prime \to E_i \to E_i^{\bis} \to 0
\end{displaymath} 
there is a Whitney-type isomorphism
\begin{equation}\label{eq:iso-Whitney-preliminaries}
    \langle \cfrak_{k_1}(E_1)\cdots \cfrak_{k_i}(E_i)\cdots \cfrak_{k_m}(E_m) \rangle \simeq \bigotimes_{k_i=k^{\prime}_{i}+k^{\bis}_{i}} \langle \cfrak_{k_1}(E_1)\cdots \cfrak_{k_i^\prime}(E_i^\prime)\cdot \cfrak_{k^{\bis}_i}(E^{\bis}_i)\cdots \cfrak_{k_m}(E_m) \rangle,
\end{equation}
which commutes with base change. One may combine the natural isomorphisms between intersection bundles, and a key result of the theory is that these operations commute  \cite[Section 7.3]{DRR1}. Since the precise statement involves a large number of diagrams, we refer the reader to \emph{loc. cit.} for details and  to \textsection\ref{subsubsec:some-natural-isos-prel} below for an example formulated within the language of line distributions.

\subsubsection{Chern categories and categorical characteristic classes}\label{subsubsec:categorical-characteristic}
For the purpose of working with intersection bundles as a genuine relative intersection theory, in \cite[Section 5]{DRR1} we developed a formalism of line distributions, recalled in \textsection\ref{subsubsec:line-dist-prel} below. In this direction, in \emph{loc. cit.} we first introduced a graded ring category $\CHfrak(X)$, called \emph{Chern category} of $X$, whose objects can be thought of as formal power series in Chern classes on $X$ with integer coefficients. We call these objects \emph{Chern power series} on $X$. The grading of $\CHfrak(X)$ is induced by the degree of the Chern classes. Addition of Chern power series defines a structure of strictly commutative Picard category on $\CHfrak(X)$. Multiplication of Chern power series induces a structure of strictly commutative Picard category on the Chern power series whose constant term is isomorphic to $1$. There is also a rational version $\CHfrak(X)_{\QBbb}$ of this construction, with analogous properties, whose objects are thought of as formal power series in Chern classes, with rational coefficients.

The construction of the Chern category is tailored so that the total Chern class induces a functor
\begin{displaymath}
    \cfrak\colon (\Vect_{X},\iso)\to \CHfrak(X)
\end{displaymath}
 which is multiplicative on short exact sequences $0 \to E' \to E\to E^{\bis}\to 0$, namely there is an isomorphism 
 \begin{displaymath}
    \cfrak(E)\simeq \cfrak(E^{\prime})\cdot\cfrak(E^{\bis})
 \end{displaymath}
 compatible with admissible filtrations and some additional constraints. It factors through the virtual category:
 \begin{displaymath}
    \cfrak\colon V(X)\to \CHfrak(X).
\end{displaymath}
The morphisms in the Chern category therefore lift isomorphisms such as \eqref{eq:iso-Whitney-preliminaries} to the categorical level. One can perform algebraic manipulations with Chern power series (addition, subtraction, multiplication, commutativity, associativity, etc.) in any order and functorially.

There is a formalism of categorical characteristic classes in the Chern categories. In particular, additive or multiplicative characteristic classes of vector bundles, such as the Chern character or the Todd genus, can be lifted to $\CHfrak(X)_{\QBbb}$. Additive or multiplicative classes may be interpreted as functors defined on the virtual category, for instance
\begin{equation}\label{eq:char-class-functors}
    \chfrak\colon V(X)\to\CHfrak(X)_{\QBbb},\quad\tdfrak\colon V(X)\to\CHfrak(X)_{\QBbb}.
\end{equation}
The Chern character $\chfrak$ extends to the rational virtual category:
\begin{displaymath}
    \chfrak\colon V(X)_{\QBbb}\to\CHfrak(X)_{\QBbb}.
\end{displaymath}

The formalism of categorical characteristic classes extends to the derived category of bounded complexes of vector bundles, and when $X$ is divisorial, since $V(X)$ and $V(\Pcal_{X})$ are equivalent, we can extend it further to perfect complexes. However, we stress that, in general, if $E$ is a virtual perfect complex on $X$, then $\chfrak(E)$ and $\tdfrak(E)$ are not defined. In practice, we will be given a morphism $X\to S$ satisfying the condition $(C_n)$, and we will tackle this issue by localizing over $S$ in order to reduce to the case where $X$ is divisorial. In the context of line distributions, a detailed procedure in this sense is expounded in \textsection\ref{subsubsec:gluing-functors}.

\subsubsection{Line distributions}\label{subsubsec:line-dist-prel} Chern categories enjoy pullback functoriality, induced by pullback of vector bundles, which upgrades the construction into a category fibered in groupoids over the category of schemes. However, Chern categories do not admit pushforward functors. The language of line distributions introduced in \cite[Section 5]{DRR1} is meant to remedy this. It is modeled on the theory of intersection bundles, and is inspired by the theory of distributions (or currents) in analysis\footnote{In this analogy, the Chern power series correspond to the test forms.} and by Fulton's bivariant intersection theory \cite[Chapter 17]{Fulton}. 

A line distribution $T_{X/S}$ for $X\to S$ is a collection of functors 
\begin{displaymath}
    \CHfrak(X^{\prime})_{\QBbb}\to\Picfr(S^{\prime})_{\QBbb},
\end{displaymath}
where $S^{\prime}\to S$ is any base change and we set $X^{\prime}=X\times_{S}S^{\prime}$. Line distributions fulfill some additional conditions, of which we do not detail all. But in particular, if $Q$ denotes a Chern power series on a base change of $X$, then $T_{X/S}(Q)$ satisfies:
\begin{enumerate}
    \item It is compatible with base change. In particular, it is compatible with restriction to open subschemes of $S$. 
    \item It is additive in $Q$ and it is compatible with Whitney-type isomorphisms.
      \item It has bounded denominators: there exists a universal integer $N\geq 1$ such that $T_{X/S}(Q)^{N}$ is an actual line bundle whenever $Q$ has integer coefficients, and induces a functor on the integral Chern category $\CHfrak(X)$. We then say that $T_{X/S}^{N}$ is an entire line distribution.  
\end{enumerate}

We note that a line distribution $T_{X/S}$ by definition induces a line distribution $T_{X^{\prime}/S^{\prime}}$ for $X^{\prime}\to S^{\prime}$, for any base change $q\colon S^{\prime}\to S$. We will write
\begin{equation}\label{eq:base-change-lin-dist}
    T_{X^{\prime}/S^{\prime}}=q^{\ast}T_{X/S}=\text{line distribution for } X^{\prime}\to S^{\prime} \text{ induced by } T_{X/S}.
\end{equation}

\subsubsection{The Picard category of line distributions}\label{subsubsec:pic-cat-line-dist}
The line distributions (resp. entire line distributions) for $X\to S$ constitute a strictly commutative Picard category $\Dcal(X/S)$ (resp. $\Dcal_{\ZBbb}(X/S)$):
\begin{enumerate}
    \item\emph{Monoidal structure.} The tensor product of line bundles induces a tensor product of line distributions. To simplify the notation, we frequently write the tensor product of line distributions in additive form: $T_{X/S}+T^{\prime}_{X/S}$ in place of $T_{X/S}\otimes T^{\prime}_{X/S}$.
    \item\emph{Isomorphisms. } An isomorphism of line distributions $T_{X/S}\simeq T^{\prime}_{X/S}$ is an isomorphism of the $\QBbb$-line bundles (resp. line bundles) $T_{X/S}(Q)\simeq T^{\prime}_{X/S}(Q)$ which respects the above data. In particular, an appropriate power of the isomorphism induces an isomorphism of entire line distributions. 
\end{enumerate}
Associating, with a morphism of schemes $S^{\prime}\to S$, the category $\Dcal(X^{\prime}/S^{\prime})$ (resp. $\Dcal_{\ZBbb}(X^{\prime}/S^{\prime})$), for $X^{\prime}=X\times_{S}S^{\prime}$, defines a category fibered in groupoids over $(\mathrm{Sch}/S)$. An important fact of line distributions and their isomorphisms is that they are determined by their restriction to affine base schemes \cite[Proposition 5.31, Remark 5.32]{DRR1}.

We shall perform several formal operations on line distributions, inspired by the analogous ones in the theory of distributions in analysis:
\begin{enumerate}
    \item \emph{Product by a Chern power series.} If $T_{X/S}$ is a line distribution and $P$ is a Chern power series on $X$, then we can define the product $P\cdot T_{X/S}$ by
\begin{equation}\label{eq:left-product}
    (P\cdot T_{X/S})(Q)=T_{X/S}(Q\cdot P),
\end{equation}
which is again a line distribution on $X$. This induces a categorical module structure on $\Dcal(X/S)$ over the Chern category $\CHfrak(X)_{\QBbb}$. Here and elsewhere, we do not write the corresponding base-changed isomorphisms, which are part of the line distribution data recalled above. 

We will also have use for the following variant, which we did not introduce in \cite{DRR1}, and that enjoys similar properties:
\begin{equation}\label{eq:right-product}
    (T_{X/S}\cdot P)(Q)=T_{X/S}(P\cdot Q).
\end{equation}
For concreteness, below, all identities stated for the left product have obvious right-product analogues.
    \item \emph{Pushforward of line distributions.} If $h\colon Y\to X$ is a morphism of schemes, such that $Y\to S$ satisfies the condition $(C_{m})$ for some $m$, and $T_{Y/S}$ is a line distribution on $Y$, then we define
    \begin{displaymath}
        (h_{\ast}T_{Y/S})(Q)=T_{Y/S}(h^{\ast} Q),
    \end{displaymath}
    which is a line distribution on $X$. This induces a functor of commutative Picard categories $h_{\ast}\colon\Dcal(Y/S)\to\Dcal(X/S)$, clearly compatible with composition. 
    \item\emph{Projection formula.} The following \emph{projection formula} is automatic from the previous definitions:
        \begin{equation}\label{eq:projection-formula-preliminaries}
              h_{\ast}(h^{\ast}P\cdot T_{Y/S})=P\cdot h_{\ast}T_{Y/S}.
         \end{equation}
         We have a similar projection formula for the product from the right.
    \item \emph{Pushforward and base change.} Continuing with the previous point, if $q\colon S^{\prime}\to S$ is any morphism, then the convention \eqref{eq:base-change-lin-dist} yields the following obvious compatibility of pushforward with base change:
    \begin{equation}\label{eq:base-change-bete}
        q^{\ast}h_{\ast}T_{Y/S}=h_{\ast}^{\prime} q^{\ast}T_{Y/S},
    \end{equation}
    where $h^{\prime}\colon Y^{\prime}\to X^{\prime}$ is the morphism deduced from $h$ by base change along $q$. 
\end{enumerate}

Finally, we complete the list with a relevant nontrivial fact:
\begin{enumerate}[resume]
    \item\label{item:splitting-principle}\emph{Splitting principle.} While there is no splitting principle available for Chern categories, there is a substitute for line distributions of the form $\phi(E)\cdot T_{X/S}$, where $E\mapsto\phi(E)$ is a characteristic class on vector bundles: to check that two such distributions $\phi(E)\cdot T_{X/S}$ and $\psi(E)\cdot T_{X/S}$ are isomorphic for all $E$, one can restrict oneself to direct sums of line bundles. This is a particular instance of more general splitting principles for line functors, first proven in \cite[Section 2]{Eriksson-Freixas-Wentworth} and summarized in \cite[Section 5.4]{DRR1}, to which we refer for an account.
\end{enumerate}

\subsubsection{Intersection distributions}
The main examples of line distributions are provided by the theory of intersection bundles. For a fixed Chern power series $Q$ on $X$, in  \cite[Section 8]{DRR1} we show that the assignment 
\begin{displaymath}
    Q \mapsto \langle Q \cdot P\rangle_{X/S}
\end{displaymath}
defines a line distribution on $X$.\footnote{Recall from \textsection\ref{subsubsec:line-dist-prel} that a line distribution is defined for all base changes of $X\to S$, but for simplicity, we omit this from the notation.} This is a subtle nontrivial fact, related to the interaction between the commutativity constraint on the virtual category and the Whitney isomorphism for intersection bundles \eqref{eq:iso-Whitney-preliminaries}, cf. \cite[Lemma 8.1]{DRR1}. We denote the line distribution associated with $P$ by 
\begin{displaymath}
    [P]_{X/S}, \quad\text{or simply}\quad [P].
\end{displaymath}
We call intersection distributions those line distributions obtained as pushforwards of line distributions of the form $[P]_{X/S}$. 

The following are some useful observations and conventions for intersection distributions: 
\begin{enumerate}
    \item If $P, P^{\prime}$ are Chern power series on $X$, the product rules \eqref{eq:left-product} and \eqref{eq:right-product} provide the identities
\begin{equation}\label{eq:prod-int-dist-prel}
    P\cdot [P^{\prime}]_{X/S}=[P\cdot P^{\prime}]_{X/S}=[P]_{X/S}\cdot P^{\prime}.
\end{equation}   
\item The base-change convention \eqref{eq:base-change-lin-dist} for intersection distributions can be written as 
\begin{displaymath}
    q^{\ast}[P]_{X/S}=[q^{\prime\ast}P]_{X^{\prime}/S^{\prime}},
\end{displaymath}
where $q^{\prime}\colon X^{\prime}\to X$ is the projection map. Similarly, \eqref{eq:base-change-bete} then becomes
\begin{displaymath}
    q^{\ast}h_{\ast}[P]_{Y/S}=h^{\prime}_{\ast}[q^{\prime\ast}P]_{Y^{\prime}/S^{\prime}},
\end{displaymath}
where $q^{\prime}$ now denotes the projection map $Y^{\prime}\to Y$.
\item If $h\colon Y\to X$ is a morphism of schemes such that $Y\to S$ satisfies the condition $(C_{m})$, then the projection formula \eqref{eq:projection-formula-preliminaries} for intersection distributions becomes
\begin{equation}\label{eq:proj-for-int-dist}
    h_{\ast}(h^{\ast} P\cdot [P^{\prime}]_{Y/S})=P\cdot h_{\ast}[P^{\prime}]_{Y/S}.
\end{equation}
We have a similar projection formula for the product from the right.

\item If $i\colon Y\to X$ is a closed immersion such that $Y\to S$ satisfies the condition $(C_{m})$, then we denote the line distribution $i_{\ast}[1]_{Y/S}$ on $X$ by $\delta_{Y/S}$. Hence, if $Q$ is a Chern power series on $X$, we have
\begin{displaymath}
    \delta_{Y/S}(Q)=\langle i^{\ast}Q\rangle_{Y/S}.
\end{displaymath}
In particular, the projection formula in this case can be written as
\begin{equation}\label{eq:rest-dist-prel}
    P\cdot\delta_{Y/S}=[i^{\ast} P]_{Y/S}.
\end{equation}
Note also that $\delta_{X/S}=[1]_{X/S}$ is simply the distribution $\delta_{X/S}(Q)=\langle Q\rangle_{X/S}$.
\end{enumerate}
\subsubsection{Some natural isomorphisms of intersection distributions} \label{subsubsec:some-natural-isos-prel}
Due to its relevance, we recall \cite[Corollary 8.6]{DRR1} in the statement below, summarizing the main types of isomorphisms of intersection distributions and their compatibilities.
\begin{theorem}[{\cite[Corollary 8.6]{DRR1}}]\label{thm:cor86}
Let $f:X \to S$ be a morphism satisfying the condition $(C_n).$ The intersection distributions satisfy the following properties:
\begin{enumerate}
    \item\label{item:cor86-1} (Projection formulas)  Let $g\colon X^{\prime}\to S$ and $h \colon X' \to X$ be morphisms satisfying the conditions $(C_{n+n'})$ and $(C_{n'})$, respectively, and let $P^{\prime}$ be 
    in $\CHfrak(X^{\prime})_{\QBbb}$ have pure degree $\deg P^{\prime} \leq n'+1$. Then, there are canonical isomorphisms of line distributions
    \begin{displaymath}
            h_\ast [P^{\prime}]_{X^{\prime}/S} \simeq [h_{\ast}P^{\prime}]_{X/S},\quad\text{where}\quad h_{\ast}P^{\prime}:=
\begin{cases}
    \cfrak_1 (\langle P^{\prime} \rangle_{X^{\prime}/X}), & \text{if }\; \deg P^{\prime}=n^{\prime}+1.\\ \\
    \int_{X^{\prime}/X}P^{\prime} &  \text{if }\; \deg P^{\prime} = n^{\prime}.  \\ \\
         0,  & \text{if }\; \deg P^{\prime} < n^{\prime}. \\
\end{cases}
\end{displaymath}
In the middle case, we assume that the intersection-theoretic degree of $P^{\prime}$ on the fibers of $X^{\prime}\to X$ is constant, and we denote it by $\int_{X^{\prime}/X}P^{\prime}$.
    \item\label{item:prop-int-dist-whitney} (Whitney isomorphism) Let $0 \to E^{\prime}\to E \to E^{\bis} \to 0$ be a short exact sequence of vector bundles on $X$. Then, there is a canonical isomorphism
    \begin{displaymath}
        [\cfrak_{k}(E)]_{X/S}\simeq \sum_{i=0}^{k} [\cfrak_{i}(E^{\prime})\cdot\cfrak_{k-i}(E^{\bis})]_{X/S},
    \end{displaymath}
    in a way compatible with admissible filtrations.
    \item\label{item:prop-int-dist-c1-det} (First Chern class isomorphism) Let $E$ be a vector bundle on $X$. Then, there is a canonical isomorphism 
 \begin{displaymath}
        [\cfrak_1(E)]_{X/S} \simeq [\cfrak_1(\det E)]_{X/S}
    \end{displaymath}
    in a way that is compatible with the Whitney isomorphism. 
    \item\label{item:prop-int-dist-rank} (Rank triviality) Let $E$ be a vector bundle on $X$ and $q$ an integer such that $q > \rk E$. Then, there is an isomorphism 
    \begin{displaymath}
        [ \cfrak_q(E)]_{X/S} \simeq 0.
        \end{displaymath}
    \item\label{item:prop-int-dist-restriction} (Restriction isomorphism) Let $E$ be a vector bundle of constant rank $r$ on $X$. Suppose that $\sigma$ is a regular section of $E$,  whose zero locus $Y$, possibly empty, is flat over $S$. Then, there is a canonical isomorphism
    \begin{displaymath}
        [ \cfrak_r(E) ]_{X/S} \simeq \delta_{Y/S}, 
    \end{displaymath}
    where $i\colon Y\hookrightarrow X$ is the closed immersion of $Y$ in $X$. 
    \item\label{item:prop-int-dist-birational} (Birational invariance) Let $g: X^{\prime} \to S$ be a morphism satisfying condition $(C_n)$. Assume that there exists a morphism $h\colon X^{\prime} \to X$ and a quasi-compact open immersion $U \to X$, such that $h^{-1}(U) \to U$ is an isomorphism and $U$ is fiberwise dense in $X$. Then, there is a canonical isomorphism
    \begin{displaymath}
        h_\ast \delta_{X'/S}\simeq \delta_{X/S}.
    \end{displaymath}
    In particular, $h_{\ast}[h^{\ast}P]_{X'/S}\simeq [P]_{X/S}$. 
\end{enumerate}
These operations can be composed with each other in a natural way, and commute with each other. 
\end{theorem}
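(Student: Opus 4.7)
The strategy is to test each assertion against an arbitrary Chern power series $Q$ on a base change of the relevant scheme, thereby reducing to the corresponding property of the intersection bundles $\langle\cdot\rangle$ established in \cite[Proposition 7.5]{DRR1}. The formalism of line distributions recalled in \textsection\ref{subsubsec:pic-cat-line-dist} is precisely designed so that functorial manipulations on intersection bundles propagate automatically to the level of distributions, provided one checks compatibility with base change and with the Whitney-type isomorphisms already built into the line distribution axioms.

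For the projection formula \eqref{item:cor86-1}, I would unfold $h_\ast[P^{\prime}]_{X^{\prime}/S}(Q)$ as $\langle h^{\ast}Q\cdot P^{\prime}\rangle_{X^{\prime}/S}$, keeping only the degree-$(n+n^{\prime}+1)$ part, and split on $\deg P^{\prime}$. When $\deg P^{\prime}=n^{\prime}+1$, the corresponding projection formula for intersection bundles rewrites this as $\langle Q^{(n+1)}\cdot \cfrak_1(\langle P^{\prime}\rangle_{X^{\prime}/X})\rangle_{X/S}$; when $\deg P^{\prime}=n^{\prime}$, integration along the fibers of $h$ turns the fiberwise zero-cycle $P^{\prime}$ into the locally constant function $\int_{X^{\prime}/X}P^{\prime}$, producing the appropriate multiple of the trivial distribution; when $\deg P^{\prime}<n^{\prime}$, the fiber integral vanishes. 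Naturality in $Q$ and base change compatibility are automatic from the construction.

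Parts \eqref{item:prop-int-dist-whitney}--\eqref{item:prop-int-dist-rank} are direct translations of structural isomorphisms between integer intersection bundles: the Whitney isomorphism \eqref{eq:iso-Whitney-preliminaries}, the identity $\cfrak_1(E)=\cfrak_1(\det E)$, and vanishing above the rank. Applied to the test class $Q\cdot\cfrak_k(E)$ for every $Q$, these transport verbatim to isomorphisms of line distributions, with coherence with admissible filtrations inherited from the intersection bundle side. For the restriction isomorphism \eqref{item:prop-int-dist-restriction}, one uses that if $\sigma$ is a regular section of $E$ with zero locus $i\colon Y\hookrightarrow X$ satisfying condition $(C_{n-r})$, then $\langle \cfrak_r(E)\cdot Q\rangle_{X/S}\simeq\langle i^{\ast}Q\rangle_{Y/S}=\delta_{Y/S}(Q)$; the empty case is handled by the conventions of \textsection\ref{sec:notations-conventions}. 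For birational invariance \eqref{item:prop-int-dist-birational}, the identity $\langle h^{\ast}Q\rangle_{\widetilde{X}/S}\simeq\langle Q\rangle_{X/S}$ from the birational invariance of intersection bundles yields $h_\ast\delta_{\widetilde{X}/S}\simeq\delta_{X/S}$, and the supplementary assertion then follows from \eqref{eq:projection-formula-preliminaries}.

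The main obstacle is not any single isomorphism but the final assertion that all these operations commute with one another. This requires the full coherence package assembled in \cite[Section 7.3]{DRR1}, where the natural isomorphisms between intersection bundles are shown to fit into compatible diagrams. Since the monoidal, pushforward, and product structures on line distributions are defined pointwise on test Chern power series, those coherence diagrams pull back verbatim to distributions, and commutativity propagates. The most delicate point, as already in the intersection bundle setting, is the interplay between the Whitney isomorphism and the commutativity constraint of the underlying Picard category, which is precisely what underlies the nontrivial fact that $[P]_{X/S}$ defines a line distribution at all.
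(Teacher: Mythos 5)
This theorem is not proved in the present paper: it is imported verbatim from \cite[Corollary 8.6]{DRR1}, the only added material here being the illustrative commutative diagram after the statement and the remark that one may manipulate intersection distributions as in classical intersection theory. Your sketch — evaluating against test Chern power series so that each item reduces to the corresponding intersection-bundle isomorphism of \cite[Proposition 7.5]{DRR1}, with the base-change/Whitney compatibilities and the full commutativity package delegated to \cite[Sections 7.3 and 8]{DRR1} (in particular the interaction of the Whitney isomorphism with the commutativity constraint underlying \cite[Lemma 8.1]{DRR1}) — is precisely the route taken in that reference, so it is consistent with the source the paper relies on.
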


For intersection bundles, we refer the reader to  \cite[Section 7.3]{DRR1}, where the commutativity of the operations is developed at length. For concreteness, we discuss the case of the Whitney and the restriction isomorphisms. Suppose that we are given an exact sequence of vector bundles as in \eqref{item:prop-int-dist-whitney} above, and a vector bundle $F$ as in \eqref{item:prop-int-dist-restriction} (in place of $E$). Then, we can write a diagram of isomorphisms of line distributions
\begin{displaymath}
\resizebox{\textwidth}{!}{
    \xymatrix{
        [\cfrak_{k}(E)\cdot\cfrak_{r}(F)]_{X/S}\ar[r]^{\eqref{eq:prod-int-dist-prel}}\ar[d]_{\eqref{eq:prod-int-dist-prel}} &[\cfrak_{k}(E)]\cdot\cfrak_{r}(F)\ar[r]^-{\text{Whitney}}    &\sum_{i=0}^{k} [\cfrak_{i}(E^{\prime})\cdot\cfrak_{k-i}(E^{\bis})]_{X/S}\cdot\cfrak_{r}(F)\ar[r]^{\eqref{eq:prod-int-dist-prel}} &\sum_{i=0}^{k} [\cfrak_{i}(E^{\prime})\cdot\cfrak_{k-i}(E^{\bis})\cdot\cfrak_{r}(F)]_{X/S}\ar[d]^{\eqref{eq:prod-int-dist-prel}}\\
        \cfrak_{k}(E)\cdot[\cfrak_{r}(F)]_{X/S}\ar[d]_-{\text{restriction}}    &   &   &\left(\sum_{i=0}^{k} \cfrak_{i}(E^{\prime})\cdot\cfrak_{k-i}(E^{\bis})\right)\cdot[\cfrak_{r}(F)]_{X/S}\ar[d]^{\text{restriction}}\\
        \cfrak_{k}(E)\cdot\delta_{Y/S}\ar[d]_{\eqref{eq:rest-dist-prel}}  &   &   &\left(\sum_{i=0}^{k} \cfrak_{i}(E^{\prime})\cdot\cfrak_{k-i}(E^{\bis})\right)\cdot\delta_{Y/S}\ar[d]^{\eqref{eq:rest-dist-prel}}\\
        [\cfrak_{k}(E|_{Y})]_{Y/S}\ar[rrr]^{\text{Whitney}}  &   &   &\sum_{i=0}^{k} [\cfrak_{i}(E^{\prime}|_{Y})\cdot\cfrak_{k-i}(E^{\bis}|_{Y})]_{Y/S}.
    }
    }
\end{displaymath}
\normalsize
In the diagram, we have named the arrows according to the effected operations. The claim of the theorem is that the diagram commutes. 

For a thorough treatment of the properties of intersection bundles and intersection distributions we refer to \cite[Section 7 \& Section 8]{DRR1}. The upshot is that, in practice, we can manipulate intersection distributions as in classical intersection theory, without worrying about the order of polynomial operations and isomorphisms derived from Theorem \ref{thm:cor86}.

\subsubsection{Multiplicativity of Chern and Borel--Serre isomorphisms}\label{subsubsec:mult-chern-Borel-Serre}
For completeness, we recall a couple of applications of the theory of intersection distributions, which play a key role in this paper. 

For a scheme $X$, we know that the categorical Chern character $\chfrak\colon V(X)\to\CHfrak(X)_{\QBbb}$ behaves additively with respect to the sum on $V(X)$. However, the structure of $\CHfrak(X)_{\QBbb}$ does not ensure that $\chfrak$ behaves multiplicatively with respect to the tensor product, as in the classical intersection theory. However, in \cite[Proposition 9.1]{DRR1} we prove that the associated line distribution does. Concretely, if $X\to S$ satisfies the condition $(C_{n})$ and $E, F$ are virtual vector bundles on $X$, or even virtual perfect complexes, then there is a canonical isomorphism
\begin{equation}\label{eq:chern-tensor-product}
    [\chfrak(E\otimes F)]_{X/S}\simeq [\chfrak(E)\cdot\chfrak(F)]_{X/S},
\end{equation}
which is functorial in $E$ and $F$ separately, meaning, e.g., that for fixed $F$, this defines an isomorphism of functors (in $E$) of commutative Picard categories. It satisfies additional naturality properties. For instance, there is also a canonical isomorphism
\begin{equation}\label{eq:chern-trivial}
    [\chfrak(\Ocal_{X})]_{X/S}\simeq 1, 
\end{equation}
and \eqref{eq:chern-tensor-product} is compatible with it whenever $E=\Ocal_{X}$ or $F=\Ocal_{X}$. We refer to \emph{op. cit.} for details. 

A variant of \eqref{eq:chern-tensor-product} concerns the first Chern categorical class: if $L$ and $M$ are line bundles on $X$, then there is a canonical isomorphism
\begin{equation}\label{eq:chern-tensor-product-line}
    [\cfrak_{1}(L\otimes M)]_{X/S}\simeq [\cfrak_{1}(L)+\cfrak_{1}(M)]_{X/S}.
\end{equation}
Actually, in the case of line bundles, the isomorphism \eqref{eq:chern-tensor-product} together with the rank triviality isomorphism from Theorem \ref{thm:cor86} \eqref{item:prop-int-dist-rank} specializes to \eqref{eq:chern-tensor-product-line}. The latter amounts to the multilinearity of Deligne pairings with respect to the tensor product.

Another example of application is the Borel--Serre isomorphism from \cite[Theorem 9.5]{DRR1}. It states that, given a vector bundle $E$ on $X$, of constant rank $r$, there is a canonical isomorphism
\begin{equation}\label{eq:Borel-Serre-prelim}
    \sum_{k}(-1)^{k}[\chfrak(\Lambda^{k}E)]_{X/S}\simeq [\cfrak_{r}(E^{\vee})\cdot\tdfrak(E^{\vee})^{-1}]_{X/S}.
\end{equation}
Among other properties, the isomorphism is compatible with exact sequences. 

The constructions of \eqref{eq:chern-tensor-product} and \eqref{eq:Borel-Serre-prelim} both rely on the splitting principles from \cite[Section 5.4]{DRR1} recalled in \textsection \ref{subsubsec:pic-cat-line-dist}, item \eqref{item:splitting-principle}, and on the properties of intersection distributions from Theorem \ref{thm:cor86}. It follows that these isomorphisms commute with the properties stated in that theorem. This was not explicitly stated in \cite{DRR1}, but we will need it and use it in the sequel.

\subsubsection{A K\"unneth-type isomorphism for intersection distributions}
We address a special case of the K\"unneth formula in the framework of line distributions, that we will later need and which is not contained in \cite{DRR1}.

Let $X\to S$ be a morphism that satisfies the condition $(C_n)$. We begin with a Cartesian diagram of morphisms of schemes
\begin{displaymath}
    \xymatrix{
        Z\ar[r]^{q^{\prime}}\ar[d]_{q}\ar[rd]^{f}   &Y\ar[d]^{\pi}\\
        Y^{\prime}\ar[r]^{\pi^{\prime}}  &X.
    }
\end{displaymath}
we assume that the morphism $\pi$ (resp. $\pi^{\prime}$) satisfies the condition $(C_{m})$ (resp. $(C_{m'})$). Given Chern power series $P$ on $Y$, and $P'$ on $Y'$, of pure degrees, we may consider the line distribution
\begin{displaymath}
    f_{\ast}[q^{\prime\ast}P \cdot q^{\ast}P^{\prime}]_{Z/S}.
\end{displaymath}   
Provided that the degrees of $P$ and $P'$ are restricted to the applicability range of the projection formulas in Theorem \ref{thm:cor86}, we can apply the latter twice, factoring $f$ as $q\pi^{\prime}$ or $q^{\prime}\pi$. We will record a lemma to the effect that both procedures yield the same result. 

\begin{lemma}\label{lemma:basechangesmalldegree}
Let the assumptions and notation be as above and as in Theorem \ref{thm:cor86} \eqref{item:cor86-1}. 
\begin{enumerate}
    \item\label{item:basechangesmalldegree-1} Suppose that $\deg P\leq m+1$, $\deg P^{\prime}\leq m'+1$ and $\deg P+\deg P^{\prime}\leq m+m'+1$. Then, the projection formula and the base-change functoriality induce two canonical isomorphisms
\begin{displaymath}
     f_{\ast}[q^{\prime\ast}P \cdot q^{\ast}P^{\prime}]_{Z/S}\simeq [\pi_{\ast}P\cdot \pi^{\prime}_{\ast} P^{\prime}]_{X/S},
\end{displaymath}
and they coincide.
    \item Suppose furthermore that $X=S$. If $\deg P>m+1$ or $\deg P^{\prime}>m^{\prime}+1$, then 
    \begin{displaymath}
     f_{\ast}[q^{\prime\ast}P \cdot q^{\ast}P^{\prime}]_{Z/S}=0.
\end{displaymath}
\end{enumerate}
\end{lemma}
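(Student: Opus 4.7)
The plan is to compute $f_\ast[q'^\ast P\cdot q^\ast P']_{Z/S}$ along each of the two factorizations $f=\pi'\circ q$ and $f=\pi\circ q'$, by iterated use of the product rule \eqref{eq:prod-int-dist-prel}, the projection formula \eqref{eq:proj-for-int-dist}, and Theorem \ref{thm:cor86}\eqref{item:cor86-1}, landing in both cases on $[\pi_\ast P\cdot \pi'_\ast P']_{X/S}$, and then to invoke the omnibus compatibility of natural isomorphisms stated at the end of Theorem \ref{thm:cor86} to conclude that the two composite isomorphisms coincide.

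For Part (1), along $f=\pi'\circ q$, the rules \eqref{eq:prod-int-dist-prel} and \eqref{eq:proj-for-int-dist} give
\[
q_\ast[q'^\ast P\cdot q^\ast P']_{Z/S}\simeq P'\cdot q_\ast[q'^\ast P]_{Z/S}.
\]
Since $q$ is a base change of $\pi$ and hence satisfies $(C_m)$, and $\deg P\leq m+1$, Theorem \ref{thm:cor86}\eqref{item:cor86-1} applies: a base-change identification of Deligne pairings (when $\deg P=m+1$) or of fiber degrees (when $\deg P=m$) yields $q_\ast(q'^\ast P)\simeq \pi'^\ast\pi_\ast P$, and therefore $q_\ast[q'^\ast P]_{Z/S}\simeq [\pi'^\ast\pi_\ast P]_{Y'/S}$. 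A further round of the same three tools, pushing along $\pi'$ with $\deg P'\leq m'+1$ and $\deg P+\deg P'\leq m+m'+1$, lands on $[\pi_\ast P\cdot \pi'_\ast P']_{X/S}$. The symmetric computation via $f=\pi\circ q'$ yields the same target. For Part (2), by symmetry assume $\deg P>m+1$. If furthermore $\deg P+\deg P'>m+m'+1$, then for every test series $Q$ the pairing $\langle q'^\ast P\cdot q^\ast P'\cdot f^\ast Q\rangle_{Z/S}$ picks up the degree-$(m+m'+1)$ part of an expression of pure higher degree, hence is canonically trivial. Otherwise $\deg P'\leq m'-1$, and running the computation of Part (1) along $f=\pi\circ q'$ reduces the distribution to $\pi_\ast\bigl(P\cdot [q'_\ast(q^\ast P')]_{Y/S}\bigr)$; since $\deg q^\ast P'=\deg P'<m'$, the low-degree clause of Theorem \ref{thm:cor86}\eqref{item:cor86-1} gives $q'_\ast(q^\ast P')=0$, so the distribution is canonically trivial already at this stage.

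The main obstacle is the coincidence of the two composite isomorphisms in Part (1). Each is built from a sequence of elementary operations -- products with Chern classes, projection formulas, and base-change identifications of Deligne pairings and fiber degrees -- and the coincidence amounts to saying that exchanging the order in which the two directions of integration are carried out commutes with all of these. This reduces to a diagram chase of exactly the sort illustrated after Theorem \ref{thm:cor86}, whose commutativity is guaranteed by the general compatibility asserted there; no new ingredient is needed beyond careful bookkeeping.
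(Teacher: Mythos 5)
Your construction of the two candidate isomorphisms in Part (1) (push along $q$ then $\pi'$, respectively along $q'$ then $\pi$, using the product rule, the projection formula \eqref{eq:proj-for-int-dist}, the projection formula of Theorem \ref{thm:cor86} \eqref{item:cor86-1}, and the base-change identification of the intermediate intersection bundle) is sound and parallels the paper. Part (2) is also essentially the paper's argument: reduce to total degree $m+m'+1$, note $\deg P'\leq m'-1$, and kill the distribution with the low-degree clause of Theorem \ref{thm:cor86} \eqref{item:cor86-1}.

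The genuine gap is the final step of Part (1), which is the actual content of the lemma. You assert that the coincidence of the two composite isomorphisms ``is guaranteed by the general compatibility asserted'' at the end of Theorem \ref{thm:cor86}, with ``no new ingredient needed beyond careful bookkeeping.'' But the commutativity statement in Theorem \ref{thm:cor86} concerns compositions of the operations listed there (projection formulas along a single morphism over $S$, Whitney, rank triviality, restriction, birational invariance) applied in different orders to the same data; it does not assert any compatibility between the two pushforward routes through the \emph{different} intermediate schemes $Y$ and $Y'$ of the Cartesian square, nor does it cover the base-change identification $\langle q'^{\ast}P\rangle_{Z/Y'}\simeq \pi'^{\ast}\langle P\rangle_{Y/X}$ (or of fiber degrees) that each of your routes uses. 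This K\"unneth-type compatibility is precisely what the paper flags as \emph{not} contained in \cite{DRR1}, and its proof there is not a formal diagram chase: using the degree hypothesis $\deg P+\deg P'\leq m+m'+1$ to assume (up to symmetry) $\deg P'\leq m'$, one reduces via the construction of intersection bundles to Deligne pairings of line bundles, then works \'etale-locally over $S$ to assume the line bundles relatively very ample, introduces generating symbols attached to sections with divisors in general position to reduce to relative degree $0$, and finally compares explicitly the two chains of norm-functoriality isomorphisms $N_{Z/X}\simeq N_{Y'/X}N_{Z/Y'}$ versus $N_{Z/X}\simeq N_{Y/X}N_{Z/Y}$, checking on symbols that they agree. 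Without an argument of this kind (or some other justification of the cross-square compatibility), your proof of the coincidence is incomplete; relatedly, you never explain where the combined degree bound $\deg P+\deg P'\leq m+m'+1$ enters, whereas in the paper it is exactly what permits the reduction to the case $\deg P'\leq m'$ on which the symbol computation rests.
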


\begin{proof}
For the first item, it suffices to treat the cases $\deg P\leq m+1$ and $\deg P'\leq m'$. For simplicity, we even suppose that these are equalities. By the very construction of intersection bundles \cite[Section 7.1]{DRR1}, the lemma reduces to an analogous statement for Deligne products of line bundles. In this case, consider line bundles $L_{0},\ldots, L_{m}$ on $Y$, $L_{1}^{\prime},\ldots, L_{m^{\prime}}^{\prime}$ on $Y^{\prime}$, and $M_{1},\ldots, M_{n}$ on $X$. Setting  $d=\int_{Y'/X}c_{1}(L^{\prime}_1)\cdots c_{1}(L^{\prime}_{m'})$, combining the symmetry of Deligne pairings, the projection formula \cite[Proposition 6.10]{DRR1} for $q^{\prime}$ and $\pi$ and the base-change functoriality of Deligne pairings, we obtain a chain of canonical isomorphisms, where 
\begin{displaymath}
\begin{split}
    \langle q^{\prime\ast}L_{0},\ldots, q^{\prime\ast} L_{m},q^{\ast}L_{1}^{\prime},\ldots, q^{\ast} L_{m'}^{\prime}, &f^{\ast}M_{1},\ldots, f^{\ast} M_{n}\rangle_{Z/S} \\
    &\simeq \langle L_{0},\ldots, L_{m}, \pi^{\ast}M_{1},\ldots, \pi^{\ast} M_{n}\rangle_{Y/S}^{d}\\
    &\hspace{0.5cm}\simeq\langle \langle L_{0},\ldots, L_{m}\rangle_{Y/X} , M_{1},\ldots, M_{n}\rangle_{X/S}^{d}.
\end{split}
\end{displaymath}
Similarly, we obtain another isomorphism by applying the projection formula for $q$ and $\pi^{\prime}$ instead. To verify that both isomorphisms coincide, one proceeds analogously to the proof of \cite[Corollary 6.12]{DRR1}. That is, since the argument is \'etale local over $S$, and Deligne pairings are multilinear with respect to the tensor product, one can suppose that all the involved line bundles are relatively very ample over $S$. One can then check the claim by introducing symbols that generate the Deligne pairings, associated with sections of the line bundles whose divisors are in general position, thanks to \cite[Section 6.1.3]{DRR1}. This allows us to reduce to the case where all the maps are of relative degree 0. We then have to compare the following canonical isomorphisms involving norms of line bundles:

\begin{displaymath}
     N_{Z/X}(q^{\prime\ast} L_{0}) \simeq N_{{Y'}/X }N_{Z/Y'}(q^{\prime\ast}L_{0}) = N_{Y'/X}( \pi^{\prime \ast } N_{Y/X}(L_0)) \simeq N_{Y/X} (L_0)^d
\end{displaymath}
and
\begin{displaymath}
     N_{Z/X}(q^{\prime\ast} L_{0}) \simeq N_{Y/X }N_{Z/Y}(q^{\prime\ast}L_{0}) \simeq N_{Y/X}  (L_0^d)\simeq N_{Y/X}(L_{0})^{d}.
\end{displaymath}
One verifies locally, with symbols, that these two isomorphisms coincide.

For the second item, since $X=S$, taking $f_{\ast}$ amounts to considering the Deligne pairing for $Z\to S$. Then, if $\deg P+\deg P^{\prime}\neq m+m'+1$, the result is trivial for degree reasons. We may thus assume that we have an equality. Assume, without loss of generality, that $\deg P>m+1$. Then, $\deg P^{\prime}\leq m'-1$ and by the last case in the projection formula from Theorem \ref{thm:cor86} \eqref{item:cor86-1}, we find
\begin{displaymath}
    \langle q^{\prime\ast}P\cdot q^{\ast}P\rangle_{Z/S}=(q^{\prime}_{\ast}[q^{\ast}P^{\prime}]_{Z/S})(P)\simeq\Ocal_{S},
\end{displaymath}
since the relative dimension of $q^{\prime}$ is $m^{\prime}$ and $q^{\ast}P^{\prime}$ has degree strictly smaller than $m^{\prime}$.
\end{proof}

By the lemma, if $X=S$, then we may define $\pi_{\ast}P\cdot \pi_{\ast}^{\prime}P^{\prime}$ to be 0 if $\deg P>m+1$ or $\deg P^{\prime}>m^{\prime}+1$, and in this case we have a general K\"unneth formula of the form
\begin{equation}\label{eq:general-Kunneth-S}
    f_{\ast}[q^{\prime\ast}P \cdot q^{\ast}P^{\prime}]_{Z/S}\simeq \pi_{\ast}P\cdot \pi^{\prime}_{\ast} P^{\prime},
\end{equation}
where the right-hand side is now to be interpreted as a line bundle on $S$, possibly trivial. 

\subsection{Functoriality and base-change compatibility}\label{subsubsec:base-change-conventions}
In this article, we will systematically deal with functors and isomorphisms which exhibit some base-change compatibility. This is best formulated in the language of categories fibered in groupoids. In order to simplify related discussions, we now fix some conventions on this topic. For concreteness, below we argue with arbitrary base schemes, but analogous results can be formulated with quasi-compact or Noetherian base schemes. 

\subsubsection{Functors and isomorphisms compatible with base change}\label{subsubsec:prelim-on-base-change}
Let $X\to S$ and $Y\to S$ satisfy the conditions $(C_{n})$ and $(C_{m})$, respectively. Associating, to a morphism $S^{\prime}\to S$, the categories $V(\Pcal_{X^{\prime}})$, $V(X^{\prime})$, $\Dcal(Y^{\prime}/S^{\prime})$ and $\Dcal_{\ZBbb}(Y^{\prime}/S^{\prime})$ for $X^{\prime}=X\times_{S}S^{\prime}$ and $Y^{\prime}=Y\times_{S}S^{\prime}$, together with pullback functoriality, naturally defines categories fibered in groupoids over $(\mathrm{Sch}/S)$, cf. \textsection\ref{subsubsec:functorial-properties-virtual-prelim} and \textsection\ref{subsubsec:line-dist-prel}. Here and below we implicitly rely on the natural equivalence between the notion of category fibered in groupoids over $(\mathrm{Sch}/S)$ and pseudo-functor from $(\mathrm{Sch}/S)$ to the 2-category of groupoids, cf. \cite[Expos\'e VI, Sections 7\& 8]{SGA1} and \cite[\href{https://stacks.math.columbia.edu/tag/0048}{0048}, \href{https://stacks.math.columbia.edu/tag/0GWH}{0GWH}]{stacks-project}. By abuse of notation, we will simply denote these fibered categories by $V(\Pcal_{X})$, $V(X)$, $\Dcal(Y/S)$ and $\Dcal_{\ZBbb}(Y/S)$, but we will specify these are considered as being fibered over $(\mathrm{Sch}/S)$. 

\begin{definition}[Functor compatible with base change]\label{def:functor-compat-base-change}
A functor of commutative Picard categories $F\colon V(\Pcal_{X})\to\Dcal(Y/S)$ (resp. $F\colon V(X)\to\Dcal(Y/S)$) compatible with base change, consists in giving a functor of commutative Picard categories fibered over $(\mathrm{Sch}/S)$. Equivalently:
\begin{enumerate}
    \item\label{item:functor-compat-base-change-1} For every morphism of schemes $q\colon S^{\prime}\to S$, we are given a functor $F_{S^{\prime}}\colon V(\Pcal_{X^{\prime}})\to\Dcal(Y^{\prime}/S^{\prime})$ of commutative Picard categories.
    \item\label{item:functor-compat-base-change-2} For every $q^{\prime}\colon S^{\bis}\to S^{\prime}$, we are given an isomorphism of functors of commutative Picard categories $\mu_{q^{\prime}}\colon q^{\prime\ast}\circ F_{S^{\prime}}\simeq F_{S^{\bis}}\circ q^{\prime\ast}$, compatible with further base change $q^{\bis}\colon S^{\tris}\to S^{\bis}$. Thus, via the natural identification $(q^{\prime}\circ q^{\bis})^{\ast}\simeq q^{\bis\ast}\circ q^{\prime\ast}$, the composition $(\mu_{q^{\bis}}\circ q^{\prime\ast})\circ (q^{\bis\ast}\circ\mu_{q^{\prime}})$ corresponds to $\mu_{q^{\prime}\circ q^{\bis}}$.
\end{enumerate}
Furthermore, we say that $F$ has bounded denominators if there exists an integer $N\geq 1$ such that $F^{N}$ induces a functor of commutative Picard categories $F\colon V(\Pcal_{X})\to\Dcal_{\ZBbb}(Y/S)$ (resp. $F\colon V(X)\to\Dcal_{\ZBbb}(Y/S)$) compatible with base change in the sense above. 
\end{definition}

We note that a functor $F\colon V(\Pcal_{X})\to \Dcal(Y/S)$ compatible with base change trivially induces a functor $F^{\prime}\colon V(\Pcal_{X^{\prime}})\to \Dcal(Y^{\prime}/S^{\prime})$ compatible with base change, for every morphism $q\colon S^{\prime}\to S$, and similarly for $V(X)$. We call this the functor deduced from $F$ by base change, and we shall tacitly denote it by $q^{\ast}F$. Prominent examples of functors compatible with base change are the Riemann--Roch distributions introduced in \cite[Section 9.4]{DRR1}, and recalled below in \textsection\ref{subsec:RR-distributions}. For the sake of clarity, we also note that the notion of a functor compatible with base change is a particular case of the usual notion of a Cartesian functor between categories fibered in groupoids.

\begin{definition}[Isomorphism of functors compatible with base change]\label{def:isom-functor-compat-base-change}
Let $F,G\colon V(\Pcal_{X})\to \Dcal(Y/S)$ (resp. $F,G\colon V(X)\to \Dcal(Y/S)$) be two functors of commutative Picard categories compatible with base change. An isomorphism of functors $\mu\colon F\to G$ compatible with base change is an isomorphism of functors of commutative Picard categories fibered over $(\mathrm{Sch}/S)$. Equivalently:
\begin{enumerate}
    \item For every morphism of schemes $q\colon S^{\prime}\to S$, we are given an isomorphism of functors of commutative Picard categories $\nu_{S^{\prime}}\colon F_{S^{\prime}}\to G_{S^{\prime}}$.
    \item For every $q^{\prime}\colon S^{\bis}\to S^{\prime}$, $q^{\prime\ast}\circ \nu_{S^{\prime}}$ corresponds to $\nu_{S^{\bis}}\circ q^{\prime\ast}$ via the base-change identifications $ q^{\prime\ast}\circ F_{S^{\prime}}\simeq F_{S^{\bis}}\circ q^{\prime\ast}$ and $q^{\prime\ast}\circ G_{S^{\prime}}\simeq G_{S^{\bis}}\circ q^{\prime\ast}$ as in Definition \ref{def:functor-compat-base-change} \eqref{item:functor-compat-base-change-2}.
\end{enumerate}
If $F$ and $G$ have bounded denominators, we say that $\nu$ has bounded denominators if there exists an integer $N\geq 1$ such that $\nu^{N}\colon F^{N}\to G^{N}$ is an isomorphism of functors of commutative Picard categories compatible with base change in the sense above.
\end{definition}
We note that an isomorphism of functors $\mu\colon F\to G$ compatible with base change trivially induces an isomorphism compatible with base change between the functors deduced from $F$ and $G$ by any base change. The central examples of isomorphisms of functors compatible with base change are the Deligne--Riemann--Roch isomorphisms introduced in \textsection\ref{subsec:DRR-iso-expected-functor} below, and whose existence is the goal of this work. 

\subsubsection{Gluing functors and isomorphisms compatible with base change}\label{subsubsec:gluing-functors}
The functors of commutative Picard categories, compatible with base change and with bounded denominators, together with the isomorphisms of such, define a category $\Fcal(V(\Pcal_{X}),\Dcal(Y/S))$, which is naturally fibered in groupoids over $(\mathrm{Sch}/S)$. There is a variant of this construction, where we restrict to affine base schemes. Namely, in Definition \ref{def:functor-compat-base-change} and Definition \ref{def:isom-functor-compat-base-change}, the schemes $S^{\prime}$, $S^{\bis}$, $S^{\tris}$ are all required to be affine. We obtain a corresponding category of functors $\Fcal_{\mathrm{Aff}}(V(\Pcal_{X}),\Dcal(Y/S))$. We can also form the analogous categories for $V(X)$ in place of $V(\Pcal_{X})$. We then have a natural commutative diagram of restriction functors
\begin{equation}\label{eq:diag-restr-functors}
    \xymatrix{
        \Fcal(V(\Pcal_{X}),\Dcal(Y/S))\ar[r]\ar[d] &\Fcal(V(X),\Dcal(Y/S))\ar[d]\\
        \Fcal_{\mathrm{Aff}}(V(\Pcal_{X}),\Dcal(Y/S))\ar[r] &\Fcal_{\mathrm{Aff}}(V(X),\Dcal(Y/S)).
    }
\end{equation}
In this diagram, the horizontal arrows are induced by the natural functor $V(X)\to V(\Pcal_{X})$. The vertical arrows are induced by the inclusion of the category of affine $S$-schemes into the category of $S$-schemes. 

The following proposition states that to construct functors and isomorphisms compatible with base change, we may assume that all the base schemes are divisorial, even affine, and we can work with virtual categories of vector bundles instead of perfect complexes. 

\begin{proposition}\label{prop:base-reduction}
With the assumptions and notation as above, the functors in \eqref{eq:diag-restr-functors} define equivalences of categories. 
\end{proposition}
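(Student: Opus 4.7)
The plan is to verify that each of the three restriction functors (the two vertical ones and the bottom horizontal one) in the diagram \eqref{eq:diag-restr-functors} is an equivalence; the top horizontal arrow then follows by the two-out-of-three property. The two localization principles I would combine are: (i) line distributions and their isomorphisms satisfy a sheaf-like descent property, allowing gluing from an affine cover of the base; and (ii) the natural functor $V(X)\to V(\Pcal_{X})$ is an equivalence when $X$ is divisorial, a condition that can always be arranged locally on the base after further affine localization.

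For the vertical arrows, full faithfulness is essentially immediate from \cite[Proposition 5.31, Remark 5.32]{DRR1}, which states that line distributions and their isomorphisms are determined by their restrictions to affine base schemes. For essential surjectivity, given $F_{\mathrm{Aff}}\in\Fcal_{\mathrm{Aff}}(V(\Pcal_{X}),\Dcal(Y/S))$, I would define an extension $F$ as follows: for $q\colon S^{\prime}\to S$ arbitrary and $E\in V(\Pcal_{X^{\prime}})$, pick an affine cover $\{U_{i}\to S^{\prime}\}$ and let $F_{S^{\prime}}(E)$ be the unique line distribution on $Y^{\prime}$ whose restriction to each $U_{i}$ is $F_{U_{i}}(E|_{X_{U_{i}}})$. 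The base-change data in $F_{\mathrm{Aff}}$ force these restrictions to agree on the affine refinements of the overlaps, so the gluing property produces a well-defined line distribution, and the cocycle conditions in Definition \ref{def:functor-compat-base-change} follow from their affine-local counterparts.

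For the bottom horizontal functor, the key geometric point is that $X\to S$ satisfies condition $(C_{n})$ and hence is locally projective, so every affine base $S^{\prime}$ admits an affine refinement $\{U_{i}\}$ over which $X_{U_{i}}\to U_{i}$ is actually projective. Each $X_{U_{i}}$ is then quasi-projective over an affine scheme, hence divisorial, which makes $V(X_{U_{i}})\to V(\Pcal_{X_{U_{i}}})$ an equivalence of commutative Picard categories (see \textsection\ref{intro:virtualcategories}). Given $F$ on $V(X)$ compatible with affine base change, I would extend $F|_{U_{i}}$ uniquely to a functor on $V(\Pcal_{X_{U_{i}}})$ via this equivalence, and reassemble these into a functor on $V(\Pcal_{X_{S^{\prime}}})$ using the descent of line distributions and the base-change compatibility of $F$. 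Uniqueness of the extension on divisorial pieces ensures the gluings agree on further refinements.

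The main obstacle I anticipate is the bookkeeping of the cocycle conditions and the preservation of bounded denominators under the gluing procedure. The denominator issue should be controllable by choosing the integer $N\geq 1$ uniformly from the affine-local data, since the condition of being an entire line distribution is local on $S$. The cocycle axioms should follow by uniqueness: two extensions of a fibered functor that agree on affine bases necessarily coincide, so the gluing cannot introduce spurious noncommutativity at the fibered level. In practice this means one need only check compatibility along morphisms between affine schemes, where it is guaranteed by hypothesis on $F_{\mathrm{Aff}}$.
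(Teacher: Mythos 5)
Your proposal is correct and follows essentially the same route as the paper: the vertical arrows are handled by the gluing/descent method of \cite[Proposition 5.31, Remark 5.32]{DRR1} (line distributions and their isomorphisms are determined over affine bases), and the lower horizontal arrow by using divisoriality to make $V(X^{\prime})\to V(\Pcal_{X^{\prime}})$ an equivalence and choosing inverses compatibly with affine base change. The only (inessential) deviation is that the paper asserts divisoriality of $X^{\prime}$ directly for affine $S^{\prime}$ and invokes \cite[\href{https://stacks.math.columbia.edu/tag/003Z}{003Z}]{stacks-project} to pick a compatible system of inverse functors, whereas you insert a further affine refinement making $X_{U_{i}}\to U_{i}$ projective and then re-glue, which costs an extra gluing step but is equally valid.
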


\begin{proof}
This is a variant of \cite[Proposition 5.31]{DRR1}, to the effect that line distributions are determined over affine base schemes. Proceeding by the gluing method as in \emph{loc. cit.}, one shows that the vertical functors in \eqref{eq:diag-restr-functors} induce equivalences of categories. 

To conclude, it is enough to show that the lower horizontal arrow induces an equivalence of categories. For this, we first note that if $S^{\prime}$ is affine, then it is a divisorial scheme, and hence so is $X^{\prime}=X\times_{S}S^{\prime}$, because $X\to S$ satisfies the condition $(C_{n})$, cf. \textsection \ref{sec:notations-conventions} above. Therefore, the functor $V(X^{\prime})\to V(\Pcal_{X^{\prime}})$ defines an equivalence of categories. We can construct an inverse $G_{S^{\prime}}\colon V(\Pcal_{X^{\prime}})\to V(X^{\prime})$, in a way compatible with base change by affine schemes $S^{\bis}\to S^{\prime}$, see \cite[\href{https://stacks.math.columbia.edu/tag/003Z}{003Z}]{stacks-project}. Composing with this inverse, we obtain an inverse functor of the lower horizontal arrow in \eqref{eq:diag-restr-functors}.
\end{proof}

\subsubsection{Some simplifications} In the sequel, instances of the previous formalism will be derived from the theory of intersection distributions. In this setting, the compatibility with base change will follow from the implicit base-change compatibility of intersection distributions. This in turn relies on the base-change compatibility of intersection bundles. To lighten the presentation, we will specify our constructions on a fixed base scheme $S$, and usually omit details on the base changes.  

\section{Deligne--Riemann--Roch isomorphisms}\label{section:DRR-isos}
In this section, all the $S$-schemes are assumed to satisfy the condition $(C_{n})$, for some $n$, except possibly for base changes $S^{\prime}\to S$. The goal of this section is to formulate the various properties one might expect from a functorial Riemann--Roch theorem. These are encapsulated in the notion of Deligne--Riemann--Roch isomorphism, abbreviated as DRR isomorphism. In this section, we follow the framework and terminology from \textsection\ref{subsubsec:base-change-conventions} regarding base-change compatibility.

\subsection{The Riemann--Roch distributions}\label{subsec:RR-distributions}
Let $f\colon X\to Y$ be a local complete intersection morphism of $S$-schemes, and $E$ a virtual perfect complex on $X$. In \cite[Section 9]{DRR1} we constructed line distributions on $Y$
\begin{equation}\label{eq:LHS-RR}
    [\chfrak(f_{!}E)]_{Y/S}
\end{equation}
and
\begin{equation}\label{eq:RHS-RR}
 f_{\ast}[\chfrak(E)\cdot\tdfrak(T_{f})]_{X/S},
\end{equation}
where $T_{f}$ stands for the tangent complex of $f$.\footnote{In loc. cit., we equivalently deal with the cotangent complex, which is dual to the tangent complex.} We call them the Riemann--Roch distributions.\footnote{In \cite[Section 9]{DRR1}, we reserved this terminology to the right-hand side distribution \eqref{eq:RHS-RR}.} We distinguish them simply as "left-hand side" or "right-hand side", according to their place in the statement of the expected DRR isomorphism. In this subsection, we briefly recall how the construction proceeds. 

By Proposition \ref{prop:base-reduction}, we can restrict to the category of divisorial schemes and suppose that $f$ is projective. In this case, we can also suppose that $E$ and $f_{!}E$ are virtual vector bundles. The Riemann--Roch distributions are then defined as follows:
\begin{enumerate}
 \item For \eqref{eq:LHS-RR}, since $f_{!}E$ is now a virtual vector bundle, the objects $\chfrak(f_{!}E)$ and $[\chfrak(f_{!}E)]_{Y/S}$ are defined. This construction is compatible with base change, and we can thus work with the convention in \cite[Section 9.4]{DRR1}, to the effect that for a morphism $q: S^{\prime} \to S$, with the base change of $X\to S$ along $f$ denoted by  ${f}^\prime: X^{\prime} \to S^{\prime}$, we define
 \begin{displaymath}
    q^\ast [\chfrak(f_! E)]_{Y/S}=[\chfrak({f}^\prime_!p^{\ast}E)]_{Y^{\prime}/S^{\prime}},
 \end{displaymath}
  where $p\colon X^{\prime}\to X$ is the projection map. Here we implicitly employ the base-change isomorphism $q^\ast f_! \simeq {f}^\prime_! p^\ast $ from \eqref{eq:virtualbasechange}, which holds by Tor-independence, due to the flatness assumption on $X\to S$ and $Y\to S$.

 \item For \eqref{eq:RHS-RR}, we consider a projective factorization $X\to\PBbb(\Ecal)\to Y$, for some vector bundle $\Ecal$ on $Y$ of constant rank. One can then realize the cotangent complex, and hence also  $T_{f}$, as a 2-term complex of vector bundles. For this complex, $\tdfrak(T_{f})$, and hence $f_{\ast}[\chfrak(E)\cdot\tdfrak(T_{f})]_{X/S}$, are defined. Two different global factorizations can be coherently compared. The construction commutes with base change over $S$, and we can thus work with the convention that for a base change along $q\colon S^{\prime}\to S$, we define
 \begin{displaymath}
    q^{\ast}f_{\ast}[\chfrak(E)\cdot\tdfrak(T_{f})]_{X/S}=f_{\ast}^{\prime}[\chfrak(p^{\ast}E)\cdot\tdfrak(T_{f^{\prime}})]_{X^{\prime}/S^{\prime}},
 \end{displaymath}
 with the notation as in the previous point. Note that this construction actually produces a line distribution
 \begin{equation}\label{eq:pre-RHS-RR}
    [\chfrak(E)\cdot\tdfrak(T_{f})]_{X/S},
 \end{equation}
from which the right-hand side Riemann--Roch distribution is obtained by taking $f_{\ast}$.
\end{enumerate}

The Riemann--Roch distributions above define functors of commutative Picard categories $V(\Pcal_{X})\to\Dcal(Y/S)$ compatible with base change over $S$. They have bounded denominators, which depend on the denominators of the power series defining $\chfrak$ and $\tdfrak$ up to a finite order, bounded by the relative dimensions of $X\to S$ or $Y\to S$ plus one. If $f$ is the identity map, we note that the Riemann--Roch distributions amount simply to $E\mapsto [\chfrak(E)]_{X/S}$. 

The Riemann--Roch distributions are compatible with the projection formula, in the following sense:
\begin{proposition}[Projection formulas for the Riemann--Roch distributions]\label{prop:proj-for-RR-dist}
Let the assumptions and notation be as above. Let $F$ be a virtual vector bundle on $Y$. Then:

\begin{enumerate}
    \item\label{item:proj-for-RR-dist-1} There is a canonical isomorphism
    \begin{displaymath}
        [\chfrak(f_{!}(E\otimes f^{\ast}F))]_{Y/S}\simeq \chfrak(F)\cdot [\chfrak(f_{!}E)]_{Y/S}
    \end{displaymath}
    of line distributions on $Y$. 
    \item\label{item:proj-for-RR-dist-2} There is a canonical isomorphism
    \begin{displaymath}
        f_{\ast}[\chfrak(E\otimes f^{\ast}F)\cdot\tdfrak(T_{f})]_{X/S}\simeq \chfrak(F)\cdot f_{\ast}[\chfrak(E)\cdot\tdfrak(T_{f})]_{X/S}
    \end{displaymath}
    of line distributions. 
\end{enumerate}
Moreover, these induce isomorphisms of functors in $E$ (resp. $F$) of commutative Picard categories $V(\Pcal_{X})\to\Dcal(Y/S)$ (resp. $V(\Pcal_{Y})\to\Dcal(Y/S)$), compatible with base change and with bounded denominators, and they are compatible with the composition of morphisms.
\end{proposition}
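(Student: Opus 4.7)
The plan is to reduce via Proposition \ref{prop:base-reduction} to the case of divisorial base schemes and projective morphisms, where both Riemann--Roch distributions are constructed directly from virtual vector bundles with a fixed projective factorization. Then each isomorphism is assembled from three ingredients already established in the preliminaries: the projection formula in the virtual category \eqref{eq:proj-formula-virtual-cat}, the multiplicativity of the Chern character line distribution \eqref{eq:chern-tensor-product}, and the product and projection rules for line distributions, \eqref{eq:prod-int-dist-prel} and \eqref{eq:proj-for-int-dist}.

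For item \eqref{item:proj-for-RR-dist-1}, I would first apply the derived projection formula $f_{!}(E\otimes f^{\ast}F)\simeq f_{!}E\otimes F$ and then the categorical Chern multiplicativity to write
\begin{displaymath}
    [\chfrak(f_{!}(E\otimes f^{\ast}F))]_{Y/S}\simeq [\chfrak(f_{!}E\otimes F)]_{Y/S}\simeq [\chfrak(f_{!}E)\cdot\chfrak(F)]_{Y/S},
\end{displaymath}
and finally invoke the product rule \eqref{eq:prod-int-dist-prel} to identify this with $\chfrak(F)\cdot [\chfrak(f_{!}E)]_{Y/S}$. For item \eqref{item:proj-for-RR-dist-2}, I would apply the Chern multiplicativity on $X$ to split off the $f^{\ast}F$ factor, using the pullback compatibility $\chfrak(f^{\ast}F)\simeq f^{\ast}\chfrak(F)$ built into the Chern category, so that
\begin{displaymath}
    [\chfrak(E\otimes f^{\ast}F)\cdot\tdfrak(T_{f})]_{X/S}\simeq f^{\ast}\chfrak(F)\cdot [\chfrak(E)\cdot\tdfrak(T_{f})]_{X/S}.
\end{displaymath}
Pushing forward by $f_{\ast}$ and applying the projection formula \eqref{eq:proj-for-int-dist} for intersection distributions then yields the claim.

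Functoriality of each side in $E$ (resp. $F$) and additivity on short exact sequences are inherited from the corresponding properties of $f_{!}$, of $\chfrak$, and of the Chern-multiplicativity isomorphism \eqref{eq:chern-tensor-product}, each of which is natural in its arguments. Base-change compatibility descends from the base-change compatibility of each ingredient, within the fibered-category framework of \textsection\ref{subsubsec:base-change-conventions}; Proposition \ref{prop:base-reduction} ensures this suffices. The bounded-denominators condition is inherited from the Chern character distribution, truncated at the relative dimensions involved.

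The main obstacle will be the compatibility with composition. For a chain $X\xrightarrow{f}Y\xrightarrow{g}Z$ of lci morphisms, the left-hand-side compatibility is essentially formal, reducing to transitivity of virtual pushforward \eqref{eq:fun-composition-virtual} and the associativity of the Chern-multiplicativity isomorphism. The right-hand side is more delicate: one must invoke the transitivity triangle $f^{\ast}T_{g}\to T_{g\circ f}\to T_{f}$, use the multiplicativity of the Todd class on this triangle (packaged within the Whitney-type isomorphisms for intersection distributions), and combine this with the iterated projection formula for $f$ and $g$. The essential point is that all these isomorphisms commute coherently, which is guaranteed by Theorem \ref{thm:cor86} together with the fact, recalled in \textsection\ref{subsubsec:mult-chern-Borel-Serre}, that the Chern-multiplicativity and Borel--Serre isomorphisms are themselves compatible with the operations of that theorem. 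The resulting diagrammatic check can be organized by first proving compatibility in the universal case via the splitting principle of \textsection\ref{subsubsec:pic-cat-line-dist}\eqref{item:splitting-principle}, and then transporting it to the general setting.
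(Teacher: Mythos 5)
Your proposal is correct and follows essentially the same route as the paper: reduction to divisorial schemes via Proposition \ref{prop:base-reduction}, then the projection formula at the virtual-category level combined with the Chern multiplicativity \eqref{eq:chern-tensor-product} for item \eqref{item:proj-for-RR-dist-1}, the projection formula \eqref{eq:proj-for-int-dist} for item \eqref{item:proj-for-RR-dist-2}, and, for the compatibility with composition, pseudofunctoriality of $f_{!}$, $f^{\ast}$ together with the Whitney-type isomorphism $[\tdfrak(T_{gf})]\simeq[\tdfrak(T_{f})\cdot f^{\ast}\tdfrak(T_{g})]$. The only difference is cosmetic: the paper dispatches the coherence of these isomorphisms by citing the relevant compatibilities directly, without the splitting-principle detour you sketch at the end.
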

\begin{proof}
By Proposition \ref{prop:base-reduction}, it is enough to construct the isomorphisms in the category of divisorial schemes. In this case, the first isomorphism is obtained by composing: (1) the projection formula at the virtual category level and the commutativity of the tensor product, namely the functorial (in $E$ and $F$) isomorphism
\begin{displaymath}
    f_{!}(E\otimes f^{\ast}F)\simeq (f_{!}E)\otimes F\simeq F\otimes f_{!}E;
\end{displaymath}
and (2) the isomorphism from \cite[Proposition 9.1]{DRR1}, recalled above in \eqref{eq:chern-tensor-product}, which gives
\begin{displaymath}
    [\chfrak(F\otimes G)]_{Y/S}\simeq [\chfrak(F)\cdot \chfrak(G)]_{Y/S},
\end{displaymath}
functorial in $F$ and $G$. 

The second isomorphism of the statement is obtained in a similar fashion, this time applying the projection formula for line distributions \eqref{eq:proj-for-int-dist}. 

For the last assertion, the functoriality and base-change property are clear from the construction, and the fact that $f_{!}$ and $f^{\ast}$ commute with any base change $S^{\prime}\to S$. For the compatibility with the composition of morphisms in \eqref{item:proj-for-RR-dist-1}, this follows from the analogous property for the pseudofunctors $f\mapsto f_{!}$ and $f\mapsto f^{\ast}$. In the case \eqref{item:proj-for-RR-dist-2}, this follows from the projection formula for intersection distributions \eqref{eq:proj-for-int-dist} and \cite[Lemma 9.2]{DRR1}, which is a Whitney-type isomorphism of the form $[\tdfrak(T_{gf})]\simeq [\tdfrak(T_{f})\cdot f^{\ast}\tdfrak(T_{g})]$.

\end{proof}
We note that, in the statement above, the restriction that $F$ be a virtual vector bundle on $Y$, rather than a virtual perfect complex, is purely a matter of notation. Indeed, the Chern power series $\chfrak(F)$ is only defined for an object $F$ of $V(Y)$, hence the notation $\chfrak(F)\cdot[\chfrak(f_{!}E)]_{Y/S}$ makes sense only in this case. However, the line distribution $[\chfrak(F)\cdot\chfrak(f_{!}E)]_{Y/S}$ is defined for any object $F$ of $V(\Pcal_{Y})$. A similar observation is valid for the right-hand side Riemann--Roch distribution. 

The Riemann--Roch distributions also depend on the morphism $f$ in a functorial manner. Concretely, we consider an isomorphism $f^{\prime}\simeq f$ between morphisms, meaning a diagram of the form
\begin{equation}\label{eq:isomor-of-mor}
    \xymatrix{
        X^{\prime}\ar[r]^{f^{\prime}}\ar[d]_{\varphi}^{\sim}  &Y^{\prime}\ar[d]^{\psi}_{\sim}\\
        X\ar[r]^{f} &Y,
    }
\end{equation}
where the vertical arrows are isomorphisms of $S$-schemes. Then, the Riemann--Roch distributions for $f$ and $f^{\prime}$ compare according to the following rules.
\begin{proposition}\label{prop:isomor-of-mor}
Let the setting be as in \eqref{eq:isomor-of-mor}. Then:
\begin{enumerate}
    \item There are natural isomorphisms
    \begin{equation}\label{eq:isomor-of-mor-RR-left}
    \psi_{\ast}[\chfrak(f_{!}^{\prime}\varphi^{\ast} E)]_{Y^{\prime}/S}\to [\chfrak(f_{!}E)]_{Y/S}
    \end{equation}
and
\begin{equation}\label{eq:isomor-of-mor-RR-right}
    \psi_{\ast} f_{\ast}^{\prime}[\chfrak(\varphi^{\ast}E)\cdot\tdfrak(T_{f^{\prime}})]_{X^{\prime}/S}\to f_{\ast}[\chfrak(E)\cdot\tdfrak(T_{f})]_{X/S},
\end{equation}
which induce isomorphisms of functors of commutative Picard categories $V(\Pcal_{X})\to\Dcal(Y/S)$, compatible with base change and with bounded denominators.
    \item The formation of \eqref{eq:isomor-of-mor-RR-left}--\eqref{eq:isomor-of-mor-RR-right} is compatible with compositions of isomorphisms $f^{\bis}\simeq f^{\prime}\simeq f$ between morphisms. 
    \item The formation of \eqref{eq:isomor-of-mor-RR-left}--\eqref{eq:isomor-of-mor-RR-right} is compatible with the projection formulas from Proposition \ref{prop:proj-for-RR-dist}.
\end{enumerate}
\end{proposition}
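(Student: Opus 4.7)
The plan is to construct each of the two isomorphisms by identifying the data on $X^{\prime}$ and $Y^{\prime}$ as pullbacks, via $\varphi$ and $\psi$, of the corresponding data on $X$ and $Y$, and then collapsing these pullbacks using the functoriality of intersection bundles under isomorphisms of $S$-schemes. In the process, functoriality in $E$, base-change compatibility and bounded denominators will follow formally from those of each ingredient in the construction.

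For the left-hand side isomorphism \eqref{eq:isomor-of-mor-RR-left}, I would first use that the square, $\varphi$ and $\psi$ being isomorphisms, is trivially Tor-independent, so the virtual base-change isomorphism \eqref{eq:virtualbasechange} yields $\psi^{\ast} f_! E \simeq f^{\prime}_! \varphi^{\ast} E$ in $V(\Pcal_{Y^{\prime}})$. Applying $\chfrak$ functorially under pullback gives $\chfrak(f^{\prime}_! \varphi^{\ast} E) \simeq \psi^{\ast} \chfrak(f_! E)$ in $\CHfrak(Y^{\prime})_{\QBbb}$. Evaluating $\psi_{\ast}[\chfrak(f^{\prime}_! \varphi^{\ast} E)]_{Y^{\prime}/S}$ on a Chern power series $Q$ on $Y$ and substituting then produces
\[
\langle \psi^{\ast} Q \cdot \psi^{\ast}\chfrak(f_! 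E) \rangle_{Y^{\prime}/S} \simeq \langle Q \cdot \chfrak(f_! E) \rangle_{Y/S}
\]
by the base-change compatibility of intersection bundles along the isomorphism $\psi$, which unwraps to the desired isomorphism of line distributions.

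For the right-hand side isomorphism \eqref{eq:isomor-of-mor-RR-right}, the identity $\psi \circ f^{\prime} = f \circ \varphi$ gives $\psi_{\ast} f^{\prime}_{\ast} = f_{\ast} \varphi_{\ast}$ as functors between categories of line distributions (see \textsection\ref{subsubsec:pic-cat-line-dist}), so it suffices to construct a canonical isomorphism $\varphi_{\ast}[\chfrak(\varphi^{\ast} E) \cdot \tdfrak(T_{f^{\prime}})]_{X^{\prime}/S} \simeq [\chfrak(E) \cdot \tdfrak(T_{f})]_{X/S}$. Any projective factorization $X \hookrightarrow \PBbb(\Ecal) \to Y$ of $f$ base-changes along $\psi$ to a compatible factorization $X^{\prime} \hookrightarrow \PBbb(\psi^{\ast}\Ecal) \to Y^{\prime}$ of $f^{\prime}$, producing a canonical identification $T_{f^{\prime}} \simeq \varphi^{\ast} T_{f}$ of the 2-term complexes used to define the Todd class; combined with $\chfrak(\varphi^{\ast} E) \simeq \varphi^{\ast} \chfrak(E)$, this reduces the claim to the identity $\varphi_{\ast}[\varphi^{\ast} P]_{X^{\prime}/S} \simeq [P]_{X/S}$ for $P = \chfrak(E) \cdot \tdfrak(T_{f})$, which is in turn a direct consequence of the base-change functoriality of intersection bundles along the isomorphism $\varphi$ (equivalently, the projection formula \eqref{eq:proj-for-int-dist}).

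The remaining assertions follow by unwinding. Compatibility with a composition $f^{\bis} \simeq f^{\prime} \simeq f$ reduces to the compositional compatibility of the four ingredients involved: virtual base-change for $f_!$, pushforward of line distributions, pullback of Chern power series, and the identification $T_{f^{\bis}} \simeq (\varphi\chi)^{\ast} T_{f}$ for the RHS. Compatibility with the projection formulas of Proposition \ref{prop:proj-for-RR-dist} is a diagram chase: both the projection formulas and the isomorphisms just constructed are built from the same elementary operations (virtual base change for $f_!$, functoriality of $\chfrak$, distribution projection formula \eqref{eq:proj-for-int-dist}), so the required commutativity reduces to the naturality of these operations in the data $E$ and $F$. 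The hard part will be the canonical identification $T_{f^{\prime}} \simeq \varphi^{\ast} T_{f}$ at the level of 2-term complexes: one must verify that different compatible choices of projective factorization produce the same isomorphism of line distributions, which ultimately reduces to the gluing arguments of \cite{DRR1} that made the line distribution \eqref{eq:pre-RHS-RR} well-defined to begin with.
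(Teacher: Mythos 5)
Your proposal is correct and follows essentially the same route as the paper: identify $f^{\prime}_{!}\varphi^{\ast}E$ with $\psi^{\ast}f_{!}E$ (you via Tor-independent base change \eqref{eq:virtualbasechange}, the paper via $f^{\prime}=\psi^{-1}\circ f\circ\varphi$ and $\varphi_{!}\simeq(\varphi^{\ast})^{-1}$, $\psi^{-1}_{!}\simeq\psi^{\ast}$), pull $\chfrak$ through, and collapse $\psi_{\ast}[\psi^{\ast}(\cdot)]$ by the birational/isomorphism invariance of Theorem \ref{thm:cor86}, with the right-hand side handled analogously via $T_{f^{\prime}}\simeq\varphi^{\ast}T_{f}$ and the compatibilities in (2)--(3) left as formal naturality checks, exactly as in the paper. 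The only point to make explicit is the preliminary reduction via Proposition \ref{prop:base-reduction} to divisorial bases, projective $f$ and virtual vector bundles, which is what makes $\chfrak(f_{!}E)$ and $\tdfrak(T_{f})$ literally defined before your manipulations.
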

\begin{proof}
We discuss the case of \eqref{eq:isomor-of-mor-RR-left}, and omit the details of  \eqref{eq:isomor-of-mor-RR-right}, which are similar. 

By Proposition \ref{prop:base-reduction}, we may assume that the base schemes are divisorial, that $f$ is projective, and we can work with virtual categories of vector bundles. In particular, we may assume that $E$ and $f_{!}E$ are virtual vector bundles. Next, we write $f^{\prime}=\psi^{-1}\circ f\circ\varphi$. Then, we have $f^{\prime}_{!}\simeq \psi^{-1}_{!}\circ f_{!}\circ\varphi_{!}.$ Moreover, since $\varphi$ and $\psi$ are isomorphisms, there are  natural isomorphisms of functors $\varphi_{!}\simeq (\varphi^{\ast})^{-1}$ and $\psi^{-1}_{!}\simeq \psi^{\ast}$.  Therefore, we can write the natural isomorphisms
\begin{equation}\label{eq:isomor-of-mor-RR-left-1}
     \psi_{\ast}[\chfrak(f_{!}^{\prime}\varphi^{\ast} E)]_{Y^{\prime}/S}\simeq \psi_{\ast}[\chfrak(\psi^{\ast}f_{!} (\varphi^{\ast})^{-1}\varphi^{\ast} E)]_{Y^{\prime}/S}\simeq \psi_{\ast}[\chfrak(\psi^{\ast} f_{!}E)]_{Y^{\prime}/S}.
\end{equation}
Now, since $f_{!}E$ is a virtual vector bundle, we can safely write $\chfrak(\psi^{\ast} f_{!}E)\simeq \psi^{\ast}\chfrak(f_{!}E)$, and then we can apply the projection formula from Theorem \ref{thm:cor86} \eqref{item:prop-int-dist-birational} to conclude
\begin{equation}\label{eq:isomor-of-mor-RR-left-2}
    \psi_{\ast}[\chfrak(\psi^{\ast} f_{!}E)]_{Y^{\prime}/S}\simeq \psi_{*}[\psi^{\ast}\chfrak(f_{!}E)]_{Y'/S}\simeq [\chfrak(f_{!}E)]_{Y/S}.
\end{equation}
The sought isomorphism is obtained as the composition of \eqref{eq:isomor-of-mor-RR-left-1}--\eqref{eq:isomor-of-mor-RR-left-2}, which all induce isomorphisms of commutative Picard categories, which commute with base change and have bounded denominators depending only on the denominators of $\chfrak$ up to a fixed degree, given by the relative dimension of $Y\to S$ plus one. The compatibility with further composition similarly follows from the pullback and pushforward functoriality of virtual categories and intersection distributions, and analogously for the projection formula. We omit the details.
\end{proof}

\subsection{Deligne--Riemann--Roch isomorphisms and expected functorialities}\label{subsec:DRR-iso-expected-functor}

In this subsection we introduce and elaborate on the notion of a Deligne--Riemann--Roch (DRR) isomorphism between the Riemann--Roch distributions, which includes various compatibilities and key equalities from the proof of the classical Grothendieck--Riemann--Roch theorem. This in particular gives a detailed meaning to the compatibilities in Theorem \ref{thm:general-DRR} below and in particular our Theorem \ref{thm:A}. Recall that we follow the terminology from \textsection\ref{subsubsec:base-change-conventions} regarding functors and isomorphisms compatible with base change.

\begin{definition}[DRR isomorphism for a morphism $f$]
Let $f\colon X\to Y$ be an lci morphism of $S$-schemes. A DRR isomorphism $\RR_f$ for $f$ consists in giving, for every virtual perfect complex $E$ on $X$, an isomorphism of line distributions
\begin{displaymath}
    \RR_{f}(E)\colon [\chfrak(f_{!} E)]_{Y/S}\to f_{\ast}[\chfrak(E)\cdot\tdfrak(T_{f})]_{X/S},
\end{displaymath}
such that the association $E\mapsto \RR_{f}(E)$ induces an isomorphism of functors of commutative Picard categories $V(\Pcal_{X})\to\Dcal(Y/S)$, compatible with base change and with bounded denominators.
\end{definition}Whenever we impose additional conditions on a DRR isomorphism, we will tacitly require the compatibility with base change. 

We next address the precise definition of compatibility with the projection formula: 

\begin{definition}[Compatibility with the projection formula]\label{def:compatibilitywithproj}
A DRR isomorphism $\RR_{f}$ for a lci morphism $f\colon X\to Y$ of $S$-schemes is said to be compatible with the projection formula if, for every virtual perfect complex $E$ on $X$ and every virtual vector bundle $F$ on $Y$, the diagram
    \begin{displaymath}
        \xymatrix{
            [\chfrak(f_{!}(E\otimes f^{\ast}F))]_{Y/S}\ar[rr]^-{\RR_{f}(E\otimes f^{\ast}F)}\ar[d]    &    &f_{\ast}[\chfrak(E\otimes f^{\ast}F)\cdot\tdfrak(T_{f})]_{X/S}\ar[d]\\
             \chfrak(F)\cdot[\chfrak(f_{!}E)]_{Y/S}\ar[rr]^-{\chfrak(F)\cdot\RR_{f}(E)}      &     &\chfrak(F)\cdot f_{\ast}[\chfrak(E)\cdot\tdfrak(T_{f})]_{X/S}
        }
    \end{displaymath}
    commutes. Here, the vertical arrows are the projection formulas from Proposition \ref{prop:proj-for-RR-dist}. 
\end{definition}

We also address the precise meaning of compositions of DRR isomorphisms. 

\begin{definition}[Composition of DRR isomorphisms]\label{def:compositionofmorphisms}
    Suppose that $X\stackrel{f}{\to} Y \stackrel{g}{\to} Z$ is a composition of lci morphisms of $S$-schemes, which admit DRR isomorphisms $\RR_{f}$ and $\RR_{g}$. We define the composition of $\RR_{f}$ and $\RR_{g}$ as follows. Suppose first that the base schemes are divisorial and the morphisms are projective. Then, we consider the composition
\begin{equation}
    \begin{split}
        (\RR_{f}\RR_{g})(E)\ \colon\ [\chfrak((gf)_{!}E)]  \stackrel{\eqref{eq:fun-composition-virtual}}{\simeq} [\chfrak(g_! f_{!} E)] &\stackrel{\RR_g}{\simeq} g_{\ast}[\chfrak(f_! E) \cdot \tdfrak(T_{g})]  \\ 
       & \hspace{10pt} \stackrel{\RR_{f}}{\simeq}  g_{\ast} \left(f_\ast [\chfrak(E) \cdot \tdfrak(T_{f})] \cdot \tdfrak(T_{g})\right) \\
       &\hspace{20pt}\stackrel{\eqref{eq:proj-for-int-dist}}{\simeq} g_\ast f_\ast [\chfrak(E) \cdot \tdfrak(T_{f}) \cdot \tdfrak({f}^\ast T_{g})] \\
       &\hspace{30pt}\stackrel{\mathrm{Whitney}}{\simeq} (gf)_\ast [\chfrak(E) \cdot \tdfrak(T_{gf})],
    \end{split}
\end{equation}
where the last isomorphism is induced by the Whitney isomorphism $[\tdfrak(T_{gf})]\simeq [\tdfrak(T_{f})\cdot\tdfrak(f^{\ast}T_{g})]$ from \cite[Lemma 9.2 (2)]{DRR1}. This defines an isomorphism of functors of commutative Picard categories compatible with base change and with bounded denominators. In general, the construction is extended by means of Proposition \ref{prop:base-reduction}.
\end{definition}
A comment on the definition is in order. The reduction to divisorial schemes and projective morphisms is needed to ensure that $T_{g}$ is a virtual vector bundle rather than just a virtual perfect complex, and hence the Chern power series $\tdfrak(T_{g})$ can be defined. This is necessary in the application of the projection formula \eqref{eq:proj-for-int-dist} (see the last paragraph in \textsection\ref{subsubsec:categorical-characteristic}). However, as explained after Proposition \ref{prop:proj-for-RR-dist}, this is just a matter of notation. 

The following lemma shows that the composition of DRR isomorphisms is associative, which thus allows us to perform repeated compositions without specifying the choices of parentheses.

\begin{lemma}\label{lemma:RRassociative}
    The composition of DRR isomorphisms is associative. More precisely, suppose that $X\stackrel{f}{\to} Y \stackrel{g}{\to} Z \stackrel{h}{\to} W$ is a composition of morphisms all admitting a DRR isomorphism. Then, 
    \begin{equation}\label{eq:comp-rr-tr}
        \RR_f (\RR_g \RR_h) = (\RR_f \RR_g) \RR_h.
    \end{equation}
\end{lemma}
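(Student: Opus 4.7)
The plan is to reduce to the divisorial, projective setting via Proposition \ref{prop:base-reduction}, where all of $T_f, T_g, T_h, T_{gf}, T_{hg}, T_{hgf}$ are represented by honest two-term complexes of vector bundles and every pushforward $f_!, g_!, h_!$ preserves virtual vector bundles. In that setting, $\RR_f(\RR_g \RR_h)(E)$ and $(\RR_f \RR_g)\RR_h(E)$ are both built out of the same six families of isomorphisms applied to $[\chfrak((hgf)_! E)]$, namely: (a) the associativity of composition of $(-)_!$ at the level of virtual categories (cf.\ \eqref{eq:fun-composition-virtual}); (b), (c), (d) three applications of $\RR_h$, $\RR_g$, $\RR_f$ respectively; (e) projection formulas for intersection distributions \eqref{eq:proj-for-int-dist}; and (f) Whitney-type isomorphisms for the Todd class along the composable pairs $(T_f, f^\ast T_{hg})$, $(T_{gf}, (gf)^\ast T_h)$, $(T_g, g^\ast T_h)$ and $(T_f, f^\ast T_g)$ coming from \cite[Lemma 9.2]{DRR1}. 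What differs between the two sides of \eqref{eq:comp-rr-tr} is only the order in which these operations are performed.

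The first step is to write both compositions out as a long chain of explicit isomorphisms, and organize them in a single diagram that starts at $[\chfrak((hgf)_! E)]$ and ends at $(hgf)_\ast [\chfrak(E) \cdot \tdfrak(T_{hgf})]$, with the left-hand side of \eqref{eq:comp-rr-tr} running along one path and the right-hand side along the other. The mismatch between the two paths can then be split into three kinds of local squares: (i) squares expressing that $\RR_f, \RR_g, \RR_h$ are isomorphisms of functors, so that applying $\RR_f$ inside an expression involving $g_\ast$, $h_\ast$ and the Todd-pairings commutes with the preceding or subsequent application of $\RR_g$ or $\RR_h$; (ii) squares expressing that the composition of pushforwards of line distributions is strictly associative, compatibly with the projection formula \eqref{eq:proj-for-int-dist} and with base change; and (iii) squares expressing that the Whitney decomposition of $\tdfrak(T_{hgf})$ along the two filtrations coming from $(T_f \to T_{gf} \to f^\ast T_{hg})$ and $(T_f \to f^\ast T_g \to (gf)^\ast T_h)$ are compatible.

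The commutativity of type (i) squares is exactly the statement that $\RR_f, \RR_g, \RR_h$ are natural transformations of functors of commutative Picard categories, together with the fact that the operations product by a Chern power series, pushforward and projection formula on line distributions commute with arbitrary isomorphisms in each of their arguments; this is built into the definition of a DRR isomorphism and into Theorem \ref{thm:cor86}. Type (ii) squares follow from the strict functoriality of $h \mapsto h_\ast$ on line distributions, together with \eqref{eq:proj-for-int-dist}, and are again an instance of the compatibilities in Theorem \ref{thm:cor86}.

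The main technical point, and the one I expect to be the real obstacle, is the type (iii) coherence: the two ways of Whitney-decomposing $\tdfrak(T_{hgf})$ via the two distinct two-step filtrations of $T_{hgf}$ produce the same isomorphism. In the classical intersection theory this is trivially true, but at the categorical level it requires an associativity statement for the Whitney isomorphism applied to a three-step exact sequence of virtual perfect complexes, refining \cite[Lemma 9.2]{DRR1}. The argument would consist in observing that this triple Whitney isomorphism coincides, by the splitting principle of \textsection\ref{subsubsec:pic-cat-line-dist}\eqref{item:splitting-principle} and the compatibility of \eqref{eq:chern-tensor-product} with Whitney and with admissible filtrations, with the three-fold multiplicativity isomorphism $[\tdfrak(A)\cdot \tdfrak(B)\cdot \tdfrak(C)]$, which is strictly associative because it is inherited from associativity of the tensor product of line bundles. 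Once this is established, the remaining bookkeeping consists in checking that the various projection formulas and pushforwards intertwine these multiplicativity isomorphisms in the expected way, which follows from the compatibilities collected in Theorem \ref{thm:cor86}. Putting these ingredients together yields \eqref{eq:comp-rr-tr} as isomorphisms of line distributions, compatible with base change and with bounded denominators.
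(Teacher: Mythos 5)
Your overall skeleton agrees with the paper's: both sides of \eqref{eq:comp-rr-tr} are composites of the same three DRR isomorphisms, the same projection formulas and pushforwards, and the only non-formal point is the coherence of the Whitney-type isomorphisms for $\tdfrak$ along the two ways of breaking up $T_{hgf}$ (your type (iii) squares); your types (i) and (ii) are indeed disposed of by naturality and Theorem \ref{thm:cor86}, exactly as in the paper.

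However, your resolution of the type (iii) coherence is where there is a genuine gap. First, the splitting principle you invoke (item \eqref{item:splitting-principle} of \textsection\ref{subsubsec:pic-cat-line-dist}) compares line distributions of the form $\phi(E)\cdot T_{X/S}$ as functors of an \emph{arbitrary} vector bundle $E$; here the statement is about one specific diagram of Whitney isomorphisms attached to specific distinguished triangles of tangent complexes, so "reduce to direct sums of line bundles" is not directly available — one would at least need a transgression/deformation argument (as in the proof of Lemma \ref{lemma:KoszulKoszulKoszul}) to interpolate to the split situation, and you do not supply it. Second, and more importantly, the statement you need is not an abstract "associativity of a triple Whitney isomorphism": the two two-step decompositions of $\tdfrak(T_{hgf})$ only become comparable once one exhibits the commutative diagram of triangles relating $T_f$, $T_{gf}$, $f^{\ast}T_g$, $T_{hgf}$, $f^{\ast}T_{hg}$ and $(gf)^{\ast}T_h$ (an exact sequence of exact sequences), realizes these triangles as genuine short exact sequences of complexes (via the staircase of factorizations used in \cite[Lemma 9.2]{DRR1}), and then applies \cite[Lemme 4.8]{Deligne-determinant}, which is precisely the compatibility of a multiplicative functor with a biadmissible two-step filtration. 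That geometric input and the realization step are the actual content of the paper's proof and are absent from your sketch; also, the isomorphism \eqref{eq:chern-tensor-product} you cite (multiplicativity of $\chfrak$ under tensor product) is not the relevant tool — what is needed is the multiplicativity of $\tdfrak$ on exact sequences together with its compatibility with admissible filtrations. With these ingredients supplied, your plan closes up; without them, the key square is asserted rather than proved.
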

\begin{proof}
Without loss of generality, we can assume that the morphisms are projective and that the tangent complexes are finite complexes of vector bundles. 

The comparison of the two sides in \eqref{eq:comp-rr-tr} is straightforward as the very definitions involve the same three DRR isomorphisms, except for the commutativity of the following diagram of Whitney isomorphisms :
\begin{displaymath}
    \xymatrix{
        [\tdfrak(T_{hgf})] \ar[r] \ar[d] & \ar[d] [\tdfrak(T_f) \cdot \tdfrak(f^\ast T_{hg})] \ar[d] \\ 
        [\tdfrak(T_f) \cdot \tdfrak(T_{gf}) \cdot \tdfrak((gf)^\ast T_{h}) ]\ar[r] & [\tdfrak(T_f) \cdot \tdfrak(f^\ast T_{g}) \cdot \tdfrak((gf)^\ast T_{h})].
    }
\end{displaymath}
The commutativity is in turn implied by the existence of the following commutative diagram of triangles of tangent complexes, taking into account \cite[Lemme 4.8]{Deligne-determinant} which provides a comparison of determinant functors of exact sequences of exact sequences:
    \begin{displaymath}
        \xymatrix{T_f \ar[r] \ar@{=}[d] & T_{gf} \ar[d] \ar[r] & f^\ast T_g \ar[d] \\ 
        T_f \ar[r] & T_{hgf} \ar[d] \ar[r] & f^\ast T_{hg} \ar[d] \\ 
          & (gf)^\ast T_h \ar@{=}[r] & (gf)^\ast T_h. }
    \end{displaymath}
Although this is a priori a diagram of triangles, it can be realized as a diagram of true triangles using the same staircase of factorizations as in the proof of \cite[Lemma 9.2]{DRR1}, so that \cite[Lemme 4.8]{Deligne-determinant} can be applied. 
\end{proof}

The last compatibility that we consider amounts to the functoriality with respect to the morphism.
\begin{definition}\label{def:compatibilitymorphisms}
Given lci morphisms $f\colon X\to Y$ and $f^{\prime}\colon X^{\prime}\to Y^{\prime}$ of $S$-schemes and an isomorphism $f^{\prime}\simeq f$ as in \eqref{eq:isomor-of-mor}, we say that $\RR_{f}$ and $\RR_{f^{\prime}}$ are compatible if the diagram
    \begin{displaymath}
        \xymatrix{
            \psi_{\ast}[\chfrak(f_{!}^{\prime}\varphi^{\ast} E)]_{Y^{\prime}/S}\ar[rr]^-{\psi_{\ast}\RR_{f^{\prime}}(\varphi^{\ast}E)}\ar[d]_{\eqref{eq:isomor-of-mor-RR-left}}  &   &\psi_{\ast} f_{\ast}^{\prime}[\chfrak(\varphi^{\ast}E)\cdot\tdfrak(T_{f^{\prime}})]_{X^{\prime}/S}\ar[d]^{\eqref{eq:isomor-of-mor-RR-right}}\\
            [\chfrak(f_{!}E)]_{Y/S}\ar[rr]^-{\RR_{f}(E)}   &  & f_{\ast}[\chfrak(E)\cdot\tdfrak(T_{f})]_{X/S}.
        }
    \end{displaymath}
    commutes for every $E$.
\end{definition}

We are now in a position to introduce the main object of interest in this article. 
\begin{definition}\label{def:DRRisoforclassofmorphisms}
A DRR isomorphism for lci morphisms of $S$-schemes consists in associating, to every lci morphism $f\colon X\to Y$ between $S$-schemes, a DRR isomorphism $\RR_{f}$, satisfying:
\begin{enumerate}
    \item If $f=\id_{X}$, then $\RR_{f}$ is the identity isomorphism. 
     \item Compatibility with the projection formula (cf. Definition \ref{def:compatibilitywithproj}).
    \item Compatibility with the composition of morphisms (cf. Definition \ref{def:compositionofmorphisms}). 
    \item Functoriality in the morphism (cf. Definition \ref{def:compatibilitymorphisms}).
\end{enumerate}
We follow a similar terminology for a class of lci morphisms which is closed under composition and base change, e.g. regular closed immersions.
\end{definition}

In the rest of the article, we establish the existence of a canonical DRR isomorphism (cf. Theorem \ref{thm:general-DRR}) and we deduce Theorem \ref{thm:A} from it. 

\section{Normal cones and deformations of complexes}\label{sec:def-normal-cone}

In this section, we revisit and refine several constructions associated with the deformation to the normal cone, including the behavior of resolutions of direct images of vector bundles under deformations. Originally conceived by MacPherson under the name "Grassmannian graph construction", this framework played a central role in the Riemann–Roch setting developed by Baum, Fulton, and MacPherson \cite{BFM}.
Below, all the $S$-schemes are assumed to satisfy the condition $(C_{n})$, for some $n$, except possibly for base changes $S^{\prime}\to S$. 

\subsection{Deformation to the normal cone}

Consider a morphism of schemes $f\colon X\to S$ of relative dimension $n$. We suppose that we are given a closed subscheme $Y$ of $X$, such that the map $Y\to S$ satisfies the condition $(C_{n-c})$ and the inclusion $i\colon Y\to X$ is a regular closed immersion of codimension $c$ (cf. Section \ref{sec:notations-conventions}). We denote the normal bundle by $N_{Y/X}$. We may also write $N_{i}$ or even $N$ if there is no danger of confusion.  

Recall that the deformation to the normal cone refers to the process of blowing up $\PBbb^1_X$ along the closed subscheme $Y \times \{\infty \}$, denoted by  
\begin{displaymath}
    \pi : M = \Bl_{Y\times\{\infty\}} (\PBbb^1_X ) \to \PBbb^1_X.
\end{displaymath}
By functoriality, there is a natural diagram
\begin{equation}\label{eq:deformationtonormalcone}
    \xymatrix{ \PBbb^1_Y \ar@{^{(}->}[rr]^{j} \ar[dr]_{\widetilde{g}} & & M \ar[dl]^\varrho  \\
    &  \PBbb^1_S & }
\end{equation}
where $j$ is the strict transform of $\PBbb^1_Y$ in $\PBbb^1_X$. In the proposition below, denote by $N$ the normal bundle of $Y$ in $X$.
\begin{proposition}\label{prop:Deformationcone}

The diagram \eqref{eq:deformationtonormalcone} enjoys the following properties:
\begin{enumerate}
    \item \label{item:Deformationcone-1} The formation commutes with base change $S' \to S$.
    \item \label{item:Deformationcone-3} The scheme $M_\infty = \varrho^{-1}(\infty)$ is the sum of two relative Cartier divisors over $S$, $\PBbb(N^\vee \oplus 1)$  and $\widetilde{X} \simeq \Bl_{Y} X$. They intersect in a mutual relative Cartier divisor over $S$, $\PBbb(N^{\vee})$, seen as the hyperplane at infinity of $\PBbb(N^{\vee}\oplus 1)$, which is also the exceptional divisor in $\widetilde{X}$. In particular, $M_{\infty}$ is the pushout scheme of $\widetilde{X}$ and $\PBbb(N^\vee \oplus 1)$ along $\PBbb(N^{\vee})$. 
    \item \label{item:Deformationcone-4} The morphism $j$ is a regular closed immersion of codimension $c$, and:
     \begin{enumerate}        
        \item\label{item:restriction-j-A1} The restriction of $j$ to $\ABbb^1_S$ identifies with the natural morphism $\ABbb^{1}_{Y} \to \ABbb^{1}_{X}$;
         \item\label{item:Y-zero-section} The restriction of $j$ to $S\times \{\infty\}$ identifies with the zero section $j_{\infty}\colon Y\to \PBbb(N^\vee \oplus 1)\subset M_{\infty}$.
    \end{enumerate} 
    \item \label{item:Deformationcone-2} The morphism $\varrho$ satisfies the condition $(C_n)$ over $\PBbb^{1}_{S}$.\end{enumerate}
\end{proposition}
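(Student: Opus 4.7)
The plan is to establish the four properties by reducing to standard facts about blow-ups of regularly immersed subschemes, then carefully handling the additional flatness and dimension requirements imposed by the relative setting over $S$. Throughout, I would work locally on $X$, choosing a regular sequence $(t_{1},\ldots,t_{c})$ cutting out $Y$ and a local coordinate $s$ on $\PBbb^{1}$ at $\infty$, so that $Y\times\{\infty\}$ is cut out in $\PBbb^{1}_{X}$ by the regular sequence $(t_{1},\ldots,t_{c},s)$ of length $c+1$.

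For item \eqref{item:Deformationcone-1}, the key point is that since $Y\times\{\infty\}$ is regularly immersed in $\PBbb^{1}_{X}$, its conormal sheaf is locally free, and the associated Rees algebra $\bigoplus_{k}\mathcal{I}^{k}$ is locally isomorphic to a symmetric algebra on a locally free sheaf. Combined with the flatness of $X\to S$, this implies that the formation of $M$ and of all auxiliary data in \eqref{eq:deformationtonormalcone} commutes with arbitrary base change $S^{\prime}\to S$. For item \eqref{item:Deformationcone-3}, the blow-up is covered by affine charts $\Spec \mathcal{O}_{X}[t_{1}/s,\ldots,t_{c}/s]$ and $\Spec \mathcal{O}_{X}[s/t_{j},t_{i}/t_{j}:i\neq j]$ for $j=1,\ldots,c$. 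In the first chart, $s$ is the local equation of $M_{\infty}$ and the slice $s=0$ gives an affine space over $Y$ that globalizes to $\PBbb(N^{\vee}\oplus 1)$; the remaining charts assemble to $\widetilde{X}=\Bl_{Y}X$ with its exceptional divisor $\PBbb(N^{\vee})$. Both components are effective Cartier divisors locally cut out by a single equation, intersecting transversally along $\PBbb(N^{\vee})$ seen as the hyperplane at infinity in $\PBbb(N^{\vee}\oplus 1)$. The pushout description of $M_{\infty}$ then follows from the fact that an effective Cartier divisor with two transverse components is recovered from its components glued along their intersection.

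For item \eqref{item:Deformationcone-4}, the strict transform $j$ is a regular closed immersion of codimension $c$ because, locally in the charts above, the strict transform of $\PBbb^{1}_{Y}$ is cut out by the regular sequence of "affine" coordinates dual to $(t_{1},\ldots,t_{c})$. Property \eqref{item:restriction-j-A1} is immediate since $\pi$ is an isomorphism outside $Y\times\{\infty\}$. For \eqref{item:Y-zero-section}, the computation in the first chart exhibits the strict transform over $\infty$ as the zero section of the affine bundle that glues to $\PBbb(N^{\vee}\oplus 1)\to Y$.

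The most delicate point is item \eqref{item:Deformationcone-2}. Local projectivity and finite presentation follow formally from the corresponding properties of blow-ups and of $\PBbb^{1}_{X}\to\PBbb^{1}_{S}$, and the assertion on the relative dimension is read off from \eqref{item:Deformationcone-3}: away from $\infty$, $\varrho$ restricts to $\PBbb^{1}_{X}\to\PBbb^{1}_{S}$, which has relative dimension $n$; over $\infty$, both $\widetilde{X}$ and $\PBbb(N^{\vee}\oplus 1)$ have dimension $n$ over $S$. The hard part is flatness of $\varrho$ over $\PBbb^{1}_{S}$, which needs to be checked near the central fiber. I would proceed by the local flatness criterion: in the affine chart at infinity, $\mathcal{O}_{M}$ is a polynomial algebra over $\mathcal{O}_{X}$, hence flat over $\mathcal{O}_{\PBbb^{1}_{S}}$ by the flatness of $X\to S$; in the remaining charts, one reduces to the flatness of $\Bl_{Y}X\to X$ in each fiber, which follows from the regularity of the immersion $i\colon Y\to X$ together with the flatness of both $X\to S$ and $Y\to S$. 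Putting these local verifications together yields the desired flatness of $\varrho$ globally, completing the proof.
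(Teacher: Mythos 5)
Your general plan (standard charts for the blow-up of a regular sequence, then checking the relative conditions) is sensible, and your treatment of the structure of $M_\infty$, the pushout, and the identification of $j$ over $\ABbb^{1}$ and over $\infty$ matches the standard computations the paper imports from Fulton. Note, however, that for the regularity of $j$ your chart argument asserts exactly what has to be proved: that the ``dual'' coordinates $t_{1}/s,\ldots,t_{c}/s$ form a regular sequence in the chart ring is the content of the claim, and over a general (non-Noetherian, non-integral) base this needs justification. The paper avoids this by using that $j$ is a closed immersion of $S$-flat, finitely presented schemes, reducing via EGA IV, 19.2.4 to fibers over $S$ and then to the points of $\PBbb^{1}_{K}$, where over $\ABbb^{1}$ it is the original immersion and over $\infty$ it is the zero section.

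The genuine gap is in your argument that $\varrho$ satisfies $(C_{n})$, specifically flatness over $\PBbb^{1}_{S}$. The chart at infinity is \emph{not} a polynomial algebra over $\Ocal_{X}$: for the regular sequence $(t_{1},\ldots,t_{c},s)$ it is $\Ocal_{X}[s][u_{1},\ldots,u_{c}]/(su_{i}-t_{i})$ (an extended Rees algebra), so its flatness over $\Ocal_{S}[s]$ does not follow ``by flatness of $X\to S$''; and the appeal in the remaining charts to ``flatness of $\Bl_{Y}X\to X$'' is both false (blow-ups are essentially never flat over $X$) and beside the point, since the map whose flatness is needed is $M\to\PBbb^{1}_{S}$, not $M\to\PBbb^{1}_{X}$. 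The paper's route is different: first, $M\to S$ is flat and of finite presentation because blowing up a regularly immersed, $S$-flat center commutes with arbitrary base change (EGA IV, 19.4.6); then the fibral flatness criterion (EGA IV, 11.3.10) reduces flatness of $\varrho$ to flatness of $M_{s}\to\PBbb^{1}_{k(s)}$ over a field, which is the classical statement from Fulton, Chapter 5. Relatedly, your dimension count over $\infty$ is asserted rather than proved: since the fibers $X_{s}$ need not be integral, equidimensionality of $\widetilde{X}=\Bl_{Y_{s}}X_{s}$ requires an argument (the paper uses that $\PBbb(N^{\vee})$ is an effective Cartier divisor, hence of pure codimension one, together with EGA IV dimension theory), and you do not address surjectivity of $\varrho$. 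Finally, in your base-change argument the claim that the Rees algebra $\bigoplus_{k}\mathcal{I}^{k}$ is locally a symmetric algebra on a locally free sheaf is incorrect for $c\geq 2$ (that is the associated graded algebra, built on the conormal sheaf); the correct input is precisely the EGA IV, 19.4.6 statement that blow-ups along regularly immersed, $S$-flat centers commute with base change.
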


\begin{proof}

The compatibility with arbitrary base changes follows from \cite[Proposition 19.4.6]{EGAIV4}, which also ensures that $M \to S$ is  flat and of finite presentation. It follows, from \cite[\href{https://stacks.math.columbia.edu/tag/02FV}{02FV}]{stacks-project}, that $M \to \PBbb^1_S$ is of finite presentation. This can be used to show that $\varrho: M \to \PBbb^1_S$ is faithfully flat. Indeed, since we already know that $M\to S$ is flat and of finite presentation, by the flatness criterion by fibers \cite[Th\'eor\`eme 11.3.10]{EGAIV3} it suffices to verify that $M_s \to \PBbb^1_{k(s)}$ is flat for all $s \in S$. This is addressed in \cite[Chapter 5]{Fulton}. To see that $\varrho$ is surjective, we note that $M\to\PBbb^{1}_{X}$ is surjective by construction, and $X\to S$ is surjective by assumption, so that $\PBbb^{1}_{X}\to\PBbb^{1}_{S}$ is surjective. 

The next two points follow as in \cite[Chapter 5]{Fulton}, with the exception of the pushout claim and the fact that $j$ is a regular immersion. 

For the pushout claim, write $D_{1}=\PBbb(N^\vee \oplus 1)$  and $D_{2}=\widetilde{X} \simeq \Bl_{Y} X$. Denote by $k_{\infty}^{i}\colon D_{i}\to M_{\infty}$ and $k^{12}_{\infty}\colon \PBbb(N^{\vee})\to M_{\infty}$ for the closed immersions. The fact that $M_{\infty}$ is the sum of $D_{1}$ and $D_{2}$, which intersect in the common Cartier divisor $\PBbb(N^{\vee})$, means that the natural complex
\begin{displaymath}
    0\to\Ocal_{M_{\infty}}\to k_{\infty,\ast}^{1}\Ocal_{D_{1}}\oplus k_{\infty,\ast}^{2}\Ocal_{D_{2}}\to k^{12}_{\infty,\ast}\Ocal_{\PBbb(N^{\vee})}\to 0
\end{displaymath}
is exact. This agrees with the characterization of the structure sheaf of the pushout provided by \cite[\href{https://stacks.math.columbia.edu/tag/0B7M}{0B7M}]{stacks-project}. 

To see that $j\colon\PBbb^1_Y\to M$ is a regular immersion, since it is a closed immersion of locally finitely presented $S$-flat schemes, by \cite[Proposition 19.2.4]{EGAIV4} it is enough to verify the claim over the points of $S$, so that we can assume it is the spectrum of a field $K$. Now, $\PBbb^1_Y \to M$ is an immersion of $\PBbb^1_K$-flat schemes, so it is enough to verify the statement over the points of $\PBbb^1_K$. Over $\ABbb^{1}_{K}$ there is nothing to prove. Over $\infty$, this follows from the description in \eqref{item:Y-zero-section}.

We now focus on the last point, and first show that $\varrho$ satisfies the remaining properties of the condition $(C_n).$ To prove that $\varrho$ is locally projective, we assume that $S$ is affine. In this case, by \cite[\href{https://stacks.math.columbia.edu/tag/07RM}{07RM}]{stacks-project} it is enough to prove that that $M \to \PBbb^{1}_X$ is projective. This follows from the fact that the ideal defining $Y \to X$ is quasi-coherent of finite type as an $\Ocal_X$-module, hence so is the ideal defining $Y\times\lbrace\infty\rbrace$ in $\PBbb^{1}_{X}$. 

To prove that the fibers of $\varrho$ are equidimensional, by the base-change property, it is enough to assume that $S$ itself is the spectrum of a field $K$. Next, it is enough to consider the fiber over $\infty$, since the other fibers are isomorphic to (a base change of) $X$, which is equidimensional by assumption. As remarked in the second point, the fiber $M_{\infty}$ is the union of $\PBbb(N^\vee \oplus 1)$ and $\widetilde{X}$, intersecting along the common effective Cartier divisor $\PBbb(N^\vee)$. Since $Y$ is equidimensional and $\PBbb(N^\vee\oplus1)\to Y$ is smooth of constant relative dimension $c$, we deduce that $\PBbb(N^\vee\oplus 1)$ is equidimensional. Similarly for $\PBbb(N^\vee)$. Using that $\PBbb(N^\vee)$ is an effective Cartier divisor in $\widetilde{X}$, it is purely of codimension one, see for instance \cite[Corollary 5.1.8]{EGAIV2}. Combining the several properties on dimension and codimension provided by \cite[Proposition 5.1.9, Corollary 5.2.3]{EGAIV2}, we deduce that for any closed point $\widetilde{x} \in \PBbb(N^\vee)$:
\begin{displaymath}
    \dim_{\widetilde{x}} \widetilde{X} = \dim_{\widetilde{x}} \PBbb(N^\vee)+1.
\end{displaymath}
Now, if $x$ is the image of $\widetilde{x}$ in $Y$, we have 
\begin{displaymath}
    \dim_{\widetilde{x}}\PBbb(N^\vee)+1=\dim_{x} Y+c=\dim_{x} X=n, 
\end{displaymath}
where we have used that $Y\to X$ is a regular closed immersion of codimension $c$, and the above properties of dimension and codimension. Therefore, we see that $\dim_{\widetilde{x}} \widetilde{X}=n$. The same is true for closed points in $\widetilde{X}\setminus\PBbb(N^\vee) =X\setminus Y$.

\end{proof}

The construction above often allows one to reduce questions about closed immersions to the model immersion $j_{\infty}\colon Y \to \PBbb(N^\vee \oplus 1)$, given by the zero section of $N$. We refer to this as the \emph{linearized case}.

\subsection{Deformation of resolutions} We maintain the setting and notation of the previous subsection. We will need to explicitly follow how resolutions of direct images deform when passing to the linearized case. 

We start by remarking that on $\PBbb(N^{\vee}\oplus 1)$ there is a tautological exact sequence
\begin{equation}\label{eq:tautologicalsequenceP}
    0 \to \Ocal(-1) \to p^*  (N \oplus 1) \to Q\to 0,
\end{equation}
where $p\colon\PBbb(N^{\vee} \oplus 1)\to Y$ is the projection and $Q  = T_{\PBbb(N^\vee \oplus 1)/Y}(-1)$ is the universal quotient bundle. Let 
\begin{displaymath} 
    \sigma_0: \Ocal_P \to p^*  (N \oplus 1) \to Q
\end{displaymath} 
be the section of $Q$ obtained by the inclusion of $\Ocal_P \to  p^*  (N \oplus 1)$ into the second factor and then projecting onto $Q$, multiplied by $-1$.\footnote{Here we follow the convention of \cite[p. 311]{BGSImmersions}.} Then, the associated Koszul complex provides a resolution $K^\bullet = K^\bullet(\sigma_0)$ of $(j_{\infty})_{\ast}\Ocal_{Y}$. Given any vector bundle $E$ on $Y$, we hence obtain a Koszul-type resolution 
\begin{equation} \label{eq:Koszulresolution}
    K^\bullet \otimes p^* E \to (j_{\infty})_{\ast} E.
\end{equation}
Below we explain the geometry of these resolutions on the deformation to the normal cone.

\begin{proposition}\label{prop:GS-normal-cone}

Let $i: Y \to X$ be a regular closed immersion of $S$-schemes as above, with normal bundle $N$. Suppose $E$ is a vector bundle on $Y$. 

\begin{enumerate}
    \item\label{item:BFM-GS-1} If $F^\bullet \to i_* E$ is a finite resolution by vector bundles, there is a canonically associated finite resolution by vector bundles $\widetilde{F}^\bullet \to j_* \widetilde{E}$ on $M$, where $\widetilde{E}$ is the pullback of $E$ along $\PBbb^1_Y\to Y$. Moreover: 
    \begin{enumerate}
        \item The restriction of $\widetilde{F}^\bullet \to j_* \widetilde{E}$ to $\ABbb^1_X \subseteq M$  identifies with the pullback of $F^\bullet \to i_* E$ along $\ABbb^1_X \to X.$ 
        \item The restriction of $\widetilde{F}^\bullet \to j_* \widetilde{E}$ to $\PBbb(N^{\vee}\oplus 1)$ above $S \times \{\infty\}$  sits in a natural short exact sequence 
        \begin{displaymath}
            0 \to G^\bullet \to  \widetilde{F}^\bullet|_{\PBbb(N^{\vee}\oplus 1)} \to K^\bullet \otimes p^\ast E \to 0,
        \end{displaymath}
    \end{enumerate}
    where $K^{\bullet}$ is the Koszul resolution of the zero section $Y\to\PBbb(N^\vee \oplus 1)$ and $G^{\bullet}$ is a finite acyclic complex of vector bundles. The restriction of $\widetilde{F}^\bullet$ to $\widetilde{X}$ over $ \infty$ is split acyclic.
    \item\label{item:BFM-GS-2} Suppose that $Y$ is the zero locus of a regular section $\sigma$ of a vector bundle on $X$, and let $K^{\bullet}(\sigma)$ be the associated Koszul resolution of $i_{\ast}\Ocal_{Y}$. Denote by $F$ a vector bundle on $X$ and by $F^\bullet = K^{\bullet}(\sigma) \otimes F$, which is a resolution of $i_\ast i^\ast F$ as in \eqref{item:BFM-GS-1}. 
    \begin{enumerate}
        \item The immersion $\PBbb^{1}_{Y}\to M$ is cut out by the zero locus of a regular section $\widetilde{\sigma}$ of a vector bundle on $M$, and the complex $\widetilde{K}^{\bullet}= {K}^{\bullet}(\widetilde{\sigma})$ is canonically isomorphic to a Koszul resolution of $j_\ast \Ocal_{\PBbb^1_Y}.$ 
        \item The complex $\widetilde{F}^\bullet$ is canonically isomorphic to $\widetilde{K}^\bullet \otimes \widetilde{F}$, where $\widetilde{F}$ denotes the pullback of $F$ along $M \to X.$
        \item The restriction of $\widetilde{F}^\bullet$ to $\ABbb^1_X$ is naturally isomorphic to the pullback of $F^\bullet$ along $\ABbb^1_X \to X.$
        \item The restriction $\widetilde{F}^\bullet$ to $\PBbb(N^\vee \oplus 1)$ is canonically isomorphic to $K^\bullet(\sigma_0) \otimes p^\ast i^\ast F$, i.e. in this case $G^\bullet$ is zero.
                     \end{enumerate}    \item The constructions above commute with base changes along morphisms of schemes $S' \to S$. 
\end{enumerate}

\end{proposition}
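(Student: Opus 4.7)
The plan is to apply the Grassmannian graph construction of MacPherson, developed in \cite{BFM} and elaborated analytically in \cite{BGSImmersions}, to the geometric setup of Proposition \ref{prop:Deformationcone}. Given $F^\bullet \to i_\ast E$ on $X$, pull it back to $\PBbb^1_X$, multiply the differentials by a uniformizer at $\infty$, and apply the graph construction degree by degree. The graphs of the rescaled differentials define subbundles of appropriate direct sums over $\PBbb^1_X$, whose closures under the blowup $\pi\colon M \to \PBbb^1_X$ become locally direct summands and assemble into a complex $\widetilde F^\bullet$ of vector bundles on $M$. Fiberwise verification shows that $\widetilde F^\bullet$ resolves $j_\ast \widetilde E$ and restricts over $\ABbb^1_X \subset M$ to the pullback of $F^\bullet$.

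Next I would analyze the restriction to $M_\infty$. By Proposition \ref{prop:Deformationcone} \eqref{item:Deformationcone-3}, $M_\infty$ decomposes as $\widetilde X \cup \PBbb(N^\vee \oplus 1)$ glued along $\PBbb(N^\vee)$, with $j(\PBbb^1_Y) \cap M_\infty$ equal to the zero section of $\PBbb(N^\vee \oplus 1)$, which is disjoint from $\widetilde X$. Hence $\widetilde F^\bullet|_{\widetilde X}$ is acyclic; split acyclicity follows because the rescaling collapses all differentials identically to zero at $t = \infty$ along the directions transverse to the exceptional divisor, so that the complex decomposes as a direct sum of trivial two-term acyclic pieces. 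Over $\PBbb(N^\vee \oplus 1)$, the linearization of the differentials along the normal direction produces a natural surjection $\widetilde F^\bullet|_{\PBbb(N^\vee \oplus 1)} \twoheadrightarrow K^\bullet \otimes p^\ast E$ onto the Koszul complex of the tautological section $\sigma_0$ of $Q$, with kernel $G^\bullet$ given by the residual split acyclic piece glued in from $\widetilde X$ along $\PBbb(N^\vee)$.

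For part \eqref{item:BFM-GS-2}, the construction is direct. The pullback $\pi^\ast \sigma$ vanishes along $j(\PBbb^1_Y)$ and to order one along the exceptional divisor $\PBbb(N^\vee \oplus 1)$; a canonical rescaling against the defining section of this divisor produces a regular section $\widetilde \sigma$ of a suitable twist $\widetilde V$ of $\pi^\ast V$ whose zero locus is precisely $j(\PBbb^1_Y)$. One verifies that $\widetilde F^\bullet \simeq K^\bullet(\widetilde \sigma) \otimes \pi^\ast F$, since the Koszul differentials are determined entirely by the section, so that the graph construction preserves this form. The stated restriction formulas then reduce to checking that the twisting line bundle is trivial on $\ABbb^1_X$ (recovering the original Koszul complex) and that the restriction of $\widetilde \sigma$ on $\PBbb(N^\vee \oplus 1)$ corresponds to $\sigma_0$ under the identification of Koszul complexes, yielding $G^\bullet = 0$. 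Base-change compatibility in the final item follows from the behavior of blowups, Koszul complexes, and the Grassmannian graph construction under flat base change, via Proposition \ref{prop:Deformationcone} \eqref{item:Deformationcone-1}. The main technical obstacle is the analysis at infinity: identifying the quotient on $\PBbb(N^\vee \oplus 1)$ with the Koszul complex $K^\bullet \otimes p^\ast E$ of $\sigma_0$ and verifying the split acyclicity of $G^\bullet$, which amounts to tracking the degeneration of the rescaled differentials as $t \to \infty$ and recognizing the tautological structure of \eqref{eq:tautologicalsequenceP}.
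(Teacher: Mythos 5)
Your part \eqref{item:BFM-GS-2} contains a genuine error. Dividing $\pi^{\ast}\sigma$ by the defining section of the exceptional divisor $P=\PBbb(N^{\vee}\oplus 1)$ produces a section $\widetilde{\sigma}$ of a \emph{line-bundle twist} of $\pi^{\ast}V$ (namely $\pi^{\ast}V\otimes\Ocal_{M}(-P)$), whose restriction to $P$ is therefore a section of $p^{\ast}N\otimes L$ for a line bundle $L$ on $P$. But the statement (2)(d) — and its later use in Proposition \ref{prop:RRres=RR}, where the restriction must be identified with $K^{\bullet}(\sigma_{0})\otimes p^{\ast}i^{\ast}F$ — requires a section of the tautological quotient $Q$ of \eqref{eq:tautologicalsequenceP}. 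For codimension $c\geq 2$ no line-bundle twist of $p^{\ast}N$ is isomorphic to $Q$: restricting to a fiber $\PBbb^{c}$ of $p$, one has $\det Q\simeq\Ocal(1)$, while $\det(p^{\ast}N\otimes L)$ restricts to $\Ocal(kc)$ for some integer $k$. Hence the Koszul complex of your $\widetilde{\sigma}$ cannot be identified with $K^{\bullet}(\sigma_{0})$ over $P$, and in particular it cannot coincide with the graph-construction complex $\widetilde{F}^{\bullet}$ of part \eqref{item:BFM-GS-1}, whose fiber over $P$ is the $Q$-Koszul complex; so your (2)(b) and (2)(d) are mutually inconsistent for $c\geq 2$ (your recipe only works for divisors). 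The correct $\widetilde{\sigma}$, as in \cite[Lemma 4.7]{BGSImmersions} which the paper invokes, is a section of a canonical rank-$c$ bundle on $M$ that is \emph{not} a twist of $\pi^{\ast}V$: it is built as a tautological quotient on the blowup, interpolating between $V$ over $\ABbb^{1}_{X}$ and $Q$ over $P$.

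Beyond this, your proposal re-sketches the classical Grassmannian graph construction of \cite{BFM}, \cite{BGSImmersions}, \cite{GS-ARR} under its classical hypotheses, whereas the paper's proof consists precisely of the modifications needed to transfer those results to the present setting, where the schemes are neither integral nor Noetherian: Zariski closures of graphs must be replaced by schematic closures (using that the strict transform is the schematic closure of the complement of the center, and that $\ABbb^{1}_{X}$ is schematically dense in $\PBbb^{1}_{X}$ for any $X$), and the integrality-based \cite[Lemma 1]{GS-ARR} (a morphism of locally free sheaves vanishing on a dense open is zero) must be upgraded to schematically dense opens. Your phrases "whose closures become locally direct summands" and "fiberwise verification shows" pass over exactly these points, which is where the work lies here. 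Also, base changes $S'\to S$ are arbitrary, not flat; what makes the construction commute with them is the $S$-flatness of the complexes $F^{\bullet}\to i_{\ast}E$ (Tor-independence) combined with Proposition \ref{prop:Deformationcone} \eqref{item:Deformationcone-1}, not "flat base change". Finally, your explanation of split acyclicity on $\widetilde{X}$ ("the differentials collapse identically to zero") is not coherent — a complex with zero differentials is acyclic only if it is zero; the correct picture is that the limit there is a direct sum of complexes of the form $A\xrightarrow{\ \id\ }A$.
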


\begin{proof}

The results can be found in \cite{BGSImmersions} and \cite{GS-ARR}, which are inspired and build on earlier work in \cite{BFM}. We will indicate how the statements transfer to our setting, by pointing out the key points and the necessary modifications. 

In \cite{GS-ARR}, the vector bundles are assumed to be coherent locally free sheaves, but in the applications it is enough to assume they are locally free sheaves of finite type.

In the Grassmannian graph construction, one repeatedly performs Zariski closures, which are well defined as schemes in their setting due to the integral hypothesis. This needs to be replaced by the schematic closure. For the comparison with the deformation to the normal cone, it is important to note that the strict transform of a subscheme $Z$ in a blowup of a closed subscheme $Y$ of a scheme $X$ coincides with schematic closure of $Z \setminus Y$ in the blowup. Also, one relies on the fact that $\ABbb^{1}_{X}$ is schematically dense in $\PBbb^{1}_{X}$, which is true for any scheme $X.$ This follows from \cite[\href{https://stacks.math.columbia.edu/tag/01RG}{01RG}]{stacks-project}, as well as \cite[\href{https://stacks.math.columbia.edu/tag/01RE}{01RE}]{stacks-project} combined with \cite[Proposition 19.9.8]{EGAIV4}.

There is a hypothesis of integrality in \cite[Lemma 1]{GS-ARR}, which is used repeatedly too. Its main statement is that a homomorphism of locally free sheaves $\phi: A\to B$ which vanishes on a dense Zariski open set is zero on the whole scheme. This is valid in general instead supposing that the open set is schematically dense, without assuming that the scheme is integral. The lemma has one more part, but this relies on the first part. With this understood, the first point of the proposition can be found in \cite[Theorem 4.8]{BGSImmersions} and \cite[Theorem 1]{GS-ARR}. The second point is \cite[Lemma 4.7]{BGSImmersions} in the case of a regular section of a trivial vector bundle. The constructions extend to general vector bundles, as in the proof \cite[Theorem 4.8]{BGSImmersions}. The compatibility with base change can be obtained from Proposition \ref{prop:Deformationcone} \eqref{item:Deformationcone-1} and the explicit construction in \cite{GS-ARR} using the functoriality of the Grassmannian construction, together with the Tor-independent base change applied to the complexes of $S$-flat sheaves $F^\bullet \to i_\ast E$.

\end{proof}
The statement in Proposition \ref{prop:GS-normal-cone} \eqref{item:BFM-GS-1} shows that resolutions can be transformed into the linearized case in a controlled manner. Nevertheless, the formalism of direct images in virtual categories enables us to circumvent such constructions altogether. However, this approach remains important in contexts where one needs to keep track of additional data, such as metrics, and we thus include it for future reference. In this article, we will only need the particular case in Proposition \ref{prop:GS-normal-cone} \eqref{item:BFM-GS-2}.

\subsection{Composition of linearizations} 
We remark upon some additional compatibilities of the previous constructions in the case of the composition of closed immersions. 

\begin{proposition}\label{prop:doubleinclusion}
    Let $Z \overset{i'}{\to} Y \overset{i}{\to} X$ be a composition of regular closed immersions over $S$, defined by closed subschemes, and denote by $N=N_{i \circ i'}$ and $N'=N_{i'}$ the associated normal bundles on $Z$, and $N^{\bis}=N/N^{\prime}$.
    \begin{enumerate}
        \item There is a natural inclusion of deformations to the normal cones 
    \begin{displaymath}
            \PBbb^1_Z \overset{j'}{\to} M' \overset{j}{\to} M,
        \end{displaymath}
    such that the following holds:
    \begin{enumerate}
       \item \label{doubleinclusion-item1} $j'$ and $j$ are both regular immersions. 
       \item  \label{doubleinclusion-item2} The restriction to $\infty$ naturally induces  the inclusion of linearized cases 
       \begin{equation}\label{eq:linearcase}
            Z \overset{j'_\infty}{\to} \PBbb(N^{\prime\vee} \oplus 1) \overset{j_\infty}{\to} \PBbb(N^\vee \oplus 1).
       \end{equation}

    \end{enumerate}
    \item \label{doubleinclusion-item3} In the setting of \eqref{eq:linearcase}, all three closed immersions are regular closed immersions, cut out by regular sections. More precisely, consider the short exact sequence 
\begin{equation}\label{eq:Fultonexactsequence}
    0 \to {N'}(1) \to N(1) \to N^{\bis}(1) \to 0
\end{equation}
    on $\PBbb(N^\vee \oplus 1)$. Then:
    \begin{enumerate}
    \item There is a regular section $\sigma$ of $N(1)$ which cuts out $Z$. 
    \item The projection of $\sigma$ onto $N^{\bis}(1)$ defines a regular section $\sigma''$ of $N^{\bis}(1)$, which cuts out $\PBbb(N^{\prime\vee} \oplus 1) $.
    \item The restriction of $\sigma$ to $\PBbb(N^{\prime\vee} \oplus 1) $ defines a regular section $\sigma'$ of $N'(1)$, which cuts out $Z. $
    \end{enumerate}
    \end{enumerate}
\end{proposition}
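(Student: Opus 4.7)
The plan is to take $M$ to be the deformation to the normal cone for the composite $Z\to X$, and $M'$ the one for $Z\to Y$, both produced by Proposition \ref{prop:Deformationcone} (using that $Z\to X$ is regular as a composition of regular closed immersions). The inclusion $\PBbb^1_Y\hookrightarrow\PBbb^1_X$ together with the common center $Z\times\{\infty\}$ yields, by functoriality of blowups, a canonical morphism $j\colon M'\to M$ realizing $M'$ as the strict transform of $\PBbb^1_Y$ inside $M$. For (1)(a), the regularity of $j'$ is an immediate application of Proposition \ref{prop:Deformationcone} (\ref{item:Deformationcone-4}) to the immersion $Z\to Y$. The regularity of $j$ I would establish by first noting it is a closed immersion (being a strict transform) and then verifying regularity by fibers over $\PBbb^1_S$: away from $\infty$ the restriction is the inclusion $\ABbb^1_Y\hookrightarrow\ABbb^1_X$, regular by hypothesis, and over $\infty$ I would invoke Proposition \ref{prop:Deformationcone} (\ref{item:Deformationcone-3}) to decompose both $M_\infty$ and $M'_\infty$ into two divisorial components meeting along a common Cartier divisor, so that regularity reduces to the two statements that $\PBbb(N^{\prime\vee}\oplus 1)\hookrightarrow\PBbb(N^{\vee}\oplus 1)$ and $\Bl_Z Y\hookrightarrow\Bl_Z X$ are regular closed immersions, which are respectively the linearization described by \eqref{eq:Fultonexactsequence} and a standard strict-transform statement. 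The flatness-plus-fiberwise-regularity criterion recalled in Section \ref{sec:notations-conventions} then promotes this to genuine regularity of $j$. For (1)(b), the identification of the restriction of $j$ to $\infty$ with \eqref{eq:linearcase} on the projective-completion components follows directly from Proposition \ref{prop:Deformationcone} (\ref{item:Y-zero-section}) applied in turn to $Z\to X$ and $Z\to Y$, and the induced embedding on the $\PBbb(N^{\vee}\oplus 1)$-component corresponds, in Grothendieck's convention, to the surjection $N^{\vee}\oplus 1\twoheadrightarrow N^{\prime\vee}\oplus 1$ coming from the subbundle $N^{\prime}\subset N$ of \eqref{eq:Fultonexactsequence}.

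For part (3), the key observation is that the section $\sigma$ of $N(1)=p^{\ast}N\otimes\Ocal(1)$ is naturally obtained dually from the composition $p^{\ast}N^{\vee}\hookrightarrow p^{\ast}(N^{\vee}\oplus 1)\twoheadrightarrow\Ocal(1)$, using the universal quotient on $\PBbb(N^{\vee}\oplus 1)$. For (a), $\sigma$ vanishes precisely where the universal quotient restricts to zero on $p^{\ast}N^{\vee}$, equivalently where it factors through the trivial summand $\Ocal\hookrightarrow p^{\ast}(N^{\vee}\oplus 1)$, which is exactly the zero section $Z$. For (b), the composition $\sigma^{\bis}$ of $\sigma$ with $N(1)\twoheadrightarrow N^{\bis}(1)$ dually corresponds to restricting the universal quotient to $p^{\ast}N^{\bis\vee}\hookrightarrow p^{\ast}N^{\vee}$; its zero locus consists of those quotients factoring through the surjection $p^{\ast}(N^{\vee}\oplus 1)\twoheadrightarrow p^{\ast}(N^{\prime\vee}\oplus 1)$, which is exactly $\PBbb(N^{\prime\vee}\oplus 1)$. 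For (c), the restriction of $\sigma$ to $\PBbb(N^{\prime\vee}\oplus 1)$ lands automatically in the kernel $N^{\prime}(1)$ since $\sigma^{\bis}$ vanishes there, and its zero locus is $Z(\sigma)\cap\PBbb(N^{\prime\vee}\oplus 1)=Z$. The regularity of the three sections reduces to matching ranks with codimensions ($c$, $c-c^{\prime}$ and $c^{\prime}$, respectively), together with the fact that the respective zero loci are flat over $S$; by the flatness-plus-fiberwise-Koszul-regularity criterion of Section \ref{sec:notations-conventions}, it suffices to check Koszul regularity on geometric fibers, where these are classical statements about linear sections on projective bundles over a point.

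I expect the main obstacle to be the fiberwise verification at $\infty$ in part (1)(a), specifically matching the regular closed immersions $\PBbb(N^{\prime\vee}\oplus 1)\hookrightarrow\PBbb(N^{\vee}\oplus 1)$ and $\Bl_ZY\hookrightarrow\Bl_ZX$ along their common Cartier divisor $\PBbb(N^{\prime\vee})\subset\PBbb(N^{\vee})$ so that they glue into a globally regular closed immersion into $M_\infty$. The remainder is a careful bookkeeping exercise with Grothendieck's convention for projective bundles and the tautological quotient.
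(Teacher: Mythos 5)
Your setup coincides with the paper's: the same $M$ and $M'$, the same realization of $M'$ as the strict transform of $\PBbb^1_Y$ in $M$, the same flatness-plus-fiberwise reduction over $\PBbb^1_S$ for the regularity of $j$, and your direct construction of the three sections in part (3) (dual to the composite $p^{\ast}N^{\vee}\to p^{\ast}(N^{\vee}\oplus 1)\to\Ocal(1)$, with the zero loci identified as you describe) is an acceptable substitute for the paper's citation of \cite[B.5.6]{Fulton}. The genuine gap is exactly the point you flag as the expected main obstacle and then leave unresolved: over $\infty$, regularity of $M'_\infty\to M_\infty$ does \emph{not} reduce to regularity of the two component immersions $\PBbb(N^{\prime\vee}\oplus 1)\to\PBbb(N^{\vee}\oplus 1)$ and $\Bl_{Z}Y\to\Bl_{Z}X$. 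Regularity of an immersion into a reducible scheme cannot be checked component by component: the reduced origin in $\{xy=0\}\subset\ABbb^2$ is a Cartier, hence regular, divisor in each axis, yet its ideal $(x,y)$ in $k[x,y]/(xy)$ needs two generators while the codimension is one, so it is not a regular immersion in the union. Away from the crossing locus $\PBbb(N^{\prime\vee})$ your componentwise argument is fine (using also that such points do not land on $\PBbb(N^{\vee})$, since $\widetilde{Y}\cap\PBbb(N^{\vee})=\PBbb(N^{\prime\vee})$ and $\PBbb(N^{\prime\vee}\oplus 1)\cap\PBbb(N^{\vee})=\PBbb(N^{\prime\vee})$ inside $M_\infty$), but at the points of $\PBbb(N^{\prime\vee})$, where both source and target are reducible, an additional argument is mandatory and is missing from your proposal.

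That crossing-point analysis is precisely what the paper's proof consists of. After the same reduction to a field base, it observes that regularity of $j_\infty$ need only be checked at the points of $\PBbb(N^{\prime\vee}\oplus 1)\cap\widetilde{Y}=\PBbb(N^{\prime\vee})$, and there it invokes the two-out-of-three property of regular immersions in the locally Noetherian setting (\cite[\href{https://stacks.math.columbia.edu/tag/0690}{0690}]{stacks-project}): producing an auxiliary closed subscheme through such a point that is regularly immersed in both $M'_\infty$ and $M_\infty$ — the paper takes $\PBbb(N^{\prime\vee})$ itself, with the factorization $\PBbb(N^{\prime\vee})\to\PBbb(N^{\vee})\to M_\infty$ — one deduces regularity of $M'_\infty\to M_\infty$ in a neighborhood of that point. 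Alternatively, a direct computation in a blowup chart, where $M_\infty$ becomes a divisor of the form $\{wg_1=0\}$ and $M'_\infty$ is cut out by a partial system of chart coordinates that one checks remains a regular sequence on the union (and not merely on each of $\{w=0\}$ and $\{g_1=0\}$ separately), would also close the gap. As it stands, your proposal stops exactly where the proposition's only non-routine verification begins.
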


\begin{proof}
    The existence of the maps in the first point is clear, as well as the fact that the restriction to $\infty$ induces the claimed inclusions. We have already proven that $j'$ is a regular closed immersion in Proposition \ref{prop:Deformationcone} \eqref{item:restriction-j-A1}. We then claim that $j$ is a regular closed immersion. Since all of our $\PBbb^{1}_{S}$-schemes are flat of finite presentation by Proposition \ref{prop:Deformationcone}, by a pointwise argument as in the proof of \eqref{item:Deformationcone-4} in \emph{loc. cit.} it is enough to verify that the inclusion $M^{\prime}_{\infty} \to M_{ \infty}$ is a regular closed immersion whenever the base is a field. In particular, we may assume that all the schemes are Noetherian. In this case, it is enough to prove that $j_\infty$ is a regular closed immersion at the points of $\PBbb(N^{\prime\vee} \oplus 1) \cap \widetilde{Y} = \PBbb(N^{\prime\vee})$. Since this is a Cartier divisor in $M_{\infty}^{\prime}$, it is enough to prove that the inclusion $\PBbb(N^{\prime\vee}) \to M_{\infty}$ is regular, cf. \cite[\href{https://stacks.math.columbia.edu/tag/0690}{0690}]{stacks-project}. This factors as 
    \begin{displaymath}
        \PBbb(N^{\prime\vee}) \to \PBbb(N^\vee) \to M_{\infty},
    \end{displaymath}
    which are clearly two regular closed immersions.
 
The statement concerning the regular sections follows from \cite[Appendix B.5.6]{Fulton}
\end{proof}

We introduce some notation in preparation for the following lemma. If $X$ is a scheme and $E$ is a vector bundle of rank $r$ on $X$, then we denote by $\lambda_{-1}(E)$ the virtual vector bundle in $V(X)$
\begin{equation}\label{eq:lambda-1-def}
    \lambda_{-1}(E):=\sum_{k=0}^{r}(-1)^{k}\Lambda^{k}E.
\end{equation}
By \cite[Proof of Lemma 9.3]{DRR1}, the operation $\lambda_{-1}$ behaves multiplicativily on short exact sequences, for the tensor product structure on $V(X)$, possibly up to sign. As in \emph{op. cit.}, this sign ambiguity is harmless in our context, since we ultimately work rationally and hence in particular invert 2, which kills the sign. Namely, we are allowed to consider $\lambda_{-1}$ as taking values in the rational virtual category $V(X)_{\QBbb}$. See also \textsection \ref{subsec:signs} above.

Given a section $\sigma$ of $E^\vee$, note that the object $\lambda_{-1}(E)$ is naturally isomorphic to the object $[K(\sigma)]$ of the associated Koszul complex $K(\sigma)$ in the virtual category $V(X)$, cf. \cite[Section 9.3]{DRR1}. We write the latter in the form
\begin{displaymath}
    0\to \Lambda^{r}E\to\Lambda^{r-1}E\to\cdots\to E\to\Ocal_{X}\to 0,
\end{displaymath}
where $\Lambda^{p}E$ is placed in degree $-p$. 

In the following lemma, we suppose our schemes are divisorial, for the purpose of having pushforward functors between virtual categories of vector bundles. 

\begin{lemma}\label{lemma:KoszulKoszulKoszul}
In the setting of Proposition \ref{prop:doubleinclusion} \eqref{doubleinclusion-item3}, supposing in addition that $X$ is a divisorial scheme, the  diagram

\begin{displaymath}
    \xymatrix{ {(i \circ i')}_! \Ocal_Z \ar[r] \ar[d] & \lambda_{-1}(N^{\vee}(-1))  \ar[d] \\ i_! (\lambda_{-1}(N^{\prime\vee}(-1))) \ar[r] &\lambda_{-1}(N^{\prime\vee}(-1)) \otimes  \lambda_{-1}(N^{\bis\vee}(-1))}
\end{displaymath}
commutes in the rational virtual category $V(\PBbb(N^\vee \oplus 1))_\QBbb$. Here:

\begin{enumerate}
    \item The upper horizontal map comes from the Koszul resolution of $\Ocal_Z$ on $\PBbb(N^\vee \oplus 1).$
    \item The leftmost vertical map comes from the Koszul resolution of $\Ocal_Z$ on $\PBbb(N^{\prime\vee} \oplus 1)$, and then applying $i_!$.
    \item The lower horizontal map comes from the Koszul resolution of $\Ocal_{\PBbb(N^{\prime\vee} \oplus 1)}$ on $\PBbb(N^\vee \oplus 1)$. 
    \item The rightmost vertical map comes from the multiplicativity of $\lambda_{-1}$ on short exact sequences.
\end{enumerate} 
\end{lemma}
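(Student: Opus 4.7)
The plan is to interpret both compositions as two realizations of a common filtered Koszul resolution of $\Ocal_Z$ on $\PBbb(N^\vee \oplus 1)$, compatible with the short exact sequence \eqref{eq:Fultonexactsequence}.

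First, I would unfold both paths concretely. Recall that if $\sigma$ is a section of $E^\vee$, then $[K(\sigma)] \simeq \lambda_{-1}(E)$ in the virtual category. Hence the top horizontal arrow sends $(i\circ i')_! \Ocal_Z$ to $\lambda_{-1}(N^\vee(-1))$ via the quasi-isomorphism $K(\sigma) \to (i\circ i')_\ast \Ocal_Z$, and the right-vertical arrow is the multiplicativity of $\lambda_{-1}$ applied to the dual-twisted sequence $0 \to N''^\vee(-1) \to N^\vee(-1) \to N'^\vee(-1) \to 0$. The leftmost vertical arrow is $i_!$ of the analogous Koszul identification on $\PBbb(N'^\vee \oplus 1)$, and the bottom horizontal arrow combines the projection formula \eqref{eq:proj-formula-virtual-cat} with the Koszul resolution $K(\sigma'') \to i_\ast \Ocal_{\PBbb(N'^\vee \oplus 1)}$ on the ambient space, yielding
\[
    i_!\bigl(i^\ast \lambda_{-1}(N'^\vee(-1))\bigr) \simeq \lambda_{-1}(N'^\vee(-1)) \otimes i_!\Ocal_{\PBbb(N'^\vee\oplus 1)} \simeq \lambda_{-1}(N'^\vee(-1)) \otimes \lambda_{-1}(N''^\vee(-1)).
\]

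The central step is to exhibit both paths as induced by a common filtered refinement of $K(\sigma)$. The compatibilities $\sigma|_{\PBbb(N'^\vee \oplus 1)} = \sigma'$ and $\sigma \bmod N'(1) = \sigma''$ furnished by Proposition \ref{prop:doubleinclusion} \eqref{doubleinclusion-item3} endow $K(\sigma)$ with a canonical decreasing filtration whose associated graded is the total complex of $K(\sigma') \otimes K(\sigma'')$ (viewed on the ambient space, with $K(\sigma')$ supported on $\PBbb(N'^\vee\oplus 1)$). Passing to the virtual category, taking the associated graded amounts to the multiplicativity isomorphism $\lambda_{-1}(N^\vee(-1)) \simeq \lambda_{-1}(N'^\vee(-1)) \otimes \lambda_{-1}(N''^\vee(-1))$, so the top-right composition factors through this total complex. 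Conversely, filtering $K(\sigma)$ by the sub-Koszul $K(\sigma'')$ and reading off the quotient as $i_\ast K(\sigma')$ recovers the bottom-left composition, and the two filtrations are mutually compatible, so both paths agree.

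The main obstacle is to avoid an explicit bicomplex sign computation when manipulating the filtered Koszul complex. I would circumvent this by invoking the naturality and multiplicativity properties of $\lambda_{-1}$ on short exact sequences and admissible filtrations established in \cite[Section 9]{DRR1}, together with the projection formula at the level of virtual categories. Any residual sign arising from the multiplicativity of $\lambda_{-1}$ (see \cite[Proof of Lemma 9.3]{DRR1}) is absorbed upon rationalization, which is precisely why the statement is formulated in $V(\PBbb(N^\vee \oplus 1))_\QBbb$. The compatibility with base change from Proposition \ref{prop:GS-normal-cone} \eqref{item:BFM-GS-1} further permits, if needed, a reduction to a split model where the filtered Koszul identification degenerates to a plain tensor product of two Koszul complexes and commutativity of the diagram becomes transparent.
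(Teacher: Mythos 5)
There is a genuine gap at your ``central step''. The filtered refinement of $K(\sigma)$ that you invoke --- a canonical filtration whose associated graded is the total complex of $K(\sigma')\otimes K(\sigma'')$ --- only exists once the sequence $0\to N'(1)\to N(1)\to N^{\bis}(1)\to 0$ is split, i.e.\ once $\sigma$ decomposes as $\widetilde{\sigma}'\oplus\sigma^{\bis}$ for a lift $\widetilde{\sigma}'$ of $\sigma'$ to the ambient projective bundle; no such lift exists canonically in general. Without a splitting, the natural filtration of $\Lambda^{\bullet}(N^{\vee}(-1))$ coming from the subbundle $N^{\bis\vee}(-1)$ is indeed stable under the Koszul differential, but its graded pieces are $\Lambda^{k}(N^{\prime\vee}(-1))\otimes K(\sigma^{\bis})$ carrying \emph{only} the $\sigma^{\bis}$-differential, not a tensor product of the two Koszul complexes, and $K(\sigma')$ itself lives on $\PBbb(N^{\prime\vee}\oplus 1)$ rather than on the ambient space. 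So the claim that ``the two filtrations are mutually compatible, so both paths agree'' is not available as stated: comparing the route through the multiplicativity of $\lambda_{-1}$ (right vertical map) with the route through $i_!$ of $K(\sigma')$ and $K(\sigma^{\bis})$ is exactly the nontrivial identity in $K_1(\PBbb(N^{\vee}\oplus 1))_{\QBbb}$ that the lemma asserts, and in the split case the paper has to invoke the biadmissible-filtration comparison of \cite[Section 4.7]{Deligne-determinant} together with the identification $K(\sigma)\simeq K(\widetilde{\sigma}')\otimes K(\sigma^{\bis})$ to establish it.

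Your proposed escape --- ``reduction to a split model'' via the base-change compatibility of Proposition \ref{prop:GS-normal-cone} --- does not work and is treated as optional when it is in fact the essential remaining difficulty. Base change does not split the extension; the paper instead deforms the extension to its split form over $\PBbb^{1}$ using the transgression sequence of \cite[Section 2.2]{Eriksson-Freixas-Wentworth}, measures the failure of commutativity by an element $g$ of $K_1(\PBbb(\widetilde{N}^{\vee}\oplus 1))_{\QBbb}$, proves $i_{\infty}^{\ast}g=0$ by the split-case argument, and then needs the separate Lemma \ref{lemma:splittonotsplit}, proved via the projective bundle formula in higher $K$-theory, to conclude that $i_{0}^{\ast}g=0$ as well. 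Without some substitute for this last transfer step (restriction to two fibers of $\PBbb^{1}$ need not agree for a general $K_1$-class, and one cannot appeal to homotopy invariance here), your argument does not close in the non-split case, which is the case of interest.
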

    
\begin{proof}
 We begin with the case when the sequence of vector bundles is split, so that $ N = N' \oplus N^{\bis}$. We will consider the following two filtrations on $K(\sigma)$: 
    \begin{itemize}
        \item $F^k K(\sigma)^{p} =\Imag\left( \Lambda^{k}(N^{\prime\vee}(-1))\otimes \Lambda^{p-k}(N^{\vee}(-1))\to \Lambda^{p}(N^{\vee}(-1))\right) $. The graded quotients are given by
            $$\Gr_F^k K(\sigma)^{p}=\Lambda^{k}(N^{\prime\vee}(-1)) \otimes \Lambda^{p-k} (N^{\bis\vee}(-1)),$$
                    and hence
        \begin{displaymath}
            \Gr^k_F(K(\sigma)) = \Lambda^{k}({N^{\prime \vee}}(-1)) \otimes K(\sigma^{\bis}) [- k] .
        \end{displaymath}
        \item $G^{k}K(\sigma)^{p}=K(\sigma)^{p}$ if $p\leq k$ and $0$ otherwise. The graded pieces are given by
        \begin{displaymath}
            \Gr_G^k K(\sigma)^{p}=
            \begin{cases}
                     K(\sigma)^{k}      &\text{if}\ k=p,\\
                     0                  &\text{if}\ k\neq p.
            \end{cases}
        \end{displaymath}
    \end{itemize}
Second, we consider the following diagram:
\begin{displaymath}
    \xymatrix{
    {(i \circ i')}_! (\Ocal_Z) \ar[d] \ar[r]\ar[dr]   & [K(\sigma)] \ar[r] \ar[d] & \lambda_{-1}(N^\vee (-1)) \ar[dd] \\
    i_! [K(\sigma')] \ar[r] \ar[d] & \ar[d][K(\widetilde{\sigma}')] \otimes [K(\sigma'')] &  \\
    i_! \lambda_{-1}(N^{\prime \vee}(-1)) \ar[r] &  \lambda_{-1}(N^{\prime \vee}(-1)) \otimes [K(\sigma'')]  \ar[r] & \lambda_{-1}({N^{\prime\vee}}(-1)) \otimes \lambda_{-1}(N^{\bis\vee}(-1)).  }
\end{displaymath}
In the diagram above, we denote by $\widetilde{\sigma}^{\prime}$ the section of $N'(1)$ on $\PBbb(N^{\vee}\oplus 1)$ obtained by projecting the section $\sigma$ of $N(1)$ onto the factor $N^{\prime}(1)$. This extends the section $\sigma^{\prime}$, and $\sigma = \widetilde{\sigma}^{\prime} \oplus \sigma^{\bis}$. In this situation, there is an isomorphism of Koszul complexes $K(\sigma)\simeq K(\widetilde{\sigma}^{\prime}) \otimes K(\sigma^{\bis})$, which provides a resolution of $i_\ast K(\sigma')$ . These facts imply that the upper left diagrams commute. The lower left diagram commutes since the isomorphism $[K(\widetilde{\sigma}^{\prime})]\to\lambda_{-1}(N^{\prime\vee}(-1))$ restricts to $[K(\sigma^{\prime})]\to\lambda_{-1}(N^{\prime\vee}(-1))$ through $i$, and since the horizontal maps correspond to the projection formula for $i$, which is a natural transformations of functors. The rightmost diagram commutes by applying \cite[Section 4.7]{Deligne-determinant} to the two filtrations $F^\bullet$ and $G^\bullet$, which states that for a biadmissible filtration, the result of successively applying both filtrations is independent of the order. To appeal to this formula, we remark that the double-graded quotients are locally free, and hence the filtration is biadmissible. 

To reduce the general case to the split case, we introduce the transgression exact sequence, which interpolates between the general exact sequence and the split case, over $\PBbb^1$, cf. \cite[Section 2.2]{Eriksson-Freixas-Wentworth}. This can be written as 
\begin{displaymath}
    0 \to \widetilde{N}'(1) \to \widetilde{N}(1) \to \widetilde{N}^{\bis}(1) \to 0.
\end{displaymath}
We also write down the corresponding inclusion of projective bundles:
\begin{displaymath}
    \PBbb^1_Z \to \PBbb(\widetilde{N}^{\prime\vee} \oplus 1) \to \PBbb(\widetilde{N}^{\vee} \oplus 1). 
\end{displaymath}
We can consider the statement of the lemma. At infinity, the transgression sequence is split, and the corresponding section $\sigma$ identifies with $\widetilde{\sigma}^\prime \oplus \sigma^{\bis}$ as above. The failure of the commutativity of the diagram is measured by an automorphism $g$ of an object in $V(\PBbb(\widetilde{N}^{\vee} \oplus 1))_\QBbb$. As explained in \textsection \ref{intro:virtualcategories}, this identifies as an element of $K_1(\PBbb(\widetilde{N}^{\vee} \oplus 1))_\QBbb,$ and the statement of actual commutativity is that this is trivial. For $t \in \PBbb^1$, we denote by $i_{t}^{\ast}g$ the restriction of $g$ along $t$. Then, by the above argument, we have that $i_{\infty}^{\ast} g$ measures the commutativity in the split case, and hence $i_{\infty}^{\ast} g= 0$. The fact that $i_{0}^{\ast}g=0$ is proven in the lemma below. This completes the proof.
\end{proof}

\begin{lemma}\label{lemma:splittonotsplit}
In the situation above, $i_{0}^{\ast}g=0$ if and only if $i_{\infty}^{\ast}g=0$.
\end{lemma}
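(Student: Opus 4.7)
The plan is to establish the stronger identity $i_0^* g = i_\infty^* g$ in $K_1(\PBbb(\widetilde{N}^\vee \oplus 1))_{\QBbb}$, which via the identification $\pi_1(V(X))_{\QBbb} \simeq K_1(X)_{\QBbb}$ (cf.\ \textsection \ref{intro:virtualcategories}) implies both directions of the lemma. The key tool is Quillen's projective bundle formula, applied in turn to the two projections $p\colon P := \PBbb(\widetilde{N}^\vee \oplus 1) \to \PBbb^1_Z$ and $q\colon \PBbb^1_Z \to Z$.

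For the first step, the projective bundle formula for $p$ identifies $K_1(P)_{\QBbb} = \bigoplus_{k=0}^{r} p^* K_1(\PBbb^1_Z)_{\QBbb} \cdot \xi^k$, where $r = \rk \widetilde{N}$ and $\xi = [\Ocal_P(1)]$. Writing $g = \sum_k p^*(c_k) \cdot \xi^k$, and using that $p \circ i_t = \widetilde{i}_t \circ p_t$ (with $p_t\colon P_t \to Z$ the structure projection and $\widetilde{i}_t\colon Z \hookrightarrow \PBbb^1_Z$ the section at $t \in \{0, \infty\}$), together with $i_t^* \xi = [\Ocal_{P_t}(1)] =: \xi_t$, the pullback becomes $i_t^* g = \sum_k p_t^*(\widetilde{i}_t^* c_k) \cdot \xi_t^k$. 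By the projective bundle formula applied to $p_t$, this vanishes in $K_1(P_t)_{\QBbb}$ precisely when every $\widetilde{i}_t^* c_k$ vanishes in $K_1(Z)_{\QBbb}$. Hence the claim reduces to showing that $\widetilde{i}_0^* = \widetilde{i}_\infty^*$ on $K_1(\PBbb^1_Z)_{\QBbb}$.

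For the second step, the projective bundle formula for $q$ yields $K_1(\PBbb^1_Z)_{\QBbb} = q^* K_1(Z)_{\QBbb} \oplus q^* K_1(Z)_{\QBbb} \cdot [\Ocal(1)]$. Writing $c \in K_1(\PBbb^1_Z)_{\QBbb}$ as $c = q^* a + q^* b \cdot [\Ocal(1)]$, and using that $q \circ \widetilde{i}_t = \id_Z$ together with the canonical trivializations $\widetilde{i}_0^* \Ocal(1) \simeq \Ocal_Z \simeq \widetilde{i}_\infty^* \Ocal(1)$, one obtains $\widetilde{i}_t^* c = a + b$ independently of $t$. This closes the argument.

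The main obstacle will be ensuring that Quillen's projective bundle formula applies in the requisite generality; since $Z$ is not assumed Noetherian, one first reduces via the gluing and base-change principles of Proposition \ref{prop:base-reduction} to a divisorial or even affine setting, in which the formula is available through standard $K$-theoretic arguments.
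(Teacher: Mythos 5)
Your proof is correct and follows essentially the same route as the paper's: a double application of the projective bundle formula (for $P\to\PBbb^1_Z$ and for $\PBbb^1_Z\to Z$, which is exactly the paper's decomposition of $K_1(\PBbb(\widetilde{N}^{\vee}\oplus 1))$ into $K_1(Z)$-coefficients), combined with the observation that restriction to $0$ and to $\infty$ act identically on those coefficients because $[\Ocal_{\PBbb^1}(1)]$ restricts to $1$ at both sections. Two minor points: the ``stronger identity'' $i_0^{\ast}g=i_\infty^{\ast}g$ cannot be asserted in $K_1(\PBbb(\widetilde{N}^{\vee}\oplus 1))_{\QBbb}$, since the two restrictions live in $K_1$ of the distinct fibers $\PBbb(N^{\vee}\oplus 1)$ and $\PBbb((N'\oplus N^{\bis})^{\vee}\oplus 1)$ (your coefficientwise comparison in $K_1(Z)_{\QBbb}$ is the correct and sufficient formulation, and is what the paper's comparison isomorphism encodes), and the closing worry about Noetherian hypotheses is unnecessary, because the lemma is set among divisorial, hence quasi-compact and quasi-separated, schemes, for which the Thomason--Trobaugh projective bundle formula applies as cited.
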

\begin{proof}
   We claim that there is a commutative diagram 
    \begin{displaymath}
        \xymatrix{
        & K_1(\PBbb(\widetilde{N}^{\vee} \oplus 1)) \ar[dl]_{i_{0}^{\ast}} \ar[dr]^{i_{\infty}^{\ast}}  & \\
        K_1(\PBbb({N}^{\vee} \oplus 1)) \ar[rr]^{\sim} & & K_1 (\PBbb((N' \oplus N^{\bis})^{\vee} \oplus 1)).}
    \end{displaymath}
     First, we introduce some notation for the $K_{0}$ classes of the $\Ocal(1)$-type bundles of the several involved projective bundles: $\Ocal(1)$ for $\PBbb^{1}_{Z}$, $\widetilde{\Ocal}(1)$ for $\PBbb(\widetilde{N}^{\vee} \oplus 1)$, $\Ocal_{0}(1)$ for $\PBbb({N}^{\vee} \oplus 1)$, and finally $\Ocal_{\infty}(1)$ for $\PBbb((N' \oplus N^{\bis})^{\vee} \oplus 1)$. By the projective bundle formula in higher algebraic $K$-theory, cf. \cite[Theorem 4.1]{ThomasonTrobaugh}, an element of the $K_1(\PBbb(\widetilde{N}^{\vee} \oplus 1))$ is of the form 
     \begin{displaymath}
        \sum a_{i,j} \otimes \Ocal(i) \otimes \widetilde{\Ocal}(j),
     \end{displaymath}
     with $a_{i,j}$ are elements of $ K_{1}(Z)$, and the indices are restricted to $-1\leq i\leq 0$ and $-\rk N+1\leq j\leq 0$. This is mapped to the elements 
     \begin{displaymath}
        \sum a_{i,j}\otimes \Ocal_{0}(j),\quad\text{resp. } \linebreak \sum a_{i,j} \otimes \Ocal_{\infty}(j),
    \end{displaymath}
    by the restrictions $i_{0}$, resp. $i_{\infty}$. On the other hand, by the same projective bundle formula, $K_1(\PBbb(N^\vee \oplus 1))$ is freely generated as $K_1(Z)$-module by $1, \Ocal_0(-1), \ldots, \Ocal_0(-\rk N+1),$ and likewise for $K_1(\PBbb((N' \oplus N^{\bis})^{\vee} \oplus 1))$, and we define the lower horizontal arrow accordingly. The formula for the restrictions shows that the diagram commutes. Then, the statement follows from the commutative diagram tensored by $\QBbb$.
\end{proof}

\section{Construction for regular closed immersions}\label{section:closed-immersions}
In this section we construct the DRR isomorphism in the case of regular closed immersions. This proceeds in two steps. First, we address the linearized case, that is the case of the zero section of a projective bundle $\PBbb(N^{\vee}\oplus 1)\to Y$, and then we perform a deformation to the normal cone to reduce to this situation. We also establish the compatibility of the construction with the projection formula and the composition of closed immersions. As a byproduct of our arguments, we can also propose a characterization of the construction. Throughout, we assume that all the $S$-schemes satisfy the condition $(C_{n})$, for some $n$, except possibly for base changes $S^{\prime}\to S$. 

\subsection{Construction using a resolution} 
In this subsection, we describe a DRR-type isomorphism associated to a regular closed immersion $i\colon Y\to X$ of $S$-schemes, which is cut out by the zero locus of a regular section $\sigma$ of a vector bundle $\Ecal$ on $X$ of constant rank. We then elaborate on the particular case of Cartier divisors.

\subsubsection{The general construction} Let $i\colon Y\to X$ be as above. We allow the exceptional case where the section $\sigma$ is nowhere-vanishing and $Y$ is empty, and in this situation we apply the conventions from \textsection\ref{sec:notations-conventions}. We will follow a construction in \cite[Corollary 9.7]{DRR1} to provide, in this setting, the following definition.

\begin{construction-definition}\label{construction-definition-RRsec}
Suppose first that $F$ is a virtual vector bundle on $X$, and let $E=i^{\ast}F$. In this case, the DRR isomorphism associated with the vector bundle $\Ecal$ and section $\sigma$ is constructed as the following composition :
\begin{equation}\label{eq:BorelSerre}
    \begin{split}
       \hspace{1cm} [\chfrak(i_{!} E)]_{X/S} &\overset{(1)}{\simeq} \chfrak(F) \cdot [\chfrak(i_! \Ocal_Y)]_{X/S}\\
            &\overset{(2)}{\simeq} \chfrak(F) \cdot i_\ast [\tdfrak(N)^{-1}]_{Y/S} \overset{(3)}{=} i_\ast [\chfrak(E) \cdot \tdfrak(N)^{-1}]_{Y/S},
    \end{split}\tag{$\RR_{sec}(F, \sigma)$}
\end{equation}
where $N$ is the normal bundle of $Y\to X$. Concretely, these isomorphisms are defined as follows:
\begin{enumerate}
    \item The first isomorphism is given by the projection formula from Proposition \ref{prop:proj-for-RR-dist} \eqref{item:proj-for-RR-dist-1}, using that $E=i^{\ast}F$. That is, it combines the projection formula at the level of virtual categories \cite[Proposition 4.12 (2)]{DRR1} with the multiplicativity of the categorical Chern character with respect to the tensor product \cite[Proposition 9.1 (1)]{DRR1}.
    \item The second isomorphism $[\chfrak(i_! \Ocal_Y)]_{X/S} \simeq i_\ast [\tdfrak(N)^{-1}]_{Y/S}$ is \cite[Corollary 9.7]{DRR1}, and it is obtained in two steps:
    \begin{enumerate}
        \item First, rewriting the direct image of the trivial sheaf in terms of a Koszul complex, whose categorical Chern character can be rearranged in a way which makes the top Chern class of $\Ecal$ appear, by the Borel--Serre isomorphism from \cite[Theorem 9.5]{DRR1} and recalled in \eqref{eq:Borel-Serre-prelim} above.
        \item Second, the Chern categorical version of the cycle representation $c_{\rm{top}}(\Ecal) = [Z(\sigma)]$ in Theorem \ref{thm:cor86} \eqref{item:prop-int-dist-restriction}. 
    \end{enumerate}
    \item The last isomorphism is given by the projection formula from Proposition \ref{prop:proj-for-RR-dist} \eqref{item:proj-for-RR-dist-2}, using that $i^\ast F = E$.
\end{enumerate}
By Lemma \ref{lemma:extension-RRsec} below, the construction extends to virtual perfect complexes on $X$.
\end{construction-definition}

Several remarks are in order. We note that in \cite[Corollary 9.7]{DRR1}, it was assumed that the immersion $i$ admits a retraction $X\to Y$, but supposing that $E$ extends to $X$ is actually enough. We also comment on two extreme cases. If $\sigma$ is the zero section of the zero bundle, then $\RR_{sec}(F,\sigma)$ is the identity automorphism of $[\chfrak(F)]_{X/S}$. If $\sigma$ is nowhere-vanishing, the conventions in \textsection \ref{sec:notations-conventions} stipulate that $\RR_{sec}(F,\sigma)$ is the identity automorphism of the constant functor $\Ocal_{S}$. This is compatible with the empty case of the Borel--Serre isomorphism \cite[Theorem 9.5 (3)]{DRR1}, on which the above construction is based. 

\begin{lemma}\label{lemma:extension-RRsec}
The construction $F\mapsto\RR_{sec}(F, \sigma)$ induces an isomorphism of functors of commutative Picard categories $V(X)\to\Dcal(X/S)$, compatible with base change and with bounded denominators. By Proposition \ref{prop:base-reduction}, this extends to an isomorphism of functors of commutative Picard categories $V(\Pcal_{X})\to\Dcal(X/S)$, compatible with base change and with bounded denominators.
\end{lemma}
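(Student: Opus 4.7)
The plan is to observe that each of the three isomorphisms composing $\RR_{sec}(F,\sigma)$ is already a natural isomorphism of functors of commutative Picard categories in $F$, compatible with base change and with bounded denominators, and then to invoke Proposition \ref{prop:base-reduction} to extend uniquely to $V(\Pcal_X)$.

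First I would verify that both the source $F \mapsto [\chfrak(i_! i^*F)]_{X/S}$ and the target $F \mapsto i_\ast[\chfrak(i^*F) \cdot \tdfrak(N)^{-1}]_{Y/S}$ are themselves functors of commutative Picard categories $V(X) \to \Dcal(X/S)$, compatible with base change. This is clear because they are obtained by composing the pullback $i^\ast\colon V(X)\to V(Y)$, the pushforward $i_!\colon V(Y)\to V(\Pcal_X)$, the Chern categorical functor, multiplication by a Chern power series, and the intersection-distribution and pushforward operations on line distributions, all of which are functors of commutative Picard categories compatible with base change as recalled in \textsection\ref{subsubsec:functorial-properties-virtual-prelim} and \textsection\ref{subsubsec:categorical-characteristic}.

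Next I would check naturality in $F$ for each of the three steps. Step (1) is a direct application of the projection formula of Proposition \ref{prop:proj-for-RR-dist}\eqref{item:proj-for-RR-dist-1}, taken with the vector bundle $\Ocal_Y$ on $Y$ and coefficient $F$ on $X$; that proposition explicitly asserts an isomorphism of functors of commutative Picard categories in $F$, compatible with base change and with bounded denominators. Step (2) is \cite[Corollary 9.7]{DRR1}, which is independent of $F$ and hence trivially natural in it, with the same base-change and denominator properties. Step (3) is the projection formula of Proposition \ref{prop:proj-for-RR-dist}\eqref{item:proj-for-RR-dist-2}, applied to the virtual vector bundle $i^\ast F$ on $Y$ together with the line distribution $[\tdfrak(N)^{-1}]_{Y/S}$, and again yields an isomorphism of functors with the required properties. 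The composition of three such isomorphisms is again one; the bounded denominators are controlled by those of $\chfrak$ and $\tdfrak(N)^{-1}$ truncated to degree $\leq n+1$.

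Finally, to upgrade the conclusion from $V(X)$ to $V(\Pcal_X)$, I would invoke Proposition \ref{prop:base-reduction}: the restriction functor from isomorphisms of functors on $V(\Pcal_X)$ compatible with base change and with bounded denominators to the analogous category for $V(X)$ is an equivalence, so the isomorphism constructed on $V(X)$ lifts uniquely and canonically. I do not anticipate a substantive obstacle; the argument is essentially a bookkeeping check that the constituent isomorphisms are natural in the fibered sense of Definition \ref{def:isom-functor-compat-base-change}. The only mild subtlety worth flagging is that $i_! i^\ast F$ is in general only a virtual perfect complex on $X$ rather than a virtual vector bundle, so that $\chfrak(i_!i^\ast F)$ is not globally defined and the left-hand side must be assembled from affine base changes — but this is precisely the mechanism Proposition \ref{prop:base-reduction} is designed to handle.
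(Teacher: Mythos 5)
Your argument is correct and essentially the paper's own proof: both reduce, via Proposition \ref{prop:base-reduction}, to divisorial bases and virtual vector bundles, and then observe that the two projection-formula steps are functorial in $F$ by Proposition \ref{prop:proj-for-RR-dist}, that the middle isomorphism of \cite[Corollary 9.7]{DRR1} is independent of $F$, and that all constituents are base-change compatible with denominators bounded by those of $\chfrak$ and $\tdfrak$ up to the relevant degree. The subtlety you flag about $i_!i^\ast F$ only being a virtual perfect complex is exactly what the reduction to divisorial schemes handles, just as you say.
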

\begin{proof}
By Proposition \ref{prop:base-reduction}, we can restrict to divisorial base schemes and work with virtual vector bundles instead of virtual perfect complexes. Then, we just need to remark that pulling back virtual bundles by $i$ defines a functor of commutative Picard categories, and the projection formulas used in the definition of $\RR_{sec}(F, \sigma)$ are functorial too, by Proposition \ref{prop:proj-for-RR-dist}. We also need to observe that all the involved constructions are already compatible with base change, and have bounded denominators depending only on the denominators of $\chfrak$ and $\tdfrak$ up to a given order, which is bounded by the relative dimensions of $Y\to S$ or $X\to S$ plus one. 
\end{proof}

\subsubsection{The case of Cartier divisors}\label{subsubsec:RRimmersionD} The case of a closed immersion given by a relative effective Cartier divisor $i\colon D\to X$ is of particular importance, and we briefly discuss it in detail. 

The sheaf $i_{\ast}\Ocal_{D}$ admits a resolution given by the natural map $\Ocal_{X}(-D)\to\Ocal_{X}$. This induces an isomorphism $i_{!}\Ocal_{D}\simeq \Ocal_{X}-\Ocal_{X}(-D)$ in the virtual category, and hence
\begin{equation}\label{eq:RRimmersionD-1}
    [\chfrak(i_{!}\Ocal_{D})]_{X/S}\simeq [\chfrak(\Ocal_{X}-\Ocal_{X}(-D))]_{X/S}.
\end{equation}
Using the rank triviality isomorphism from Theorem \ref{thm:cor86}, the isomorphism $[\cfrak_{1}(\Ocal_{X}(-D))]\simeq -[\cfrak_{1}(\Ocal(D))]$ from \cite[Proposition 8.11]{DRR1} and a formal rearrangement of power series, we obtain the Borel--Serre isomorphism in this case:
\begin{equation}\label{eq:RRimmersionD-2}
    [\chfrak(\Ocal_{X}-\Ocal_{X}(-D))]_{X/S}\simeq [\cfrak_{1}(\Ocal(D))\cdot\tdfrak(\Ocal(D))^{-1}]_{X/S}.
\end{equation}
By the restriction isomorphism from Theorem \ref{thm:cor86}, we find
\begin{equation}\label{eq:RRimmersionD-3}
    [\cfrak_{1}(\Ocal(D))\cdot \tdfrak(\Ocal(D))^{-1}]_{X/S}\simeq [\tdfrak(N_{D/X})^{-1}]_{D/S}.
\end{equation}
Concatenating \eqref{eq:RRimmersionD-1}--\eqref{eq:RRimmersionD-3} recovers $\RR_{sec}(\Ocal_{X},1)$, where $1$ is the canonical section of $\Ocal_{X}(D)$.

\subsection{Reduction to the case of a resolution}
 In the general case of a regular closed immersion of a subscheme, by the previous section on the deformation to the normal cone, the construction of a Deligne--Riemann--Roch isomorphism can be reduced to the setting of \eqref{eq:BorelSerre}. Below, we follow the notation of Section \ref{sec:def-normal-cone}.

Let $Y$ be a closed subscheme of $X$, and suppose that the morphism $Y\to S$ satisfies the condition $(C_{n-c})$, and that the inclusion $i\colon Y\to X$ is a regular closed immersion of codimension $c$ and normal bundle $N$. Let $\pi\colon M\to\PBbb^{1}_{X}$ be the associated deformation to the normal cone construction. For $t \in \PBbb^1$, denote by $k_t: M_t \to M$ the natural inclusion. We denote by $k_\infty^{1}: \PBbb(N^\vee \oplus 1)\to M$ the restriction of $k_{\infty}$ to $\PBbb(N^\vee \oplus 1)$, and by $k_{\infty}^{2}\colon \widetilde{X}\to M$ the restriction to the blowup $\widetilde{X}$. The following lemma introduces the necessary rational equivalence reducing to the linear case. 

\begin{lemma}\label{lemma:rationalequivalence}
For any virtual perfect complex $E$ on $Y$ there is an isomorphism of line distributions
    \begin{displaymath}
        k_{0, \ast } [\chfrak(i_! E)]_{X/S} \simeq k_{\infty, \ast }^{1} [\chfrak(j_{\infty, !} E)]_{\PBbb(N^\vee \oplus 1)/S} 
    \end{displaymath}
  over $M \to S$. This induces an isomorphism of functors of commutative Picard categories $V(\Pcal_Y) \to \Dcal(M/S)$, compatible with base change and with bounded denominators.
\end{lemma}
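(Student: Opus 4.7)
The plan is to construct the isomorphism through an intermediate line distribution on $M$, by identifying both sides with the product of that distribution with the first Chern class of one of two linearly equivalent line bundles. Concretely, since $\varrho\colon M\to\PBbb^{1}_{S}$ satisfies the condition $(C_{n})$ by Proposition \ref{prop:Deformationcone} and $j$ is a regular closed immersion of codimension $c$, the intersection distribution $[\chfrak(j_{!}\widetilde{E})]_{M/S}$ is well-defined on $M$, where $\widetilde{E}$ denotes the pullback of $E$ along $\PBbb^{1}_{Y}\to Y$. The fibers $M_{0}$ and $M_{\infty}$ of $\varrho$ are linearly equivalent Cartier divisors, both arising as $\varrho^{\ast}\Ocal_{\PBbb^{1}_{S}}(1)$, so $\cfrak_{1}(\Ocal_{M}(M_{0}))\simeq\cfrak_{1}(\Ocal_{M}(M_{\infty}))$ in $\CHfrak(M)_{\QBbb}$. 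I will show that multiplying $[\chfrak(j_{!}\widetilde{E})]_{M/S}$ by either class computes the corresponding side of the claimed isomorphism.

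At $t=0$, I will apply the restriction isomorphism of Theorem \ref{thm:cor86}\eqref{item:prop-int-dist-restriction} to $\Ocal_{M}(M_{0})$ and combine it with the product rule \eqref{eq:rest-dist-prel} to obtain
\begin{displaymath}
    \cfrak_{1}(\Ocal_{M}(M_{0}))\cdot[\chfrak(j_{!}\widetilde{E})]_{M/S}\simeq k_{0,\ast}[\chfrak(k_{0}^{\ast}j_{!}\widetilde{E})]_{M_{0}/S}.
\end{displaymath}
Tor-independent base change in the Cartesian square formed by $k_{0}$ and $j$---which holds because $M_{0}$ is cut out by the affine coordinate on $\PBbb^{1}_{S}$, a non-zero-divisor on the flat scheme $\PBbb^{1}_{Y}$ over $\PBbb^{1}_{S}$---then yields $k_{0}^{\ast}j_{!}\widetilde{E}\simeq i_{!}E$ under the identification $M_{0}\simeq X$ provided by Proposition \ref{prop:Deformationcone}\eqref{item:restriction-j-A1}, delivering the left-hand side $k_{0,\ast}[\chfrak(i_{!}E)]_{X/S}$.

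At $t=\infty$, I will use the decomposition $M_{\infty}=\PBbb(N^{\vee}\oplus 1)+\widetilde{X}$ as Cartier divisors (Proposition \ref{prop:Deformationcone}\eqref{item:Deformationcone-3}) to split the product according to
\begin{displaymath}
    \cfrak_{1}(\Ocal_{M}(M_{\infty}))\simeq\cfrak_{1}(\Ocal_{M}(\PBbb(N^{\vee}\oplus 1)))+\cfrak_{1}(\Ocal_{M}(\widetilde{X})),
\end{displaymath}
via the additivity of $\cfrak_{1}$ on tensor products of line bundles, a combination of Theorem \ref{thm:cor86}\eqref{item:prop-int-dist-whitney} and \eqref{item:prop-int-dist-c1-det}. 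Applying the restriction isomorphism to each summand, the first will produce $k_{\infty,\ast}^{1}[\chfrak(j_{\infty,!}E)]_{\PBbb(N^{\vee}\oplus 1)/S}$, since by Proposition \ref{prop:Deformationcone}\eqref{item:Y-zero-section} the base change of $j$ along $k_{\infty}^{1}$ is precisely the zero section $j_{\infty}$. The second summand should vanish because $j(\PBbb^{1}_{Y})$ is disjoint from $\widetilde{X}$: outside the exceptional divisor $\ABbb^{1}_{Y}$ does not meet $X\setminus Y$ inside $\PBbb^{1}_{X}\setminus(Y\times\{\infty\})$, while inside the exceptional divisor the zero section of $\PBbb(N^{\vee}\oplus 1)$ is disjoint from the hyperplane at infinity $\PBbb(N^{\vee})=\PBbb(N^{\vee}\oplus 1)\cap\widetilde{X}$. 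Combining both computations through the linear equivalence of first Chern classes then yields the desired isomorphism.

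The functoriality in $E$, the compatibility with base change, and the boundedness of denominators for the resulting isomorphism of functors $V(\Pcal_{Y})\to\Dcal(M/S)$ will follow from the analogous properties of each ingredient used: the virtual pushforward $j_{!}$, the categorical Chern character, the restriction isomorphism of Theorem \ref{thm:cor86}, and the Whitney-type decomposition of $\cfrak_{1}$; reduction to virtual vector bundles over divisorial base schemes, if needed, can be carried out through Proposition \ref{prop:base-reduction}. The main technical obstacle is the justification that the $\widetilde{X}$-contribution vanishes in the virtual category of perfect complexes, for which the empty-intersection statement $j(\PBbb^{1}_{Y})\cap\widetilde{X}=\varnothing$ has to translate into the vanishing of the base change of $j_{!}\widetilde{E}$ along the regular Cartier divisor $\widetilde{X}\hookrightarrow M$ via Tor-independent base change for perfect complexes with disjoint support.
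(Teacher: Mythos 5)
Your proposal is correct and takes essentially the same route as the paper: the paper's proof multiplies the isomorphism of delta distributions $\delta_{M_{0}/S}\simeq\delta_{M_{\infty}/S}\simeq\delta_{\PBbb(N^{\vee}\oplus 1)/S}+\delta_{\widetilde{X}/S}$ (cited from \cite[Corollaries 8.8 and 8.9]{DRR1}) by $\chfrak(j_{!}\widetilde{E})$ after the same reduction via Proposition \ref{prop:base-reduction}, and then uses the same Tor-independence and disjointness-of-support identifications at $t=0$ and $t=\infty$. The only difference is that you rederive those delta-distribution identities directly from the restriction isomorphism of Theorem \ref{thm:cor86} and the additivity of $\cfrak_{1}$ on the sum of Cartier divisors, which is precisely how the cited corollaries are constructed, so this is an unpacking of the same argument rather than a different one.
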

\begin{proof}
By Proposition \ref{prop:base-reduction}, we can restrict to divisorial schemes and work with virtual categories of vector bundles. 

By \cite[Corollary 8.8, Corollary 8.9]{DRR1}, there is a canonical isomorphism of line distributions for $M\to S$:

\begin{equation}\label{eq:rationalequivalencelinedistr}
    \delta_{X/S}= \delta_{M_0/S} \simeq \delta_{M_\infty/S} \simeq \delta_{\PBbb(N^\vee \oplus 1)/S} + \delta_{\widetilde{X}/S}.
\end{equation}
We multiply this by $\chfrak(j_! \widetilde{E}),$ and consider the associated line distribution over $M\to S$. The intersection of $\PBbb^{1}_{Y}$ with both components $\PBbb(N^\vee \oplus 1)$ and $\widetilde{X}$ is Tor-independent. This implies that the pullback of $j_! \widetilde{E}$ to $\PBbb(N^\vee \oplus 1)$ is isomorphic to $j_{\infty, !} E$ and the pullback to $\widetilde{X}$ is isomorphic to 0. 

For the functoriality, the key points are that $E\mapsto j_{!}\widetilde{E}=j_{!}p^{\ast}E$ induces a functor of commutative Picard categories and the fact that the deformation to the normal cone construction commutes with base changes $S^{\prime}\to S$, cf. Proposition \ref{prop:Deformationcone} \eqref{item:Deformationcone-2}. The boundedness of denominators is clear from the construction. The denominators depend only on the relative dimensions of $X\to S$ and $Y\to S$, and they can be extracted from the denominators of the power series defining $\chfrak$.
\end{proof}
\begin{proposition-definition}\label{prop:RR-closed-immersions}
Let $i\colon Y \to X$ be a regular closed immersion of $S$-schemes with normal bundle $N$. Then, there is a canonical isomorphism 
\begin{displaymath}
    \RR_{i}(E)\colon [\chfrak(i_! E)]_{X/S} \to i_\ast \left[ \chfrak(E) \cdot \tdfrak(N)^{-1} \right]_{Y/S},
\end{displaymath}
which induces an isomorphism of functors $V(\Pcal_{Y}) \to \Dcal(X/S)$ of commutative Picard categories, compatible with base change and with bounded denominators. Moreover, it is functorial in $i$, in the sense of Definition \ref{def:compatibilitymorphisms}, and it is the identity isomorphism if $i$ is the identity map.\end{proposition-definition}
\begin{proof}
First, by Proposition \ref{prop:isomor-of-mor} and by imposing the compatibility with isomorphisms in the sense of Definition \ref{def:compatibilitymorphisms}, we can reduce the construction to the case where $Y$ is actually a closed subscheme of $X$. We are thus in the setting above, and we maintain the notation therein. 

Consider the projection $q: M \to \PBbb^1_X \to X$, for which we have $q \circ k_0 = \id_X$ and $q \circ k_\infty \circ j_\infty = i$. Taking direct images along $q$ in the isomorphism of Lemma \ref{lemma:rationalequivalence}, we find an isomorphism of line distributions from $X \to S:$
\begin{equation}\label{eq:rationalequivalence}
   [\chfrak(i_! E)]_{X/S} \simeq q_\ast k_{\infty, \ast} [\chfrak(j_{\infty, !} E)]_{\PBbb(N^\vee \oplus 1)/S}.
\end{equation}
Since the regular immersion $Y \to \PBbb(N^\vee \oplus 1)$ is cut out by the zero locus of the regular section $\sigma_{0}$ of the tautological quotient bundle $Q$ as in \eqref{eq:tautologicalsequenceP}, we can next apply the isomorphism \eqref{eq:BorelSerre} with $F = p^\ast E$ and the regular section $\sigma_0$. This yields
\begin{displaymath}
    [\chfrak(j_{\infty, !} E)]_{\PBbb(N^\vee \oplus 1)/S} \simeq j_{\infty, \ast} [\chfrak(E) \cdot \tdfrak(N)^{-1}]_{Y/S}.
\end{displaymath}
Composing this isomorphism with \eqref{eq:rationalequivalence}, and using that $q_\ast k_{\infty, \ast} j_{\infty, \ast} = i_\ast$, we conclude. The functoriality and compatibility with base change are clear by construction, as well as the boundedness of denominators. 

For the functorial dependence on $i$, by the very construction, one can easily reduce to the following setting:
\begin{displaymath}
\xymatrix{
        Y^{\prime}\ar[r]^{i^{\prime}}\ar[d]_{\varphi}^{\sim}  &X^{\prime}\ar[d]^{\psi}_{\sim}\\
        Y\ar[r]^{i} &X,
    }
\end{displaymath}
where the immersions are defined by closed subschemes. Also by the deformation to the normal cone, one is led to treat the linearized case, for which the DRR isomorphisms are given by Construction/Definition \ref{construction-definition-RRsec}. The problem further amounts to studying the compatibility of \cite[Corollary 9.7]{DRR1} with isomorphisms of schemes, which in turns hinges on the analogous question for the Borel--Serre isomorphism \eqref{eq:Borel-Serre-prelim} and the restriction isomorphism from Theorem \ref{thm:cor86} \eqref{item:prop-int-dist-restriction}. These are compatible with the birational invariance isomorphism from Theorem \ref{thm:cor86} \eqref{item:prop-int-dist-birational}, and in particular with isomorphisms of schemes. Indeed, this was observed in \textsection\ref{subsubsec:mult-chern-Borel-Serre} for the Borel--Serre isomorphism, and it is part of Theorem \ref{thm:cor86} for the restriction isomorphism. 

Finally, the fact that $\RR_{i}$ is the identity isomorphism if $i$ is the identity map is trivial. It also follows from Proposition \ref{prop:RRres=RR} in the case of the zero section of the zero vector bundle.
\end{proof}

Note that in case where $Y\to X$ is cut out by the zeros of a regular section, both constructions \eqref{eq:BorelSerre} and Proposition \ref{prop:RR-closed-immersions} apply. The following proposition ensures the resulting isomorphisms agree.
\begin{proposition}\label{prop:RRres=RR}
        Let $i: Y \to X$ be a regular closed immersion cut out by the zeros of a regular section $\sigma$ of a vector bundle. Also, let $F$ (resp. $E$) be a virtual perfect complex on $X$ (resp. $Y$), with an isomorphism $\psi: E \simeq i^\ast F$. Then, the following diagram commutes, where the vertical arrows are induced by $\psi$ :
        \begin{displaymath}
            \xymatrix{[\chfrak(i_! E)]_{X/S} \ar[rr]^-{\RR_i(E)}\ar[d] & & i_\ast  [\chfrak(E) \cdot \tdfrak(N)^{-1}]_{Y/S} \ar[d] \\
            [\chfrak(i_! i^\ast F)]_{X/S} \ar[rr]^-{\RR_{sec}( F,\sigma)} & & i_\ast  [\chfrak(i^\ast F) \cdot \tdfrak(N)^{-1}]_{Y/S}    }         
        \end{displaymath}
        In particular, $\RR_i(i^\ast F) = \RR_{sec}(F, \sigma)$, which only depends on the immersion $i$ and the restriction $i^{\ast}F$.   \end{proposition}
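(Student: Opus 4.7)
First, by the functoriality in $E$ of both constructions---encoded in Lemma \ref{lemma:extension-RRsec} for $\RR_{sec}$ and in Proposition-Definition \ref{prop:RR-closed-immersions} for $\RR_{i}$---the isomorphism $\psi\colon E\simeq i^{\ast}F$ reduces the claim to showing $\RR_{i}(i^{\ast}F)=\RR_{sec}(F,\sigma)$. To do so, the plan is to apply $\RR_{sec}$ to a global deformation of the pair $(F,\sigma)$ over the deformation to the normal cone $\pi\colon M\to\PBbb^{1}_{X}\to X$ of Section \ref{sec:def-normal-cone}. Concretely, by Proposition \ref{prop:GS-normal-cone}\eqref{item:BFM-GS-2}, the section $\sigma$ extends canonically to a regular section $\widetilde{\sigma}$ of a vector bundle on $M$ whose zero locus is $j\colon\PBbb^{1}_{Y}\hookrightarrow M$; moreover $k_{0}^{\ast}\widetilde{\sigma}=\sigma$ on $M_{0}=X$, and the restriction of $\widetilde{\sigma}$ along $k_{\infty}^{1}\colon\PBbb(N^{\vee}\oplus 1)\to M$ is the tautological section $\sigma_{0}$ of $Q$ cutting out the zero section $j_{\infty}\colon Y\to\PBbb(N^{\vee}\oplus 1)$. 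The crucial feature here is that the correction term $G^{\bullet}$ vanishes in the regular-section case, so that the Koszul resolutions on $X$ and on $\PBbb(N^{\vee}\oplus 1)$ fit together globally on $M$.

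Setting $\widetilde{F}=\pi^{\ast}F$, I would form the isomorphism of line distributions $\RR_{sec}(\widetilde{F},\widetilde{\sigma})$ over $M$. Its base-change compatibility from Lemma \ref{lemma:extension-RRsec} yields two restriction identities, namely $k_{0}^{\ast}\RR_{sec}(\widetilde{F},\widetilde{\sigma})=\RR_{sec}(F,\sigma)$ on $X$ and $(k_{\infty}^{1})^{\ast}\RR_{sec}(\widetilde{F},\widetilde{\sigma})=\RR_{sec}(F|_{\PBbb(N^{\vee}\oplus 1)},\sigma_{0})$ on $\PBbb(N^{\vee}\oplus 1)$. The right-hand side of the second identity is precisely the linearized $\RR_{sec}$ isomorphism entering the construction of $\RR_{i}$ in Proposition-Definition \ref{prop:RR-closed-immersions}.

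To conclude, I would push $\RR_{sec}(\widetilde{F},\widetilde{\sigma})$ forward along $q\colon M\to X$ and interpret the result through the rational equivalence $k_{0,\ast}\simeq k_{\infty,\ast}^{1}$ of Lemma \ref{lemma:rationalequivalence}. The contribution from $\widetilde{X}$ drops out because $j_{!}\widetilde{E}$ restricts trivially there by Tor-independence, as already exploited in the proof of that lemma. The identity $q\circ k_{0}=\id_{X}$ then yields $q_{\ast}\RR_{sec}(\widetilde{F},\widetilde{\sigma})=\RR_{sec}(F,\sigma)$, while $q\circ k_{\infty}^{1}\circ j_{\infty}=i$ together with the definition of $\RR_{i}$ in Proposition-Definition \ref{prop:RR-closed-immersions} gives $q_{\ast}\RR_{sec}(\widetilde{F},\widetilde{\sigma})=\RR_{i}(i^{\ast}F)$, whence the equality. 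The principal obstacle, and the reason the entirety of Section \ref{sec:def-normal-cone} is needed, is the coherent tracking of the Koszul resolutions, Borel--Serre isomorphisms, restriction isomorphisms, and projection formulas constituting the two sides as one interpolates between $t=0$ and $t=\infty$ on $M$; this bookkeeping is reduced to routine verifications by the naturality and mutual compatibility statements in Theorem \ref{thm:cor86}, combined with the geometric input of Proposition \ref{prop:GS-normal-cone}\eqref{item:BFM-GS-2}.
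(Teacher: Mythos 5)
Your proposal follows essentially the same route as the paper's proof: reduce by functoriality to $E=i^{\ast}F$ (with $F$ a vector bundle), form $\RR_{sec}(\widetilde{F},\widetilde{\sigma})$ over the deformation to the normal cone using Proposition \ref{prop:GS-normal-cone}\eqref{item:BFM-GS-2}, compare the fibers at $0$ and $\infty$ through the rational equivalence and birational-invariance isomorphisms of Lemma \ref{lemma:rationalequivalence} (with the $\widetilde{X}$-contribution trivial), and push forward along $q$ using $q k_{0}=\id_{X}$ and $q k_{\infty}^{1} j_{\infty}=i$. The only difference is presentational: the "routine verifications" you defer to Theorem \ref{thm:cor86} are exactly what the paper spells out in its large commutative diagram, checking that the rational-equivalence and component-decomposition isomorphisms preserve the data defining $\RR_{sec}$.
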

\begin{proof}
To prove the commutativity, we may assume that the base schemes are divisorial and work with virtual categories of vector bundles, cf. Proposition \ref{prop:base-reduction}. Since $\RR_{i}$ is functorial with respect to isomorphisms in the bundle, we can reduce to the case $E= i^\ast F$, which we assume from now on. Therefore, we must show $\RR_i(i^\ast F) = \RR_{sec}(F, \sigma)$.

We then consider the following diagram, whose commutativity is proven below:

\begin{displaymath}
\resizebox{\textwidth}{!}{
\xymatrix@R=2em@C=10em{    k_{0,\ast} [\chfrak(i_! E)] \ar[d] \ar[r]^-{k_{0, \ast} \RR_{sec}(F, \sigma)} \ar@/_10pc/[dddddd]_{\varphi} & \ar@/^10pc/[dddddd]^{\phi} k_{0, \ast} i_\ast [\chfrak(E) \cdot \tdfrak(N)^{-1}] \ar[d] \\
    [\chfrak(j_! \widetilde{E})] \cdot \delta_0 \ar[d] \ar[r]^-{\RR_{sec}(\widetilde{F}, \widetilde{\sigma}) \cdot \delta_0} & j_\ast [\chfrak(\widetilde{E})\cdot \tdfrak(N_j)^{-1}] \cdot \delta_0 \ar[d] \\
    [\chfrak(j_! \widetilde{E})] \cdot \delta_\infty \ar[d] \ar[r]^-{\RR_{sec}(\widetilde{F}, \widetilde{\sigma}) \cdot \delta_\infty} & j_\ast [\chfrak(\widetilde{E})\cdot \tdfrak(N_j)^{-1}] \cdot \delta_{\infty} \ar[d] \\
    k_{\infty, \ast} [\chfrak(j_{\infty, !} E)] \ar[r]^-{k_{\infty, \ast}\RR_{sec}(\widetilde{F}|_{M_\infty}, \widetilde{\sigma}|_{M_{\infty}})} \ar[d] & k_{\infty, \ast } j_{\infty, \ast}[\chfrak(E) \cdot \tdfrak(N)^{-1}] \ar[d] \\
    k_{\infty, \ast}^{1} [\chfrak(j_{\infty,!}^{1} E)] \ar@{}[d]|-{+}    &    \ar[l]+<40pt,-22pt>;+<-60pt,-22pt>^{k_{\infty, \ast}^1 \RR_{sec}(\widetilde{F}|_{\PBbb(N^\vee \oplus 1)}, \sigma_0)}_{\overset{+}{k_{\infty, \ast}^2 \RR_{sec}(\widetilde{F}|_{\widetilde{X}}, \widetilde{\sigma}|_{\widetilde{X}} )}}   k_{\infty, \ast}^{1}j_{\infty, \ast }^{1} [\chfrak(E) 
    \cdot \tdfrak(N)^{-1}]  \ar@{}[d]|-{+}\\   
    k_{\infty, \ast}^{2}[\chfrak(j_{\infty, !}^{2} 0)]\ar[d]  &k_{\infty, \ast}^{2} j_{\infty, \ast }^{2} [\chfrak(0) \cdot \tdfrak(N|_{\emptyset})^{-1}] \ar[d]\\
    k_{\infty, \ast}^{1} [\chfrak(j_{\infty, !}^{1} E)] \ar[r]^-{k_{\infty, \ast} \RR_{sec}(\widetilde{F}|_{\PBbb(N^\vee \oplus 1)}, \sigma_0)} &  k_{\infty, \ast}^{1}j_{\infty, \ast }^{1} [\chfrak(E) 
    \cdot \tdfrak(N)^{-1}].
}
}
\end{displaymath}
In the diagram, the notation is as follows:
\begin{itemize}
    \item We write $\delta_{0}$ and $\delta_{\infty}$ for the distributions $\delta_{M_{0}/S}$ and $\delta_{M_{\infty}/S}$.
    \item We denote by $\widetilde{F}$ the pullback of $F$ to $M$ and $\widetilde{E}$ the pullback of $E$ to $\PBbb^{1}_{Y}$. 
    \item We introduce the closed immersions $k_{\infty}^{1}\colon \PBbb(N^{\vee}\oplus 1)\to M$, $k_{\infty}^{2}\colon\widetilde{X}\to M$, $j_{\infty}^{1}\colon Y\to \PBbb(N^{\vee}\oplus 1)$ and $j_{\infty}^{2}\colon\emptyset\to\widetilde{X}$.
\end{itemize}
Moreover, the vertical arrows are defined similarly to Lemma \ref{lemma:rationalequivalence}, and we in particular use that the constructions restricted to $\widetilde{X}$ are canonically trivial.

The fact that these diagrams actually commute relies on several facts. One is the actual description of $\RR_{sec}$ from Construction/Definition \ref{construction-definition-RRsec}, and the restriction morphism along the zeros of a regular section. The claim as such is that the vertical isomorphisms preserve this datum. For the rational equivalence isomorphism $\delta_{0}\simeq\delta_{\infty}$ (cf. \cite[Corollary 8.8]{DRR1}), this follows since the definition is in terms of multiplication by $\cfrak_1(\Ocal_{\PBbb^{1}}(1))$ and restriction along the divisors of the standard sections $X_{0}$ and $X_{1}$ of $\Ocal_{\PBbb^{1}}(1)$, together with the commutativity of the restriction isomorphism for multiple sections. For the birational invariance isomorphism $\delta_{M_\infty/S} \simeq \delta_{\PBbb(N^\vee \oplus 1)/S} + \delta_{\widetilde{X}/S}, $ (cf. \cite[Corollary 8.7]{DRR1}), this follows from the description in terms of restriction to the components $\PBbb(N^{\vee}\oplus 1)$ and $\widetilde{X}$ (cf. \cite[Proposition 6.13]{DRR1}), and the commutativity of the birational invariance isomorphism and the restriction along the zeros of regular sections. The claimed commutativity of isomorphisms of line distributions is ensured by Theorem \ref{thm:cor86}.

The fact that the lower diagram commutes follows from the empty case in \cite[Corollary 9.6]{DRR1}, which asserts that both $[\chfrak(j_{\infty,!}^{2}0)]$ and $j_{\infty, \ast }^{2} [\chfrak(0) \cdot \tdfrak(N|_{\emptyset})^{-1}]$ are canonically trivialized, and that the isomorphism $\RR_{sec}$ interchanges these trivializations. 

Having established this, let us now note that by the universal property of the virtual category, to prove the proposition we can assume that our initial $F$ is actually a vector bundle. By Proposition \ref{prop:GS-normal-cone} \eqref{item:BFM-GS-2}, in this case $\widetilde{F}|_{\PBbb(N^\vee \oplus 1)} \simeq p^\ast i^\ast F$. Applying the functor $q_\ast$ to $k_{\infty,\ast}\RR_{sec}(p^\ast i^\ast F,\sigma_{0}) \circ \varphi$ is exactly the definition of $\RR_i(i^\ast F)$. Since the diagram commutes, this is equal to $(q_\ast \phi) \circ q_\ast k_{0, \ast} \RR_{sec}(F, \sigma).$ Since $q k_0 = \id_X$ and $q k_{\infty} j_{\infty} = i$, we find that the isomorphism $q_\ast \phi$ naturally identifies with the identity, and $q_\ast k_{0, \ast} \RR_{sec}(F, \sigma) \simeq \RR_{sec}(F, \sigma)$. In other words, the commutativity of the diagram ensures that $\RR_i(i^\ast F)$ coincides with $\RR_{sec}(F, \sigma)$. 

\end{proof}
The assertion of Proposition \ref{prop:RRres=RR} can be seen to imply that the DRR isomorphism is compatible with the projection formula.  The following corollary asserts that this also holds in general. 

\begin{corollary}\label{cor:projformula}
    If $F$ (resp. $E$) is a virtual vector bundle on $X$ (resp. virtual perfect complex on $Y$), the following diagram commutes: 
    \begin{displaymath}
        \xymatrix{\chfrak(F) \cdot [\chfrak(i_! E)]_{X/S} \ar[rr]^-{\chfrak(F) \cdot \RR_i(E)} \ar[d] &  &\chfrak(F) \cdot i_\ast [\chfrak(E)\cdot \tdfrak(N)^{-1}]_{Y/S} \ar[d] \\ 
        [\chfrak(i_!(i^\ast F \otimes E))]_{X/S} \ar[rr]^-{\RR_{i}(i^\ast F \otimes E)} &   &i_\ast [\chfrak(i^\ast F \otimes E)\cdot \tdfrak(N)^{-1}]_{Y/S}.
        }
    \end{displaymath}
    Here, the vertical arrows are given by the projection formula for the Riemann--Roch distributions from Proposition \ref{prop:proj-for-RR-dist}. 
\end{corollary}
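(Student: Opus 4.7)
The plan is to trace the definition of $\RR_i(E)$ through the deformation to the normal cone, reducing the compatibility to the linearized case where it follows from the very construction of $\RR_{sec}$. By Proposition \ref{prop:base-reduction}, I may assume throughout that the base is divisorial and that $E$ is a virtual vector bundle, so that pullbacks like $p^\ast E$ and tensor products are genuine virtual vector bundles and $\chfrak(\,\cdot\,)$ is defined on all the objects involved.

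First, I would pull back $F$ along $q\colon M\to \PBbb^1_X\to X$, obtaining $\widetilde{F} = q^\ast F$ on $M$, whose restriction to $\PBbb(N^\vee\oplus 1)$ is canonically $p^\ast i^\ast F$. The central observation is that the rational equivalence isomorphism of Lemma \ref{lemma:rationalequivalence},
\[
k_{0,\ast}[\chfrak(i_! E)]_{X/S} \simeq k_{\infty,\ast}^{1}[\chfrak(j_{\infty,!}E)]_{\PBbb(N^\vee\oplus 1)/S},
\]
is a morphism of modules over the Chern category on $M$: its construction only uses the fundamental class isomorphism $\delta_{M_0/S}\simeq \delta_{\PBbb(N^\vee\oplus 1)/S}+\delta_{\widetilde X/S}$ and the acyclicity of $j_! \widetilde{E}$ on $\widetilde X$, both of which are isomorphisms of line distributions and hence commute with left multiplication by any Chern power series. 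Multiplying by $\chfrak(\widetilde F)$, pushing forward by $q$, and using the multiplicativity of $\chfrak$ on tensor products recalled in \eqref{eq:chern-tensor-product}, one reduces the corollary to the analogous projection formula compatibility for $\RR_{sec}(p^\ast E,\sigma_0)$ and $\RR_{sec}(p^\ast i^\ast F\otimes p^\ast E,\sigma_0)$ on $\PBbb(N^\vee\oplus 1)$.

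In that linearized situation, Proposition \ref{prop:RRres=RR} identifies the DRR isomorphism for $j_\infty$ with $\RR_{sec}$, and so the task is to check the projection formula compatibility for $\RR_{sec}$ itself. Inspecting the three-step construction \eqref{eq:BorelSerre}, steps (1) and (3) are precisely the projection-formula isomorphisms of Proposition \ref{prop:proj-for-RR-dist}, both of which are isomorphisms of functors and therefore natural in the auxiliary bundle; step (2) is the Borel--Serre/restriction isomorphism for the trivial sheaf $\Ocal$, and does not involve the auxiliary bundle at all. Hence the square in the statement decomposes as a juxtaposition of three commutative squares, each of which commutes by the functoriality of the projection formula, the naturality of \eqref{eq:chern-tensor-product}, and the bookkeeping of Theorem \ref{thm:cor86}.

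The main technical obstacle will be the bookkeeping in the first step: one needs to verify that the several natural isomorphisms entering Lemma \ref{lemma:rationalequivalence}, together with the projection formula along $q$ and $k_\infty$, genuinely assemble into a module isomorphism over the Chern category, so that multiplying by $\chfrak(\widetilde F)$ interchanges with each of them. This ultimately reduces to the commutativity assertions between the Whitney, restriction, birational-invariance and projection-formula isomorphisms contained in Theorem \ref{thm:cor86}, which guarantee that any rearrangement of the order of these operations yields the same isomorphism of line distributions.
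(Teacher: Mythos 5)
Your proposal is correct and follows essentially the same route as the paper: reduce via Proposition \ref{prop:base-reduction} and the deformation to the normal cone (with $\widetilde F=q^{\ast}F$ restricting to $p^{\ast}i^{\ast}F$ on $\PBbb(N^{\vee}\oplus 1)$) to the linearized case, where Proposition \ref{prop:RRres=RR} identifies $\RR_{i}$ with $\RR_{sec}$, whose very construction is built from the projection formulas of Proposition \ref{prop:proj-for-RR-dist}, so the diagram commutes automatically. Your elaboration of the reduction step (that the rational equivalence of Lemma \ref{lemma:rationalequivalence} commutes with multiplication by $\chfrak(\widetilde F)$, by Theorem \ref{thm:cor86} and the naturality of \eqref{eq:chern-tensor-product}) is precisely the bookkeeping the paper leaves to the reader.
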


\begin{proof}
     In the linearized case of a zero section $Y\to\PBbb(N^{\vee}\oplus 1)$, by Proposition \ref{prop:RRres=RR}, the isomorphism $\RR_{i}$ amounts to a computation by $\RR_{sec}$, which is defined in terms of the projection formula, using as extension of $E$ the bundle $p^\ast E$. Since $\widetilde{F}|_{\PBbb(N^\vee \oplus 1)} = p^\ast i^\ast F$ the diagram which appears in this comparison automatically commutes. 

     In general, one reduces to this case by a deformation to the normal cone argument, which is part of the construction of $\RR_{i}$. The details are omitted.
\end{proof}

\subsection{Composition of immersions: the linearized case}
 In this subsection we show that the DRR isomorphism for regular closed immersions is compatible with composition of immersions, in the linearized case. In the subsequent subsection we will show how to reduce to this case.

\begin{proposition}
\label{prop:linearcase}
Let $Z$ be an $S$-scheme, and suppose that $N' \subseteq N$ is an inclusion of vector bundles of constant rank, such that $N/N'$ is locally free. Consider the corresponding regular closed immersions $Z \overset{i'}{\to} \PBbb(N^{\prime\vee} \oplus 1) \overset{i}{\to} \PBbb(N^\vee \oplus 1)$. Then, $\RR_{i'}\RR_{i}=\RR_{ii'}$.   
\end{proposition}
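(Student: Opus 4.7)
The plan is to reduce to the universal case $E = \Ocal_Z$ via the projection formula, then use Proposition \ref{prop:RRres=RR} to express all three DRR isomorphisms in terms of $\RR_{sec}$ applied to the regular sections $\sigma$, $\sigma'$, $\sigma''$ provided by Proposition \ref{prop:doubleinclusion} \eqref{doubleinclusion-item3}, and finally verify the resulting identity by a diagram chase based on Lemma \ref{lemma:KoszulKoszulKoszul}.

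First, by Corollary \ref{cor:projformula} applied twice, both $\RR_{i'}\RR_{i}$ and $\RR_{ii'}$ are compatible with the projection formula along $ii' \colon Z \to \PBbb(N^\vee \oplus 1)$. It is therefore enough to compare the two isomorphisms on $E = \Ocal_Z$. By Proposition \ref{prop:RRres=RR}, $\RR_{ii'}(\Ocal_Z)$ equals $\RR_{sec}(\Ocal_{\PBbb(N^\vee \oplus 1)}, \sigma)$, and $\RR_{i'}(\Ocal_Z)$ equals $\RR_{sec}(\Ocal_{\PBbb(N'^\vee \oplus 1)}, \sigma')$. For $\RR_{i}$, the Koszul resolution of $i'_! \Ocal_Z$ identifies this object with $\lambda_{-1}(N'^\vee(-1))|_{\PBbb(N'^\vee \oplus 1)}$ in the rational virtual category, and the latter is the restriction of the class $\lambda_{-1}(N'^\vee(-1))$ already defined on $\PBbb(N^\vee \oplus 1)$. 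Hence Proposition \ref{prop:RRres=RR} applies and yields $\RR_{i}(i'_! \Ocal_Z) = \RR_{sec}(\lambda_{-1}(N'^\vee(-1)), \sigma'')$.

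Next, I would unfold each $\RR_{sec}$ according to Construction/Definition \ref{construction-definition-RRsec} into its three constitutive pieces: a projection formula, an application of the Borel--Serre isomorphism \eqref{eq:Borel-Serre-prelim} to rewrite the Chern class of $\lambda_{-1}(\cdot)$ as $\cfrak_{\rm top} \cdot \tdfrak^{-1}$, and the restriction isomorphism of Theorem \ref{thm:cor86} \eqref{item:prop-int-dist-restriction}. The equality $\RR_{i'}\RR_{i}(\Ocal_Z) = \RR_{ii'}(\Ocal_Z)$ then reduces to three independent compatibilities along the short exact sequence \eqref{eq:Fultonexactsequence}: firstly, the Koszul resolutions match, which is exactly Lemma \ref{lemma:KoszulKoszulKoszul} together with the multiplicativity of $\lambda_{-1}$; secondly, the Borel--Serre isomorphism is compatible with short exact sequences, and both the top Chern class and the Todd class are multiplicative on \eqref{eq:Fultonexactsequence} (the latter giving precisely the Whitney isomorphism $\tdfrak(N_{ii'})^{-1} \simeq \tdfrak(N_{i'})^{-1} \cdot \tdfrak((i')^\ast N_i)^{-1}$ appearing in Definition \ref{def:compositionofmorphisms}); thirdly, the restriction along $\sigma = \widetilde{\sigma}' \oplus \sigma''$ agrees with the iterated restriction first along $\sigma''$ and then along $\sigma'$, which is again a case of Theorem \ref{thm:cor86}.

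The main obstacle is the sheer size of the diagram needed to assemble these three pieces while threading through the projection formulas for both $i$ and $i'$ and respecting the definition of composition of DRR isomorphisms. However, the foundational commutativity result of \cite{DRR1}, namely Theorem \ref{thm:cor86}, guarantees that the relevant subdiagrams of Whitney, restriction, and birational invariance isomorphisms commute with each other. Working in the rational virtual category (cf. \textsection\ref{subsec:signs}) dispenses with signs, so that the comparison becomes formal once the Koszul-resolution compatibility of Lemma \ref{lemma:KoszulKoszulKoszul} is in place, which is the one genuinely non-formal input.
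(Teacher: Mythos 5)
Your proposal is correct and follows essentially the same route as the paper: reduce to $E=\Ocal_Z$ via the projection formula (using that bundles on $Z$ extend to $\PBbb(N^\vee\oplus 1)$), realize $\RR_{ii'}(\Ocal_Z)$, $\RR_{i'}(\Ocal_Z)$ and $\RR_{i}(i'_!\Ocal_Z)$ as $\RR_{sec}$ for the sections $\sigma$, $\sigma'$, $\sigma''$ of Proposition \ref{prop:doubleinclusion}, and conclude by a diagram chase whose only non-formal input is Lemma \ref{lemma:KoszulKoszulKoszul}, the remaining squares being Whitney/Borel--Serre/restriction compatibilities. The only cosmetic difference is that the compatibility of restriction along $\sigma$ with iterated restriction along $\sigma''$ and then $\sigma'$ is invoked in the paper via \cite[Proposition 7.14]{DRR1} rather than directly from Theorem \ref{thm:cor86}.
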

\begin{proof} 
Without loss of generality, we may assume that our schemes are divisorial and work with virtual categories of vector bundles, cf. Proposition \ref{prop:base-reduction}. 

First, any vector bundle on $Z$ will extend to $\PBbb(N^\vee \oplus 1)$. Then, by Corollary \ref{cor:projformula}, we can reduce to study the compatibility with the composition for the trivial vector bundle on $Z$. 

Second, by Proposition \ref{prop:RRres=RR}, we can use the description of the Riemann--Roch isomorphism provided by $\RR_{sec}$. Indeed, by Proposition \ref{prop:doubleinclusion} \eqref{doubleinclusion-item3} all three inclusions are cut out by regular sections, in a way compatible with each other in the sense accounted for in the proposition.

We now consider the diagram below:

\begin{displaymath}
\resizebox{\textwidth}{!}{
\xymatrix@R=3em@C=2.5em{
     \left[\chfrak((i \circ i')_! \Ocal_Z)\right]  
     \ar@/_13pc/[dddddddd]_{\mathbf{(DRR)}} 
     \ar@{}[ddrrr]|{\textbf{(A)}} 
     \ar[dd] 
     \ar[rrr] 
     & & & 
     \left[\chfrak(i_! i^{\prime}_! \Ocal_Z)\right]  
     \ar[d] 
     \ar@/^14pc/[ddddd]^{\mathbf{(DRR'')}} \\
    & & & 
    \left[\chfrak(i_! \lambda_{-1}(N^{\prime\vee}(-1)))\right] 
    \ar[d] \\
    \left[\chfrak(\lambda_{-1}(N^\vee(-1)))\right] 
    \ar[ddddd] 
    \ar[rrr] 
    \ar@{}[dddrr]|{\hspace{2.5cm}\textbf{(B)}} 
    & & & 
    \left[\chfrak(\lambda_{-1}((N/N')^\vee(-1)) \cdot \lambda_{-1}(N^{\prime\vee}(-1)))\right] 
    \ar[d] \\
     &  &  & 
     \left[\cfrak_{\operatorname{top}}((N/N')(1)) \cdot \tdfrak((N/N')(1))^{-1} \cdot \chfrak(\lambda_{-1}(N^{\prime\vee}(-1)))\right] 
     \ar[d] 
     \ar[ddl] \\
    &  & &  
    i_\ast \left[\tdfrak((N/N')(1))^{-1} \cdot \chfrak(\lambda_{-1}(N^{\prime\vee}(-1)))\right]  
    \ar[d] 
    \ar@/_7pc/[dd]_{\operatorname{id}} \\
    \ar@{}[dddrr]|{\hspace{2.5cm}{\textbf{(D)}}} 
    & & 
    \ar@{}[dddr]|{\textbf{(E)}}   
    \left[\cfrak_{\operatorname{top}}((N/N')(1)) \cdot \tdfrak((N/N')(1))^{-1} \cdot \cfrak_{\operatorname{top}}(N'(1)) \cdot \tdfrak({N'}(1))^{-1}\right]  
    \ar[ddr] 
    \ar@{}[r]|{\hspace{2cm}\textbf{(C)}} 
    & i_\ast \left[\tdfrak((N/N')(1))^{-1} \cdot \chfrak({i}'_! \Ocal_Z)\right] 
    \ar[d] 
    \ar@/^14pc/[ddd]^{\mathbf{(DRR')}}\\
    & & & 
    i_\ast \left[\tdfrak((N/N')(1))^{-1} \cdot \chfrak(\lambda_{-1}(N^{\prime\vee}(-1)))\right]   
    \ar[d] \\
    \left[\cfrak_{\operatorname{top}}(N(1)) \cdot \tdfrak(N(1))^{-1}\right] 
    \ar@{}[drr]|{\textbf{(F)}} 
    \ar[d] 
    \ar[rr] 
    \ar[uurr] 
    & & 
    \left[\cfrak_{\operatorname{top}}(N(1)) \cdot \tdfrak((N/N')(1))^{-1} \cdot \tdfrak(N'(1))^{-1}\right] 
    \ar[d] \ar[uu]
    & i_\ast \left[\tdfrak((N/N')(1))^{-1} \cdot \cfrak_{\operatorname{top}}(N'(1)) \cdot \tdfrak({N'}(1))^{-1}\right]    
    \ar[d] \\
    (i \circ i')_\ast \left[ \tdfrak(N)^{-1}\right]   
    \ar[rr] 
    &  &  
    (i \circ i')_\ast \left[\tdfrak((N/N') )^{-1} \cdot \tdfrak({N'})^{-1}\right]   
    \ar[r]  
    & i_\ast \left(\tdfrak((N/N')(1))^{-1} \cdot i^{\prime}_{\ast} \left[ \tdfrak(N'(1))^{-1}\right]\right).  
}
}
\end{displaymath}
We first explain the meaning of the arrows $\mathbf{(DRR)}$, $\mathbf{(DRR')}$ and $\mathbf{(DRR'')}$:
\begin{itemize}
    \item[$\mathbf{(DRR)}$] is given by $\RR_{sec}(\Ocal_{\PBbb(N^{\prime\vee} \oplus 1)}, \sigma),$ and hence by Proposition \ref{prop:RRres=RR} realizes $\RR_{i \circ i'}(\Ocal_Z)$.
    \item[$\mathbf{(DRR^{\prime})}$] is given by the isomorphism $i_\ast\left[ \tdfrak((N/N')(1))^{-1} \cdot \RR_{i'}(\Ocal_Z) \right]$, where we have realized $\RR_{i'}(\Ocal_Z)$ as $\RR_{sec}(\Ocal_{\PBbb(N^{\prime\vee}\oplus 1)}, \sigma')$.
    \item[$\mathbf{(DRR^{\prime\prime})}$] is given by $\RR_{i}({i}^{\prime}_{!} \Ocal_Z)$, realized using $\RR_{sec}(\lambda_{-1}(N^{\prime\vee}(-1)),\sigma'')$, where we have used the isomorphism $i'_! \Ocal_Z \simeq \lambda_{-1}(N^{\prime\vee}(-1))$ in the virtual category, and the fact that $\lambda_{-1}(N^{\prime\vee}(-1))$ canonically extends to $\PBbb(N^\vee \oplus 1)$.
\end{itemize}
Hence, the outer contour of the above diagram is the comparison stated in the lemma. We will show that this outer contour commutes, by proving the commutativity of all the interior diagrams.

The diagram $\mathbf{(A)}$ commutes by applying $\chfrak$ to the diagram in Lemma \ref{lemma:KoszulKoszulKoszul}. Because we work with rational coefficients, the fact that it only commutes up to sign is innocuous. The diagram $\mathbf{(B)}$ amounts to the compatibility of the Borel--Serre isomorphism with short exact sequences from \cite[Theorem 9.5(1)]{DRR1}. The diagram $\mathbf{(C)}$ commutes because it is the restriction of the Borel--Serre isomorphism, which commutes with restriction, as observed in \textsection \ref{subsubsec:mult-chern-Borel-Serre}. The diagram $\mathbf{(D)}$ commutes by definition, since the diagonal arrow is given by the two others. The diagram $\mathbf{(E)}$ commutes because of \cite[Proposition 7.14]{DRR1}, applied to the sections involved in the exact sequence \eqref{eq:Fultonexactsequence} of Proposition \ref{prop:doubleinclusion}. This states that in this particular situation, the restriction isomorphisms are compatible with each other in the asserted sense.  The diagram $\mathbf{(F)}$ commutes because the Whitney isomorphism is compatible with the restriction isomorphism, as stated in Theorem \ref{thm:cor86}.

\end{proof}

\subsection{Composition of immersions: the general case}
We trace out the steps needed to reduce the composition of immersions to the already treated linearized case. The main result of this subsection is hence the following:

\begin{proposition}\label{prop:compositionofinclusions}
Let $i: Y \to X$ and $i': Z \to Y$ be regular closed immersions. Then, $\RR_{i'}\RR_{i}=\RR_{ii'}$.    
\end{proposition}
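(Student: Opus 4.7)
The plan is to reduce the composition compatibility to the linearized case already treated in Proposition~\ref{prop:linearcase}, by deforming both immersions simultaneously to their normal cones. By Proposition~\ref{prop:base-reduction} we may assume the base $S$ is divisorial and work with virtual categories of vector bundles throughout; the compatibility with base change and boundedness of denominators for $\RR_{ii'}$ and $\RR_{i'}\RR_i$ is then inherited from the corresponding properties of the individual constructions.

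The key geometric input is Proposition~\ref{prop:doubleinclusion}, which provides a commutative diagram of regular closed immersions $\PBbb^1_Z \overset{j'}{\hookrightarrow} M' \overset{j}{\hookrightarrow} M$ over $\PBbb^1_S$, whose fiber over $0 \in \PBbb^1$ recovers $Z \overset{i'}{\to} Y \overset{i}{\to} X$, while its fiber over $\infty$ contains the linearized composition $Z \overset{j'_\infty}{\to} \PBbb(N'^\vee \oplus 1) \overset{j_\infty}{\to} \PBbb(N^\vee \oplus 1)$. Write $q\colon M \to \PBbb^1_X \to X$ for the projection. The construction of $\RR_i$ in Proposition~\ref{prop:RR-closed-immersions} amounts to applying the rational-equivalence isomorphism of Lemma~\ref{lemma:rationalequivalence} and then pushing forward along $q$; an entirely parallel construction applies to $i'$ via the deformation $M'$, and to $i \circ i'$ via the composite deformation. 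Thus both $\RR_{ii'}(E)$ and $\RR_{i'}\RR_i(E)$ are realized as the $q_\ast$-image of isomorphisms carried out on the fiber $M_\infty$, and the problem reduces to comparing them in the linearized case, which is Proposition~\ref{prop:linearcase}.

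The main obstacle will be verifying the (large) commutative diagram asserting that the rational-equivalence comparison is compatible with the composition law of Definition~\ref{def:compositionofmorphisms}. This requires the mutual commutativity of three families of natural isomorphisms along the deformation: the rational equivalence and birational invariance isomorphisms of \cite{DRR1} relating the fibers at $0$ and $\infty$; the projection formulas for intersection distributions, together with Corollary~\ref{cor:projformula} applied separately to $\RR_i$ and $\RR_{i'}$; and the Whitney-type isomorphism $[\tdfrak(T_{ii'})] \simeq [\tdfrak(T_{i'}) \cdot (i')^\ast \tdfrak(T_i)]$ from \cite[Lemma 9.2]{DRR1}, which at $\infty$ must be matched with the Whitney isomorphism attached to the exact sequence $0 \to N'(1) \to N(1) \to N''(1) \to 0$ of Proposition~\ref{prop:doubleinclusion}\eqref{doubleinclusion-item3}. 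Each of these ingredients is controlled by Theorem~\ref{thm:cor86} and by the associativity established in Lemma~\ref{lemma:RRassociative}, so the diagram-chase, while sizeable, reduces to routine applications of the principle that all natural operations on intersection distributions mutually commute. Once this is checked, the equality $\RR_{ii'}(E) = \RR_{i'}\RR_i(E)$ follows by applying $q_\ast$ to the identity supplied by Proposition~\ref{prop:linearcase} at $\infty$.
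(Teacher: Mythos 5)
Your overall strategy---deform to the normal cone via Proposition \ref{prop:doubleinclusion} and invoke Proposition \ref{prop:linearcase} at infinity---matches the paper's, but the mechanism by which you transfer the identity from the fiber at infinity back to $i,i'$ has a genuine gap. The claim that both $\RR_{ii'}(E)$ and $\RR_{i'}\RR_i(E)$ ``are realized as the $q_\ast$-image of isomorphisms carried out on the fiber $M_\infty$'' is unjustified: in the diagram $\PBbb^1_Z\overset{j'}{\to}M'\overset{j}{\to}M$, the spaces $M'$ and $M$ are the deformations of $Z$ inside $Y$ and inside $X$, whereas $\RR_i$ is constructed from the deformation of $Y$ inside $X$, which does not appear in this picture at all; only $\RR_{ii'}$ (via $M$) and $\RR_{i'}$ (via $M'$, pushed to $Y$) are visible there. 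Consequently the composite $\RR_{i'}\RR_i$---which also involves the projection formula and the Whitney isomorphism $[\tdfrak(N_{ii'})^{-1}]\simeq[\tdfrak(N_{i'})^{-1}\cdot i'^{\ast}\tdfrak(N_i)^{-1}]$---is not visibly computed on $M_\infty$, and the ``large commutative diagram'' you propose to check along the deformation is precisely the content to be proved, not a routine consequence of Theorem \ref{thm:cor86}. The paper sidesteps this with a rigidity argument you are missing: it applies the already-constructed DRR isomorphisms to the deformed immersions $j'$, $j$, $jj'$ themselves (regular closed immersions of schemes over $\PBbb^1_S$), notes that the discrepancy between $\RR_{j'}\RR_j$ and $\RR_{j'j}$, evaluated on any Chern power series, is an invertible function on $\PBbb^1_S$, hence pulled back from $S$ and constant along the $\PBbb^1$-direction; its value at $0$ is the discrepancy for $i,i'$ (by base-change compatibility of the DRR isomorphisms), and its value at $\infty$ is the discrepancy for the fiber at infinity. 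No comparison of the deformation constructions ``along'' the family is then required.

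Even granting a transfer to $\infty$, you cannot quote Proposition \ref{prop:linearcase} directly there: the immersions at infinity land in the reducible schemes $\widetilde{Y}\cup\PBbb(N_{i'}^{\vee}\oplus 1)$ and $\widetilde{X}\cup\PBbb(N_{ii'}^{\vee}\oplus 1)$, not in the projective bundles. The paper reduces to the linearized case by two applications of Lemma \ref{lemma:ignorepart}: first along the maps from the disjoint unions $\widetilde{Y}\sqcup\PBbb(N_{i'}^{\vee}\oplus 1)$ and $\widetilde{X}\sqcup\PBbb(N_{ii'}^{\vee}\oplus 1)$ to the unions, which requires checking that the relevant squares are Cartesian (done via the pushout description of $M_\infty$ in Proposition \ref{prop:Deformationcone} and \cite[\href{https://stacks.math.columbia.edu/tag/0ECK}{0ECK}]{stacks-project}), and then to discard the blowup components $\widetilde{Y}$, $\widetilde{X}$, which $Z$ does not meet. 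Your proposal omits this reduction and the Cartesian verification entirely, so as written the final appeal to Proposition \ref{prop:linearcase} does not apply to the immersions you actually have at $\infty$.
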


To be able to prove the proposition, we will need a technical result on the compatibility of the DRR for closed immersions with some birational morphisms. Let us introduce the setting. We consider a Cartesian diagram of $S$-schemes
\begin{equation}\label{eq:cartesian-birational-invariance}
    \xymatrix{
        Y^{\prime}\ar[r]^{i^{\prime}}\ar[d]_{\varphi}   &X^{\prime}\ar[d]^{\psi}\\
        Y\ar[r]^{i}       &X.
    }
\end{equation}
We make the following assumptions on the various morphisms:
\begin{enumerate}
    \item\label{item:all-mor-same-dim} $X\to S$ and $X^{\prime}\to S$ have the same (constant) relative dimension, and similarly for $Y\to S$ and $Y^{\prime}\to S$.
    \item The morphism $\psi$ has open image and it induces a birational morphism onto its image, in the sense of Theorem \ref{thm:cor86} \eqref{item:prop-int-dist-birational}, and $\varphi$ inherits the analogous structure.
    \item The maps $i$ and $i^{\prime}$ are regular closed immersions.
\end{enumerate}
For clarity, we elaborate on the meaning of the second assumption. The map $\psi$ has open image by assumption, and it is necessarily proper, since all the $S$-schemes are. Hence it has closed image too. We can thus factor $X=X_{1}\sqcup X_{2}$, where $X_{1}=\psi(X^{\prime})$ is open-and-closed, and so is $X_{2}$. By the condition \eqref{item:all-mor-same-dim}, we see that necessarily $X_{2}\cap Y=\emptyset$. We hence just require that $\varphi$ is birational in the sense of Theorem \ref{thm:cor86} \eqref{item:prop-int-dist-birational}. 

The assumptions above have several implications:
\begin{enumerate}
    \item The closed immersions necessarily have the same codimension, by the dimension condition imposed on the $S$-schemes in the diagram. In particular, their normal bundles have the same rank.
    \item There is a natural isomorphism
    \begin{equation}\label{eq:iso-normal-bundles}
        \varphi^{\ast}N_{i}\overset{\sim}{\to} N_{i'}.
    \end{equation}
    Indeed, let $\Jcal_{i}$ and $\Jcal_{i'}$ be the ideals defining the closed immersions. Since the diagram is Cartesian, we have a natural surjective map $\psi^{\ast}\Jcal_{i}\to\Jcal_{i'}$, and hence a surjective map $\varphi^{\ast}N_{i}\to N_{i'}$. This must be an isomorphism, since the normal bundles have the same rank by the previous point.
    \item The fact that the regularity and the codimension of the immersion are preserved under the Cartesian product entails that the diagram is Tor-independent by \cite[Lemma 3.2]{Thomason:excess}. Conversely, if the diagram is Tor-independent, then the regularity and the codimension of the immersion are preserved, since locally over $X$ the latter can be expressed as the exactness of a Koszul complex, which remains exact after pulling back to $X^{\prime}$.
\end{enumerate}

\begin{lemma}\label{lemma:ignorepart}
Let the setting be as above. Let $E$ denote a virtual perfect complex on $Y$. Then:
\begin{enumerate}
    \item\label{item:ignore-1} There exist natural isomorphisms 
    \begin{equation}\label{eq:statement-ignore-1}
        \psi_{\ast}[\chfrak(i^{\prime}_{!}\varphi^{\ast} E)]_{X^{\prime}/S}\to [\chfrak(i_{!}E)]_{X/S}
    \end{equation}
    and
    \begin{equation}\label{eq:statement-ignore-2}
        \psi_{\ast}i^{\prime}_{\ast}[\chfrak(\varphi^{\ast}E)\cdot\tdfrak(N_{i^{\prime}})^{-1}]_{Y^{\prime}/S}\to i_{\ast}[\chfrak(E)\cdot\tdfrak(N_{i})^{-1}]_{Y/S}
    \end{equation}
    inducing isomorphisms of functors of commutative Picard categories $V(\Pcal_{Y})\to\Dcal(X/S)$, compatible with base change and the projection formulas from Proposition \ref{prop:proj-for-RR-dist}, and with bounded denominators.
     \item\label{item:ignore-2}  The diagram
    \begin{equation}\label{eq:birational-inv-RR-diagram}
         \xymatrix{  
             \psi_{\ast}[\chfrak(i^{\prime}_{!}\varphi^{\ast} E)]_{X^{\prime}/S} \ar[rr]^-{\psi_{\ast}\RR_{i^{\prime}}(\varphi^{\ast}E)}\ar[d]_{\eqref{eq:statement-ignore-1}}     &  &\psi_{\ast}i^{\prime}_{\ast}[\chfrak(\varphi^{\ast}E)\cdot\tdfrak(N_{i^{\prime}})^{-1}]_{Y^{\prime}/S}\ar[d]^{\eqref{eq:statement-ignore-2}}\\
              [\chfrak(i_{!}E)]_{X/S}\ar[rr]^-{\RR_{i}(E)}    &   &i_{\ast}[\chfrak(E)\cdot\tdfrak(N_{i})^{-1}]_{Y/S}
         }
    \end{equation}
   commutes.
\end{enumerate}
\end{lemma}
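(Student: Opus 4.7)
The first step is to reduce to the divisorial setting using Proposition \ref{prop:base-reduction}, so that one can work with virtual categories of vector bundles and have access to direct images. Since $i$ and $i'$ share the same codimension (forced by the dimension hypothesis on the $S$-schemes), the Cartesian square is Tor-independent by \cite[Lemma 3.2]{Thomason:excess}, which yields a canonical base-change isomorphism $\psi^{\ast}i_{!}E\simeq i'_{!}\varphi^{\ast}E$ in the virtual category, together with the identification $\varphi^{\ast}N_{i}\simeq N_{i'}$ noted after \eqref{eq:iso-normal-bundles}. Moreover, $\psi$ maps birationally onto an open-and-closed subscheme of $X$ containing $i(Y)$, with the complementary component playing no role (its contribution is trivial since it is disjoint from $Y$).

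For part \eqref{item:ignore-1}, I would construct \eqref{eq:statement-ignore-1} as the composition obtained by combining the above base-change isomorphism with the multiplicativity of $\chfrak$ under tensor product, the projection formula \eqref{eq:projection-formula-preliminaries}, and the birational invariance isomorphism $\psi_{\ast}\delta_{X'/S}\simeq\delta_{X/S}$ from Theorem \ref{thm:cor86}\eqref{item:prop-int-dist-birational}. More concretely, this rewrites the left-hand side as $\chfrak(i_{!}E)\cdot\psi_{\ast}\delta_{X'/S}\simeq\chfrak(i_{!}E)\cdot\delta_{X/S}=[\chfrak(i_{!}E)]_{X/S}$. The isomorphism \eqref{eq:statement-ignore-2} is built analogously, using $\varphi^{\ast}N_{i}\simeq N_{i'}$ to match the Todd factors and then applying birational invariance for $\varphi$. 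The required functoriality, base-change compatibility, compatibility with the projection formulas from Proposition \ref{prop:proj-for-RR-dist}, and bounded denominators are all inherited from the ingredients.

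For part \eqref{item:ignore-2}, the natural approach is to deploy the deformation to the normal cone. By Proposition \ref{prop:Deformationcone}\eqref{item:Deformationcone-1}, this construction commutes with base change, so it produces a natural map of deformations $M'\to M$ over $\PBbb^{1}_{S}$ refining the Cartesian square, and restricting to $\infty$ a Cartesian square of zero sections $Y'\hookrightarrow\PBbb(N_{i'}^{\vee}\oplus 1)$ and $Y\hookrightarrow\PBbb(N_{i}^{\vee}\oplus 1)$. Since both $\RR_{i}$ and $\RR_{i'}$ are defined by transporting through this rational equivalence (and the constructions of \eqref{eq:statement-ignore-1}--\eqref{eq:statement-ignore-2} likewise transport under the deformation), the commutativity of \eqref{eq:birational-inv-RR-diagram} reduces to the analogous commutativity in the linearized case. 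There, by Proposition \ref{prop:RRres=RR}, both Riemann--Roch isomorphisms are given by $\RR_{sec}$ applied to the tautological regular sections $\sigma_{0}$ and $\sigma'_{0}$ of $Q$ and $Q'$, which correspond under $\varphi^{\ast}N_{i}\simeq N_{i'}$.

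The main obstacle is thus the linearized case, where one must verify that each of the three constituents of $\RR_{sec}$ --- the projection formula for Chern--Todd classes, the Borel--Serre isomorphism applied to the Koszul resolution, and the restriction isomorphism along the zero section of $Q$ --- is compatible with the birational-invariance pushforwards constructed in paragraph two. Compatibility of the projection formula step is automatic from Proposition \ref{prop:proj-for-RR-dist} and the construction above, while for the Borel--Serre and restriction isomorphisms one invokes the general commutativity of operations on intersection distributions encoded in Theorem \ref{thm:cor86}, together with the observation in \textsection\ref{subsubsec:mult-chern-Borel-Serre} that the Borel--Serre isomorphism is compatible with those same operations, in particular with birational invariance. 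Tracking these compatibilities simultaneously through the Koszul resolution $K^{\bullet}(\sigma_0)\otimes p^{\ast}E$ and its pullback along $\varphi$ is the technical heart of the argument.
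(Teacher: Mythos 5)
Your proposal follows essentially the same route as the paper: Tor-independent base change plus the identification $\varphi^{\ast}N_{i}\simeq N_{i'}$ and the birational-invariance isomorphism of Theorem \ref{thm:cor86} to build \eqref{eq:statement-ignore-1}--\eqref{eq:statement-ignore-2}, then a deformation to the normal cone to reduce the diagram \eqref{eq:birational-inv-RR-diagram} to the linearized zero-section case, where $\RR_{sec}$ is unpacked into the projection formula, the Borel--Serre isomorphism and the restriction isomorphism, each of which is compatible with birational invariance by Theorem \ref{thm:cor86} and the remark in \textsection\ref{subsubsec:mult-chern-Borel-Serre}. The only points where the paper is more explicit than your sketch are the separate treatment of the open-and-closed component of $X\setminus\psi(X')$ (handled via the compatibility of intersection bundles with open partitions) together with the blowup components at infinity, and the justification of the passage from the fiber at $0$ to the fiber at $\infty$, which is not a mere ``transport'': the defect of commutativity of the deformation-level diagram, evaluated on a Chern power series, is an invertible function on $\PBbb^{1}_{S}$, hence pulled back from $S$, and this rigidity is what makes the check at infinity sufficient.
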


\begin{proof}
As usual, we may assume that our schemes are divisorial, and we can work with virtual vector bundles instead of perfect complexes. 

We can factor $\psi$ as a birational morphism in the sense of Theorem \ref{thm:cor86} \eqref{item:prop-int-dist-birational}, and an open-and-closed immersion, inducing an analogous factorization for $\varphi$. To prove the lemma, we can reduce to dealing separately with the case of birational morphisms and the case of open-and-closed immersions. 

Suppose next that $\psi$ and $\varphi$ are birational morphisms. In this case the isomorphism $\psi_{\ast}[\chfrak(i^{\prime}_{!}\varphi^{\ast} E)]_{X^{\prime}/S}\simeq [\chfrak(i_{!}F)]_{X/S}$ is induced by Tor-independent base change and birational invariance of the intersection distributions, in the following manner. We already observed that $\psi$ and $i$ are Tor-independent morphisms, and hence there is an isomorphism  $i^{\prime}_{!}\varphi^{\ast}E\simeq \psi^{\ast}i_{!} E$. Thus, we have a sequence of natural isomorphisms
\begin{displaymath}
    \psi_{\ast}[\chfrak(i^{\prime}_{!}\varphi^{\ast} E)]\simeq \psi_{\ast}[\psi^{\ast}\chfrak(i_{!}E)]\simeq [\chfrak(i_{!}E)],
\end{displaymath}
where the last identification is given by birational invariance isomorphism from Theorem \ref{thm:cor86} \eqref{item:prop-int-dist-birational}. This defines an isomorphism of functors between commutative Picard categories, the key point being that the Whitney isomorphism is compatible with the birational invariance, as stated in Theorem \ref{thm:cor86}. Moreover, it is compatible with base changes $S^{\prime}\to S$, since this is Tor-independent with our $S$-schemes. The denominators of all these isomorphisms are bounded, depending only on the dimension of $X\to S$ and the denominators of $\chfrak$. For the compatibility with the projection formula, we first observe that at the level of the virtual categories, the corresponding projection formula in \cite[Proposition 4.12]{DRR1} is compatible with Tor-independent base change, as seen by inspecting the proof. The result then follows from the construction in Proposition \ref{prop:proj-for-RR-dist} \eqref{item:proj-for-RR-dist-1} and the compatibility of \eqref{eq:chern-tensor-product} with the birational invariance isomorphism, as noted in \emph{loc. cit.}. The isomorphism \eqref{eq:statement-ignore-2} is obtained in a similar manner, using the canonical isomorphism $\varphi^{\ast}N_{i}\simeq N_{i'}$ from \eqref{eq:iso-normal-bundles}.

For the commutativity of \eqref{eq:birational-inv-RR-diagram}, by Proposition/Definition \ref{prop:RR-closed-immersions}, we may assume that $i$ and $i'$ are given by closed subschemes. Let then $M$ and $M'$ denote the deformation to the normal cone constructions for $Y\to X$ and $Y^{\prime}\to X^{\prime}$. The map $\psi$ induces a birational morphism $\widetilde{\psi}\colon M^{\prime}\to M$, which also satisfies the assumptions of Theorem \ref{thm:cor86} \eqref{item:prop-int-dist-birational}. We then observe that \eqref{eq:birational-inv-RR-diagram} is the fiber at 0 of a similar diagram, performed at the deformation to the normal cone level. For \eqref{eq:birational-inv-RR-diagram} to commute, it suffices to check the commutativity of the diagram for $\widetilde{\psi}$ on the fiber at infinity. Indeed, evaluating along a Chern power series, the defect of commutativity for $\widetilde{\psi}$ is given by an invertible function on $\PBbb^{1}_{S}$, which is actually an invertible function on $S$, and hence its value is determined either by restricting to 0 or infinity. Now, $M_{\infty}=\widetilde{X}\cup\PBbb(N_{i}^{\vee}\oplus 1)$ and $M_{\infty}^{\prime}=\widetilde{X}^{\prime}\cup\PBbb(N_{i^{\prime}}^{\vee}\oplus 1)$, and the birational morphism $M_{\infty}^{\prime}\to M_{\infty}$ restricts to a birational morphisms $\widetilde{X}^{\prime}\to\widetilde{X}$ and $\PBbb(N_{i^{\prime}}^{\vee}\oplus 1)\to\PBbb(N_{i}^{\vee}\oplus 1)$. By the construction of the DRR isomorphism in Proposition \ref{prop:RR-closed-immersions}, one reduces to treating separately the corresponding cases. That is, on the one hand, the immersion of the empty scheme in $\widetilde{X}^{\prime}$ and $\widetilde{X}$, and the birational morphism $\widetilde{X}^{\prime}\to\widetilde{X}$. On the other hand, the immersion of the zero sections in $\PBbb(N_{i^{\prime}}^{\vee}\oplus 1)$ and $\PBbb(N_{i}^{\vee}\oplus 1)$ gives us the diagram
\begin{equation}\label{eq:zero-sections-birational}
    \xymatrix{
         Y^{\prime}\ar[r]\ar[d]  &\PBbb(N_{i^{\prime}}^{\vee}\oplus 1)\ar[d]\\
        Y\ar[r]       &\PBbb(N_{i}^{\vee}\oplus 1).
    }
\end{equation}
This diagram fulfills the assumptions of the lemma. In the first case of the embedding of the empty scheme, the commutativity of \eqref{eq:birational-inv-RR-diagram} is trivial. In the case of \eqref{eq:zero-sections-birational}, the DRR isomorphisms are given by Construction/Definition \ref{construction-definition-RRsec}. This construction imposes the compatibility with the projection formula, and the the isomorphisms \eqref{eq:statement-ignore-1}--\eqref{eq:statement-ignore-2} are compatible with it. One hence reduces to treat the birational invariance of the isomorphism $[\chfrak(i_! \Ocal_Y)]_{X/S} \simeq i_\ast [\tdfrak(N_{i})^{-1}]_{Y/S}$ from \cite[Corollary 9.7]{DRR1}. Given the construction of the latter in \emph{op. cit.}, recalled in Construction/Definition \ref{construction-definition-RRsec}, this amounts to the following two facts:
\begin{enumerate}
    \item The restriction isomorphism and the birational invariance isomorphisms are compatible, as asserted in the conclusion of Theorem \ref{thm:cor86}.
    \item The Borel--Serre isomorphism from \cite[Theorem 9.5]{DRR1} is compatible with the birational invariance isomorphism, as observed in \textsection\ref{subsubsec:mult-chern-Borel-Serre}.\end{enumerate}
We conclude that $[\chfrak(i_! \Ocal_Y)]_{X/S} \simeq i_\ast [\tdfrak(N_{i})^{-1}]_{Y/S}$ exhibits the sought birational invariance.

If $\psi$ and $\varphi$ are open-and-closed immersions, then we can identify $X=X_{1}\sqcup X_{2}$, where the $X_{i}$ are open-and-closed subschemes of $X$, and $X_{2}\cap Y=\emptyset$. In this case, the isomorphism in \eqref{item:ignore-1} is part of the very construction of intersection bundles \cite[Section 7.1]{DRR1}, which imposes a compatibility with open partitions of the schemes. Concretely, if $P$ is a Chern power series on $X$, then we have a sequence of natural isomorphisms:

\begin{displaymath}
    \begin{split}
         [\chfrak(i_{!}E)]_{X/S}(P)=\langle \chfrak(i_{!}E)\cdot P\rangle_{X/S}\simeq &\langle \chfrak(i_{!}E|_{X_{1}})\cdot P\rangle_{X_{1}/S}\otimes \langle \chfrak(i_{!}E|_{X_{2}})\cdot P\rangle_{X_{2}/S}\\
         &\simeq \langle \chfrak(i_{!} E|_{X_{1}})\cdot P\rangle_{X_{1}/S}\otimes \langle \chfrak(0)\cdot P\rangle_{X_{2}/S}\\
         &\hspace{0.4cm}\simeq  [\chfrak(i_{!}^{\prime}\varphi^{\ast} E)]_{X_{1} /S}(h^{\ast} P).
    \end{split}
\end{displaymath}
The right-hand side Riemann--Roch distribution satisfies a similar sequence of isomorphisms. With this understood, the proof in this case follows the same pattern as in the birational case, and we omit the details.
\end{proof}

\begin{proof}[Proof of Proposition \ref{prop:compositionofinclusions}]
   By Proposition \ref{prop:RR-closed-immersions}, we may assume that all the immersions are given by closed subschemes. We will consider the deformation to the normal cone for both $i'$ and $i \circ i',$ denoting them by $M$ and $M'.$ By Proposition \ref{prop:doubleinclusion} \eqref{doubleinclusion-item1} there is an induced map 
    \begin{displaymath}
        \PBbb^1_Z \overset{j'}{\to} M_{i'} \overset{j}{\to} M_{i \circ i'},
    \end{displaymath}
    where $j$ and $j'$ are regular closed immersions. For later use, we note that by a similar argument as for $M_{i'}\to M_{i\circ i'}$, the morphism between blowups $\widetilde{Y}\to\widetilde{X}$ is also a regular closed immersion of $S$-schemes.
    
    As in the proof of Lemma \ref{lemma:ignorepart}, evaluating the difference between $\RR_{j'}\RR_j$ and $\RR_{j'  j}$ along a Chern power series defines an invertible function on $\PBbb^1_S$, i.e. on $S$. This constant function on $\PBbb^1_S$ can be specialized to either 0 or $\infty$. The value at 0 is the comparison of the two DRR isomorphisms we started with. 
    
    We aim to show that the value at $\infty$ is 1, in which case the sequence of inclusions to consider is 
\begin{displaymath}
        Z\overset{i'}{\longrightarrow}\widetilde{Y}\cup\PBbb(N_{i'}^{\vee}\oplus 1)\overset{i}{\longrightarrow} \widetilde{X}\cup\PBbb(N_{ii'}^{\vee}\oplus 1).
    \end{displaymath}
    Here, by an abuse of notation, we continue to denote the immersions by $i$ and $i^{\prime}$. Towards our goal, we consider the diagram 
    \begin{equation}\label{eq:cartesian-diagrams-ignorepart}
    \xymatrix{
        Z \ar[d] \ar[r] & \widetilde{Y} \sqcup \PBbb(N_{i'}^{\vee} \oplus 1) \ar[d]^\varphi \ar[r]^{k} & \widetilde{X}\sqcup\PBbb(N_{ii'}^{\vee}\oplus 1) \ar[d]^\psi  \\ 
        Z \ar[r] &  \widetilde{Y} \cup \PBbb(N_{i'}^{\vee} \oplus 1) \ar[r]^i & \widetilde{X}\cup\PBbb(N_{ii'}^{\vee}\oplus 1),
        }
    \end{equation}
    where $\varphi$ and $\psi$ denote the natural maps. They are birational in the sense of Theorem \ref{thm:cor86}\eqref{item:prop-int-dist-birational}. Moreover, as remarked above, all the upper and lower closed immersions are regular. 
    
    We want to compare the DRR isomorphisms of the lower and the upper compositions. Assume for the time being that the leftmost, rightmost and outer squares of the diagram \eqref{eq:cartesian-diagrams-ignorepart} fulfill the requirements of Lemma \ref{lemma:ignorepart}. A straightforward application of the lemma then shows that the statement of the current proposition for the lower composition follows from the analogous statement for the upper composition. Hence, the statement to be proven is now reduced to the composition
    \begin{displaymath}
        Z \to \widetilde{Y} \sqcup \PBbb(N_{i'}^{\vee} \oplus 1) \to \widetilde{X}\sqcup\PBbb(N_{ii'}^{\vee}\oplus 1).
    \end{displaymath}
    Since the image of $Z$ does not intersect $\widetilde{Y}$ and $\widetilde{X}$, we can apply Lemma \ref{lemma:ignorepart} once again, and we can disregard $\widetilde{Y}$ and $\widetilde{X}$. We are hence in the setting of Proposition \ref{prop:linearcase}, which thus allows to conclude. 

    It remains to show that we can apply Lemma \ref{lemma:ignorepart} to the squares in \eqref{eq:cartesian-diagrams-ignorepart}. The only point which requires an explanation is that the squares are Cartesian. For the leftmost square, there is nothing to prove, since $Z$ does not intersect $\widetilde{Y}$. For the rightmost square, we recall from Proposition \ref{prop:Deformationcone} that the schemes $\widetilde{Y} \cup \PBbb(N_{i'}^\vee \oplus 1)$ and $\widetilde{X}\cup\PBbb(N_{ii'}^{\vee}\oplus 1)$ can be realized as pushouts in the category of schemes. We can then apply \cite[\href{https://stacks.math.columbia.edu/tag/0ECK}{0ECK}]{stacks-project} to conclude that the rightmost square is Cartesian. It also follows that the outer square is Cartesian.
\end{proof}

        \subsection{General statement and characterization}

We now summarize the goals we have achieved in the previous subsections, to the effect of proving that our construction is a DRR isomorphism, defined as in Definition \ref{def:DRRisoforclassofmorphisms}. Recall our running assumptions that $S$-schemes are assumed to satisfy the condition $(C_{n})$, for some $n$.

\begin{theorem}\label{thm:DRRi-general}
There exists a unique DRR isomorphism for regular closed immersions $i:Y \to X$ of $S$-schemes
    \begin{displaymath}
        \RR_{i}(E)\colon [\chfrak(i_! E)]_{X/S} \to i_\ast [\chfrak(E) \cdot \tdfrak(N_{i})^{-1}]_{Y/S},
    \end{displaymath}
    satisfying the following properties:
    \begin{enumerate}  
        \item\label{item:DRR-4} Birational invariance in the sense of Lemma \ref{lemma:ignorepart}, when $\varphi$ in \eqref{eq:cartesian-birational-invariance} is the identity and $\psi$ is an isomorphism in a neighborhood of $i(Y)$.
        \item\label{item:DRR-5} If $Y \to X$ is a relative effective Cartier divisor, the isomorphism $\RR_{i}(\Ocal_{Y})$ is given by $\RR_{sec}(\Ocal_{X},1)$ of Construction/Definition \ref{construction-definition-RRsec} (see also \textsection\ref{subsubsec:RRimmersionD}), where $1$ is the canonical section of $\Ocal_{X}(Y)$.
    \end{enumerate}
\end{theorem}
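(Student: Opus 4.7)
The isomorphism $\RR_i$ constructed in Proposition/Definition~\ref{prop:RR-closed-immersions} is the DRR isomorphism we seek. The axioms of a DRR isomorphism for regular closed immersions (Definition~\ref{def:DRRisoforclassofmorphisms}) have already been verified: the identity-for-identity property and functoriality in $i$ are built into Proposition/Definition~\ref{prop:RR-closed-immersions}; compatibility with the projection formula is Corollary~\ref{cor:projformula}; and compatibility with composition is Proposition~\ref{prop:compositionofinclusions}. Birational invariance~\eqref{item:DRR-4} follows directly from Lemma~\ref{lemma:ignorepart}\eqref{item:ignore-2} upon specializing $\varphi = \id$. Property~\eqref{item:DRR-5} follows from Proposition~\ref{prop:RRres=RR}: when $i\colon D \hookrightarrow X$ is a relative effective Cartier divisor, $D$ is globally cut out by the canonical regular section $1$ of $\Ocal_X(D)$ and $\Ocal_D \simeq i^\ast \Ocal_X$, whence $\RR_i(\Ocal_D) = \RR_{sec}(\Ocal_X, 1)$.

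\textbf{Uniqueness.} Let $\RR'_\bullet$ be another assignment satisfying all the listed properties. The strategy is to replay the construction of $\RR_i$ using only the axioms, showing that each step is uniquely determined. The first stage reduces the comparison $\RR'_i = \RR_i$ for a general regular closed immersion $i\colon Y \to X$ to the case of a zero section $j_\infty\colon Y \hookrightarrow \PBbb(N^\vee \oplus 1)$. This reduction mirrors the construction of Proposition/Definition~\ref{prop:RR-closed-immersions}: in the deformation space $M \to \PBbb^1_X$, the fibres $M_0 \hookrightarrow M$ and $M_\infty \hookrightarrow M$ are Cartier divisor inclusions on which $\RR'$ is pinned down by axiom~\eqref{item:DRR-5} and the projection formula; birational invariance~\eqref{item:DRR-4} allows one to ignore the component $\widetilde{X} \subset M_\infty$; and the composition axiom then expresses $\RR'_i$ in terms of $\RR'_{j_\infty}$.

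The second stage treats the zero section by induction on $c = \rk N$. The base case $c = 1$ is~\eqref{item:DRR-5} directly. For the inductive step, Proposition~\ref{prop:doubleinclusion} shows that a rank-one quotient $N \twoheadrightarrow N^{\bis}$ with kernel $N'$ induces a factorization $Y \hookrightarrow \PBbb(N^{\prime\vee} \oplus 1) \hookrightarrow \PBbb(N^\vee \oplus 1)$ whose second inclusion is a Cartier divisor and whose first is a zero-section inclusion of codimension $c-1$; the composition axiom and the inductive hypothesis then force $\RR'_{j_\infty} = \RR_{j_\infty}$. The main obstacle is that a global rank-one quotient of $N$ need not exist. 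One handles this by splitting $N$ Zariski-locally on $Y$ (where $N$ is trivial) and gluing via the descent property of line-distribution isomorphisms (Proposition~\ref{prop:base-reduction} together with \cite[Remark~5.32]{DRR1}), or, alternatively, by invoking a splitting principle for line distributions in the spirit of \textsection\ref{subsubsec:pic-cat-line-dist}\eqref{item:splitting-principle} to reduce directly to the split case.
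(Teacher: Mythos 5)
Your existence argument and the second stage of your uniqueness argument are essentially the paper's. Existence is exactly the assembly of Proposition/Definition \ref{prop:RR-closed-immersions}, Corollary \ref{cor:projformula}, Proposition \ref{prop:compositionofinclusions}, Lemma \ref{lemma:ignorepart} and Proposition \ref{prop:RRres=RR}, as you say. For the linearized case, the paper invokes the splitting principle to equip $N$ with a complete flag and factors the zero section into a chain of relative Cartier divisor inclusions $Y\to\PBbb(N_1^\vee\oplus 1)\to\cdots\to\PBbb(N_r^\vee\oplus 1)$, pinned step by step by property \eqref{item:DRR-5}, the projection formula and the composition axiom, keeping track of the fact that $E$ and the successive pushforwards $(i_k\cdots i_1)_!E$ extend to the ambient projective bundles; your rank-one-quotient induction is the same argument run one step at a time, and your fallback to a splitting principle is precisely the paper's choice.

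The gap is in your first stage, the reduction of a general $i\colon Y\to X$ to the zero section. Pinning $\RR'$ on the Cartier divisor inclusions $M_0\hookrightarrow M$ and $M_\infty\hookrightarrow M$ and then invoking the composition axiom does not, by itself, ``express $\RR'_i$ in terms of $\RR'_{j_\infty}$'': the composites $Y\to M_0\hookrightarrow M$ and $Y\to \PBbb(N^\vee\oplus 1)\subset M_\infty\hookrightarrow M$ are two \emph{different} closed immersions of $Y$ into $M$ (the subschemes $Y\times\{0\}$ and $Y\times\{\infty\}$), so the composition axiom produces two unrelated identities, one determining $\RR'_i$ and one determining $\RR'$ for $Y\to M_\infty$ (hence, via birational invariance, $\RR'_{j_\infty}$), with nothing connecting them. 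The whole point of the deformation is the interpolating regular immersion $j\colon\PBbb^1_Y\to M$, which your sketch never uses. The paper's reduction runs ``as in the proof of Lemma \ref{lemma:ignorepart}'': one compares the two candidate isomorphisms on $j$ and observes that their discrepancy, evaluated along a Chern power series, is an invertible function whose value is detected by restricting to either end of the deformation; by base-change compatibility the restriction at $0$ is the discrepancy for $i$ and, using birational invariance to discard $\widetilde{X}$, the restriction at $\infty$ is the discrepancy for $j_\infty$. Alternatively one can stay within your composition strategy, but then one must compose $j$ with the relative Cartier divisors $Y\times\{0\}$ and $Y\times\{\infty\}$ inside $\PBbb^1_Y$ (these \emph{are} pinned by \eqref{item:DRR-5} and the projection formula), and use additivity and functoriality in $E$ together with the canonical isomorphism between the two pushforwards of $E$ to $\PBbb^1_Y$ (both are isomorphic to $\widetilde{E}\otimes(\Ocal-\Ocal(-1))$, since $\Ocal(Y\times\{0\})\simeq\Ocal(Y\times\{\infty\})$) to transport the unknown value of $\RR'_j$ from one end to the other. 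Either way, the interpolation through $j$ is the missing ingredient; without it your stage-one reduction does not go through as written.
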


\begin{proof}
    The existence statement has been established in the previous sections. Now for the uniqueness claim. Without loss of generality, we can restrict to divisorial schemes and work with virtual vector bundles. A deformation to the normal cone argument as in the proof of Lemma \ref{lemma:ignorepart} shows that any DRR isomorphism for closed immersions is determined by the linearized case $Y\to\PBbb(N^{\vee}\oplus 1)$. Here $N$ is a vector bundle of constant rank on $Y$. This reduction uses the compatibility with base change and with the projection formula, and the birational invariance in \eqref{item:DRR-4}. In this case, $E$ extends to $\PBbb(N^{\vee}\oplus 1)$. By the splitting principle, we may assume that $N$ admits a complete flag: $N=N_{r}\supset\cdots\supset N_{1}\supset 0$. We can then decompose the closed immersion as a composition
    \begin{displaymath}
        Y \overset{i_{1}}{\longrightarrow} \PBbb(N_1^\vee \oplus 1) \overset{i_{2}}{\longrightarrow} \cdots\overset{i_{r}}{\longrightarrow} \PBbb(N_{r}^\vee \oplus 1),
    \end{displaymath}
    where every step is an inclusion by a relative effective Cartier divisor. By induction, because these are all linear embeddings, we see that for  $1\leq k\leq r-1$, the virtual vector bundle $(i_{k} i_{k-1} \cdots i_{1})_{!}E$ extends to $\PBbb(N_{k+1}^{\vee}\oplus 1)$. By the compatibility with composition, we can then reduce to the case where $Y\to X$ is a closed immersion of a relative effective Cartier divisor and $E$ extends to $X$. By the compatibility with the projection formula, we may assume that $E$ is the trivial rank one bundle. In this case, the isomorphism is then fixed by \eqref{item:DRR-5}.
\end{proof}

In Proposition \ref{prop:crazy-diagrams}, based on the above characterization, we will prove that the DRR isomorphism for regular closed immersions is compatible with Grothendieck duality.

\section{Construction for projective bundles}\label{section:projective-bundles}
In this section, we construct and study the DRR isomorphism for projective bundles $\PBbb(\Ecal)\to X$. We begin by stating the main result:

\begin{theorem}\label{thm:RR-P(E)}
Let $X\to S$ be a morphism of schemes satisfying the condition $(C_{n})$. Let $\Ecal$ be a vector bundle of constant rank $r\geq 1$ on $X$, and let $\pi\colon \PBbb(\Ecal) \to X$ be the associated projective bundle. Then, there exists a DRR isomorphism for $\pi$
\begin{equation}\label{eq:RR-iso-PE}
    \RR_{\pi}(E): [\chfrak(\pi_! E)]_{X/S} \to \pi_{\ast} \left[\chfrak(E) \cdot \tdfrak(T_\pi) \right]_{\PBbb(\Ecal)/S}
\end{equation}
satisfying the following properties:
 \begin{enumerate}  
        \item\label{item:RRPE-1} If $\pi$ is the identity map, then $\RR_{\pi}$ is the identity isomorphism.
        \item\label{item:RRPE-2} Compatibility with the projection formula.
        \item\label{item:RRPE-3} Compatibility with composition of projective bundles.
        \item\label{item:RRPE-4} Functoriality with respect to isomorphisms of projective bundles, namely commutative diagrams of the form
        \begin{equation}\label{eq:diagram-proj-bundles}
            \xymatrix{
                \PBbb(\Ecal^{\prime})\ar[r]^{\pi^{\prime}}\ar[d]^{\sim}_{\varphi}   &X^{\prime}\ar[d]^{\psi}_{\sim}\\
                \PBbb(\Ecal)\ar[r]^{\pi}        &X,
            }
        \end{equation}
        where the vertical arrows are isomorphisms.
    \end{enumerate}
\end{theorem}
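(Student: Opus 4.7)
The plan is to construct $\RR_\pi$ by combining the projective bundle formula at the level of virtual categories with the DRR isomorphism for regular closed immersions from Theorem \ref{thm:DRRi-general}, via the realization of $\PBbb(\Ecal)$ as the hyperplane at infinity inside $\PBbb(\Ecal\oplus\Ocal_X)$.

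First I would exploit that \eqref{item:RRPE-2} is a very strong constraint: combined with the K-theoretic projective bundle formula, which writes any $E\in V(\Pcal_{\PBbb(\Ecal)})$ as $\sum_{k=0}^{r-1}\pi^{\ast}F_k\otimes\Ocal(k)$, it reduces the construction of $\RR_\pi(E)$ to the definition of $\RR_\pi(\Ocal(k))$ for $0\leq k\leq r-1$. Additivity on short exact sequences further reduces the problem, via the Koszul-type resolutions of $\Ocal(k)$ built from the Euler sequence $0\to\Ocal\to\pi^{\ast}\Ecal(1)\to T_\pi\to 0$, to the two fundamental cases $\Ocal_{\PBbb(\Ecal)}$ and $\Ocal(1)$. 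For the structure sheaf, $\pi_!\Ocal\simeq\Ocal_X$ trivializes the left-hand side via \eqref{eq:chern-trivial}; the right-hand side is then trivialized by applying the Whitney isomorphism to the Euler sequence, the projection formula \eqref{eq:proj-for-int-dist}, and a direct computation carried out by means of the splitting principle from \textsection\ref{subsubsec:pic-cat-line-dist}\eqref{item:splitting-principle}.

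Second, for $\Ocal(1)$, I would leverage the closed immersion $i\colon\PBbb(\Ecal)\hookrightarrow\PBbb(\Ecal\oplus\Ocal_X)$ as the hyperplane at infinity, a relative effective Cartier divisor with normal bundle $\Ocal(1)$, and the natural projection $p\colon\PBbb(\Ecal\oplus\Ocal_X)\to X$, which admits the zero section $s\colon X\to\PBbb(\Ecal\oplus\Ocal_X)$ satisfying $p\circ s=\id_X$. The enforced identities $\RR_s\RR_p=\id_X$ and $\RR_\pi=\RR_i\RR_p$, coupled with Proposition/Definition \ref{prop:RR-closed-immersions} and \ref{prop:RRres=RR}, together determine $\RR_p$ and hence $\RR_\pi$; the key consistency check is that the resulting isomorphism extends from the image of $s^{\ast}$ to all of $V(\Pcal_{\PBbb(\Ecal\oplus\Ocal_X)})$, which follows because the projection formula and the projective bundle formula in K-theory completely describe all classes in terms of pullbacks from $X$ twisted by $\Ocal(k)$.

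Third, the stated properties would be verified as follows: \eqref{item:RRPE-1} is immediate from Construction/Definition \ref{construction-definition-RRsec} applied to the zero section of the zero vector bundle; \eqref{item:RRPE-2} is imposed at the construction stage; \eqref{item:RRPE-4} follows from the naturality of every constituent isomorphism together with Proposition \ref{prop:base-reduction} for the reduction to divisorial bases. The main obstacle is expected to be \eqref{item:RRPE-3}, the compatibility with compositions $\PBbb(\Fcal)\xrightarrow{\rho}\PBbb(\Ecal)\xrightarrow{\pi}X$ of projective bundles. This will require a diagrammatic argument based on the Whitney isomorphism for the short exact sequence $0\to T_\rho\to T_{\pi\rho}\to\rho^{\ast}T_\pi\to 0$ of relative tangent complexes, together with the compatibility of iterated Euler sequences for $\PBbb(\Fcal)$ as a bundle over $\PBbb(\Ecal)$; the argument will proceed by splitting-principle reductions, in parallel with the treatment of Proposition \ref{prop:linearcase} for the composition of closed immersions, and will be the technically most delicate step.
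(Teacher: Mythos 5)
Your proposal has two genuine gaps. First, the reduction step fails: over $V(X)$ the classes $\Ocal,\Ocal(1),\ldots,\Ocal(r-1)$ are a \emph{free} basis of $V(\PBbb(\Ecal))$ (Lemma \ref{lemma:projbundle}), and the only relation the Euler/Koszul sequence produces is the vanishing of $\lambda_{-1}(\pi^{\ast}\Ecal^{\vee}(1))$, which expresses $\Ocal(r)$ (and higher twists) in terms of lower ones; it cannot reduce $\Ocal(2),\ldots,\Ocal(r-1)$ to the "two fundamental cases" $\Ocal$ and $\Ocal(1)$ once $r\geq 3$. Second, and more seriously, the $\Ocal(1)$ case is never actually constructed. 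The identities $\RR_{s}\RR_{p}=\id$ and $\RR_{\pi}=\RR_{i}\RR_{p}$ are exactly the nontrivial compatibilities between the closed-immersion and projective-bundle constructions that the paper proves \emph{afterwards} (Theorem \ref{thm:elkikfranke} and Proposition \ref{prop:hyperplanereduction}), once both isomorphisms exist independently; using them as a definition is circular in the rank (to pin down $\RR_{p}$ for $\PBbb(\Ecal\oplus\Ocal)$ this way you would already need $\RR_{\pi}$), and in any case underdetermined: by the projection formula, $\RR_{s}\RR_{p}=\id$ constrains $\RR_{p}$ only on the single class $s_{!}\Ocal_{X}=\lambda_{-1}$ of the tautological quotient, i.e.\ on one specific $V(X)$-combination of the $\Ocal(-j)$'s, not on each generator separately. "Determination" is also not existence: nowhere do you exhibit an isomorphism between $[\chfrak(\pi_{!}\Ocal(1))]_{X/S}\simeq[\chfrak(\Ecal)]_{X/S}$ and $\pi_{\ast}[\chfrak(\Ocal(1))\cdot\tdfrak(T_{\pi})]_{\PBbb(\Ecal)/S}$, which is a genuine categorified Riemann--Roch computation.

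The paper avoids both problems by working with the generators $\Ocal(-i)$, $0\leq i\leq r-1$, for which the left-hand side is computed outright ($R\pi_{\ast}\Ocal(-i)$ is $\Ocal_{X}$ for $i=0$ and acyclic otherwise, Proposition \ref{prop:RR-proj-left}), and by explicitly reducing the right-hand side via the Euler sequence, the nowhere-vanishing section trivializing $[\cfrak_{r}(\pi^{\ast}\Ecal^{\vee}(1))]$, the binomial development of \cite[Proposition 8.12]{DRR1}, and Howe's evaluation of $P_{i,r-1}$ (Corollary \ref{corollary:RHS-RR-PB}, Proposition \ref{prop:RR-proj-right}); $\RR_{\pi}$ is then \emph{defined} as the concatenation of these two trivializations, extended by the projection formula. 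Your treatment of property \eqref{item:RRPE-3} also misses the actual difficulty: in the paper this is the fiber-product statement (Proposition \ref{prop:projprojprojproj}), whose proof hinges on the K\"unneth-type compatibility of the two projection-formula factorizations (Lemma \ref{lemma:basechangesmalldegree} and Lemma \ref{lemma:diagramA}), not on a splitting-principle argument for the tangent sequence. To repair your approach you would essentially have to carry out the paper's computation for each $\Ocal(-i)$; the hyperplane-at-infinity picture can then be recovered, but only as a verified compatibility, not as a construction.
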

Recall from \textsection \ref{sec:notations-conventions} that we identify $\PBbb(\Ocal_{X})$ with $X$ and view $\pi$ as the identity map. This is only a notational convenience: without this identification, property \eqref{item:RRPE-1} would instead require $\RR_{\pi}$ to coincide with the DRR isomorphism for closed immersions in Theorem \ref{thm:DRRi-general} when $\pi$ is an isomorphism.

The construction leading to the proof of the theorem is addressed in the following subsection.

\subsection{The construction}\label{subsec:the-construction}
Before proceeding with the construction of \eqref{eq:RR-iso-PE}, we record the following lemma: 

\begin{lemma}\label{lemma:projbundle}
Let $X$ be a quasi-compact scheme, and let $\Ecal$ be a vector bundle of constant rank $r$ on $X$. Then, there is an equivalence of commutative Picard categories
    \begin{equation}\label{eq:equivalence-virtual-projective}
        \bigoplus_{i=0}^{r-1} V(X) \simeq V(\PBbb(\Ecal)),
    \end{equation}
    induced by 
    \begin{equation}\label{eq:equiv-virtual-proj}
        (G_0, \ldots, G_{r-1}) \mapsto \sum \pi^{\ast} G_{i}\otimes\Ocal(-i),
    \end{equation}
    which is compatible with pullback by morphisms of quasi-compact schemes $X^{\prime}\to X$. Moreover, one can choose inverses to the functors \eqref{eq:equivalence-virtual-projective} compatibly with pullback functoriality.
\end{lemma}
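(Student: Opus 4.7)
The plan is to apply the recognition criterion for equivalences of commutative Picard categories recalled in \textsection\ref{intro:virtualcategories}: such a functor is an equivalence if and only if it induces isomorphisms on $\pi_{0}$ and $\pi_{1}$. First I would verify that \eqref{eq:equiv-virtual-proj} defines a functor of commutative Picard categories, which follows directly from the universal property of the virtual category: for each fixed $i$, the operation $G_{i}\mapsto\pi^{\ast}G_{i}\otimes\Ocal(-i)$ is the composition of a pullback functor with tensoring by a line bundle, both of which are exact; the direct sum over $i$ then corresponds to the monoidal structure on $V(\PBbb(\Ecal))$.

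The core step is to compute the induced maps on homotopy groups. Since $\pi_{j}(V(\,\cdot\,))\simeq K_{j}(\,\cdot\,)$ for $j=0,1$, the homomorphism induced by \eqref{eq:equiv-virtual-proj} becomes
\begin{displaymath}
    \bigoplus_{i=0}^{r-1} K_{j}(X)\longrightarrow K_{j}(\PBbb(\Ecal)),\quad(a_{0},\ldots,a_{r-1})\mapsto\sum_{i=0}^{r-1}\pi^{\ast}a_{i}\cdot[\Ocal(-i)].
\end{displaymath}
This is precisely Quillen's projective bundle isomorphism in algebraic $K$-theory, valid for any quasi-compact scheme $X$, as recorded in \cite[Theorem 4.1]{ThomasonTrobaugh}. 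Combined with \cite[Lemma 2.2]{DRR1}, this yields the equivalence \eqref{eq:equivalence-virtual-projective}. The compatibility with pullback along a morphism $X^{\prime}\to X$ of quasi-compact schemes is then essentially tautological, since the formation of $\pi^{\ast}G_{i}\otimes\Ocal(-i)$ commutes canonically with pullback of bundles, via the identification $\PBbb(\Ecal)\times_{X}X^{\prime}\simeq\PBbb(\Ecal_{X^{\prime}})$ and the pseudofunctoriality of $f\mapsto f^{\ast}$.

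The only real subtlety, and the main obstacle, is the last assertion: the choice of quasi-inverses that are themselves compatible with pullback. For this I would argue as in the proof of Proposition \ref{prop:base-reduction}: the forward functor defines an equivalence of the categories fibered in groupoids over the category of quasi-compact schemes, and one invokes the general $2$-categorical machinery recorded in \cite[\href{https://stacks.math.columbia.edu/tag/003Z}{003Z}]{stacks-project} to produce quasi-inverses carrying a coherent pseudofunctorial structure. Alternatively, one could construct explicit inverses using the Beilinson-type semiorthogonal decomposition associated with the full exceptional collection $\Ocal,\Ocal(-1),\ldots,\Ocal(-r+1)$, which makes the pullback compatibility visible on the nose and has the added benefit of yielding workable formulas that can be reused later in the construction of the DRR isomorphism for projective bundles.
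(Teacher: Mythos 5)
Your proposal follows essentially the same route as the paper's proof: verify that \eqref{eq:equiv-virtual-proj} is a functor of commutative Picard categories, reduce to $K_{0}$ and $K_{1}$ via \cite[Lemma 2.2]{DRR1}, invoke the projective bundle formula (the paper cites \cite[Proposition 4.3]{Quillen:K-theory-I}, you cite \cite[Theorem 4.1]{ThomasonTrobaugh}, which is immaterial), and produce pullback-compatible inverses via the fibered-category argument and \cite[\href{https://stacks.math.columbia.edu/tag/003Z}{003Z}]{stacks-project}. The argument is correct and matches the paper's proof in all essentials.
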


\begin{proof}
It is straightforward to verify that \eqref{eq:equiv-virtual-proj} induces a functor of commutative Picard categories which is compatible with pullback functoriality.

To show that \eqref{eq:equivalence-virtual-projective} defines an equivalence of categories, by \cite[Lemma 2.2]{DRR1} it suffices to verify that it induces an isomorphism on $\pi_0$ and $\pi_1$; see also the reminder on virtual categories in \textsection\ref{intro:virtualcategories}. This in turn corresponds to the analogous maps $\bigoplus_{i=0}^{r-1} K_i(X) \to K_i(\PBbb(\Ecal))$, for $i=0,1$. These are isomorphisms by the projective bundle formula, cf.  \cite[Proposition 4.3]{Quillen:K-theory-I}. 

For the last claim, we note that $V(X)$ and $V(\PBbb(\Ecal))$ define categories fibered in groupoids over quasi-compact $X$-schemes. Here we follow the notation convention fixed at the beginning of \textsection\ref{subsubsec:prelim-on-base-change}. Since \eqref{eq:equivalence-virtual-projective} is compatible with pullback, by \cite[\href{https://stacks.math.columbia.edu/tag/003Z}{003Z}]{stacks-project} we can construct an inverse compatible with pullbacks as well. 
\end{proof}

We go back to the setting of Theorem \ref{thm:RR-P(E)}. In order to construct the DRR isomorphism, we can first make several simplifications. By Proposition \ref{prop:base-reduction}, we can restrict to working with divisorial schemes and virtual categories of vector bundles. In particular, our schemes are now quasi-compact. Then, in view of Lemma \ref{lemma:projbundle} and by the additivity of the categorical Chern character, the above lemma allows us to reduce the construction of \eqref{eq:RR-iso-PE} to objects $E$ of the form $ \pi^\ast G \otimes \Ocal(-i)$, for $0 \leq i \leq r-1$ and $G$ some vector bundle on $X$. By imposing the compatibility with the projection formulas from Proposition \ref{prop:proj-for-RR-dist}, we finally reduce to the cases $\Ocal(-i)$ for $i = 0, \ldots, r-1$. We will tackle those by rendering both sides of \eqref{eq:RR-iso-PE} explicit, starting with the left-hand side. 

The forthcoming statements no longer require a divisorial or quasi-compactness assumption on the base schemes, although we may as usual reduce to this setting in the proofs. 

\begin{proposition}\label{prop:RR-proj-left}
There are natural isomorphisms of line distributions,

\begin{equation}\label{eq:determinantprojectivebundle}
     [\chfrak(\pi_!(  \Ocal(-i)))]_{X/S}  \simeq \begin{cases}
         \delta_{X/S} ,  & \hbox{ if }\; i=0, \\
          \Ocal_S, & \hbox{ if }\; 0 < i \leq r-1,
     \end{cases}
\end{equation}
where we consider $\Ocal_S$ as a constant functor. These are compatible with base change, and have bounded denominators.

\end{proposition}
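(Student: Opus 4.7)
By Proposition \ref{prop:base-reduction}, we may reduce to the case where the base scheme $S$, and hence $X$, is divisorial, so that virtual perfect complexes may be identified with virtual vector bundles via \eqref{eq:VX-to-VPX}; in particular, $\pi_!\Ocal(-i)$ is representable in $V(\PBbb(\Ecal))$ and $\chfrak(\pi_!\Ocal(-i))$ is defined. The strategy is to identify $\pi_!\Ocal(-i)$ canonically in the virtual category, apply the Chern character, and interpret the outcome as a line distribution.

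The starting point is the standard cohomology of twists on a projective bundle. For $i=0$, the adjunction unit $\Ocal_X \to R\pi_\ast\Ocal_{\PBbb(\Ecal)}$ is a quasi-isomorphism with all higher direct images vanishing, hence defines a canonical isomorphism $\pi_!\Ocal_{\PBbb(\Ecal)} \simeq \Ocal_X$ in $V(X)$. For $0 < i \leq r-1$, the fiberwise cohomology $H^j(\PBbb^{r-1}, \Ocal(-i))$ vanishes for every $j$; since $\pi$ is flat, cohomology and base change gives $R\pi_\ast\Ocal(-i) \simeq 0$ as a perfect complex, and hence $\pi_!\Ocal(-i) \simeq 0$ in $V(X)$. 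Both identifications are canonical and, by the same argument, compatible with arbitrary base change $S' \to S$.

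Combining with the categorical Chern character, the case $i=0$ together with \eqref{eq:chern-trivial} (interpreted as $[\chfrak(\Ocal_X)]_{X/S} \simeq [1]_{X/S} = \delta_{X/S}$) produces
\[
   [\chfrak(\pi_!\Ocal_{\PBbb(\Ecal)})]_{X/S} \simeq [\chfrak(\Ocal_X)]_{X/S} \simeq \delta_{X/S},
\]
while for $0 < i \leq r-1$ the additivity of $\chfrak$ and of $[\,\cdot\,]_{X/S}$ on the zero object yields
\[
   [\chfrak(\pi_!\Ocal(-i))]_{X/S} \simeq [\chfrak(0)]_{X/S} = [0]_{X/S} = \Ocal_S,
\]
the neutral object in $\Picfr(S)_{\QBbb}$.

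The bounded denominators assertion follows from the fact that only the denominators of $\chfrak$ truncated to total degree at most $n+1 = \dim(X/S)+1$ intervene, and these are uniform in base changes. Base-change compatibility reduces to that of each individual step, built into the constructions recalled in \textsection\ref{section:preliminaries}. The only nontrivial point I anticipate is ensuring that the identifications $\pi_!\Ocal \simeq \Ocal_X$ and $\pi_!\Ocal(-i) \simeq 0$ in the virtual category are truly canonical and stable under base change; this is guaranteed by applying cohomology and base change to the perfect complex $R\pi_\ast\Ocal(-i)$, together with the fact that a perfect complex with vanishing cohomology sheaves is canonically quasi-isomorphic to zero, hence gives the neutral object of the virtual category in a way functorial in the scheme.
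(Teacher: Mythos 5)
Your proposal is correct and follows essentially the same route as the paper: reduce to the divisorial setting via Proposition \ref{prop:base-reduction}, identify $R\pi_\ast\Ocal(-i)$ as $\Ocal_X$ for $i=0$ and as acyclic for $0<i\leq r-1$, then conclude via $[\chfrak(\Ocal_X)]\simeq 1$ (the content of \eqref{eq:chern-trivial}) and additivity of the categorical Chern character, with base-change compatibility coming from Tor-independent base change for the virtual-category pushforward. The only cosmetic difference is that the paper simply quotes these cohomological facts rather than rederiving them, so no further comment is needed.
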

\begin{proof}
In order for $\pi_{!}$ to be defined at the level of virtual categories, we reduce to the divisorial and hence quasi-compact setting, by Proposition\ref{prop:base-reduction}. By the very definition of $\pi_!$ in \eqref{eq:pushfwd-1}, the object $\pi_! \Ocal(-i)$ is given by the complex $R\pi_{\ast}\Ocal(-i)$. This is acyclic for $0<i\leq r-1$, and is naturally isomorphic to $\Ocal_{X}$ for $i=0$. In the case $i=0$, we use \cite[Proposition 9.1 (2)]{DRR1}, which asserts that $[\chfrak(\Ocal_{X})]\simeq 1$. The case $0<i\leq r-1$ amounts to the additivity of the categorical Chern character. The commutativity with base change follows from the Tor-independent base-change property of virtual categories, and the boundedness of denominators is clear.

\end{proof}

For the second part of the construction, we will need to adapt, to the functorial setting, the classical rewriting of Chern classes on the right-hand side of the Grothendieck--Riemann--Roch theorem for projective bundles. Our argument is inspired by that of Franke \cite{Franke}, which in turn goes back to a computation of Howe, cf. \cite[Lemma 2.3]{FultonLang}. Concretely, we are led to simplify the following expression: 
\begin{equation}\label{eq:Eulerconsequence} 
   [\chfrak(\Ocal(-i)) \cdot \tdfrak(\pi^{\ast}\Ecal^{\vee}(1))]_{\PBbb(\Ecal)/S}.
\end{equation}
First, we consider the Chern classes $\cfrak_{k}(\pi^{\ast}\Ecal^{\vee}(1))$. For $k>r$, by the rank triviality property in Theorem \ref{thm:cor86} \eqref{item:prop-int-dist-rank}, we have a canonical isomorphism
 \begin{displaymath}
    [\cfrak_k(\pi^\ast \Ecal^{\vee}(1))]_{\PBbb(\Ecal)/S} \simeq \Ocal_S.
\end{displaymath}
For $k\leq r$, by \cite[Proposition 8.12]{DRR1}, there is a canonical isomorphism 
\begin{equation} \label{eq:binomialdevelopment1}
[\cfrak_k(\pi^\ast \Ecal^{\vee} (1))]_{\PBbb(\Ecal)/S} \simeq \sum_{j=0}^k  {r-k+j \choose j}[\cfrak_{k-j}(\pi^\ast \Ecal^{\vee}) \cdot \cfrak_1(\Ocal(1))^j ]_{\PBbb(\Ecal)/S},
\end{equation}
and in particular
\begin{equation} \label{eq:binomialdevelopment2}
    [\cfrak_r(\pi^\ast \Ecal^{\vee}(1))]_{\PBbb(\Ecal)/S} \simeq \sum_{j=0}^r  [\cfrak_{r-j}(\pi^\ast \Ecal^{\vee}) \cdot \cfrak_1(\Ocal(1))^j ]_{\PBbb(\Ecal)/S}.
\end{equation}
 Also, since the Euler sequence equips $\pi^\ast \Ecal^{\vee}(1)$ with an everywhere non-vanishing canonical section, one finds by Theorem \ref{thm:cor86}\eqref{item:prop-int-dist-restriction} that 
 \begin{equation}\label{eq:vanishingsectiontrivial}
    [\cfrak_r(\pi^\ast \Ecal^{\vee}(1))]_{\PBbb(\Ecal)/S} \simeq \Ocal_S.
\end{equation}
Combining this with \eqref{eq:binomialdevelopment2}, we infer inductively that for $k\geq r$, the line distribution $[\cfrak_{1}(\Ocal(1))^k]_{\PBbb(\Ecal)/S}$ is expressed as a universal polynomial of degree at most $r-1$ in $\cfrak_{1}(\Ocal(1))$, whose coefficients are themselves polynomials in $\cfrak_{j}(\pi^{\ast}\Ecal)$, for $j\leq r$, and the rank $r$. 

Finally, we use the canonical isomorphism $[\chfrak(\Ocal(-i))]_{\PBbb(\Ecal)/S}\simeq [\exp(-i\cfrak_{1}(\Ocal(1)))]_{\PBbb(\Ecal)/S}$, which uses the rank triviality in Theorem \ref{thm:cor86} \eqref{item:prop-int-dist-rank}, we expand $\tdfrak(\pi^{\ast}\Ecal^{\vee}(1))$ as a Chern power series in the $\cfrak_{k}(\pi^{\ast}\Ecal^{\vee}(1))$, and we apply the above procedure to simplify the latter. We conclude that the expression \eqref{eq:Eulerconsequence} is canonically isomorphic to 
\begin{equation}\label{eq:premierePIL}
     \sum_{\ell=0}^{r-1} [P_{i, \ell}(\pi^\ast \Ecal) \cdot \cfrak_1(\Ocal(1))^\ell]_{\PBbb(\Ecal)/S},
\end{equation}
for some universal polynomial $P_{i, \ell}(\Ecal)$ in the Chern classes of $\Ecal$.

The expression $P_{i, r-1}(\Ecal)$ was computed by R. Howe, see \cite[Lemma 2.3]{FultonLang}:
\begin{equation}\label{eq:Pir-1}
    P_{i,r-1}(\Ecal)= 
    \begin{cases}
        1, &\hbox{  if }\;  i=0,\\ 
        0, &\hbox{  if }\;  0 < i \leq r-1.
    \end{cases}
\end{equation}
The statement in \emph{op. cit.} is given in terms of Chern roots, and then one obtains \eqref{eq:Pir-1} by the splitting principle. The fact that this polynomial identity can be lifted to a canonical isomorphism follows from the fact that all the operations above commute, as stated in Theorem \ref{thm:cor86}. 

\begin{corollary}\label{corollary:RHS-RR-PB}
   There is a canonical isomorphism of line distributions
    \begin{displaymath}
        [\chfrak(\Ocal(-i)) \cdot \tdfrak(\pi^{\ast}\Ecal^{\vee}(1))]_{\PBbb(\Ecal)/S}\simeq \sum_{\ell=0}^{r-1} [P_{i, \ell}(\pi^\ast \Ecal) \cdot \cfrak_1(\Ocal(1))^\ell]_{\PBbb(\Ecal)/S},
    \end{displaymath}
    where $P_{i,r-1}(\Ecal)$ is given by \eqref{eq:Pir-1}.  
\end{corollary}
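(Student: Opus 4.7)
The plan is to formalize, at the level of line distributions, the sequence of simplifications of $[\chfrak(\Ocal(-i))\cdot\tdfrak(\pi^{\ast}\Ecal^{\vee}(1))]_{\PBbb(\Ecal)/S}$ already outlined in the discussion preceding the statement. First I would expand $\chfrak(\Ocal(-i))\cdot\tdfrak(\pi^{\ast}\Ecal^{\vee}(1))$ as a formal power series in the categorical Chern classes $\cfrak_1(\Ocal(1))$ and $\cfrak_k(\pi^{\ast}\Ecal^{\vee}(1))$, using the universality of the Chern power series formalism in $\CHfrak(\PBbb(\Ecal))_{\QBbb}$. The terms with $k>r$ are killed by the rank triviality isomorphism (Theorem \ref{thm:cor86}\eqref{item:prop-int-dist-rank}), and each remaining $\cfrak_k(\pi^{\ast}\Ecal^{\vee}(1))$ with $k\leq r$ is rewritten via the canonical isomorphism \eqref{eq:binomialdevelopment1} in terms of Chern classes of $\pi^{\ast}\Ecal^{\vee}$ and powers of $\cfrak_1(\Ocal(1))$. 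Combined with the expansion $\chfrak(\Ocal(-i))\simeq \exp(-i\cfrak_1(\Ocal(1)))$, this produces an expression in $\cfrak_1(\Ocal(1))$ whose coefficients are polynomials in the Chern classes of $\pi^{\ast}\Ecal$.

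The next step is to iteratively reduce the degree in $\cfrak_1(\Ocal(1))$ using the relation coming from the Euler sequence. Combining \eqref{eq:binomialdevelopment2} with \eqref{eq:vanishingsectiontrivial}, one obtains a canonical isomorphism expressing $\cfrak_1(\Ocal(1))^r$ as a polynomial of degree strictly less than $r$ in $\cfrak_1(\Ocal(1))$, with coefficients in Chern classes of $\pi^{\ast}\Ecal$. Repeated application reduces any power $\cfrak_1(\Ocal(1))^k$ with $k\geq r$ to degree $\leq r-1$, and plugging this into the previous expansion yields the desired canonical isomorphism with universally defined polynomials $P_{i,\ell}(\Ecal)$.

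For the identification of the top coefficient $P_{i,r-1}(\Ecal)$ with \eqref{eq:Pir-1}, I would proceed as follows. By \cite[Lemma 2.3]{FultonLang} (Howe's lemma), the corresponding identity is known to hold as a polynomial identity in the Chern roots of $\Ecal$, and hence by the splitting principle for vector bundles it holds as a polynomial identity in the Chern classes of $\Ecal$. Since our construction assembled $P_{i,r-1}(\Ecal)$ out of the very same universal algebraic manipulations, the two polynomials agree, and the categorical coefficient is identified with \eqref{eq:Pir-1}.

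The only real subtlety, rather than an obstacle, is that the simplification above involves many elementary isomorphisms applied in various orders, and the resulting canonical isomorphism must not depend on these choices. This is exactly the content of the final assertion of Theorem \ref{thm:cor86}, guaranteeing that the operations of rank triviality, Whitney, the binomial expansion \eqref{eq:binomialdevelopment1}, and restriction along the everywhere non-vanishing canonical section of $\pi^{\ast}\Ecal^{\vee}(1)$ all commute with one another. The compatibility with base change and boundedness of denominators are then inherited from the corresponding properties of each individual ingredient.
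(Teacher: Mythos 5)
Your proposal is correct and follows essentially the same route as the paper's own derivation (the discussion preceding the corollary): rank triviality, the binomial development \eqref{eq:binomialdevelopment1}--\eqref{eq:binomialdevelopment2}, the trivialization \eqref{eq:vanishingsectiontrivial} from the Euler section, inductive reduction of powers of $\cfrak_1(\Ocal(1))$, Howe's computation of $P_{i,r-1}$ via the splitting principle, and the commutativity of operations from Theorem \ref{thm:cor86} to make the isomorphism canonical. No gaps to report.
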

\qed

With this in mind, we are now ready to complete the analogue of Proposition \ref{prop:RR-proj-left} for the right-hand side of the DRR isomorphism for projective bundles. 
\begin{proposition}\label{prop:RR-proj-right}
    There are natural isomorphisms of line distributions 
\begin{equation}\label{eq:RHS-projectivebundle}
     \pi_{\ast}[\chfrak( \Ocal(-i)) \cdot \tdfrak(T_{\pi})]_{\PBbb(\Ecal)/S}  \simeq \begin{cases}
         \delta_{X/S} ,  & \hbox{ if }\; i=0, \\
          \Ocal_S, & \hbox{ if }\; 0 < i \leq r-1,
     \end{cases}
\end{equation}
which are compatible with base change and have bounded denominators.
\end{proposition}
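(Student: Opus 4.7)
The plan is to combine the Euler sequence on $\PBbb(\Ecal)$, Howe's polynomial identity \eqref{eq:Pir-1}, and the projection formula for intersection distributions in Theorem \ref{thm:cor86}\eqref{item:cor86-1}.

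First, the Euler sequence $0 \to \Ocal \to \pi^{\ast}\Ecal^{\vee}(1) \to T_{\pi} \to 0$ on $\PBbb(\Ecal)$, combined with the multiplicativity of the categorical Todd class on short exact sequences and the normalization $\tdfrak(\Ocal) \simeq 1$, yields a canonical isomorphism
\begin{displaymath}
    [\chfrak(\Ocal(-i)) \cdot \tdfrak(T_{\pi})]_{\PBbb(\Ecal)/S} \simeq [\chfrak(\Ocal(-i)) \cdot \tdfrak(\pi^{\ast}\Ecal^{\vee}(1))]_{\PBbb(\Ecal)/S}.
\end{displaymath}
Applying Corollary \ref{corollary:RHS-RR-PB} and then $\pi_{\ast}$, and using the projection formula \eqref{eq:proj-for-int-dist} to pull out $P_{i,\ell}(\pi^{\ast}\Ecal) \simeq \pi^{\ast}P_{i,\ell}(\Ecal)$, I obtain
\begin{displaymath}
    \pi_{\ast}[\chfrak(\Ocal(-i)) \cdot \tdfrak(T_{\pi})]_{\PBbb(\Ecal)/S} \simeq \sum_{\ell=0}^{r-1}P_{i,\ell}(\Ecal) \cdot \pi_{\ast}[\cfrak_{1}(\Ocal(1))^{\ell}]_{\PBbb(\Ecal)/S}.
\end{displaymath}

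Next, I would evaluate the pushforwards $\pi_{\ast}[\cfrak_{1}(\Ocal(1))^{\ell}]_{\PBbb(\Ecal)/S}$ by invoking Theorem \ref{thm:cor86}\eqref{item:cor86-1}. Since $\pi$ has relative dimension $r-1$, the terms with $\ell < r-1$ vanish, while the top-degree term $\ell = r-1$ contributes $\delta_{X/S}$ by the normalization $\int_{\PBbb^{r-1}} c_{1}(\Ocal(1))^{r-1} = 1$. Only the $\ell = r-1$ summand therefore survives, and the pushforward collapses to $P_{i,r-1}(\Ecal) \cdot \delta_{X/S}$. Howe's identity \eqref{eq:Pir-1} then delivers the required dichotomy: the result equals $\delta_{X/S}$ when $i = 0$ and the trivial distribution $\Ocal_{S}$ when $0 < i \leq r-1$.

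Base-change compatibility and bounded denominators are inherited at every step from the analogous properties of the Euler sequence, the Whitney-type isomorphisms for $\tdfrak$, Corollary \ref{corollary:RHS-RR-PB}, and the two projection formulas used. I do not expect a genuine obstacle; the only point requiring a little care is the coherence of the chain of natural isomorphisms in the Chern category, which is precisely what the final assertion of Theorem \ref{thm:cor86} ensures.
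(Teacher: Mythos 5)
Your proposal is correct and follows essentially the same route as the paper: reduce $\tdfrak(T_{\pi})$ to $\tdfrak(\pi^{\ast}\Ecal^{\vee}(1))$ via the Euler sequence, apply Corollary \ref{corollary:RHS-RR-PB}, push forward term by term using the projection formulas (where only the $\ell=r-1$ term survives for degree reasons), and conclude with Howe's identity \eqref{eq:Pir-1}. The only cosmetic difference is that you factor the pushforward step explicitly into the projection formula \eqref{eq:proj-for-int-dist} followed by Theorem \ref{thm:cor86}\eqref{item:cor86-1}, which is exactly what the paper's isomorphism \eqref{eq:projformulaPil} encapsulates.
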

\begin{proof}
  
First, by the Euler sequence \begin{equation}\label{eq:Euler}
    0\to \Ocal_{\PBbb(\Ecal)}\to \pi^{\ast}\Ecal^{\vee}(1)\to T_{\pi}\to 0
\end{equation}
and the multiplicativity of the categorical Todd genus, we have a canonical isomorphism
\begin{displaymath}
    [\chfrak( \Ocal(-i)) \cdot \tdfrak(T_{\pi})]\simeq [\chfrak(\Ocal(-i)) \cdot \tdfrak(\pi^{\ast}\Ecal^{\vee}(1))].
\end{displaymath}
The latter is described in Corollary \ref{corollary:RHS-RR-PB}, as a sum of terms $[P_{i, \ell}(\pi^\ast \Ecal) \cdot \cfrak_{1}(\Ocal(1))^\ell]$. By the projection formula of Theorem \ref{thm:cor86} \eqref{item:cor86-1}, we obtain that 
\begin{equation} \label{eq:projformulaPil}
    \pi_\ast [P_{i, \ell}(\pi^\ast \Ecal) \cdot \cfrak_{1}(\Ocal(1))^\ell]_{\PBbb(\Ecal)/S} \simeq \Ocal_S, 
\end{equation}
unless $\ell = r-1$, in which case it is isomorphic to 
\begin{displaymath}
        [P_{i,r-1}(\Ecal)]_{X/S}.
\end{displaymath}
The claimed isomorphism now follows from the explicit evaluation provided by \eqref{eq:Pir-1}. The compatibility with base change follows from the compatibility of all the involved intermediate constructions, and similarly for the boundedness of the denominators. 
\end{proof}

We are now in a position to construct the DRR isomorphism for projective bundles.

\begin{proof}[Proof of Theorem \ref{thm:RR-P(E)} \eqref{item:RRPE-1}--\eqref{item:RRPE-2}] We define $\RR_\pi(\Ocal(-i))$ as the isomorphism obtained by the conjunction of the isomorphisms in Proposition \ref{prop:RR-proj-left} and Proposition \ref{prop:RR-proj-right}. As explained after Lemma \ref{lemma:projbundle}, this is extended to the general case by imposing compatibility with the projection formula. The rest of the properties of the statement of Theorem \ref{thm:RR-P(E)} are addressed in the following subsections, together with some additional properties needed in Section \ref{sec:construction-DRR}. Concretely, the compatibility with the composition of projective bundles is the content of Proposition \ref{prop:projprojprojproj}, and the compatibility with isomorphisms of projective bundles is considered in Corollary \ref{cor:invariance-isom-proj-bundles}.
\end{proof}
\subsection{Invariance of projective bundles} 

We now consider the invariance of the DRR isomorphism for projective bundles under twisting by line bundles. We maintain the assumptions and notation of the previous subsections. In particular, we work with divisorial schemes.

Let $L$ be a line bundle on $X$. Then, for $\Ecal'=\Ecal\otimes L$, we consider $\pi': \PBbb(\Ecal \otimes L) \to X$, and note that there is a natural $X$-isomorphism $p\colon\PBbb(\Ecal)\to\PBbb(\Ecal^{\prime})$. We denote the respective tautological line bundles by $\Ocal(1)$ and $\Ocal^{\prime}(1)$. They are related by a natural isomorphism $p^{\ast}\Ocal^{\prime}(1)\simeq \Ocal(1) \otimes \pi^{\ast} L^{\vee}$.  

Since $\pi = \pi' \circ p$, by the projection formula we have 
\begin{displaymath}  [\chfrak (\pi_! p^\ast E^{\prime})]_{X/S} \simeq [\chfrak (\pi'_! E')]_{X/S}.
\end{displaymath}
Since $p^\ast T_{\pi'} \simeq T_{\pi}$, we also have
\begin{displaymath} \pi_\ast [\chfrak (p^\ast E^{\prime}) \cdot \tdfrak(T_\pi)]_{\PBbb(\Ecal)/S} \simeq  \pi'_\ast [\chfrak (E^{\prime}) \cdot \tdfrak(T_{\pi'})]_{\PBbb(\Ecal')/S}.
\end{displaymath}
It hence makes sense to ask if these isomorphisms interchange the DRR isomorphisms. 
\begin{proposition}\label{prop:vectorbundleinvariance}
With the notation as above, the pullback along $p$ induces a natural identification of the DRR isomorphisms $\RR_{\pi} $ and $\RR_{\pi'}$. 
\end{proposition}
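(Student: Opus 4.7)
The plan is to reduce the statement to the generators $\Ocal'(-i)$, $0\leq i\leq r-1$, via the projective bundle formula for virtual categories (Lemma \ref{lemma:projbundle}), and then verify it on these generators by tracing through the explicit construction of the DRR isomorphism in Propositions \ref{prop:RR-proj-left}--\ref{prop:RR-proj-right}. More precisely, by Proposition \ref{prop:base-reduction}, we may assume the base is divisorial and work with virtual vector bundles. By Lemma \ref{lemma:projbundle}, any virtual vector bundle $E'$ on $\PBbb(\Ecal')$ decomposes as $\sum_{i=0}^{r-1}\pi'^\ast G_i\otimes\Ocal'(-i)$; using $p^\ast\Ocal'(1)\simeq\Ocal(1)\otimes\pi^\ast L^\vee$, its pullback takes the form $p^\ast E'\simeq\sum\pi^\ast(G_i\otimes L^i)\otimes\Ocal(-i)$. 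Since both $\RR_\pi$ and $\RR_{\pi'}$ are additive and compatible with the projection formula, the comparison reduces to the generators $E'=\Ocal'(-i)$. By projection formula compatibility on the $\pi$-side, $\RR_\pi(p^\ast\Ocal'(-i))=\RR_\pi(\Ocal(-i)\otimes\pi^\ast L^i)=\chfrak(L^i)\cdot\RR_\pi(\Ocal(-i))$, and it remains to show that this isomorphism matches $\RR_{\pi'}(\Ocal'(-i))$ under the natural identifications of source and target.

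For each $i$, the construction of $\RR_\pi(\Ocal(-i))$ in Propositions \ref{prop:RR-proj-left}--\ref{prop:RR-proj-right} is assembled from universal data: the identification of $\pi_!\Ocal(-i)$ with $\Ocal_X$ (for $i=0$) or with $0$ (otherwise); the Euler sequence $0\to\Ocal\to\pi^\ast\Ecal^\vee(1)\to T_\pi\to 0$; the binomial development from \cite[Proposition 8.12]{DRR1}; the triviality of $[\cfrak_r(\pi^\ast\Ecal^\vee(1))]$ arising from the Euler section \eqref{eq:vanishingsectiontrivial}; Howe's formula \eqref{eq:Pir-1}; and the projection formula. An analogous construction produces $\RR_{\pi'}(\Ocal'(-i))$. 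The canonical isomorphism $p^\ast\pi'^\ast\Ecal'^\vee(1)\simeq\pi^\ast\Ecal^\vee(1)$ induced by $\Ecal'^\vee=\Ecal^\vee\otimes L^\vee$ and $p^\ast\Ocal'(1)\simeq\Ocal(1)\otimes\pi^\ast L^\vee$ intertwines the two Euler sequences together with their tautological sections, thereby identifying $\tdfrak(T_\pi)\simeq p^\ast\tdfrak(T_{\pi'})$. Furthermore, the factor $\chfrak(L^i)$ is absorbed, via the multiplicativity isomorphism \eqref{eq:chern-tensor-product}, into $\chfrak(\Ocal(-i)\otimes\pi^\ast L^i)=\chfrak(p^\ast\Ocal'(-i))$, which in turn is the pullback of $\chfrak(\Ocal'(-i))$. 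Step by step, the construction of $\chfrak(L^i)\cdot\RR_\pi(\Ocal(-i))$ is thereby identified, via pullback along $p$, with the construction of $\RR_{\pi'}(\Ocal'(-i))$.

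The main obstacle is the careful bookkeeping: one must verify that the various natural isomorphisms involved---Whitney, restriction, rank triviality, projection formula, multiplicativity of $\chfrak$, and the Borel--Serre-type rearrangements---all commute with the pullback identifications induced by $p^\ast$. This is ensured by the commutativity properties of operations on intersection distributions established in Theorem \ref{thm:cor86}, together with the analogous properties of the multiplicativity and Borel--Serre isomorphisms observed in \textsection\ref{subsubsec:mult-chern-Borel-Serre}, which permit the simplifications to be rearranged freely. After all steps, both sides of the comparison reduce to the same canonical trivialization---$\delta_{X/S}$ for $i=0$ or $\Ocal_S$ for $0<i\leq r-1$---so the final equality becomes tautological.
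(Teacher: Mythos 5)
Your proposal is correct and follows essentially the same route as the paper: reduce via Proposition \ref{prop:base-reduction}, Lemma \ref{lemma:projbundle} and projection-formula compatibility to the generators $\Ocal'(-i)$, with $p^{\ast}\Ocal'(-i)\simeq\Ocal(-i)\otimes\pi^{\ast}L^{i}$, and then check that the identifications of Proposition \ref{prop:RR-proj-left} and Proposition \ref{prop:RR-proj-right} are compatible with the twist by $L$. The paper packages exactly this into Lemma \ref{lemma:lhsRRprojindependence} and Lemma \ref{lemma:rhsRRprojindependence}, the one genuinely delicate point being the compatibility of the binomial development of \cite[Proposition 8.12]{DRR1} with twisting, yielding $[\chfrak(L^{i})\cdot P_{i,\ell}(\Ecal)]\simeq[P_{i,\ell}(p^{\ast}\Ecal')]$, which is proved there by the splitting principle and which your appeal to Theorem \ref{thm:cor86} and the splitting-principle constructions covers in spirit, if somewhat less explicitly.
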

 
This will follow from Lemma \ref{lemma:lhsRRprojindependence} and Lemma \ref{lemma:rhsRRprojindependence} below, which assert that the isomorphisms used in the construction satisfy the analogous properties. 

\begin{lemma} \label{lemma:lhsRRprojindependence}
    The following diagram commutes:
\[
\begin{tikzcd}
    {[\chfrak(\pi_!(   \Ocal(-i)) \cdot \chfrak(  L^{i})]_{X/S}} 
        \arrow[dr, " \eqref{eq:determinantprojectivebundle}\cdot \chfrak(L^{i}) "]
        \arrow[d, "\simeq" '] 
    & \\ {[\chfrak(\pi_!(  p^\ast \Ocal^{\prime}(-i))]_{X/S}}  \arrow[d, "\simeq" ']  & \quad {\begin{cases}
         \delta_{X/S} ,\quad\    \hbox{ if }\; i=0, \\
          \Ocal_S,\quad\quad  \hbox{ if }\; 0 < i \leq r-1.
     \end{cases}}
        \\
    {[\chfrak(\pi'_!(  \Ocal^{\prime}(-i)) ]_{X/S}} 
    \arrow[ur, swap, "\eqref{eq:determinantprojectivebundle}"] & 
\end{tikzcd}
\]
Here: the upper vertical arrow is induced by the relationship $p^{\ast}\Ocal^{\prime}(1)\simeq \Ocal(1) \otimes \pi^{\ast} L^{\vee}$ and the projection formula from Proposition \ref{prop:proj-for-RR-dist} \eqref{item:proj-for-RR-dist-1}; the lower vertical arrow is induced by pushforward functoriality $\pi_{!}\simeq\pi^{\prime}_{!}\circ p_{!}$ and the projection formula \eqref{eq:proj-formula-virtual-cat}.

\end{lemma}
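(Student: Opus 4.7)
We reduce first to the divisorial setting, working with virtual categories of vector bundles, via Proposition \ref{prop:base-reduction}. The two vertical arrows both come from natural isomorphisms at the virtual category level, which we unwind explicitly. The upper vertical arrow is obtained as the composite
\begin{displaymath}
    \pi_!(\Ocal(-i)) \otimes L^i \overset{\eqref{eq:proj-formula-virtual-cat}}{\simeq} \pi_!\bigl(\Ocal(-i) \otimes \pi^\ast L^i\bigr) \simeq \pi_!(p^\ast \Ocal'(-i)),
\end{displaymath}
using the projection formula for $\pi$ in the virtual category and the natural isomorphism $\Ocal(-i) \otimes \pi^\ast L^i \simeq p^\ast \Ocal'(-i)$, composed with the multiplicativity isomorphism \eqref{eq:chern-tensor-product} to pass to the left-hand side of the diagram. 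The lower vertical arrow unwinds similarly as
\begin{displaymath}
    \pi_!(p^\ast \Ocal'(-i)) \overset{\eqref{eq:fun-composition-virtual}}{\simeq} \pi'_! p_! p^\ast \Ocal'(-i) \overset{\eqref{eq:proj-formula-virtual-cat}}{\simeq} \pi'_!\bigl(\Ocal'(-i) \otimes p_!\Ocal_{\PBbb(\Ecal)}\bigr) \simeq \pi'_!\Ocal'(-i),
\end{displaymath}
where the last identification uses that $p$ is an isomorphism, so $p_!\Ocal \simeq \Ocal$ canonically. The composition of the two vertical arrows thus realizes a canonical virtual-categorical isomorphism $\pi_!(\Ocal(-i)) \otimes L^i \simeq \pi'_!\Ocal'(-i)$, to which \eqref{eq:chern-tensor-product} is applied.

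Next, I would treat the two cases of \eqref{eq:determinantprojectivebundle} separately. For $i = 0$: the isomorphism $[\chfrak(\pi_!\Ocal)] \simeq \delta_{X/S}$ (respectively for $\pi'$) is induced by the natural isomorphism $\pi_!\Ocal \simeq \Ocal_X$ coming from $R\pi_\ast\Ocal \simeq \Ocal_X$, combined with $[\chfrak(\Ocal_X)]_{X/S} \simeq \delta_{X/S}$. Since the composite vertical isomorphism reduces in this case to the identification $\pi_!\Ocal \otimes \Ocal \simeq \pi'_!\Ocal'$ induced by the naturality of $R\pi_\ast\Ocal \simeq \Ocal_X$ under $\pi = \pi' \circ p$, the commutativity follows from the compatibility of \eqref{eq:chern-tensor-product} with $\chfrak(\Ocal_X) \simeq 1$ noted after that equation. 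For $0 < i \leq r-1$: both $\pi_!\Ocal(-i)$ and $\pi'_!\Ocal'(-i)$ are canonically isomorphic to the zero object of the virtual category by acyclicity of $R\pi_\ast\Ocal(-i)$ and $R\pi'_\ast\Ocal'(-i)$, and the target $\Ocal_S$ of both diagonal arrows arises from the additivity of $\chfrak$ applied to these trivializations.

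The main obstacle is verifying, in the case $0 < i \leq r - 1$, that the composite isomorphism between the two zero objects constructed above agrees with the canonical one, i.e. does not introduce a nontrivial automorphism of the zero object (an element of $K_1(X)_\QBbb$). I would establish this by tracking the composite through the sequence of natural transformations above: each step (projection formula \eqref{eq:proj-formula-virtual-cat}, composition isomorphism \eqref{eq:fun-composition-virtual}, the canonical isomorphism $p_!\Ocal \simeq \Ocal$ coming from $p$ being an isomorphism, and \eqref{eq:chern-tensor-product}) is a natural transformation of functors of Picard categories, hence respects the zero object without twist. Combined with the naturality of the acyclicity isomorphisms $R\pi_\ast\Ocal(-i) \simeq 0$ and $R\pi'_\ast\Ocal'(-i) \simeq 0$ under pullback by the isomorphism $p$, this yields the required equality of isomorphisms to $\Ocal_S$, concluding the proof.
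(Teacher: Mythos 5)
Your argument is correct and follows essentially the same route as the paper: the case $i=0$ rests on the compatibility of the multiplicativity isomorphism \eqref{eq:chern-tensor-product} with the canonical trivialization $[\chfrak(\Ocal_X)]\simeq 1$, and the case $0<i\leq r-1$ on the compatibility of that bimonoidal structure with the zero object, which is exactly what the paper cites from \cite[Proposition 9.1]{DRR1}. The only difference is cosmetic: you re-derive the zero-object compatibility from general naturality of isomorphisms of functors of Picard categories, where the paper simply invokes the cited statement.
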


\begin{proof}
    In the case $i=0$, this follows from \cite[Proposition 9.1 (2)]{DRR1}, which states that $[\chfrak(\Ocal)]\simeq 1$, in a way which is compatible with the multiplicative structure of $\chfrak$ with respect to the tensor product. See also the reminder in \textsection\ref{subsubsec:mult-chern-Borel-Serre}. The case $i\neq 0$ follows a similar logic, instead relying on \cite[Proposition 9.1 (1)]{DRR1}, which states that the multiplicativity of the Chern character is bimonoidal, and further states that it is compatible with multiplication by 0.
\end{proof}
Similarly to Lemma \ref{lemma:lhsRRprojindependence}, we consider the following compatibility with the construction in Proposition \ref{prop:RR-proj-right}. In the formulation, we use the natural isomorphism $p^\ast T_{\pi'} \simeq T_{\pi}$.
\begin{lemma} \label{lemma:rhsRRprojindependence}
    The following diagram commutes:

\begin{displaymath}
\begin{tikzcd}
    {{\pi}_\ast[\chfrak(\Ocal(-i)) \cdot \chfrak({\pi^\ast L^{i}}) \cdot \tdfrak(T_{\pi})]_{\PBbb(\Ecal)/S}} 
        \arrow[dr, " \eqref{eq:RHS-projectivebundle}\cdot \chfrak(L^{i}) "]
        \arrow[d, "\simeq" '] 
    & \\    {{\pi}_\ast[\chfrak(p^\ast 
 \Ocal^{\prime}(-i)) \cdot \tdfrak(p^{\ast} T_{\pi^{\prime}})]_{\PBbb(\Ecal)/S}} \arrow[d, "\simeq" ']  & \quad {\begin{cases}
         \delta_{X/S} ,\quad\    \hbox{ if }\; i=0, \\
          \Ocal_S,\quad\quad  \hbox{ if }\; 0 < i \leq r-1.
     \end{cases}}
        \\
    {\pi'_\ast[\chfrak(\Ocal^{\prime}}(-i)) \cdot \tdfrak(T_{\pi'})]_{\PBbb(\Ecal')/S}
    \arrow[ur, swap, "\eqref{eq:RHS-projectivebundle}"] & 
\end{tikzcd}
\end{displaymath}
Here: the upper vertical arrow is given by the relationships $p^{\ast}\Ocal^{\prime}(1)\simeq\Ocal(1) \otimes \pi^{\ast} L^{\vee}$, $p^\ast T_{\pi'} \simeq T_{\pi}$ and the multiplicativity of $\chfrak$ under tensor product; the lower vertical arrow is induced by the identity $\pi_{\ast}=\pi^{\prime}_{\ast}\circ p_{\ast}$ and the projection formula for intersection distributions \eqref{eq:proj-for-int-dist}.  
\end{lemma}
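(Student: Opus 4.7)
The plan is to verify that the explicit construction of the isomorphism~\eqref{eq:RHS-projectivebundle} from Proposition~\ref{prop:RR-proj-right} is compatible with pullback along the isomorphism $p$. By Proposition~\ref{prop:base-reduction} we may restrict to divisorial base schemes and work with virtual categories of vector bundles. From $p^{\ast}\Ocal^{\prime}(1)\simeq\Ocal(1)\otimes\pi^{\ast}L^{\vee}$ and $p^{\ast}T_{\pi^{\prime}}\simeq T_{\pi}$ one deduces $p^{\ast}\Ocal^{\prime}(-i)\simeq\Ocal(-i)\otimes\pi^{\ast}L^{i}$, from which the upper vertical arrow arises (via~\eqref{eq:chern-tensor-product}); the lower vertical arrow follows from $\pi=\pi^{\prime}\circ p$ together with the birational invariance in Theorem~\ref{thm:cor86}~\eqref{item:prop-int-dist-birational}.

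The construction of~\eqref{eq:RHS-projectivebundle} decomposes into four natural isomorphisms: (i) the Euler sequence combined with multiplicativity of $\tdfrak$ yielding $\tdfrak(T_{\pi})\simeq\tdfrak(\pi^{\ast}\Ecal^{\vee}(1))$; (ii) the binomial expansion~\eqref{eq:binomialdevelopment1} rewriting Chern classes of $\pi^{\ast}\Ecal^{\vee}(1)$ in terms of $\cfrak_{j}(\pi^{\ast}\Ecal^{\vee})$ and $\cfrak_{1}(\Ocal(1))$; (iii) the triviality~\eqref{eq:vanishingsectiontrivial} arising from the canonical nowhere-vanishing section of $\pi^{\ast}\Ecal^{\vee}(1)$; (iv) the projection formula~\eqref{eq:projformulaPil} combined with Howe's formula~\eqref{eq:Pir-1}. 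Each of these is a natural transformation in the Chern-category and line-distribution formalism that commutes with pullback, by the universal compatibilities asserted in Theorem~\ref{thm:cor86} and the multiplicativity properties of $\chfrak$ and $\tdfrak$ recalled in~\textsection\ref{subsubsec:mult-chern-Borel-Serre}.

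It thus suffices to observe that pulling back along $p$ the instance of~\eqref{eq:RHS-projectivebundle} constructed for $(\pi^{\prime},\Ecal^{\prime},\Ocal^{\prime}(-i))$ yields the same composite as the direct application of~\eqref{eq:RHS-projectivebundle} for $(\pi,\Ecal,\Ocal(-i))$ multiplied by $\chfrak(\pi^{\ast}L^{i})$. The essential point is that Howe's polynomial $P_{i,r-1}$ depends only on the rank $r$ and the integer $i$, not on the chosen vector bundle; hence the final identification in step~(iv) produces the same output---namely $1$ for $i=0$ and $0$ for $0<i\leq r-1$---regardless of whether the construction is run on $\Ecal$ or on $\Ecal\otimes L$. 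Combined with the coherence of the intermediate natural isomorphisms in the Chern category, this forces the two composite isomorphisms to agree.

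The principal subtlety, which is the main step to check carefully, is the comparison of the two Euler sequences available on $\PBbb(\Ecal)$ after pulling back: the natural one, and the pullback along $p$ of the Euler sequence on $\PBbb(\Ecal^{\prime})$, whose middle term $p^{\ast}(\pi^{\prime\ast}\Ecal^{\prime\vee}(1))$ differs from $\pi^{\ast}\Ecal^{\vee}(1)$ by a twist involving $\pi^{\ast}L$. Both sequences present $T_{\pi}$ as a quotient, and the multiplicativity of the categorical Todd class together with the compatibilities~\eqref{eq:chern-tensor-product}--\eqref{eq:chern-tensor-product-line} ensures that both routes produce the same element of the relevant Picard category of line distributions. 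Once this Euler-sequence comparison is established, the commutativity of the diagram follows from the chain of intermediate naturalities outlined above.
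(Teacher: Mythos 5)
There is a genuine gap, and it sits exactly where you wave your hands. First, the point you single out as the ``principal subtlety'' rests on a false premise: the middle term of the pullback along $p$ of the Euler sequence on $\PBbb(\Ecal')$ does \emph{not} differ from $\pi^{\ast}\Ecal^{\vee}(1)$ by a twist. Since $\Ecal'^{\vee}=\Ecal^{\vee}\otimes L^{\vee}$ while $p^{\ast}\Ocal'(1)$ differs from $\Ocal(1)$ by the opposite twist of $\pi^{\ast}L$, the two twists cancel and $p^{\ast}\bigl(\pi'^{\ast}\Ecal'^{\vee}(1)\bigr)\simeq\pi^{\ast}\Ecal^{\vee}(1)$ canonically; moreover the pullback of the Euler sequence on $\PBbb(\Ecal')$ identifies with the Euler sequence on $\PBbb(\Ecal)$, and the canonical nowhere-vanishing section matches, so the trivialization \eqref{eq:vanishingsectiontrivial} commutes with pullback essentially for free (this is the easy half of the paper's argument). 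So the Euler-sequence comparison you flag as delicate is not where the work lies.

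The actual delicate step is your item (ii), which you dismiss as ``a natural transformation\ldots{} that commutes with pullback by the universal compatibilities asserted in Theorem \ref{thm:cor86}.'' The binomial development \eqref{eq:binomialdevelopment1} is not one of the operations of Theorem \ref{thm:cor86}; it is the construction of \cite[Proposition 8.12]{DRR1}, and the issue is not pullback-compatibility of a single instance of it but the agreement of two \emph{different} developments of the \emph{same} bundle $\pi^{\ast}\Ecal^{\vee}(1)$: once with respect to the decomposition $\pi^{\ast}\Ecal^{\vee}\otimes\Ocal(1)$, and once with respect to $\pi^{\ast}(\Ecal\otimes L)^{\vee}\otimes p^{\ast}\Ocal'(1)$. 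What must be proved is that developing $\Ecal^{\vee}\otimes M\otimes N$ with either bracketing, combined with $[\cfrak_{1}(M\otimes N)]\simeq[\cfrak_{1}(M)+\cfrak_{1}(N)]$ as in \eqref{eq:chern-tensor-product-line}, yields the same isomorphism; this has to be traced back to the splitting-principle construction of \cite[Proposition 8.12]{DRR1}, and it is precisely this comparison that produces the identification of all coefficients $[\chfrak(L^{i})\cdot P_{i,\ell}(\Ecal)]\simeq[P_{i,\ell}(\Ecal')]$ in \eqref{eq:premierePIL} and thereby absorbs the factor $\chfrak(L^{i})$ decorating the upper arrow of the diagram. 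Your substitute argument --- that Howe's polynomial $P_{i,r-1}$ in \eqref{eq:Pir-1} is a universal polynomial depending only on $r$ and $i$ --- only compares the targets of the two composite isomorphisms, not the isomorphisms themselves; two chains of natural isomorphisms between the same $\QBbb$-line bundles (here ultimately $\Ocal_S$ or $\delta_{X/S}$) can perfectly well differ by a nontrivial unit, so commutativity of the diagram does not follow from this observation.
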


\begin{proof}
We need to compare the constructions leading to Proposition \ref{prop:RR-proj-right} for $\Ecal$ and $\Ecal^{\prime}$. We will write $\pi^\ast \Ecal^{\vee}(1)$ for $\pi^{\ast}\Ecal^{\vee}\otimes \Ocal(1)$ and ${\pi'}^{\ast}\Ecal^{\prime\vee}(1)$ for ${\pi'}^\ast \Ecal^{\vee}\otimes \Ocal^{\prime}(1)$. 

Note that we have a natural isomorphism
\begin{equation}\label{eq:pullbackO1s}
p^{\ast} {\pi'}^\ast \Ecal^{\prime\vee}(1) \simeq \pi^{\ast}(\Ecal^{\vee} \otimes L^{\vee}) \otimes (\Ocal(1) \otimes \pi^{\ast} L)\simeq \pi^\ast \Ecal^{\vee}(1),
\end{equation}
and that the pullback of the Euler sequence \eqref{eq:Euler} on $\PBbb(\Ecal')$ identifies with that on $\PBbb(\Ecal).$ This also means that the natural non-vanishing section of ${\pi'}^\ast \Ecal^{\prime\vee}(1)$ pulls back to that of $\pi^\ast \Ecal^{\vee}(1)$. Since by Theorem \ref{thm:cor86} the trivialization determined by the non-vanishing section commutes with the projection formula, this means that the trivialization of \eqref{eq:vanishingsectiontrivial} commutes with pullback along $p$. Thus, the only possible difference in the construction arises in how one develops \eqref{eq:binomialdevelopment1} according to the various bundles in \eqref{eq:pullbackO1s}. We claim that for line bundles $M$ and $N$, developing \eqref{eq:binomialdevelopment1} by $\Ecal^{\vee} \otimes M \otimes N$ first as  $(\Ecal^{\vee} \otimes M) \otimes N$, and then by developing $\Ecal^{\vee} \otimes M$ is the same as developing $\Ecal^{\vee} \otimes (M \otimes N)$ and then expanding according to the isomorphism $[\cfrak_1( M \otimes N)] \simeq [\cfrak_1(M) + \cfrak_1(N)]$, cf. equation \eqref{eq:chern-tensor-product-line}. This follows from the construction of \eqref{eq:binomialdevelopment1} in \cite[Proposition 8.12]{DRR1}, which proceeds by reduction to the line bundle case via the splitting principle. This statement proves that in the rearrangements leading to \eqref{eq:premierePIL}, we naturally have the following identifications of the coefficients:
\begin{displaymath}
    [\chfrak(L^{i}) \cdot P_{i, \ell}( \Ecal)]_{X/S}\simeq  [P_{i, \ell}(p^{\ast} \Ecal')]_{X/S}.
\end{displaymath}
This in turn implies the lemma. 
    
\end{proof}

\subsection{Composition of projective bundles}

Consider two vector bundles $\Ecal, \Ecal'$ on $X$ of constant rank. The natural projection $f: \PBbb(\Ecal) \times_X \PBbb(\Ecal') \to X$ factors in two ways as a composition of projective bundles: writing $\PBbb=\PBbb(\Ecal) \times_X \PBbb(\Ecal')$, we have a commutative diagram
\begin{equation}\label{eq:productdiagram}
    \xymatrix{
                &\PBbb(\Ecal)\ar[rd]^{\pi}     & \\
        \PBbb\ar[ru]^{q'}\ar[rd]_{q}\ar[rr]^{f} &   & X.\\
            &\PBbb(\Ecal^{\prime})\ar[ru]_{\pi^{\prime}}   &
    }
\end{equation}
The following proposition shows that the composition of the DRR isomorphisms according to the two possible factorizations of $f$ coincide.

\begin{proposition}\label{prop:projprojprojproj}
We have the identity of composed DRR isomorphisms $\RR_{\pi} \RR_{q'}=\RR_{\pi'} \RR_{q} $.
\end{proposition}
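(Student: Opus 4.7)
The plan is to follow the logic of \textsection\ref{subsec:the-construction} and reduce the equality of the two composed DRR isomorphisms to an explicit verification on a preferred set of generators of $V(\PBbb)$. By Proposition \ref{prop:base-reduction}, I would first restrict to divisorial $S$-schemes and work with virtual vector bundles. Both $\RR_{\pi}\RR_{q'}$ and $\RR_{\pi'}\RR_{q}$ are isomorphisms of functors of commutative Picard categories $V(\PBbb)\to\Dcal(X/S)$, and by Theorem \ref{thm:RR-P(E)} \eqref{item:RRPE-2} each factor is compatible with the projection formula, so both compositions are as well. Iterating Lemma \ref{lemma:projbundle} along $q'$ and then along $\pi$ yields an equivalence $\bigoplus_{0\leq i\leq r-1,\,0\leq j\leq r'-1} V(X)\xrightarrow{\sim} V(\PBbb)$ via $(G_{i,j})\mapsto \sum f^{\ast}G_{i,j}\otimes \Ocal(-i)\otimes q^{\ast}\Ocal'(-j)$. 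The projection-formula compatibility thus reduces the problem to the generators $E_{i,j}=\Ocal(-i)\otimes q^{\ast}\Ocal'(-j)$.

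For such an $E_{i,j}$, the two compositions can be written out explicitly. In $\RR_{\pi}\RR_{q'}$, after using the Whitney isomorphism $[\tdfrak(T_{f})]\simeq [\tdfrak(T_{q'})\cdot\tdfrak(q'^{\ast}T_{\pi})]$ of \cite[Lemma 9.2]{DRR1} and the projection formula for $q'$, one first pushes forward along $q'\colon\PBbb\to\PBbb(\Ecal)$, which is the projective bundle $\PBbb(\pi^{\ast}\Ecal')$ with relative $\Ocal(1)$ equal to $q^{\ast}\Ocal'(1)$. Applying Proposition \ref{prop:RR-proj-left} and Proposition \ref{prop:RR-proj-right} to this projective bundle yields $\delta_{\PBbb(\Ecal)/S}$ if $j=0$ and the trivial distribution otherwise. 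A subsequent application of the same two propositions to $\pi\colon\PBbb(\Ecal)\to X$ yields $\delta_{X/S}$ when $i=j=0$ and the trivial distribution otherwise. Performing the symmetric computation through $q$ and $\pi'$ gives the same source and target distributions.

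To show that the two resulting sequences of isomorphisms agree as morphisms, not merely that they have matching source and target, I would proceed by a step-by-step diagram chase. The Whitney expansions $[\tdfrak(T_{f})]\simeq[\tdfrak(T_{q'})\cdot\tdfrak(q'^{\ast}T_{\pi})]$ and $[\tdfrak(T_{f})]\simeq[\tdfrak(T_{q})\cdot\tdfrak(q^{\ast}T_{\pi'})]$ are interchanged via the natural isomorphism induced by the commutative square \eqref{eq:productdiagram} and the compatibility of the Whitney isomorphism with admissible filtrations. Under this identification, the iterated projection formulas of Proposition \ref{prop:proj-for-RR-dist}, the Borel--Serre rewriting provided by the Euler sequences for $\pi$ and $\pi'$, and the Howe-type simplifications \eqref{eq:Pir-1} for both factors, all commute thanks to the general compatibility of operations on intersection distributions granted by Theorem \ref{thm:cor86}. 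In particular, the resulting pairing between the two Howe reductions is independent of the order of pushforward by the K\"unneth-type symmetry of Lemma \ref{lemma:basechangesmalldegree} \eqref{item:basechangesmalldegree-1}, applied to the $X$-scheme $\PBbb=\PBbb(\Ecal)\times_{X}\PBbb(\Ecal')$ with its two projections.

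The main obstacle will be the bookkeeping needed to align the two Whitney expansions of $\tdfrak(T_{f})$ through the successive rearrangements of Chern power series; in particular, the trivializations of $\chfrak(\pi_{!}\Ocal(-i))$ and $\chfrak(\pi'_{!}\Ocal'(-j))$ coming from Proposition \ref{prop:RR-proj-left} must be shown to appear symmetrically in both orders. This is ultimately a formal consequence of Proposition \ref{prop:vectorbundleinvariance} (which ensures that the construction is insensitive to the specific choice of trivialization up to coherent isomorphism) together with the commutativity granted by Theorem \ref{thm:cor86}, so that the two compositions coincide on each generator $E_{i,j}$ and hence, by the projection-formula reduction above, on all of $V(\PBbb)$.
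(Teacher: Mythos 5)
Your proposal follows the paper's own route: reduction to divisorial bases via Proposition \ref{prop:base-reduction}, the decomposition of $V(\PBbb)$ from Lemma \ref{lemma:projbundle} together with projection-formula compatibility to reduce to the generators $\Ocal(-i)\otimes q^{\ast}\Ocal'(-j)$, the explicit evaluation of both compositions through Propositions \ref{prop:RR-proj-left} and \ref{prop:RR-proj-right}, and the symmetry on the right-hand side supplied by the K\"unneth-type Lemma \ref{lemma:basechangesmalldegree} \eqref{item:basechangesmalldegree-1}, correctly invoked only after the Howe reduction so that the degree restriction applies. This is precisely the content of the paper's large diagram and of diagram \textbf{(B)} in Lemma \ref{lemma:diagramA}, so in outline your argument and the paper's coincide.

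The one step whose justification would not go through as written is the left-hand side. You claim that the symmetric appearance of the trivializations of $\chfrak(\pi_{!}\Ocal(-i))$ and $\chfrak(\pi'_{!}\Ocal'(-j))$ is a formal consequence of Proposition \ref{prop:vectorbundleinvariance} together with Theorem \ref{thm:cor86}. Proposition \ref{prop:vectorbundleinvariance} concerns replacing $\Ecal$ by $\Ecal\otimes L$ and is not relevant here, and Theorem \ref{thm:cor86} only governs operations on intersection distributions; neither controls the identifications made inside the virtual category when $q'^{\ast}\Ocal(-i)\otimes q^{\ast}\Ocal'(-j)$ is pushed forward in the two possible orders. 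What is actually needed is that the two chains of projection formulas and Tor-independent base-change isomorphisms, through $\pi_{!}q'_{!}$ and through $\pi'_{!}q_{!}$, induce the \emph{same} isomorphism $[\chfrak(f_{!}(q'^{\ast}\Ocal(-i)\otimes q^{\ast}\Ocal'(-j)))]\simeq[\chfrak(\pi_{!}\Ocal(-i))\cdot\chfrak(\pi'_{!}\Ocal'(-j))]$. The paper isolates this as diagram \textbf{(A)} of Lemma \ref{lemma:diagramA} and proves it by identifying both chains with the K\"unneth formula in the derived category. Supplying this input (and dropping the appeal to Proposition \ref{prop:vectorbundleinvariance}) closes the gap and makes your argument agree with the paper's proof.
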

\begin{proof}
    Without loss of generality, we may assume that the base schemes are divisorial. We are hence in the setting of the construction of the DRR isomorphisms, in \textsection \ref{subsec:the-construction}. By Lemma \ref{lemma:projbundle} and the compatibility of the DRR isomorphisms with the projection formula, we are reduced to study the interaction of the constructions in Proposition \ref{prop:RR-proj-left} and Proposition \ref{prop:RR-proj-right}, for $\pi$ and $\pi^{\prime}$. For this, it will be useful to fix the following conventions:
    \begin{enumerate}
        \item We will write the DRR isomorphisms symbolically as the composition 
    \begin{displaymath}
        [\chfrak(\pi_! \Ocal(-i))] \simeq \delta_i \simeq \pi_\ast [\chfrak(\Ocal(-i))\cdot\tdfrak(T_\pi)],
    \end{displaymath}
    where we use the shorthand $\delta_{i}$ for the right-hand side distribution in \eqref{eq:determinantprojectivebundle}. Hence, $\delta_i$ is the distribution $[1]=\delta_{X/S}$ if $i=0$ and $[0]=\Ocal_{S}$ otherwise, i.e. the distribution associated with the Kronecker delta. 
        \item The multiplication of any line distribution $T$ with $\delta_{i}$ can be defined by setting $\delta_{i}\cdot T=T$ if $i=0$, and $\Ocal_{S}$ otherwise. Equivalently, here we interpret $\delta_{i}$ as the Chern power series associated with the Kronecker delta. We follow a similar convention for multiplication from the right. 
        \item We will write $\pi_{\ast}(\chfrak(\Ocal(-i))\cdot\tdfrak(T_\pi))$ for the Chern power series representing the distribution $\pi_\ast [\chfrak(\Ocal(-i))\cdot\tdfrak(T_\pi)]$, after performing the reductions in the proof of Proposition \ref{prop:RR-proj-right} leading to \eqref{eq:premierePIL}, for which Lemma \ref{lemma:basechangesmalldegree} \eqref{item:basechangesmalldegree-1} applies.
    \end{enumerate}
With this in mind, we find that the two possible compositions of DRR isomorphisms can naturally be written as the two outer contours of the diagram below: \smallskip

\begin{displaymath}
\resizebox{\textwidth}{!}{
\xymatrix@C-30pt@R=30pt{
& {\left[ \chfrak(f_! (q^{\prime\ast} \Ocal(-i) \otimes q^{\ast} \Ocal^{\prime}(-j))) \right]} 
  \ar@/^2pc/[d]
  \ar@/_2pc/[d]\ar@{}[d]|-{\textbf{(A)}} & \\
& {\left[ \chfrak(\pi_! \Ocal(-i)) \cdot \chfrak(\pi_{!}^{\prime}\Ocal^{\prime}(-j)) \right]}  
  \ar[dl] \ar[dr] & \\
{\delta_i \cdot  \chfrak(\pi_{!}^{\prime}\Ocal^{\prime}(-j)) } 
  \ar[d] \ar[dr] & & 
{ \chfrak(\pi_! \Ocal(-i)) \cdot \delta_j} 
  \ar[d] \ar[dl] \\
{\pi_\ast \left[ \chfrak(\Ocal(-i)) \cdot \tdfrak(T_\pi) \right] \cdot \chfrak(\pi'_!\Ocal^{\prime}(-j))} 
  \ar[d] & 
\delta_i\cdot \delta_j & 
{\chfrak(\pi_!\Ocal(-i)) \cdot \pi'_\ast \left[ \chfrak(\Ocal^{\prime}(-j)) \cdot \tdfrak(T_{\pi'}) \right]} 
  \ar[d] \\
{\pi_\ast \left[ \chfrak(\Ocal(-i)) \cdot \tdfrak(T_\pi) \right] \cdot \delta_j} 
  \ar[d] \ar[ur] & & 
{\delta_i \cdot \pi_\ast^{\prime} \left[ \chfrak(\Ocal^{\prime}(-j)) \cdot \tdfrak(T_{\pi'}) \right]} 
  \ar[d] \ar[ul] \\
{\pi_\ast \left(\chfrak(\Ocal(-i)\cdot \tdfrak(T_\pi)) \cdot q'_! \left[\chfrak(q^\ast \Ocal^{\prime}(-j))\cdot \tdfrak(q^\ast T_{\pi'})\right]\right)} 
  \ar[dr] & & 
{\pi'_\ast \left(\chfrak(\Ocal^{\prime}(-j)\cdot \tdfrak(T_{\pi'})) \cdot q_! \left[\chfrak({q'}^\ast \Ocal(-i))\cdot \tdfrak({q'}^\ast T_{\pi})\right]\right)} 
  \ar[dl] \\
& {\left[ \pi_\ast(\chfrak(\Ocal(-i)) \cdot\tdfrak(T_\pi)) \cdot \pi'_\ast(\chfrak(\Ocal^{\prime}(-j))\cdot \tdfrak(T_{\pi'}))  \right]} 
  \ar@/^2pc/[d]
  \ar@/_2pc/[d]\ar@{}[d]|-{\textbf{(B)}} & \\
& {f_\ast \left[ \chfrak(q^{\prime\ast} \Ocal(-i) \otimes q^\ast \Ocal^{\prime}(-j) ) \cdot\tdfrak(T_f) \right].} & 
}
}
\end{displaymath}
The proposition amounts to the commutativity of the diagram above. The unlabeled inner diagrams commute because the involved product is bimonoidal and the involved operations all commute with each other. The meaning of the diagrams of isomorphisms of K\"unneth-type in \textbf{(A)} and \textbf{(B)} will be further elucidated in the proof of Lemma \ref{lemma:diagramA} below, which ensures the diagrams commute.

\end{proof}

\begin{lemma}\label{lemma:diagramA}
The diagrams \emph{\textbf{(A)}} and \emph{\textbf{(B)}} commute.
\end{lemma}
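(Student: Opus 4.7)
Both diagrams express the Künneth-type symmetry of the composite $f = \pi \circ q' = \pi' \circ q$ arising from the commutative diagram \eqref{eq:productdiagram}. Diagram \textbf{(A)} is at the level of virtual categories and Chern power series, while diagram \textbf{(B)} is at the level of intersection distributions. My plan is to decompose each curly arrow into its constituent ingredients and compare them using the standard naturality of pushforward, pullback, projection formula, and base change, together with the bimonoidal nature of the tensor-product compatibility of $\chfrak$.

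For \textbf{(A)}, each curly arrow factors as: (i) the composition isomorphism at the virtual category level, $f_! \simeq \pi_!\,q'_!$ (resp.\ $\pi'_!\,q_!$) from \eqref{eq:fun-composition-virtual}; (ii) the projection formula \eqref{eq:proj-formula-virtual-cat} applied either to $q'_!$ or $q_!$ of $q^{\prime\ast}\Ocal(-i) \otimes q^\ast \Ocal'(-j)$; (iii) the Tor-independent base-change isomorphism $q'_!\,q^\ast \simeq \pi^\ast\,\pi'_!$ (resp.\ its mirror) from \eqref{eq:virtualbasechange}, valid because the Cartesian square in \eqref{eq:productdiagram} consists of flat morphisms; and (iv) the multiplicativity of the categorical Chern character on tensor products from \cite[Proposition 9.1]{DRR1}, recalled in \textsection\ref{subsubsec:mult-chern-Borel-Serre}. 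Each of these is a natural isomorphism of functors of commutative Picard categories, and the comparison of the two orderings (decomposing via $q'$ first versus $q$ first) is then formal, since the pseudofunctorial data for $(-)_!$ and $(-)^\ast$ commute with each other in the standard way, and $\chfrak$ is bimonoidal with respect to the tensor-product structures on both sides.

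For \textbf{(B)}, the plan is to reduce directly to Lemma \ref{lemma:basechangesmalldegree}\eqref{item:basechangesmalldegree-1}. Each curly arrow factors via either $f_\ast = \pi_\ast\,q'_\ast$ or $f_\ast = \pi'_\ast\,q_\ast$ at the level of line distributions, using the projection formula \eqref{eq:proj-for-int-dist} and the base-change compatibility \eqref{eq:base-change-bete} of pushforward, after first rewriting $\tdfrak(T_f)$ as $\tdfrak(q^{\prime\ast}T_\pi)\cdot\tdfrak(q^\ast T_{\pi'})$. The latter identification comes from the split exact sequence $0 \to q^{\prime\ast}T_\pi \to T_f \to q^\ast T_{\pi'} \to 0$ together with the multiplicativity of the Todd class on exact sequences (cf.\ \cite[Lemma 9.2]{DRR1}). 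After these manipulations, both curly arrows produce the two possible isomorphisms
\[
f_\ast[q^{\prime\ast}P \cdot q^\ast P']_{\PBbb/S} \simeq [\pi_\ast P \cdot \pi'_\ast P']_{X/S},
\]
with $P = \chfrak(\Ocal(-i))\cdot\tdfrak(T_\pi)$ and $P' = \chfrak(\Ocal'(-j))\cdot\tdfrak(T_{\pi'})$, whose coincidence is exactly the content of Lemma \ref{lemma:basechangesmalldegree}\eqref{item:basechangesmalldegree-1}.

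The main obstacle will be ensuring that the degree hypotheses of Lemma \ref{lemma:basechangesmalldegree}\eqref{item:basechangesmalldegree-1} apply when the resulting line distributions are evaluated against an arbitrary test Chern power series. This is precisely where the simplifications in Proposition \ref{prop:RR-proj-right} and Corollary \ref{corollary:RHS-RR-PB} become essential: the expressions $\pi_\ast(\chfrak(\Ocal(-i))\cdot\tdfrak(T_\pi))$ and its primed analogue can be replaced, via the Euler-sequence computation leading to \eqref{eq:premierePIL} together with the projection-formula identity \eqref{eq:projformulaPil}, by polynomial expressions in the Chern classes of $\Ecal$ and $\Ecal'$ of controlled degree. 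Careful bookkeeping of these degrees will confirm that the hypotheses of Lemma \ref{lemma:basechangesmalldegree} are met, at which point all remaining identifications are formal consequences of the naturality of the projection formula, base change, and the Whitney compatibility of the Todd class.
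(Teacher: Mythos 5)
Your proposal is correct and follows essentially the same route as the paper's proof: for \textbf{(A)} it decomposes each curly arrow into the composition isomorphism, projection formula, Tor-independent base change and the multiplicativity of $\chfrak$, and for \textbf{(B)} it reduces, after the degree-lowering simplifications of Proposition \ref{prop:RR-proj-right} and Corollary \ref{corollary:RHS-RR-PB} leading to \eqref{eq:premierePIL}, to the K\"unneth-type Lemma \ref{lemma:basechangesmalldegree} \eqref{item:basechangesmalldegree-1}. The only real difference is that where you dismiss the agreement of the two routes in \textbf{(A)} as formal pseudofunctoriality, the paper closes that step by identifying both composites with the derived-category K\"unneth isomorphism (citing the Stacks project), which is the precise justification you should supply there.
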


\begin{proof}
We first address the diagram \textbf{(A)}. Consider the following diagram

\begin{tikzcd}[column sep=-30pt, row sep=30pt]    & \left[\chfrak(f_! \left({q'}^\ast \Ocal(-i) \otimes q^{\ast} \Ocal^{\prime}(-j)\right)) \right] \ar[dr, "f = \pi  q' "] \ar[dl, "f = \pi' q" '] \\
    \left[\chfrak(\pi'_! {q}_! \left({q'}^\ast \Ocal(-i) \otimes q^{\ast} \Ocal^{\prime}(-j)\right)\right] \ar[d, "{\substack{\text{projection}\\ \text{formula}}}" ']
    & &  \left[\chfrak(\pi_! q'_! \left({q'}^\ast \Ocal(-i) \otimes q^{\ast} \Ocal^{\prime}(-j)\right))\right] \ar[d, "{\substack{\text{projection}\\ \text{formula}}}"] \\
    \left[\chfrak(\pi'_!( {q}_! {q'}^\ast \Ocal(-i) \otimes  \Ocal^{\prime}(-j)))\right] \ar[d, "{\substack{\text{base}\\ \text{change}}}" ']
    & & \left[\chfrak(\pi_!( \Ocal(-i) \otimes q^{\prime}_! q^{\ast}\Ocal^{\prime}(-j)))\right] \ar[d, "{\substack{\text{base}\\ \text{change}}}"] \\
    \left[\chfrak(\pi'_!( {\pi'}^\ast {\pi}_! \Ocal(-i) \otimes  \Ocal^{\prime}(-j)))\right] \ar[dr]
    & &  \left[\chfrak(\pi_!(\Ocal(-i) \otimes \pi^{\ast}\pi^{\prime}_! \Ocal^{\prime}(-j)))\right] \ar[dl] \\ & \left[\chfrak(\pi_! \Ocal(-i)) \cdot \chfrak( {\pi'_!} \Ocal^{\prime}(-j))\right].
\end{tikzcd}

\smallskip

\noindent This diagram is obtained as a composition of operations in the virtual category, induced by the analogous operations in the derived category, and an application of $\chfrak$ and its multiplicative structure. The upper and lower compositions are instances of the K\"unneth formula \cite[\href{https://stacks.math.columbia.edu/tag/0FLN}{0FLN}]{stacks-project}, hence the diagram commutes. 

For diagram \textbf{(B)}, we proceed analogously, applying instead the K\"unneth-type isomorphism of Lemma \ref{lemma:basechangesmalldegree} instead. Recall that the isomorphism in \emph{loc. cit.} holds under a restriction of the degrees of the involved Chern power series. In the current setting, we recall that $\pi_\ast(\chfrak(\Ocal(-i)) \cdot\tdfrak(T_\pi))$ and its $\pi'$ counterpart are understood after the reductions in the proof of Proposition \ref{prop:RR-proj-right} leading to \eqref{eq:premierePIL}, for which we can indeed apply Lemma \ref{lemma:basechangesmalldegree} \eqref{item:basechangesmalldegree-1}.
\end{proof}

\subsection{Birational invariance}
Let $h: X' \to X$ be a birational morphism as in Theorem \ref{thm:cor86} \eqref{item:prop-int-dist-birational}, and $\Ecal$ a vector bundle on $X$ of constant rank. Consider the Cartesian square
\begin{displaymath}
    \xymatrix{
    \PBbb(h^\ast \Ecal) \ar[r]^-{h'} \ar[d]_{\pi'} & \PBbb(\Ecal) \ar[d]^{\pi} \\
        X^{\prime} \ar[r]^{h} & X.
    }
\end{displaymath}
Then, there is a diagram of morphisms that compares the birational invariance and the isomorphisms of Proposition \ref{prop:RR-proj-left}: 
\begin{displaymath}
    \xymatrix{
        h_* [\chfrak(\pi'_! {h'}^\ast \Ocal(-i))] \ar[d] \ar[r]^{\substack{\text{base}\\ \text{change}\\ \textcolor{white}{hello}} }& h_* [\chfrak(h^\ast \pi_! \Ocal(-i))]  \ar[r]^-{\substack{\text{birational}\\ \text{invariance}\\ \textcolor{white}{hello}}} &  [\chfrak(\pi_! \Ocal(-i))] \ar[d] \\ 
     h_* \delta_i \ar[rr]^-{\substack{\text{birational}\\ \text{invariance}}} & & \delta_i.
     }
\end{displaymath}
Here we recall that $\delta_{i}$ is a shorthand for the right-hand side distribution in \eqref{eq:determinantprojectivebundle}. This diagram is easily seen to commute, by keeping track of the definitions. There is a similarly defined diagram instead utilizing Proposition \ref{prop:RR-proj-right}. This also commutes, reducing to the fact that the operations in Theorem \ref{thm:cor86} commute with each other. By the construction of the DRR isomorphism for projective bundles, this easily implies the following lemma:
\begin{lemma}\label{lemma:birinvarianceprojective}
 The birational invariance isomorphism intertwines the two DRR isomorphisms for the two projective bundles $\pi$ and $\pi'$. 
\end{lemma}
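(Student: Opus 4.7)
The plan is to reduce to the basic case of the twisting line bundles $\Ocal(-i)$ for $0\leq i\leq r-1$, and then to paste together the two commutative diagrams displayed just before the statement.

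By Proposition \ref{prop:base-reduction}, we may work over divisorial base schemes and with virtual categories of vector bundles. By Lemma \ref{lemma:projbundle}, any object $E$ of $V(\PBbb(\Ecal))$ may be expressed, up to a canonical isomorphism compatible with pullback along $h'$, as a sum $\sum \pi^{\ast}G_{i}\otimes\Ocal(-i)$ with $G_{i}$ in $V(X)$. Since $\RR_{\pi}$ (and $\RR_{\pi'}$) was constructed by imposing the compatibility with the projection formula from Proposition \ref{prop:proj-for-RR-dist}, and since both projection formulas and birational invariance are compatible as isomorphisms of functors of commutative Picard categories (the birational invariance being compatible with the multiplicativity isomorphism \eqref{eq:chern-tensor-product}, as noted in \textsection\ref{subsubsec:mult-chern-Borel-Serre}, and with the projection formula \eqref{eq:proj-for-int-dist} by construction), the general case follows from the case $E=\Ocal(-i)$.

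For $E=\Ocal(-i)$, the DRR isomorphism $\RR_{\pi}(\Ocal(-i))$ is, by definition, the composition
\begin{displaymath}
     [\chfrak(\pi_! \Ocal(-i))]_{X/S} \xrightarrow{\eqref{eq:determinantprojectivebundle}} \delta_{i} \xrightarrow{\eqref{eq:RHS-projectivebundle}} \pi_{\ast}[\chfrak(\Ocal(-i))\cdot\tdfrak(T_{\pi})]_{\PBbb(\Ecal)/S},
\end{displaymath}
and similarly for $\RR_{\pi'}({h'}^{\ast}\Ocal(-i))$, with an intermediate distribution $h_{\ast}\delta_{i}$. Applying $h_{\ast}$ to the latter composition and concatenating the two diagrams displayed just before the lemma yields a commutative diagram whose outer contour is precisely the compatibility to be proven. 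Both diagrams commute by tracking definitions, using that base change, birational invariance, the Whitney isomorphism, the rank triviality, the restriction isomorphism and the Borel--Serre isomorphism all commute pairwise, as stated in Theorem \ref{thm:cor86} and recalled in \textsection\ref{subsubsec:mult-chern-Borel-Serre}.

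The only mildly delicate point is the right-hand side diagram, since $\pi_{\ast}[\chfrak(\Ocal(-i))\cdot\tdfrak(T_{\pi})]_{\PBbb(\Ecal)/S}$ is computed via the rearrangement leading to \eqref{eq:premierePIL} and Howe's formula \eqref{eq:Pir-1}; one must verify that this rearrangement commutes with the pullback along $h'$ and the birational invariance isomorphism. This follows from the fact that each ingredient of the computation---the Euler sequence, the binomial expansion \eqref{eq:binomialdevelopment1}, the vanishing \eqref{eq:vanishingsectiontrivial} coming from the tautological section, and the projection formula \eqref{eq:projformulaPil}---is natural with respect to pullback and compatible with birational invariance. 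Hence the lemma.
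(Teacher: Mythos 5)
Your proposal is correct and follows essentially the same route as the paper: reduce to the twisting bundles $\Ocal(-i)$ via Lemma \ref{lemma:projbundle} and the projection-formula compatibility built into the construction, then check that the two diagrams comparing the birational invariance with Proposition \ref{prop:RR-proj-left} and Proposition \ref{prop:RR-proj-right} commute, using the commutativity of the operations in Theorem \ref{thm:cor86}. The paper's argument is just a terser version of the same verification, so no further comments are needed.
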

\qed

\begin{corollary}\label{cor:invariance-isom-proj-bundles}
The DRR isomorphism for projective bundles is functorial with respect to isomorphisms of the projective bundles as in \eqref{eq:diagram-proj-bundles}, in the sense of Definition \ref{def:compatibilitymorphisms}. Moreover, $\RR_{\pi}$ is the identity isomorphism if $\pi$ is the identity map.
\end{corollary}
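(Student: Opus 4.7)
The plan is to reduce the functoriality assertion to the two invariance properties already established, namely Lemma~\ref{lemma:birinvarianceprojective} (birational invariance) and Proposition~\ref{prop:vectorbundleinvariance} (invariance under twisting by a line bundle), and to verify the identity case directly from the constructions in Proposition~\ref{prop:RR-proj-left} and Proposition~\ref{prop:RR-proj-right}. As usual, by Proposition~\ref{prop:base-reduction} we may work with divisorial base schemes and virtual vector bundles.

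First I would treat the identity case. If $\pi$ is the identity map, then under the convention $\PBbb(\Ocal_X) = X$ we have $\Ecal = \Ocal_X$, $r = 1$, and $T_\pi = 0$. The construction of $\RR_\pi$ need only be inspected on the generator $\Ocal(-i)$ for $0 \leq i \leq r-1$, i.e.\ $i = 0$ and the bundle $\Ocal_X$. In this case both Proposition~\ref{prop:RR-proj-left} and Proposition~\ref{prop:RR-proj-right} reduce to the canonical isomorphism $[\chfrak(\Ocal_X)]_{X/S} \simeq \delta_{X/S}$ coming from \cite[Proposition 9.1(2)]{DRR1}, and the triviality of $\tdfrak(0)$; extending by the projection formula (Proposition~\ref{prop:proj-for-RR-dist}) recovers the identity automorphism on $[\chfrak(E)]_{X/S}$ for arbitrary virtual $E$.

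For the functoriality assertion, consider a commutative diagram as in \eqref{eq:diagram-proj-bundles} with $\psi$ and $\varphi$ isomorphisms. I would factor $\varphi$ through the base change by $\psi$: let $\Ecal'' = \psi^\ast \Ecal$ and $\pi''\colon \PBbb(\Ecal'') \to X'$ be the pullback projective bundle. The induced map $\PBbb(\Ecal'') \to \PBbb(\Ecal)$ is an isomorphism over $\psi$, so Lemma~\ref{lemma:birinvarianceprojective} (applied with $h = \psi$, which is trivially birational in the sense of Theorem~\ref{thm:cor86}\eqref{item:prop-int-dist-birational}) identifies $\RR_\pi$ with $\RR_{\pi''}$ in a manner compatible with the isomorphism of $S$-schemes. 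It remains to compare $\RR_{\pi'}$ with $\RR_{\pi''}$, where now $\varphi$ induces an $X'$-isomorphism $\PBbb(\Ecal') \simeq \PBbb(\Ecal'')$. Since $X'$ is divisorial and hence in particular quasi-compact, this $X'$-isomorphism forces $\Ecal' \simeq \Ecal'' \otimes L$ for some line bundle $L$ on $X'$ (a standard fact for isomorphisms of projective bundles), and Proposition~\ref{prop:vectorbundleinvariance} then supplies the required compatibility of the DRR isomorphisms.

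The main obstacle I anticipate is purely bookkeeping: one must verify that the composite of the birational invariance isomorphism from Lemma~\ref{lemma:birinvarianceprojective} and the twist invariance of Proposition~\ref{prop:vectorbundleinvariance} agrees with the functorial isomorphisms \eqref{eq:isomor-of-mor-RR-left}--\eqref{eq:isomor-of-mor-RR-right} appearing in Definition~\ref{def:compatibilitymorphisms}. This is however automatic from the constructions of those isomorphisms in Proposition~\ref{prop:isomor-of-mor}, since both are obtained by combining pullback-pushforward adjunctions with the birational invariance of Theorem~\ref{thm:cor86}\eqref{item:prop-int-dist-birational} and the multiplicativity of $\chfrak$, which are the same ingredients used throughout \textsection\ref{subsec:the-construction}. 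No nontrivial coherence beyond what is already encoded in Theorem~\ref{thm:cor86} and in \cite[Proposition 9.1]{DRR1} is required.
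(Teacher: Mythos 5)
Your proposal is correct and takes essentially the same route as the paper: its proof of the corollary is precisely the combination of Lemma~\ref{lemma:birinvarianceprojective} (applied to birational morphisms that are in fact isomorphisms) with Proposition~\ref{prop:vectorbundleinvariance}, together with the observation that the construction for $\Ecal=\Ocal_{X}$ trivially yields the identity. Your only addition is to spell out the standard factorization of the $X^{\prime}$-isomorphism of projective bundles through the pullback $\psi^{\ast}\Ecal$ and a twist by a line bundle, which the paper leaves implicit.
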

\begin{proof}
The first claim follows from Proposition \ref{prop:vectorbundleinvariance} and Lemma \ref{lemma:birinvarianceprojective}, where the latter is applied to the case of birational morphisms which are actually isomorphisms. For the second claim, the very construction of $\RR_{\pi}$ for $\Ecal=\Ocal_{X}$ readily yields the identity isomorphism. 
\end{proof}

\section{Construction of the Deligne--Riemann--Roch isomorphism}\label{sec:construction-DRR}
 In this section, we construct and characterize the DRR isomorphism for local complete intersection morphisms, thus proving the main contributions of this article, cf. Theorem \ref{thm:general-DRR} and Theorem \ref{thm:DRR-det-coh}. Several of the considerations are parallel to the strategy of proof of the classical Grothendieck--Riemann--Roch formula, as treated, for instance, in \cite{FultonLang}. However, in the functorial setting, the classical proofs do not directly carry over, since one has to keep track of numerous isomorphisms and intermediate choices, as the previous sections already illustrate. From the characterization in Theorem \ref{thm:DRR-det-coh}, we deduce that the DRR isomorphism for the determinant of the cohomology is compatible with Grothendieck duality, cf. Theorem \ref{thm:Groth-duality}. Throughout, we assume that all the $S$-schemes satisfy the condition $(C_{n})$, for some $n$, except possibly for base changes $S^{\prime}\to S$.

\subsection{Compatibilities between closed immersions and projective bundles}
The compatibilities exhibited in this subsection are compatibilities that are expected from any DRR isomorphism. In later subsections the very construction of the DRR isomorphism in the lci case will build upon these basic compatibilities. 
\begin{proposition}\label{prop:projimmersionimmersionproj}
Let $i\colon Y\to X$ be a regular closed immersion of $S$-schemes, and let $\Ecal$ be a vector bundle of constant rank on $X$. Consider the following Cartesian diagram: 
\begin{displaymath}
    \xymatrix{
        \PBbb(i^{\ast}\Ecal) \ar[d]_{p'} \ar[r]^{i'}& \PBbb(\Ecal) \ar[d]^{p} \\ 
         Y \ar[r]^{i} & X.
    }
\end{displaymath}
Then, $\RR_{i'} \RR_p = \RR_{p'} \RR_i$.
\end{proposition}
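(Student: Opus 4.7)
My approach is to first use the projective bundle splitting to reduce to checking the compatibility on explicit generators, then to apply the deformation to the normal cone of $i$ and pull it back to the projective bundles, reducing to the linearized case where both immersions are cut out by regular sections and can be handled by the $\RR_{sec}$ construction.

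By Proposition \ref{prop:base-reduction} I reduce to divisorial schemes and to virtual categories of vector bundles. By Lemma \ref{lemma:projbundle}, every object of $V(\PBbb(i^{\ast}\Ecal))$ decomposes as a sum of bundles of the form $p^{\prime\ast}H\otimes\Ocal(-k)$ with $H\in V(Y)$ and $0\leq k\leq r-1$. The composed isomorphisms $\RR_{i^{\prime}}\RR_{p}$ and $\RR_{p^{\prime}}\RR_{i}$ inherit compatibility with projection formulas from their constituents, namely Corollary \ref{cor:projformula} and Theorem \ref{thm:RR-P(E)} \eqref{item:RRPE-2}, together with the Tor-independent base-change isomorphism $i^{\prime}_{!}p^{\prime\ast}H\simeq p^{\ast}i_{!}H$ coming from the Cartesian square. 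Exploiting both projection formulas in a coordinated manner (the one along $p^{\prime}$ on the right and the combined one along $p$ via $p^{\ast}i_{!}H$ on the left), I reduce to verifying the compatibility for $E=i^{\prime\ast}\Ocal(-k)$.

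Next I apply the deformation to the normal cone for $i\colon Y\to X$, yielding $j\colon\PBbb^{1}_{Y}\to M\to\PBbb^{1}_{X}$ as in Proposition \ref{prop:Deformationcone}. Pulling back $\PBbb(\Ecal)$ along $M\to X$ gives a projective bundle over $M$ whose fiber at $0$ recovers the original square and whose fiber at $\infty$ contains the linearized configuration: the zero section $j_{\infty}\colon Y\to\PBbb(N^{\vee}\oplus 1)$ together with the pulled-back zero section $\PBbb(i^{\ast}\Ecal)\to\PBbb(\pi^{\ast}i^{\ast}\Ecal)$, where $\pi\colon\PBbb(N^{\vee}\oplus 1)\to Y$. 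Applying Lemma \ref{lemma:rationalequivalence} in this projective-bundle enriched context, and observing that the contribution from the strict transform $\widetilde{X}$ is canonically trivial (as in the proof of Proposition/Definition \ref{prop:RR-closed-immersions}), I reduce to checking the compatibility in this linearized case.

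In the linearized case, both immersions are cut out by regular sections, namely $\sigma_{0}$ from \eqref{eq:tautologicalsequenceP} on $\PBbb(N^{\vee}\oplus 1)$ and its pullback along the corresponding projective bundle. By Proposition \ref{prop:RRres=RR}, both $\RR_{j_{\infty}}$ and its pulled-back counterpart are computed via $\RR_{sec}$, hence explicitly in terms of Koszul resolutions, the Borel--Serre isomorphism \eqref{eq:Borel-Serre-prelim}, and the restriction isomorphism of Theorem \ref{thm:cor86} \eqref{item:prop-int-dist-restriction}. The main obstacle is to verify that the explicit simplification underlying the construction of $\RR_{p}$ and $\RR_{p^{\prime}}$ via the Howe-type polynomial rewriting of Corollary \ref{corollary:RHS-RR-PB} interacts coherently with these Koszul-theoretic operations. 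This will follow from the commutation rules of Theorem \ref{thm:cor86}, together with the observations in \textsection\ref{subsubsec:mult-chern-Borel-Serre} that the Borel--Serre isomorphism is compatible with the other natural operations on intersection distributions, and the fact that the Koszul resolution of $\sigma_{0}$ pulls back to that of its pullback. The resulting diagram, analogous in structure to the one in the proof of Proposition \ref{prop:linearcase} but now incorporating the projective-bundle degrees of freedom on top of the linear embedding, will be assembled and seen to commute piece by piece.
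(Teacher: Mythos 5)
Your overall strategy (reduce to divisorial base, deform to the normal cone of $i$, and settle the linearized case via $\RR_{sec}$ and the commutation rules of Theorem \ref{thm:cor86}) is the same as the paper's, but the order of your reductions creates a genuine gap. You claim to reduce to $E=i^{\prime\ast}\Ocal(-k)$ \emph{before} linearizing, "exploiting both projection formulas in a coordinated manner". This does not follow from the available compatibilities: for a generator $p^{\prime\ast}H\otimes\Ocal(-k)$ with $H$ a virtual bundle on $Y$, the projection formula for $\RR_{i}$ (Corollary \ref{cor:projformula}) only lets you pull out factors of the form $i^{\ast}F$ with $F$ on $X$, and likewise $\RR_{i^{\prime}}$ only absorbs factors restricted from $\PBbb(\Ecal)$; since $H$ need not extend to $X$ (nor $p^{\prime\ast}H$ to $\PBbb(\Ecal)$), the $H$-dependence cannot be cancelled on the two sides. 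Concretely, on the side $\RR_{i^{\prime}}\RR_{p}$ your base-change isomorphism $i^{\prime}_{!}p^{\prime\ast}H\simeq p^{\ast}i_{!}H$ lets you extract $\chfrak(i_{!}H)$ at the level of $X$ through $\RR_{p}$, but on the side $\RR_{p^{\prime}}\RR_{i}$ you are left with $\RR_{i}(H\otimes p^{\prime}_{!}\Ocal(-k))$, where $H$ cannot be pulled out; the difference of the two composites is a homomorphism on $K_{0}(\PBbb(i^{\ast}\Ecal))\simeq\bigoplus_{k}K_{0}(Y)$, and nothing you have cited forces its value on $p^{\prime\ast}H\otimes\Ocal(-k)$ to be determined by its value on $\Ocal(-k)$. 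The paper avoids exactly this by deforming \emph{first}: in the linearized model $X=\PBbb(N^{\vee}\oplus 1)$ the immersion admits a retraction, every bundle on $Y$ is a restriction from $X$, so the generator is of the form $i^{\prime\ast}(p^{\ast}E\otimes\Ocal(-k))$ and the projection-formula reduction to trivial $E$ is then legitimate.

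Your deformation step also needs more than you invoke: since the fiber at infinity is the reducible scheme $M_{\infty}=\PBbb(N^{\vee}\oplus 1)\cup\widetilde{X}$, to split the comparison into the $\widetilde{X}$-part and the linearized part one must know that \emph{both} composed isomorphisms are compatible with the birational map $\PBbb(N^{\vee}\oplus 1)\sqcup\widetilde{X}\to M_{\infty}$ (pulled back to the projective bundles); for the closed-immersion constituents this is Lemma \ref{lemma:ignorepart}, and for $\RR_{p}$ it is Lemma \ref{lemma:birinvarianceprojective}, which you do not cite. With the order of reductions corrected (linearize, discard $\widetilde{X}$ via these two lemmas, then split and apply the projection formula), your final step — identifying the linearized comparison as a commutation of the restriction isomorphism with the projection formula and the Borel--Serre-type operations, via Theorem \ref{thm:cor86} — matches the paper's conclusion.
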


\begin{proof}
We may assume that the base schemes are divisorial, and work with virtual vector bundles instead of perfect complexes, by Proposition \ref{prop:base-reduction}. Also, we may assume that $Y$ is a closed subscheme of $X$, by Proposition \ref{prop:RR-closed-immersions}. An argument similar to that of Lemma \ref{lemma:rationalequivalence} and Proposition \ref{prop:RRres=RR} allows us to perform a deformation to the normal cone argument with respect to $Y \to X$. We can hence replace $X$ by the fiber at infinity of the deformation to the normal cone, given by $M_{\infty}=\PBbb(N^\vee \oplus 1)\cup \widetilde{X}$, so that the closed immersion $Y\to X$ is replaced by the zero section of $\PBbb(N^\vee \oplus 1)$.

    Since the zero section does not meet $\widetilde{X}$, we can apply Lemma \ref{lemma:ignorepart}, and conclude that the DRR isomorphism for the immersion of the zero section is compatible with the birational transformation $\pi: \PBbb(N^\vee \oplus 1) \sqcup \widetilde{X} \to M_{\infty}$. By Lemma \ref{lemma:birinvarianceprojective}, the same holds for $\RR_p$. This shows that we can reduce to studying two situations:
    \begin{itemize}
        \item The exceptional case where $Y = \emptyset$, corresponding to restricting to $\widetilde{X}$.
        \item The linearized case $Y \to \PBbb(N^\vee \oplus 1)$. 
    \end{itemize}
    The first case is a straightforward unraveling of the definitions, and we omit the details. In the second case, both $Y$ and also $\PBbb(i^{\ast}\Ecal)$ are cut out by a regular section, in which case  the DRR isomorphism for closed immersions is described by Proposition \ref{prop:RRres=RR}. Hence, we may assume that the virtual bundle on $\PBbb(i^{\ast}\Ecal)$ is of the form $i^{\prime\ast} (p^\ast E \otimes \Ocal(-k))$. By the compatibility with the projection formula, we may assume that $E$ is trivial. The statement then reduces to the compatibility between the projection formula and the restriction isomorphism, stated in Theorem \ref{thm:cor86}. 
\end{proof}

The following relates another compatibility between closed immersions and projective bundles, in the case of hyperplane sections. More precisely, we suppose we are given a vector bundle $\Ecal$ of rank $r$ on an $S$-scheme $X$, and a quotient bundle $\Ecal'$ of rank $r-1$. Consider the associated linear embedding $i: \PBbb(\Ecal')\to \PBbb(\Ecal)$, and denote by $\pi': \PBbb(\Ecal') \to X$ and $\pi: \PBbb(\Ecal) \to X$ the natural projections. 

\begin{proposition}\label{prop:hyperplanereduction}
The DRR isomorphism for projective bundles is compatible with the DRR isomorphism for closed immersions defined by hyperplane sections. Namely, with the above notation, we have $ \RR_i\RR_{\pi} = \RR_{\pi^{\prime}}$.
\end{proposition}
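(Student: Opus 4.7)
The plan is a reduction-and-computation strategy. First, by Proposition \ref{prop:base-reduction} I would reduce to divisorial base schemes and work with virtual vector bundles. Applying Lemma \ref{lemma:projbundle} to $\pi^{\prime}$, any virtual vector bundle on $\PBbb(\Ecal^{\prime})$ is a formal sum $\sum_{j=0}^{r-2} \pi^{\prime\ast} G_j \otimes \Ocal_{\PBbb(\Ecal^{\prime})}(-j)$. Both $\RR_{\pi^{\prime}}$ and the composition $\RR_i \RR_\pi$ are compatible with the projection formula, by Theorem \ref{thm:RR-P(E)}\eqref{item:RRPE-2} and Corollary \ref{cor:projformula}. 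This reduces the identity to the case $E = \Ocal_{\PBbb(\Ecal^{\prime})}(-j)$ for each $0 \leq j \leq r-2$.

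Second, I would identify $\PBbb(\Ecal^{\prime})$ as an explicit relative Cartier divisor in $\PBbb(\Ecal)$. Let $\Kcal := \ker(\Ecal \twoheadrightarrow \Ecal^{\prime})$, a line bundle. The composition $\pi^\ast \Kcal \hookrightarrow \pi^\ast \Ecal \twoheadrightarrow \Ocal_{\PBbb(\Ecal)}(1)$ is a regular section of $L := \Ocal_{\PBbb(\Ecal)}(1) \otimes \pi^\ast \Kcal^\vee$ whose zero locus is $\PBbb(\Ecal^{\prime})$, with normal bundle $N_i = L|_{\PBbb(\Ecal^{\prime})}$. The Koszul resolution $0 \to L^{-1} \to \Ocal \to i_\ast \Ocal_{\PBbb(\Ecal^{\prime})} \to 0$, combined with the projection formula, identifies $i_! \Ocal_{\PBbb(\Ecal^{\prime})}(-j)$ with $\Ocal(-j) - \Ocal(-j-1) \otimes \pi^\ast \Kcal$ in the virtual category of $\PBbb(\Ecal)$. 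Since $i$ is a Cartier divisor immersion, Proposition \ref{prop:RRres=RR} (see also \textsection\ref{subsubsec:RRimmersionD}) expresses $\RR_i$ through the section construction $\RR_{sec}(\Ocal, 1)$ of Construction/Definition \ref{construction-definition-RRsec}, assembled from the Borel--Serre isomorphism and the restriction isomorphism of Theorem \ref{thm:cor86}\eqref{item:prop-int-dist-restriction}.

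With both sides admitting explicit descriptions, the verification unfolds as follows. The left-hand side $\RR_{\pi^{\prime}}(\Ocal_{\PBbb(\Ecal^{\prime})}(-j))$ is computed via Propositions \ref{prop:RR-proj-left}--\ref{prop:RR-proj-right} applied to $\pi^{\prime}$ and $\Ecal^{\prime}$. The right-hand side factors through $\RR_\pi$ applied to $\Ocal(-j) - \Ocal(-j-1) \otimes \pi^\ast \Kcal$, followed by $\RR_i$. To match them, I would use the Whitney isomorphism associated with the sequence $0 \to \Kcal \to \Ecal \to \Ecal^{\prime} \to 0$ to relate the universal polynomials $P_{i,\ell}$ of \eqref{eq:premierePIL} for $\Ecal$ and for $\Ecal^{\prime}$, combined with Howe's identity \eqref{eq:Pir-1}. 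The Todd factors are reconciled via the short exact sequence $0 \to T_{\pi^{\prime}} \to i^\ast T_\pi \to N_i \to 0$, which identifies $\tdfrak(T_\pi) \cdot \tdfrak(N_i)^{-1}$ with $\tdfrak(T_{\pi^{\prime}})$ after restriction along $i$, via the multiplicativity of $\tdfrak$ on exact sequences.

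The main obstacle will be the diagrammatic bookkeeping required to ensure that these rewritings intertwine the Borel--Serre isomorphism, the restriction isomorphism along the regular section of $L$, the rank-triviality isomorphism of Theorem \ref{thm:cor86}\eqref{item:prop-int-dist-rank}, and the multiplicativity of the categorical Chern character recalled in \textsection\ref{subsubsec:mult-chern-Borel-Serre}. Each individual compatibility is an instance of the commutativity of operations on intersection distributions asserted by Theorem \ref{thm:cor86}, so the difficulty is in assembling them into one large commutative diagram, not in any deep new input. Once these compatibilities are in place, the underlying numerical coincidence is precisely Howe's identity \eqref{eq:Pir-1} applied to both $\Ecal$ and $\Ecal^{\prime}$, which gives the cancellations needed to conclude $\RR_i \RR_\pi = \RR_{\pi^{\prime}}$.
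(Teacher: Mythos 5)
Your overall route is the same as the paper's: reduce via Proposition \ref{prop:base-reduction}, Lemma \ref{lemma:projbundle} and projection-formula compatibility to the bundles $\Ocal_{\PBbb(\Ecal')}(-k)$, realize $\RR_i$ through the Koszul resolution and $\RR_{sec}$ via Proposition \ref{prop:RRres=RR}, and then match the explicit descriptions of $\RR_\pi$ and $\RR_{\pi'}$ coming from Propositions \ref{prop:RR-proj-left}--\ref{prop:RR-proj-right}, with Howe's identity \eqref{eq:Pir-1} supplying the numerical cancellation. The only structural difference is that the paper first invokes the splitting principle together with Proposition \ref{prop:vectorbundleinvariance} to assume $\Ecal=\Ecal'\oplus\Ocal$, so the hyperplane is cut out by a section of $\Ocal(1)$ itself and the twist by your $\Kcal$ disappears; you instead carry $\Kcal$ through the computation, which is legitimate but adds bookkeeping that the twist-invariance statement lets you avoid.

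The one point where your proposal undersells the work is the claim that every needed compatibility ``is an instance of the commutativity of operations asserted by Theorem \ref{thm:cor86}.'' Two of the inputs the paper actually uses are not covered by that blanket statement. First, the heart of the comparison (the paper's Lemma \ref{lemma:resres=res}) is that the trivialization $[\cfrak_r(\pi^\ast\Ecal^\vee(1))]\simeq 0$ given by the Euler section on $\PBbb(\Ecal)$ is carried, under the Whitney and restriction isomorphisms along $i$, to the trivialization $[\cfrak_{r-1}(\pi'^{\ast}\Ecal'^{\vee}(1))]\simeq 0$ on $\PBbb(\Ecal')$; this rests on the compatibility of restriction isomorphisms for the three sections $\sigma,\sigma',\sigma''$ related by the exact sequence, i.e.\ on \cite[Proposition 7.14]{DRR1}, a specific statement beyond generic commutativity, and without it Howe's identity for $\Ecal$ and $\Ecal'$ would not be known to match through $i_\ast$. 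Second, your reconciliation of the Todd factors via the single sequence $0\to T_{\pi'}\to i^\ast T_\pi\to N_i\to 0$ is not enough: what is needed is that this Whitney isomorphism is compatible with the Euler-sequence substitutions $T_\pi\rightsquigarrow\pi^\ast\Ecal^\vee(1)$ and $T_{\pi'}\rightsquigarrow\pi'^{\ast}\Ecal'^{\vee}(1)$ made on both sides, and the paper proves this by applying \cite[Lemme 4.8]{Deligne-determinant} to the $3\times 3$ diagram of Euler sequences \eqref{eq:Eulersequencesinclusion} (a compatibility of determinant-type functors for exact sequences of exact sequences). With these two inputs supplied, your assembly of the large diagram goes through exactly as in the paper.
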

\begin{proof}
By the splitting principle and by Proposition \ref{prop:vectorbundleinvariance}, we may assume that $\Ecal=\Ecal^{\prime}\oplus\Ocal$, and the immersion $\PBbb(\Ecal^{\prime})\to\PBbb(\Ecal)$ corresponds to the hyperplane at infinity. The normal bundle $N_i$ of $i$ is then identified with $\Ocal^{\prime}(1).$

We consider the exact sequence on $\PBbb(\Ecal)$
\begin{equation}\label{eq:Oksequence}
    0 \to \Ocal(-k-1) \to \Ocal(-k) \to i_\ast \Ocal^{\prime}(-k) \to 0
\end{equation}
which identifies with the standard Koszul resolution 
\begin{equation}\label{eq:Osequence}
    0 \to \Ocal(-1) \to \Ocal \to i_\ast \Ocal \to 0,
\end{equation}
twisted by $\Ocal(-k)$. Since both sides are compatible with the projection formula, it is enough to prove the statement for bundles of the form $\Ocal^{\prime}(-k), 0 \leq k \leq r-2$. By the compatibility of the DRR isomorphisms for regular closed immersions and projective bundles with the projection formula, the proposition ultimately amounts to the commutativity of the diagram below, where we use the notation $\delta_{k}$ for the distribution associated with the Kronecker delta:

\begin{equation}\label{eq:OkOk-1}
\resizebox{\textwidth}{!}{
\xymatrix{  
    [\chfrak({\pi}_! i_! \Ocal^{\prime}(-k))] \ar[r] \ar[d]^{\eqref{eq:Oksequence}} \ar@/_13pc/[dddd]_{\RR_{\pi}}& \ar[dd] [\chfrak(\pi'_! \Ocal^{\prime}(-k))]\ar@/^5pc/[dddddddd]^{\RR_{\pi^{\prime}}} \\ 
    [\chfrak(\pi_! (\Ocal(-k) - \Ocal(-k-1)))]  \ar[d] &  \\
    \delta_k - \delta_{k+1} \ar[d]\ar[r] & \delta_k \ar[dddddd] \\
    \pi_{\ast} [\chfrak(\Ocal(-k) - \Ocal(-k-1)) \cdot\tdfrak(T_\pi)] \ar[d]^{\eqref{eq:Oksequence}}  \ar@/^3pc/[dddrdd]^{\textbf{(A)}}\ar@{}[r]|-{\text{Lemma \ref{lemma:resres=res}}}&  \\
    \pi_{\ast} [\chfrak(i_! \Ocal^{\prime}(-k)) \cdot\tdfrak(T_\pi)]  \ar[d]^{\text{projection formula}}\ar@/_13pc/[dddd]_{\RR_{i}} &  \\
         \pi_{\ast} [\chfrak(i_! \Ocal )\cdot \chfrak(\Ocal(-k)) \cdot\tdfrak(T_\pi)]\ar[d]^{\eqref{eq:Osequence}}  &  \\ 
       \pi_{\ast} [\chfrak(\Ocal-\Ocal(-1) ) \cdot\chfrak(\Ocal(-k)) \cdot\tdfrak(T_\pi)] \ar[d]^{\text{Borel--Serre}}  &  \\
        \pi_{\ast} [\cfrak_1(\Ocal(1)) \cdot \tdfrak(\Ocal(1))^{-1} \cdot \chfrak(\Ocal(-k)) \cdot\tdfrak(T_\pi)] \ar[d]^{\text{restriction}} &  \\ 
        \pi_\ast i_\ast [\chfrak(\Ocal^{\prime}(-k))\cdot \tdfrak( \Ocal^{\prime}(1))^{-1} \cdot \tdfrak(i^\ast T_\pi)] \ar[r]^-{\text{Whitney}} &  \pi'_{\ast} [\chfrak(\Ocal^{\prime}(-k))\cdot\tdfrak(T_{\pi'})].} 
}
\end{equation}
\normalsize
The outer vaulted diagrams are simply the definitions of the corresponding DRR isomorphisms. Here we have used Proposition \ref{prop:RRres=RR} to obtain the explicit description of $\RR_i$ as $\RR_{sec}$. We proceed to prove that the square diagrams commute.

The first upper square diagram naturally commutes, by the very definition in terms of the computation of direct images and the exact sequence \eqref{eq:Oksequence}. Note here that $\delta_{k+1}$ is actually the constant distribution $\Ocal_{S}$, because $k$ is in the range $0\leq k\leq r-2$, and the arrow $\delta_k-\delta_{k+1}\to\delta_{k}$ identifies with the identity map $\delta_{k}\to\delta_{k}$. 

The lower diagram is more subtle and requires some preliminary reductions. First, we claim that we can replace $T_{\pi}$ (resp. $T_{\pi^{\prime}}$) by $\pi^{\ast} \Ecal^{\vee}(1)$ (resp. $\pi^{\prime \ast} \Ecal^{\prime\vee}(1)$) in \eqref{eq:OkOk-1}. For the arrows involving $\tdfrak(T_{\pi})$, this is immediate from the Euler sequence \eqref{eq:Euler} and the Whitney isomorphism for the categorical Todd genus, which yield 
\begin{displaymath} 
    [\tdfrak(T_{\pi})]\simeq [\tdfrak(\Ocal)\cdot\tdfrak(\pi^{\ast}\Ecal^{\vee}(1))]\simeq [\tdfrak(\pi^{\ast}\Ecal^{\vee}(1))].
\end{displaymath} 
The only non-trivial point is thus the compatibility between such isomorphisms for $T_{\pi}$ and $T_{\pi^{\prime}}$ and the lowest horizontal Whitney isomorphism. For this, we consider the following diagram of bundles on $\PBbb(\Ecal')$, where the two vertical exact sequences are the Euler sequences, and the lower horizontal diagram is that of the exact sequence of tangent bundles:

\begin{equation}\label{eq:Eulersequencesinclusion}
    \xymatrix{
      \underset{\ }{\Ocal} \ar@{>->}[d] \ar@{=}[r] & \underset{\ }{\Ocal} \ar@{>->}[d] & \\
     \pi^{\prime\ast} \Ecal^{\prime\vee}(1)\ \ \ar@{->>}[d] \ar@{>->}[r] & \pi^{\prime\ast} \Ecal^\vee(1) \ar@{->>}[r] \ar@{->>}[d] & \Ocal^{\prime}(1) \ar@{=}[d] \\ 
     T_{\pi'}\ \ \ar@{>->}[r] & i^\ast T_\pi \ar@{->>}[r] & \Ocal^{\prime}(1).
    }
\end{equation}
By \cite[Lemme 4.8]{Deligne-determinant} and the multiplicativity of the categorical Todd genus, the exact sequence of exact sequences in \eqref{eq:Eulersequencesinclusion} implies the commutativity of the diagram of Whitney isomorphisms
\begin{displaymath}
    \xymatrix{&  [\tdfrak(\Ocal^{\prime}(1))^{-1}\cdot\tdfrak(\pi^{\prime\ast} \Ecal^\vee(1))] \ar[d] \ar[r] &[ \tdfrak(\pi^{\prime \ast} \Ecal^{\prime \vee}(1))]  \ar[d] \\ 
    & [\tdfrak(\Ocal) \cdot\tdfrak(\Ocal^{\prime}(1))^{-1}\cdot \tdfrak(i^\ast T_\pi)] \ar[r] \ar[d] & [\tdfrak(\Ocal) \cdot \tdfrak( T_{\pi'})] \ar[d] \\
    & [\tdfrak(\Ocal^{\prime}(1))^{-1} \cdot \tdfrak(i^\ast T_\pi)] \ar[r] & [\tdfrak( T_{\pi'})].
    }
\end{displaymath}
This allows us to replace $T_{\pi}$ (resp. $T_{\pi^{\prime}}$) by $\pi^{\ast} \Ecal^{\vee}(1)$ (resp. $\pi^{\prime \ast} \Ecal^{\prime\vee}(1)$) in the lowest Whitney isomorphism in \eqref{eq:OkOk-1}, as claimed. With this in mind, the commutativity of the lower square diagram in \eqref{eq:OkOk-1} is a consequence of Lemma \ref{lemma:resres=res} below, and the projection formula as applied to deduce Proposition \ref{prop:RR-proj-right} from Corollary \ref{corollary:RHS-RR-PB}.
\end{proof}

\begin{lemma}\label{lemma:resres=res}
    The following diagram commutes:
\footnotesize{
    \begin{displaymath}
        \xymatrix{
        [(\chfrak(\Ocal(-k)) - \chfrak(\Ocal(-k-1))) \cdot \tdfrak(\pi^{\ast} \Ecal^\vee (1))] \ar[dd]^{\eqref{eq:OkOk-1}\ \mathbf{(A)}} \ar[r] & \sum_{\ell=1}^{r-1} [\cfrak_1(\Ocal(1))\cdot \pi^\ast (P_{k, \ell}(\Ecal) - P_{k+1, \ell}(\Ecal)) \cdot \cfrak_1(\Ocal(1))^{\ell-1}] \ar[d]^{\mathrm{restriction}} \\
         & \sum_{\ell=1}^{r-1} i_\ast  [\pi^{\prime \ast} (P_{k, \ell}(\Ecal) - P_{k+1, \ell}(\Ecal)) \cdot \cfrak_1(\Ocal^{\prime}(1))^{\ell-1}] \ar[d]^{\mathrm{Whitney}} \\
         i_\ast [\chfrak(\Ocal^{\prime}(-k))\cdot \tdfrak(\pi^{\prime\ast} \Ecal^{\prime\vee} (1))] \ar[r] &  \sum_{\ell=0}^{r-2} i_\ast  [\pi^{\prime \ast} P_{k, \ell}(\Ecal') \cdot\cfrak_1(\Ocal^{\prime}(1))^{\ell}].
         }
    \end{displaymath}
}
\normalsize
Here the upper horizontal (resp. lower) arrow is given by  Corollary \ref{corollary:RHS-RR-PB}, associated with the canonical everywhere non-vanishing section of $\pi^{\ast}\Ecal^\vee(1)$ (resp. of $\pi^{\prime \ast}\Ecal^{\prime\vee}(1)$).
\end{lemma}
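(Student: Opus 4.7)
The plan is to reduce the commutativity to an algebraic identity between polynomials in Chern classes following from the hypothesis $\Ecal\simeq\Ecal'\oplus\Ocal$, and then lift this identity to a canonical isomorphism of line distributions using the compatibilities built into Theorem \ref{thm:cor86}. The splitting principle for line distributions from \textsection\ref{subsubsec:pic-cat-line-dist}(5) allows us to further assume, if convenient, that $\Ecal'$ is a sum of line bundles; this reduction commutes with every operation appearing in the diagram.

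Write $c=\cfrak_1(\Ocal(1))$. Since $\pi^{\ast}\Ecal^\vee(1)\simeq\pi^{\ast}\Ecal^{\prime\vee}(1)\oplus\Ocal(1)$, the multiplicativity of $\tdfrak$ and the formal identity $(1-\chfrak(\Ocal(-1)))\cdot\tdfrak(\Ocal(1))\simeq \cfrak_1(\Ocal(1))$ (a consequence of the defining multiplicativities of $\chfrak$ and $\tdfrak$ on exact sequences) give
\begin{displaymath}
    [(\chfrak(\Ocal(-k))-\chfrak(\Ocal(-k-1)))\cdot\tdfrak(\pi^{\ast}\Ecal^\vee(1))]\simeq [c\cdot\chfrak(\Ocal(-k))\cdot\tdfrak(\pi^{\ast}\Ecal^{\prime\vee}(1))].
\end{displaymath}
The factor $c$ triggers the restriction isomorphism of Theorem \ref{thm:cor86}(5) along the canonical section of $\Ocal(1)$ which cuts out $\PBbb(\Ecal')$, restricting the remaining factor to $i_{\ast}[\chfrak(\Ocal'(-k))\cdot\tdfrak(\pi^{\prime\ast}\Ecal^{\prime\vee}(1))]$. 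This provides a direct path from the top-left to the bottom-left corner of the diagram, which coincides with the one through $\mathbf{(A)}$ by the commutativity of the Borel--Serre, Whitney, and restriction isomorphisms asserted in Theorem \ref{thm:cor86} and \textsection\ref{subsubsec:mult-chern-Borel-Serre}.

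To conclude, one must verify that performing the Corollary \ref{corollary:RHS-RR-PB} rewriting on $\PBbb(\Ecal)$ and then restricting agrees with restricting first and then rewriting on $\PBbb(\Ecal')$. The rewriting uses the Euler-section trivialization $\cfrak_r(\pi^{\ast}\Ecal^\vee(1))\simeq 0$ on $\PBbb(\Ecal)$. By Whitney and the vanishing $\cfrak_r(\Ecal^\vee)\simeq 0$ (from the summand $\Ocal^\vee\subset\Ecal^\vee$, using $\cfrak_1(\Ocal)\simeq 0$), this relation canonically factorizes as $c\cdot\cfrak_{r-1}(\pi^{\ast}\Ecal^{\prime\vee}(1))\simeq 0$; restricting via Theorem \ref{thm:cor86}(5) yields the Euler relation $\cfrak_{r-1}(\pi^{\prime\ast}\Ecal^{\prime\vee}(1))\simeq 0$ on $\PBbb(\Ecal')$. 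The compatibility of these two Euler trivializations under restriction is precisely \cite[Proposition 7.14]{DRR1}. Hence each coefficient $P_{k,\ell}(\Ecal)-P_{k+1,\ell}(\Ecal)$ obtained on $\PBbb(\Ecal)$ restricts canonically to $P_{k,\ell-1}(\Ecal')$ on $\PBbb(\Ecal')$ for $1\leq\ell\leq r-1$, while the $\ell=0$ term disappears once the factor $c$ has been extracted. The main obstacle is this final bookkeeping, which is nonetheless a purely formal consequence of the commutativity between Whitney, restriction, and the categorical Chern-class identities ensured by Theorem \ref{thm:cor86}.
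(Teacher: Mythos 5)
Your proposal follows essentially the same route as the paper's proof: you identify the arrow $\mathbf{(A)}$ with the extraction of a factor $\cfrak_{1}(\Ocal(1))$ followed by restriction along $\PBbb(\Ecal')$, and you reduce the comparison of the two rewritings from Corollary \ref{corollary:RHS-RR-PB} to the compatibility, via \cite[Proposition 7.14]{DRR1}, of the Euler trivializations $[\cfrak_{r}(\pi^{\ast}\Ecal^{\vee}(1))]\simeq 0$ and $[\cfrak_{r-1}(\pi^{\prime\ast}\Ecal^{\prime\vee}(1))]\simeq 0$, together with the commutativity of the operations in Theorem \ref{thm:cor86}, exactly as in the paper. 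One cosmetic point: the factorization $[\cfrak_{r}(\pi^{\ast}\Ecal^{\vee}(1))]\simeq[\cfrak_{1}(\Ocal(1))\cdot\cfrak_{r-1}(\pi^{\ast}\Ecal^{\prime\vee}(1))]$ follows from the Whitney isomorphism and rank triviality applied to the rank-$(r-1)$ bundle $\pi^{\ast}\Ecal^{\prime\vee}(1)$, rather than from a vanishing of $\cfrak_{r}(\Ecal^{\vee})$ as your parenthetical suggests.
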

\begin{proof}  
The canonical section $\sigma$ of $\pi^{\ast}\Ecal^{\vee}(1)$ sits in an exact sequence
    \begin{displaymath}
        \xymatrix{& & \Ocal \ar[d]^\sigma \ar[dr]^{\sigma''} & \\
        0 \ar[r] & \pi^{\ast} \Ecal^{\prime \vee}(1) \ar[r] & \pi^{\ast} \Ecal^{ \vee}(1) \ar[r] & \Ocal(1) \ar[r]  & 0}
    \end{displaymath}
    where $\sigma''$ is simply the section of $\Ocal(1)$ induced by $\sigma$ under the natural surjection. Then, $\sigma''$ cuts out the hyperplane $\PBbb(\Ecal')$ of $\PBbb(\Ecal)$, and the restriction of $\sigma$  to $\PBbb(\Ecal')$ induces the canonical section $\sigma'$ of $\pi^{\prime \ast} \Ecal^{\prime \vee} (1)$ as in \eqref{eq:Eulersequencesinclusion}.

The horizontal arrows in the statement of the lemma are explained in the discussion preceding Corollary \ref{corollary:RHS-RR-PB}. Beyond standard manipulations such as rank triviality, the isomorphism $[\chfrak(\Ocal(-k))]\simeq [\exp(-k\cfrak_{1}(\Ocal(1))]$ and the binomial expansion \eqref{eq:binomialdevelopment1}, the key point of the construction relies on the trivializations $[\cfrak_{r}(\pi^\ast \Ecal^\vee(1))] \simeq 0$ and $[\cfrak_{r-1}(\pi^{\prime\ast} \Ecal^{\prime\vee}(1))] \simeq 0$ associated with the sections $\sigma$ and $\sigma^{\prime}$, respectively. We thus have to relate these. By \cite[Proposition 7.14]{DRR1}, they correspond to each other through the following composition of Whitney and restriction isomorphisms:
\begin{displaymath}
        [\cfrak_{r}(\pi^{\ast} \Ecal^{\vee}(1))]\simeq [\cfrak_{1}(\Ocal(1))\cdot\cfrak_{r-1}(\pi^{\ast} \Ecal^{\prime\vee}(1))]\simeq i_{\ast}[\cfrak_{r-1}(\pi^{\prime\ast} \Ecal^{\prime\vee}(1))].
\end{displaymath}
This leads to the following diagram involving the binomial expansion in the form of \eqref{eq:binomialdevelopment2}:
\begin{equation}\label{eq:cr-restriction-cr-1}
    \xymatrix{
        [\cfrak_{1}(\Ocal(1))^{r}]\ar[d]_{\text{restriction}} \ar[r]  &-\sum_{j=0}^{r-1}[\cfrak_{r-j}(\pi^{\ast}\Ecal^{\vee})\cdot\cfrak_{1}(\Ocal(1))^{j}]\ar[d]^{\text{Whitney + restriction}} \\
        i_{\ast}[\cfrak_{1}(\Ocal(1))^{r-1}] \ar[r] &-\sum_{j=0}^{r-2}i_{\ast}[\cfrak_{r-1-j}(\pi^{\prime\ast}\Ecal^{\prime\vee})\cdot\cfrak_{1}(\Ocal(1))^{j}].  
    }
\end{equation}
We assert that this diagram commutes. This is a consequence of the compatibility of the trivializations $[\cfrak_{r}(\pi^\ast \Ecal^\vee(1))] \simeq 0$ and $[\cfrak_{r-1}(\pi^{\prime\ast} \Ecal^{\prime\vee}(1))] \simeq 0$ already addressed above, and the fact that \eqref{eq:binomialdevelopment2} is compatible with the Whitney isomorphism by the construction of the latter given in \cite[Proposition 8.12]{DRR1}, and with the restriction isomorphism trivially.

To conclude, we unravel the definition of the arrow \textbf{(A)} in \eqref{eq:OkOk-1}. We distinguish two parts. The first part is the composition of the vertical arrows which take place at the level of $\PBbb(\Ecal)$, namely those with the labels \eqref{eq:Oksequence}, projection formula, \eqref{eq:Osequence} and Borel--Serre. The composition of these arrows is a simple rearrangement of the involved Chern power series. This reduces to the fact that the exact sequence \eqref{eq:Oksequence} identifies with \eqref{eq:Osequence} tensored by $\Ocal(-k)$. The second part is the composition of the lowest restriction and Whitney isomorphisms. To sum up, up to rearrangement, the arrow \textbf{(A)} is a composition of restriction and Whitney isomorphisms. This parallels the operations defining the right vertical arrows in the statement of the lemma. The commutativity of the diagram then reduces to the very construction preceding Corollary \ref{corollary:RHS-RR-PB} and the commutativity of \eqref{eq:cr-restriction-cr-1}.

\end{proof}

\begin{theorem}\label{thm:elkikfranke}
Let $X$ be an $S$-scheme, and let $\Ecal$ be a vector bundle of constant rank on $X$. Suppose that $\pi: \PBbb(\Ecal) \to X$ is a projective bundle, with a section $s: X \to \PBbb(\Ecal)$. Then, $\RR_{s} \RR_{\pi} = \id.$
\end{theorem}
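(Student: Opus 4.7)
The plan is to decompose the section $s$ as a chain of hyperplane embeddings and then invoke Proposition \ref{prop:hyperplanereduction} iteratively. First, by Proposition \ref{prop:base-reduction} we may restrict to divisorial schemes and work with virtual vector bundles. Since both $\RR_s$ and $\RR_\pi$ are compatible with the projection formula, their composition $\RR_s\RR_\pi$ is a natural automorphism of the endofunctor $E \mapsto [\chfrak(E)]$, and is therefore determined by its value at $\Ocal_X$. Thus it suffices to verify that $\RR_s\RR_\pi(\Ocal_X)$ is the identity automorphism of $[\chfrak(\Ocal_X)]$. The section $s$ corresponds to a line bundle quotient $\Ecal \to L$ with kernel $\Ecal'$ of rank $r-1$.

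Assume for the moment that $\Ecal'$ admits a complete flag $0 = F_0 \subset F_1 \subset \ldots \subset F_{r-1} = \Ecal'$ by subbundles with line-bundle successive quotients. This induces a chain of corank-one surjections of $\Ecal$, and hence a chain of projective subbundles
\begin{displaymath}
    s(X) = \PBbb_1 \stackrel{\iota_2}{\hookrightarrow} \PBbb_2 \stackrel{\iota_3}{\hookrightarrow} \cdots \stackrel{\iota_r}{\hookrightarrow} \PBbb_r = \PBbb(\Ecal),
\end{displaymath}
where $\PBbb_k = \PBbb(\Ecal/F_{r-k})$ and each $\iota_k$ is a hyperplane section. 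Let $\pi_k\colon \PBbb_k \to X$ denote the structure projection, so that $\pi_1 = \id_X$ and $\pi_r = \pi$. Combining the compatibility of the DRR for regular closed immersions with composition (Proposition \ref{prop:compositionofinclusions}) with the associativity of composition of DRR isomorphisms (Lemma \ref{lemma:RRassociative}), we obtain
\begin{displaymath}
    \RR_s \RR_\pi = \RR_{\iota_2} \RR_{\iota_3} \cdots \RR_{\iota_r} \RR_\pi.
\end{displaymath}
Proposition \ref{prop:hyperplanereduction}, applied to the innermost pair, gives $\RR_{\iota_r}\RR_\pi = \RR_{\pi_{r-1}}$, and iterating this reduction $r-1$ times yields $\RR_s\RR_\pi = \RR_{\pi_1} = \RR_{\id_X} = \id$, with the last equality supplied by Theorem \ref{thm:RR-P(E)}\eqref{item:RRPE-1}.

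The main obstacle is the flag assumption on $\Ecal'$, which does not hold in general. To overcome it, I would invoke a splitting principle: pass to a flag variety $p\colon X' \to X$ over which $p^\ast\Ecal'$ splits as a direct sum of line bundles, run the iterated hyperplane argument on $X'$, and deduce the identity on $X$ by the naturality of the DRR construction under the source change $p$. Evaluated at any Chern power series $Q$, the automorphism $\RR_s\RR_\pi(\Ocal_X)(Q)$ is an element of $\Gamma(S,\Ocal_S^\times)\otimes\QBbb$, and under $p$ is sent to its counterpart on $X'$ evaluated at $p^\ast Q$; the vanishing of the latter then implies the vanishing of the former for all $Q$. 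A careful formulation of this functoriality of the DRR construction under arbitrary source changes, in the spirit of the splitting principles of \cite[Section 2]{Eriksson-Freixas-Wentworth} and \cite[Section 5.4]{DRR1}, is the most delicate part of the argument.
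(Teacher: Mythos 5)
Your proposal is correct and follows essentially the same route as the paper's proof: decompose the section into a chain of hyperplane embeddings coming from a complete flag, iterate Proposition \ref{prop:hyperplanereduction} together with Proposition \ref{prop:compositionofinclusions} (and the identity case of Theorem \ref{thm:RR-P(E)}), and remove the flag hypothesis by a splitting principle. The ``delicate'' functoriality you flag at the end is not something that needs a new formulation: it is precisely the splitting principle for automorphisms of line functors in $\Ecal$ already established in \cite[Section 2]{Eriksson-Freixas-Wentworth} (cf. \cite[Theorem 2.10]{Eriksson-Freixas-Wentworth} and \cite[Section 5.4]{DRR1}), which the paper invokes directly, after first using Proposition \ref{prop:vectorbundleinvariance} to reduce to the case where the quotient line bundle defining the section is trivial.
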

\begin{proof}
    First, we note that if $E$ is a vector bundle on $X$ and $Q$ a Chern power series on $X$, we can interpret $(\RR_{s}\RR_{\pi}(E))(Q)$ as an automorphism of the line functor $E \mapsto \langle\chfrak(E)\cdot Q\rangle_{X/S}$. Hence, while the line functor itself does not depend on $\Ecal$, the automorphism a priori depends on it. The same applies to the identity to be proven. As such, they are amenable to the splitting principles for isomorphisms of line functors in $\Ecal$, similarly to the proof of \cite[Corollary 5.44]{DRR1}. 
    
    In the case of rank one, $\pi$ and $s$ are mutually inverse isomorphisms. In this case, the claim of the theorem is trivial. Actually, by the functoriality of $\RR_{\pi}$ and $\RR_{s}$ in the morphism, we may assume that $\pi$ and $s$ are the identity map. If the rank of $\Ecal$ is two, this immediately follows from Proposition \ref{prop:hyperplanereduction}. In general, we suppose the section $s$ corresponds to a  surjection $\Ecal \twoheadrightarrow L$ onto a line bundle $L$. By Proposition \ref{prop:vectorbundleinvariance} we can even suppose $L$ is trivial. By the splitting principle \cite[Theorem 2.10]{Eriksson-Freixas-Wentworth}, we can assume that the inclusion $L^\vee \subseteq \Ecal^\vee$ can be completed into a full flag $L^\vee = \Ecal_1^\vee \subset \cdots \subset \Ecal_{r}^\vee = \Ecal^\vee$, which in turn induces a sequence of hyperplane inclusions 
    \begin{displaymath}
        X \stackrel{s_1}{\longrightarrow} \PBbb(\Ecal_2) \stackrel{s_2}{\longrightarrow}  \cdots \stackrel{s_{r-2}}{\longrightarrow} \PBbb(\Ecal_{r-1})   \stackrel{s_{r-1}}{\longrightarrow}   \PBbb(\Ecal),
    \end{displaymath}
    whose composition is the section $s$. 
    
    In this case, note that by Proposition \ref{prop:compositionofinclusions} we have $\RR_s = \RR_{s_1} \cdots \RR_{s_{r-1}}$. If we denote by $\pi_j: \PBbb(\Ecal_j) \to X$ the natural projection, then by  Proposition \ref{prop:hyperplanereduction} we have $\RR_{s_j} \RR_{\pi_{j+1}} = \RR_{\pi_j}$. The statement is then proven by induction.
    \end{proof}

\begin{corollary}\label{cor:diagonal}
Consider a commutative diagram of $S$-schemes
\begin{displaymath}
    \xymatrix{
        Y\ar[r]^{j}\ar[rd]_{i}     &\PBbb(\Ecal)\ar[d]^{\pi}\\
            &X,
    }
\end{displaymath}
where $\Ecal$ is a vector bundle of constant rank on $X$, and $i$ and $j$ are regular closed immersions. Then, $\RR_{i}=\RR_{j}\RR_{\pi}$.
\end{corollary}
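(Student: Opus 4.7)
My plan is to reduce the statement to the already established section case, namely Theorem \ref{thm:elkikfranke}, by introducing the Cartesian square obtained by pulling back the projective bundle $\pi$ along $i$. Since $j$ is a morphism from $Y$ to $\PBbb(\Ecal)$ over $X$ (via $i$), the universal property of $\PBbb(\Ecal)$ identifies $j$ with a line bundle quotient of $i^{\ast}\Ecal$, hence with a section $s\colon Y \to \PBbb(i^{\ast}\Ecal)$ of the natural projection $p'\colon \PBbb(i^{\ast}\Ecal) \to Y$. Writing $i'\colon \PBbb(i^{\ast}\Ecal) \to \PBbb(\Ecal)$ for the base change of $i$, we obtain a factorization $j = i' \circ s$, where both $i'$ and $s$ are regular closed immersions (the former by base change of a regular closed immersion, the latter since sections of projective bundles are).

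Once this factorization is in place, the proof is a short chain of identities. By Proposition \ref{prop:compositionofinclusions}, applied to the composition of regular closed immersions $Y \xrightarrow{s} \PBbb(i^{\ast}\Ecal) \xrightarrow{i'} \PBbb(\Ecal)$, one has $\RR_j = \RR_{i' \circ s} = \RR_s \RR_{i'}$. Next, Proposition \ref{prop:projimmersionimmersionproj}, applied to the Cartesian square with $i, i', \pi, p'$, yields $\RR_{i'}\RR_{\pi} = \RR_{p'}\RR_i$. Combining these two identities gives
\begin{displaymath}
    \RR_j \RR_\pi = \RR_s \RR_{i'}\RR_{\pi} = \RR_s \RR_{p'} \RR_i.
\end{displaymath}
Finally, since $s$ is a section of the projective bundle $p'$, Theorem \ref{thm:elkikfranke} gives $\RR_s \RR_{p'} = \id$, and we conclude that $\RR_j \RR_\pi = \RR_i$, as desired.

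The argument is essentially formal and does not present a substantive obstacle: all the heavy lifting has already been carried out in Proposition \ref{prop:compositionofinclusions} (composition of closed immersions), Proposition \ref{prop:projimmersionimmersionproj} (the immersion-projection Cartesian compatibility), and Theorem \ref{thm:elkikfranke} (trivialization along sections). The only point requiring care is the verification that the canonical factorization $j = i' \circ s$ through the Cartesian pullback is indeed well-defined and that $s$ is a regular closed immersion over $Y$; both are standard consequences of the universal property of the projective bundle together with the fact that sections of projective bundles are locally cut out by regular sequences in codimension equal to the relative dimension.
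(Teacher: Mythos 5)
Your proof is correct and follows essentially the same route as the paper: the paper also factors $j$ through the section $s=(j,\id_Y)\colon Y\to\PBbb(i^{\ast}\Ecal)=\PBbb(\Ecal)\times_X Y$ followed by the base-changed immersion $i'$, and then combines Proposition \ref{prop:compositionofinclusions}, Proposition \ref{prop:projimmersionimmersionproj} and Theorem \ref{thm:elkikfranke} exactly as you do (using associativity from Lemma \ref{lemma:RRassociative} to regroup the compositions).
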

\begin{proof}
From the assumptions, we can form a new commutative diagram:
\begin{displaymath}
    \xymatrix{
        \PBbb(i^{\ast}\Ecal)\ar[rr]^-{i^{\prime}} \ar@/^1pc/[d]^{\pi^{\prime}} &  &\PBbb(\Ecal)\ar[d]^{\pi}\\
        Y\ar[rr]^i \ar@/^1pc/[u]^{s}\ar[urr]^{j}  &   &X.
    }
\end{displaymath}
Here, $i^{\prime}$ is the regular closed immersion deduced from $i$ by base change by $p$, $\pi^{\prime}$ is the restriction of $\pi$ to $Y$, and $s$ is the section $(j,\id_{Y})\colon Y\to\PBbb(\Ecal)\times_{X}Y$. By Proposition \ref{prop:compositionofinclusions}, we find
\begin{displaymath}
    \RR_{i}=\RR_{s}\RR_{i^{\prime}}.
\end{displaymath}   
By Proposition \ref{prop:projimmersionimmersionproj}, we also have
\begin{displaymath}
    \RR_{i^{\prime}}\RR_{\pi}=\RR_{\pi^{\prime}}\RR_{i}.
\end{displaymath}
Finally, by Theorem \ref{thm:elkikfranke} we know that $\RR_{s}\RR_{\pi^{\prime}}=\id$. We conclude by combining all these facts.
\end{proof}

\subsection{Assembling the Deligne--Riemann--Roch isomorphism}
 We now consider decompositions of lci morphisms into closed immersions and projective bundles, and we prove that the composition of the associated DRR isomorphisms does not depend on the particular factorization. This is the key result behind the construction of the DRR isomorphism.

\begin{proposition}\label{prop:independence-factorizations}
 Let $X\to Y$ be an lci morphism of $S$-schemes. Suppose given two factorizations $X\stackrel{i}{\to}\PBbb(\Ecal)\stackrel{\pi}{\to} Y$ and $X\stackrel{i^{\prime}}{\to} \PBbb(\Ecal^{\prime})\stackrel{\pi^{\prime}}{\to} Y$ into closed immersions and projective bundles, where $\Ecal$ and $\Ecal^{\prime}$ are vector bundles of constant rank on $Y$. Then, $\RR_{i}\RR_{\pi}=\RR_{i^{\prime}}\RR_{\pi^{\prime}}$.
\end{proposition}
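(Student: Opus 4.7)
The plan is to exhibit a common refinement of the two factorizations via the fiber product of the two projective bundles, and then reduce to the previously established compatibilities.

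First, I would form the fiber product $\PBbb = \PBbb(\Ecal) \times_{Y} \PBbb(\Ecal^{\prime})$, with natural projections $q \colon \PBbb \to \PBbb(\Ecal)$ and $q^{\prime} \colon \PBbb \to \PBbb(\Ecal^{\prime})$. Since $\PBbb = \PBbb(\pi^{\ast}\Ecal^{\prime}) = \PBbb(\pi^{\prime\ast}\Ecal)$, both $q$ and $q^{\prime}$ are projective bundle projections. Let $f \colon \PBbb \to Y$ denote the structure map, so that $f = \pi \circ q = \pi^{\prime} \circ q^{\prime}$. The universal property of the fiber product provides a natural map $k = (i, i^{\prime}) \colon X \to \PBbb$, which is a regular closed immersion: it factors as $X \xrightarrow{i} \PBbb(\Ecal) \to \PBbb$, where the second map is the base change of the regular closed immersion $i^{\prime}$ along $\pi$, hence regular, and a composition of regular closed immersions is regular. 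By construction, $q \circ k = i$ and $q^{\prime} \circ k = i^{\prime}$.

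Next, I would apply Corollary \ref{cor:diagonal} to the two commutative diagrams
\begin{displaymath}
    \xymatrix{X \ar[r]^{k} \ar[rd]_{i} & \PBbb \ar[d]^{q} \\ & \PBbb(\Ecal)} \qquad\text{and}\qquad \xymatrix{X \ar[r]^{k} \ar[rd]_{i^{\prime}} & \PBbb \ar[d]^{q^{\prime}} \\ & \PBbb(\Ecal^{\prime}),}
\end{displaymath}
both of which satisfy the hypotheses of the corollary (projective bundle projection composed with a regular closed immersion, where the composite is itself a regular closed immersion). This yields the identities
\begin{displaymath}
    \RR_{i} = \RR_{k}\RR_{q}, \qquad \RR_{i^{\prime}} = \RR_{k}\RR_{q^{\prime}}.
\end{displaymath}

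Finally, I would invoke Proposition \ref{prop:projprojprojproj}, which states precisely that the two ways of factoring $f \colon \PBbb(\Ecal) \times_{Y} \PBbb(\Ecal^{\prime}) \to Y$ through either projective bundle factor produce the same composite DRR isomorphism, i.e.
\begin{displaymath}
    \RR_{q}\RR_{\pi} = \RR_{q^{\prime}}\RR_{\pi^{\prime}}.
\end{displaymath}
Combining the three identities above and invoking the associativity of composition of DRR isomorphisms from Lemma \ref{lemma:RRassociative}, we conclude
\begin{displaymath}
    \RR_{i}\RR_{\pi} = \RR_{k}\RR_{q}\RR_{\pi} = \RR_{k}\RR_{q^{\prime}}\RR_{\pi^{\prime}} = \RR_{i^{\prime}}\RR_{\pi^{\prime}},
\end{displaymath}
as desired.

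There is no serious obstacle: all the technical work has already been done in the previous subsections. The only point requiring care is the verification that the diagonal immersion $k$ is a regular closed immersion, so that Corollary \ref{cor:diagonal} applies, and the identification of $q, q^{\prime}$ as bona fide projective bundle projections, so that Proposition \ref{prop:projprojprojproj} applies. Both are immediate from the Cartesian structure of the fiber product together with the fact that being a regular closed immersion and being a projective bundle are preserved under base change.
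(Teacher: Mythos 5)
Your proposal is correct and is essentially the paper's own argument: both pass to the fiber product $\PBbb(\Ecal)\times_{Y}\PBbb(\Ecal^{\prime})$, use Corollary \ref{cor:diagonal} to write $\RR_{i}$ and $\RR_{i^{\prime}}$ as compositions through the "diagonal" immersion into the fiber product, and conclude with Proposition \ref{prop:projprojprojproj} and associativity. The only slip is your justification that $k=(i,i^{\prime})$ is a regular closed immersion: there is no map $\PBbb(\Ecal)\to\PBbb$ obtained by base-changing $i^{\prime}$ along $\pi$, so the claimed factorization $X\xrightarrow{i}\PBbb(\Ecal)\to\PBbb$ does not exist; instead factor $k$ as the graph $X\to X\times_{Y}\PBbb(\Ecal^{\prime})$ (a section of a smooth separated morphism, hence a regular closed immersion) followed by the base change of $i$ along the smooth projection $\PBbb\to\PBbb(\Ecal^{\prime})$, which repairs the verification without affecting the rest of the argument.
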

\begin{proof}
Given the tools developed so far, the proof of the proposition now follows the standard approach in \cite{FultonLang}. We provide the details for completeness. 

Any two factorizations can be compared as in the diagram below:
\begin{displaymath}
    \xymatrix{& & & \PBbb(\Ecal)  \ar[dr]^{\pi} & \\ 
    X \ar@/^1pc/[urrr]^i \ar[rr]^-{\Delta}
    \ar@/_1pc/[drrr]_{i'} & &  \PBbb(\Ecal) \times_Y \PBbb(\Ecal') \ar[ur]^{p'} \ar[dr]_{p} & & Y. \\ 
    & & & \PBbb(\Ecal') \ar[ur]_{\pi'}
    }
\end{displaymath}
By Corollary \ref{cor:diagonal}, we have
\begin{equation}\label{eq:kindofElkikFranke} \RR_i =\RR_{\Delta} \RR_{p'}.
\end{equation}
We infer that 
\begin{displaymath}
    \RR_{i} \RR_{\pi} =  \RR_\Delta \RR_{p'} \RR_{\pi} = \RR_\Delta \RR_{p'}  \RR_{\pi},
\end{displaymath}
which by Proposition \ref{prop:projprojprojproj} identifies with 
\begin{displaymath}
    \RR_\Delta \RR_{p'} \RR_{\pi} = \RR_{\Delta} \RR_{p}  \RR_{\pi'}.
\end{displaymath}
Another application of the claim \eqref{eq:kindofElkikFranke} identifies this with $\RR_{i'}\RR_{\pi'}$. 

\end{proof}
Thanks to Proposition \ref{prop:independence-factorizations}, it now makes sense to give the following definition. 
\begin{construction-definition}\label{construction-definition:DRR}
 Let $f: X \to Y $ be an lci morphism of $S$-schemes.
 \begin{enumerate}
    \item\label{enum:def-RRf-1} If $f$ admits a projective bundle factorization $X \stackrel{i}{\to}\PBbb(\Ecal)\stackrel{\pi}{\to} Y$, for some vector bundle $\Ecal$ on $Y$ of constant rank, then we define the DRR isomorphism of $f$ as $\RR_f = \RR_i \RR_\pi$. 
    \item\label{enum:def-RRf-2} In general, suppose that $S$ is quasi-compact. Since $f$ admits a projective bundle factorization over a finite open cover of $S$, we can define $\RR_{f}$ by gluing the corresponding locally defined DRR isomorphisms as in \eqref{enum:def-RRf-1}. 
 \end{enumerate}
\end{construction-definition}
In the construction above, the quasi-compactness assumption of the second point is needed to ensure a uniform control on the denominators of the Riemann--Roch distributions, which is necessary in the gluing step. Note that the property that $f$ admits a projective bundle factorization over a finite open cover of $S$ is stable under arbitrary base change $S^{\prime}\to S$, with $S^{\prime}$ not necessarily compact.

We next establish the existence of a DRR isomorphism for lci projective morphisms in the sense of Definition \ref{def:DRRisoforclassofmorphisms}, along with a characterization of the construction. We recall our convention that all the $S$-schemes are assumed to satisfy the condition $(C_{n})$, except possibly for base changes over $S$.

\begin{theorem}\label{thm:general-DRR}
Let $S$ be a scheme. There exists a unique DRR isomorphism
    \begin{displaymath}
        \RR_f(E): [\chfrak(Rf_{\ast} E)]_{Y/S} \xrightarrow{\sim} f_\ast [\chfrak(E) \cdot \tdfrak(T_f)]_{X/S},
    \end{displaymath}
for lci projective morphisms $f:X \to Y$ of $S$-schemes (resp. lci morphisms of $S$-schemes if $S$ is quasi-compact) such that:
\begin{enumerate}
    \item In the case of regular closed immersions, it coincides with the DRR isomorphism of Theorem \ref{thm:DRRi-general}.
    \item In the case of a projective bundle $\pi\colon\PBbb(\Ecal)\to X$ with $\Ecal$ of rank $r\geq 1$, $\RR_{\pi}(\Ocal(-i))$, for $0\leq i\leq r-1$, is given by the constructions in Proposition \ref{prop:RR-proj-left} and Proposition \ref{prop:RR-proj-right}.  
\end{enumerate}
\end{theorem}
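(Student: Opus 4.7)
The plan is to verify that Construction/Definition \ref{construction-definition:DRR} yields a DRR isomorphism for lci projective morphisms in the sense of Definition \ref{def:DRRisoforclassofmorphisms}, satisfying the two characterizing conditions of the statement, and then to deduce uniqueness and the extension to the non-projective case when $S$ is quasi-compact.

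Well-definedness of the assignment $f\mapsto\RR_f=\RR_i\RR_\pi$ is already granted by Proposition \ref{prop:independence-factorizations}. I would verify the two characterizing conditions by inspecting trivial factorizations: a regular closed immersion $i$ admits the factorization $i\circ\id_X$ with $\id_X=\pi$ viewed as the projection of $\PBbb(\Ocal_X)=X$, and a projective bundle $\pi\colon\PBbb(\Ecal)\to X$ admits $\id_{\PBbb(\Ecal)}\circ\pi$. The three remaining properties of Definition \ref{def:DRRisoforclassofmorphisms} --- the identity property, compatibility with the projection formula, and functoriality in the morphism --- follow formally from the corresponding properties of $\RR_i$ and $\RR_\pi$ already established in Theorem \ref{thm:DRRi-general}, Corollary \ref{cor:projformula}, and Theorem \ref{thm:RR-P(E)}.

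The hard part will be the compatibility with composition. Given lci projective morphisms $X\xrightarrow{f}Y\xrightarrow{g}Z$ with factorizations $f=\pi_1\circ i_1$ through $\PBbb(\Ecal_1)\to Y$ and $g=\pi_2\circ i_2$ through $\PBbb(\Ecal_2)\to Z$, one must identify $\RR_f\RR_g=\RR_{i_1}\RR_{\pi_1}\RR_{i_2}\RR_{\pi_2}$ with $\RR_{gf}$ computed from any projective factorization of $gf$. The strategy will be to fit the chain $X\to\PBbb(\Ecal_1)\to Y\hookrightarrow\PBbb(\Ecal_2)\to Z$ into a refined commutative diagram by factoring the intermediate lci morphism $\PBbb(\Ecal_1)\to\PBbb(\Ecal_2)$ through an auxiliary projective bundle, and then to rearrange using the same primitives that underlie the proof of Proposition \ref{prop:independence-factorizations}: Proposition \ref{prop:projimmersionimmersionproj} to commute a closed immersion past a projective bundle, Proposition \ref{prop:projprojprojproj} for the interchange of composed projective bundles, Proposition \ref{prop:compositionofinclusions} for composed closed immersions, and Theorem \ref{thm:elkikfranke} together with Corollary \ref{cor:diagonal} to cancel auxiliary projective bundles admitting a section. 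Associativity of the resulting composition law is inherited from Lemma \ref{lemma:RRassociative}.

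Uniqueness is then formal: any DRR isomorphism matching ours on regular closed immersions and on the objects $\Ocal(-i)$ on projective bundles must agree on all virtual perfect complexes on projective bundles by additivity together with the projection formula (via the reduction of Lemma \ref{lemma:projbundle}), and hence on every projectively factored lci morphism by the composition compatibility. For the non-projective case with $S$ quasi-compact, every lci morphism $f\colon X\to Y$ admits projective bundle factorizations on the pieces of a finite open cover of $S$; the locally defined DRR isomorphisms agree on overlaps by base-change compatibility and share uniformly bounded denominators, so Proposition \ref{prop:base-reduction} glues them into the unique global $\RR_f$.
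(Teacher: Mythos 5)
Your overall architecture coincides with the paper's: existence via Construction/Definition \ref{construction-definition:DRR}, with everything except compatibility with composition supplied by Theorem \ref{thm:DRRi-general}, Theorem \ref{thm:RR-P(E)} and Corollary \ref{cor:projformula}; uniqueness by reducing (Proposition \ref{prop:base-reduction}) to divisorial bases, projective morphisms and virtual vector bundles, then to the two characterizing conditions via Lemma \ref{lemma:projbundle} and the projection formula; and the quasi-compact case by gluing over a finite open cover. The gap is in the composition step, which you correctly identify as the crux but whose sketch does not go through as stated. You keep the given factorizations $f=\pi_1 i_1$ through $\PBbb(\Ecal_1)\to Y$ and $g=\pi_2 i_2$ through $\PBbb(\Ecal_2)\to Z$ and propose to factor the intermediate morphism $\PBbb(\Ecal_1)\to\PBbb(\Ecal_2)$ through an auxiliary projective bundle. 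Two concrete problems arise. First, such a factorization need not exist globally: it would require a vector bundle $\mathcal{G}$ on $\PBbb(\Ecal_2)$ with a closed immersion $\PBbb(\Ecal_1)\hookrightarrow\PBbb(\mathcal{G})$ over $\PBbb(\Ecal_2)$, and a bundle such as $\Ecal_1$ living on the closed subscheme $Y$ does not in general extend to $\PBbb(\Ecal_2)$; only projectivity locally over $S$ is available. Second, and more seriously, even granting such a factorization you obtain a factorization of $gf$ through a tower $\PBbb(\mathcal{G})\to\PBbb(\Ecal_2)\to Z$ of projective bundles, which is not a projective bundle factorization in the sense of Construction/Definition \ref{construction-definition:DRR}, and none of the primitives you list compares a tower with a single bundle $\PBbb(\Ecal)\to Z$: Proposition \ref{prop:projprojprojproj} only treats the two factorizations of a fiber product of projective bundles over the \emph{same} base. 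So the announced "rearrangement" is precisely the missing content, not a formal consequence of the cited results.

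The paper's proof reverses the order of choices, following the pattern of the analytic torsion argument in \cite{BFL2}: choose projective factorizations of $gf$ through $\PBbb(\Ecal)\to Z$ and of $g$ through $\PBbb(\Fcal)\to Z$, deduce a factorization of $f$ through $\PBbb(g^{\ast}\Ecal)\to Y$ by pullback (legitimate, since $\RR_f$ is independent of the chosen factorization by Proposition \ref{prop:independence-factorizations}), and factor the immersion $X\to\PBbb(\Ecal)$ as $X\to\PBbb(g^{\ast}\Ecal)\to\PBbb(\Ecal)\times_Z\PBbb(\Fcal)\to\PBbb(\Ecal)$. Every comparison then takes place over fiber products of projective bundles over $Z$, so Corollary \ref{cor:diagonal} and Proposition \ref{prop:compositionofinclusions} break up the immersions, while Proposition \ref{prop:projimmersionimmersionproj} and Proposition \ref{prop:projprojprojproj} interchange the remaining DRR isomorphisms, yielding $\RR_{gf}=\RR_f\RR_g$. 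If you want to repair your route, the fix is exactly this: do not attempt to factor $\PBbb(\Ecal_1)\to\PBbb(\Ecal_2)$; instead replace your chosen factorization of $f$ by the one pulled back from a factorization of $gf$ over $Z$, and work inside $\PBbb(\Ecal)\times_Z\PBbb(\Fcal)$.
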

\begin{proof}
For the existence, Theorem \ref{thm:DRRi-general} and Theorem \ref{thm:RR-P(E)} prove that Construction/Definition \ref{construction-definition:DRR} yields an isomorphism which satisfies the required properties for a DRR isomorphism, except possibly the compatibility with the composition. The treatment of the latter follows the same pattern as in the theory of generalized holomorphic analytic torsion classes \cite[Theorem 7.7, pp. 42--43]{BFL2}, formally replacing the latter by the DRR isomorphisms. Due to its relevance, we next explain how to adapt the argument in our setting. 

Let $f\colon X\to Y$ and $g\colon Y\to Z$ be lci morphisms of $S$-schemes. By Proposition \ref{prop:base-reduction}, we may assume that $S$ is divisorial and that the morphisms are projective. We begin by choosing projective factorizations
\begin{displaymath}
    \xymatrix{
        X\ar@{^{(}->}[r]^-{i}\ar[dr]_-{gf}   &\PBbb(\Ecal)\ar[d]^-{p}\\
        &Z
    }
    \quad\quad
    \xymatrix{
        Y\ar@{^{(}->}[r]^-{j}\ar[dr]_-{g}   &\PBbb(\Fcal)\ar[d]^-{q}\\
        &Z.
    }
\end{displaymath}
From the first one, we deduce a projective factorization for $f$, namely
\begin{displaymath}
    \xymatrix{
        X\ar@{^{(}->}[r]^-{k}\ar[dr]_{f}   &\PBbb(g^{\ast}\Ecal)\ar[d]^-{r}\\
        &Y.
    }
\end{displaymath}
We can further factor the immersion $i$ as
\begin{displaymath}
    \xymatrix{
        X\ar@{^{(}->}[r]^-{k}\ar@{^{(}->}[dr]_-{i}      &\PBbb(g^{\ast}\Ecal)\ar[d]^-{g^{\prime}}\ar@{^{(}->}[r]^-{\ell}   &\PBbb(\Ecal)\times_{Z}\PBbb(\Fcal)\ar[d]^{q^{\prime}}\\
                &\PBbb(\Ecal)\ar@{=}[r]           &\PBbb(\Ecal).
    }
\end{displaymath}
Taking these diagrams into account, together with Corollary \ref{cor:diagonal} and Proposition \ref{prop:compositionofinclusions}, we have
\begin{displaymath}
    \RR_{gf}=\RR_{i}\RR_{p}=\RR_{k}\RR_{\ell}\RR_{q^{\prime}}\RR_{p}=\RR_{k}\RR_{g^{\prime}}\RR_{p}.
\end{displaymath}
To conclude, it suffices to show that $\RR_{g^{\prime}}\RR_{p}=\RR_{r}\RR_{g}$, because $\RR_{k}\RR_{r}=\RR_{f}$ by construction. For this, we write down the diagram
\begin{displaymath}
    \xymatrix{
        \PBbb(g^{\ast}\Ecal)\ar@{^{(}->}[r]^-{\ell}\ar[d]^-{r}        &\PBbb(\Ecal)\times_{Z}\PBbb(\Fcal) \ar[r]^-{q^{\prime}}\ar[d]^-{p^{\prime}}       &\PBbb(\Ecal)\ar[d]^-{p}\\
        Y\ar@{^{(}->}[r]^-{j}                           &\PBbb(\Fcal)\ar[r]^-{q}                              &Z,                                          
    }   
\end{displaymath}
with Cartesian squares. The upper composition is the morphism $g^{\prime}$, while the lower composition is $g$. Applying Proposition \ref{prop:projprojprojproj} first and then Proposition \ref{prop:projimmersionimmersionproj}, we find
\begin{displaymath}
    \begin{split}
        \RR_{g^{\prime}}\RR_{p}=\RR_{\ell}\RR_{q^{\prime}}\RR_{p}=&\RR_{\ell}\RR_{p^{\prime}}\RR_{q}\\
       & =\RR_{r}\RR_{j}\RR_{q}=\RR_{r}\RR_{g},
    \end{split}
\end{displaymath}
as was to be shown. This completes the proof of the existence.

For the uniqueness, by Proposition \ref{prop:base-reduction} we may assume that our base schemes are divisorial, and our morphisms projective. Then, we are led to separately treat the case of closed immersions and projective bundles. For closed immersions, there is nothing to say. For the case of projective bundles, by the compatibility of the projection formula and by Lemma \ref{lemma:projbundle}, we see that the DRR isomorphism is fixed by the second condition in the statement. 
\end{proof}

\subsection{Deligne--Riemann--Roch isomorphism for the determinant of the cohomology}
In the particular case of morphisms $f\colon X \to S$, Theorem \ref{thm:general-DRR} implies Theorem \ref{thm:A} in the Introduction, except for the compatibility with Grothendieck duality, which is provided by Theorem \ref{thm:Groth-duality} in \textsection \ref{subsec:compat-Grothendieck-duality} below. In fact, compared with Theorem \ref{thm:general-DRR}, a stronger characterization can be given which does not require any condition on projective bundles. To achieve this, one is led to consider all DRR isomorphisms over variable base schemes $S$.

In the discussion below, we follow the conventions in \textsection \ref{subsubsec:base-change-conventions} regarding base change, where instead of line distributions we deal with $\QBbb$-line bundles. This is legitimate, since $\QBbb$-line bundles over $S$ can be interpreted as line distributions in $\Dcal(S/S)$. Also, we state the theorem for general, or quasi-compact base schemes, but we note that the same result can be proven if one restricts to the category of Noetherian schemes. The Noetherian setting will be used in Proposition \ref{thm:Groth-duality} below on Grothendieck duality.

\begin{theorem}\label{thm:DRR-det-coh}
There is a unique way to associate, with any quasi-compact scheme $S$ (resp. any scheme $S$) and any lci morphism (resp. lci projective morphism) $f\colon X\to S$ satisfying the condition $(C_{n})$, an isomorphism of functors of commutative Picard categories $V(\Pcal_{X})\to\Picfr(S)_{\QBbb}$ 
\begin{displaymath}
   E\mapsto \RR_{f}(E)\colon \lambda_{f}(E)\to\langle\chfrak(E)\cdot\tdfrak(T_{f})\rangle_{X/S},
\end{displaymath}
compatible with base change and with bounded denominators, and satisfying:
\begin{enumerate}
    \item Compatibility with isomorphisms of $S$-schemes.
    \item Compatibility with the projection formula.
    \item\label{item:DRR-det-coh-3} Restriction property. Suppose that $i\colon Y\to X$ is a regular closed immersion of codimension $c$, such that $Y\to S$ satisfies the condition $(C_{n-c})$. Let $E$ be a virtual perfect complex on $Y$. Then, the diagram
    \begin{displaymath}
       \xymatrix{
         \lambda_{f}(i_{!}E) \ar[rr]^-{\RR_{f}(i_{!}E)} \ar[d]_{(f|_{Y})_{!}\simeq f_{!}\circ i_{!}}  &    & \langle \chfrak(i_{!}E) \cdot \tdfrak(T_{f}) \rangle_{X/S} \ar[d]^{\RR_{i}(E)+\mathrm{Whitney}} \\
         \lambda_{f|_{Y}}(F) \ar[rr]^-{\RR_{f|_{Y}}(E)} &  & \langle \chfrak(E) \cdot \tdfrak(T_{f|_{Y}}) \rangle_{Y/S}
}
    \end{displaymath}
    commutes. Here: the left vertical arrow is induced by pushforward functoriality; the right vertical arrow is induced by $\RR_{i}(E)$ from Theorem \ref{thm:DRRi-general}, and the Whitney-type isomorphism $[\tdfrak(T_{f|_{Y}})]\simeq [\tdfrak(i^{\ast}T_{f})\cdot\tdfrak(N_{i})^{-1}]$ (cf. Definition \ref{def:compositionofmorphisms}).
\end{enumerate}

\end{theorem}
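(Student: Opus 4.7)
Existence will follow by specializing Theorem~\ref{thm:general-DRR} to $Y = S$ with structural morphism $g = \id_S$. All the required compatibilities---functoriality in isomorphisms, the projection formula, base change, and bounded denominators---transfer directly from those of Theorem~\ref{thm:general-DRR}. The restriction property is the composition compatibility of Definition~\ref{def:compositionofmorphisms}, applied to the lci composition $f|_Y = f \circ i\colon Y \to X \to S$, combined with the Whitney-type identification $[\tdfrak(T_{f|_Y})] \simeq [\tdfrak(i^*T_f) \cdot \tdfrak(N_i)^{-1}]$ coming from the exact triangle of tangent complexes.

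For uniqueness, given a second assignment $\RR'$ satisfying the listed properties, I consider the defect
\[
\mu_f(E) := \RR'_f(E) \circ \RR_f(E)^{-1},
\]
an automorphism of the functor $E \mapsto \lambda_f(E)$ which is multiplicative on exact sequences, compatible with the projection formula, with base change, and which---since $\RR_i$ and the associated Whitney isomorphism appear identically in the restriction diagram for both systems---satisfies the clean identity $\mu_f(i_!E) = \mu_{f|_Y}(E)$. The objective is to establish $\mu_f = \id$ for every $f$. By Proposition~\ref{prop:base-reduction} we may work over divisorial $S$ and with virtual vector bundles, and using base-change compatibility together with a finite open cover on which $f$ is projective we may assume $f$ itself is projective. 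Choosing a projective factorization $X \xrightarrow{i} \PBbb(\Ecal) \xrightarrow{\pi} S$, the restriction identity yields $\mu_f(E) = \mu_\pi(i_!E)$, so it suffices to show $\mu_\pi = \id$ for every projective bundle $\pi\colon \PBbb(\Ecal) \to S$.

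By Lemma~\ref{lemma:projbundle} and the projection formula, $\mu_\pi$ is determined by its values on $\Ocal(-k)$ for $0 \leq k \leq r-1$, each of which is an element of $H^0(S, \Ocal_S^\times) \otimes \QBbb$ (since $\lambda_\pi(\Ocal(-k)) \simeq \Ocal_S$ by Proposition~\ref{prop:RR-proj-left}). Zariski-locally on $S$, the vector bundle $\Ecal$ is trivial, so that $\pi$ becomes the base change of $\PBbb^{r-1}_{\ZBbb} \to \Spec \ZBbb$ along the structural morphism $S \to \Spec \ZBbb$, and the $\Ocal(-k)$ are likewise pulled back. By base-change compatibility of $\RR$ and $\RR'$, the unit $\mu_\pi(\Ocal(-k))$ over $S$ is then the pullback of the corresponding unit over $\Spec \ZBbb$, which lies in $\ZBbb^\times \otimes \QBbb = 0$ and is therefore trivial. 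Gluing over the trivializing Zariski cover yields $\mu_\pi = \id$ on all of $\PBbb(\Ecal) \to S$.

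The main obstacle is to correctly package the cancellation of the $\RR_i$-contributions in the restriction diagram so as to obtain the clean identity $\mu_f(i_!E) = \mu_{f|_Y}(E)$, and to verify that the chain of factorizations (arbitrary $f$ into $\pi \circ i$, then the projective bundle formula) ultimately reduces the determination of $\mu$ to rational units over $\Spec \ZBbb$. Once this is in place, the remainder is formal, resting on the triviality of rational units on $\Spec \ZBbb$ and the Zariski-local triviality of vector bundles.
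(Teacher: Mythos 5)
Your proposal is correct and follows essentially the same route as the paper: existence by specializing Theorem \ref{thm:general-DRR} to $Y=S$, and uniqueness by using the projection formula and Lemma \ref{lemma:projbundle} to reduce to $\Ocal(-k)$ on projective bundles, pulling back from $\PBbb^{N}_{\ZBbb}\to\Spec\ZBbb$ where rational units vanish, and transferring to general $f$ through the restriction property applied to a projective factorization (the immersion being regular since $f$ is lci and the bundle projection smooth). Your packaging via the defect automorphism $\mu_f$, which cancels the shared $\RR_i$ and Whitney arrows, is only a cosmetic reformulation of the paper's comparison of $\RR^{(1)}_f$ and $\RR^{(2)}_f$ via the commuting diagram.
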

\begin{proof}
The existence is obtained by specializing Theorem \ref{thm:general-DRR} to the case of lci morphisms $f\colon X\to S$, and then evaluating the line distributions along the Chern power series $1$.  Indeed, on the one hand, by Theorem \ref{thm:cor86} \eqref{item:prop-int-dist-c1-det} and \eqref{eq:det-coh-prelim}, and the definition of intersection distributions, we have
\begin{displaymath}
    [\chfrak(f_{!}E)]_{S/S}(1)=[\cfrak_{1}(f_{!}E)]_{S/S}\simeq \det f_{!}E=\lambda_{f}(E).
\end{displaymath}
On the other hand, again by definition of the intersection distributions, we find
\begin{displaymath}
    f_{\ast}[\chfrak(E)\cdot\tdfrak(T_{f})]_{X/S}(1)=\langle \chfrak(E)\cdot\tdfrak(T_{f})\rangle_{X/S}.
\end{displaymath}
We thus obtain a canonical isomorphism of $\QBbb$-line bundles
\begin{displaymath}
    \lambda_{f}(E)\to \langle \chfrak(E)\cdot\tdfrak(T_{f})\rangle_{X/S}.
\end{displaymath}
This isomorphism satisfies the stated properties, as follows from the properties satisfied by the general DRR isomorphism for lci morphisms that we have constructed. In the particular case of the projection formula, one needs to apply the projection formula for the Riemann--Roch distributions from Proposition \ref{prop:proj-for-RR-dist} with the projection formula from Theorem \ref{thm:cor86}.

For the uniqueness, suppose that we are given two such constructions, denoted by $\RR^{(1)}_{f}$ and $\RR^{(2)}_{f}$. Since they are compatible with base change, for the sake of comparison, we can argue locally over the base schemes. In particular, we can reduce to working with divisorial schemes, virtual categories of vector bundles, and projective morphisms (cf. Proposition \ref{prop:base-reduction}). Moreover, we can also suppose that our projective morphisms factor through relative projective spaces of the form $\PBbb^{N}_{S}\to S$, for some $N\geq 1$. 

Let $p\colon\PBbb^{N}_{S}\to S$ be a relative projective space. By the compatibility with the projection formula and by Lemma \ref{lemma:projbundle}, in order to compare  $\RR^{(1)}_{p}$ and $\RR^{(2)}_{p}$ we are reduced to comparing of $\RR^{(1)}_{p}(\Ocal(-k))$ and $\RR^{(2)}_{p}(\Ocal(-k))$, for $k$ an integer. Now, $p$ is a base change of the structure map $q\colon \PBbb^{N}_{\ZBbb}\to\Spec\ZBbb$. By the compatibility with base change, we reduce to comparing $\RR^{(1)}_{q}(\Ocal(-k))$ and $\RR^{(2)}_{q}(\Ocal(-k))$. These are isomorphisms between the same $\QBbb$-line bundles over $\Spec\ZBbb$, so that they necessarily coincide. Indeed, after taking an appropriate power, they both become isomorphisms of line bundles, which differ by a unit in $\ZBbb$, i.e. by $\pm 1$. We can then take an additional power of 2 to kill the sign ambiguity.

Consider now a factorization of a flat projective lci morphism $f\colon X\to S$ of constant relative dimension $n$, into a closed immersion $i\colon X\to\PBbb^{N}_{S}$ and the natural projection $p\colon\PBbb^{N}_{S}\to S$. Since $f$ is lci and $p$ is smooth, the immersion $i$ is regular of codimension $N-n$, cf. \cite[\href{https://stacks.math.columbia.edu/tag/069G}{069G}]{stacks-project}. Let $E$ be a virtual vector bundle over $X$. Then, we write down the following diagrams, for $k=1,2$:
\begin{equation}\label{eq:comparison-RR1-RR2}
       \xymatrix{
         \lambda_{f}(E) \ar[rr]^-{\RR_{f}^{(k)}(E)} \ar[d]  &    & \langle \chfrak(E) \cdot \tdfrak(T_{f}) \rangle_{X/S} \ar[d]^{\RR_{i}(E)+\text{Whitney}} \\
         \lambda_{p}(i_{!}E) \ar[rr]^-{\RR_{p}^{(k)}(i_{!}E)} &  & \langle \chfrak(i_{!}E) \cdot \tdfrak(T_{p}) \rangle_{P/S}.
}
    \end{equation}
The left vertical arrow is induced by pushforward functoriality $f_{!}\simeq p_{!}\circ i_{!}$. The right vertical arrow is induced by the isomorphism $\RR_{i}(E)$ from Theorem \ref{thm:DRRi-general} and by the Whitney isomorphism $[\tdfrak(T_{f})]\simeq [\tdfrak(i^{\ast} T_{p})\cdot\tdfrak(N_{i})^{-1}]$. By the assumed restriction property, the diagram commutes. Moreover, we have already shown that $\RR_{p}^{(1)}(i_{!}E)=\RR_{p}^{(2)}(i_{!}E)$. We conclude that $\RR_{i}^{(1)}(E)=\RR_{i}^{(2)}(E)$, as required. 
\end{proof}

\subsection{Deligne--Riemann--Roch isomorphism and Grothendieck duality}\label{subsec:compat-Grothendieck-duality}
In this subsection we apply the characterization from Theorem \ref{thm:DRR-det-coh} and we prove that the DRR isomorphism for the determinant of the cohomology is compatible with Grothendieck duality. The argument is partly inspired by an analogous result in the theory of generalized holomorphic analytic torsion \cite[Section 9]{BFL2}.

\subsubsection{Dual Chern power series and line distributions}
We begin with some preliminary constructions on Chern power series, that we later need to formulate the compatibility of the DRR isomorphism with Grothendieck duality. 

Let $P$ be a Chern power series on a scheme $X$. We define the dual Chern power series $P^{\ast}$ of $P$ by
\begin{displaymath}
    (P^{\ast})^{(k)}=(-1)^{k}P^{(k)},
\end{displaymath}
where the index $k$ denotes the degree-$k$ part. This operation induces a categorical graded ring endomorphism of the Chern category $\CHfrak(X)_{\QBbb}$, whose formation is compatible with pullback functoriality. 

Suppose now that $X\to S$ is a morphism of schemes over $S$, of relative dimension $n$. Then, if $P$ is a Chern power series on $X$, we clearly have
\begin{equation}\label{eq:dist-ast-1}
    \langle P^{\ast}\rangle_{X/S}=\langle P\rangle_{X/S}^{(-1)^{n+1}},
\end{equation}
because the intersection bundle only takes into account the degree-$(n+1)$ part. If $Q$ is another Chern power series, then
\begin{equation}\label{eq:star-jumps}
    \langle P^{\ast}\cdot Q\rangle_{X/S}=\langle P\cdot Q^{\ast}\rangle_{X/S}^{(-1)^{n+1}}. 
\end{equation}
If $T$ is a line distribution for $X\to S$, equation \eqref{eq:star-jumps} motivates the definition of the dual line distribution $T^{\ast}$ by
\begin{equation}\label{eq:dist-ast-2}
    T^{\ast}(P)=T(P^{\ast})^{(-1)^{n+1}}. 
\end{equation}
This construction induces an autoequivalence of categories on $\Dcal(X/S)$. 

We gather several formal properties of these constructions in a lemma, whose proof we omit.

\begin{lemma}\label{lemma:properties-ast}
Let $X\to S$ and $Y\to S$ be $S$-schemes of relative dimension $n$ and $m$, respectively. Let $f\colon X\to Y$ be an $S$-morphism. The dual Chern power series and line distributions satisfy the following properties:
\begin{enumerate}
    \item If $P$ is a Chern power series on $X$, then $[P]^{\ast}_{X/S}=[P^{\ast}]_{X/S}$.
    \item If $Q$ a Chern power series on $X$, then
    \begin{displaymath}
    (Q\cdot T)^{\ast}=Q^{\ast}\cdot T^{\ast}.
\end{displaymath}
    In particular, for every Chern power series $P$ on $X$, we have
    \begin{displaymath}
         (Q\cdot[P]_{X/S})^{\ast}=Q^{\ast}\cdot[P^{\ast}]_{X/S}.
    \end{displaymath}
    \item\label{item:properties-ast-3} If $T$ is a line distribution for $X\to S$, then
\begin{displaymath}
    (f_{\ast}T)^{\ast}=f_{\ast}(T^{\ast})^{(-1)^{d}},
\end{displaymath}
where $d=n-m$. In particular, for every Chern power series $P$ on $X$, we have
    \begin{displaymath}
        (f_{\ast}[P]_{X/S})^{\ast}=(-1)^{d}f_{\ast}[P^{\ast}]_{X/S}
    \end{displaymath}
     in additive notation.
\end{enumerate}
\end{lemma}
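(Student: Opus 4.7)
The plan is to verify each of the three assertions by direct unwinding of the definitions \eqref{eq:dist-ast-1} and \eqref{eq:dist-ast-2} together with the identity \eqref{eq:star-jumps}, which is the sole non-trivial input. There are no deeper geometric facts at stake: the content of the lemma is purely the bookkeeping of signs, and the main point to be careful about is that the exponents $(-1)^{n+1}$ and $(-1)^{m+1}$ appearing in the definition of the dual are attached to the ambient relative dimensions. Once this is borne in mind, each statement will reduce to a one-line computation.

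For part (1), I would evaluate both distributions $[P]^{\ast}_{X/S}$ and $[P^{\ast}]_{X/S}$ on an arbitrary Chern power series $Q$ on $X$. The right-hand side gives $\langle Q\cdot P^{\ast}\rangle_{X/S}$, while by the definition \eqref{eq:dist-ast-2} applied to the line distribution $[P]_{X/S}$ the left-hand side gives $\langle Q^{\ast}\cdot P\rangle_{X/S}^{(-1)^{n+1}}$. An application of \eqref{eq:star-jumps} (with $P$ and $Q$ interchanged) identifies these two expressions. For part (2), the same strategy applied to $(Q\cdot T)^{\ast}$ and $Q^{\ast}\cdot T^{\ast}$, using the definitions of left multiplication \eqref{eq:left-product} and of duality, yields that both distributions send a Chern power series $P$ to $T(Q\cdot P^{\ast})^{(-1)^{n+1}}$, the only observation being that $(P\cdot Q^{\ast})^{\ast}=P^{\ast}\cdot Q$ since dualization is a graded ring endomorphism. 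The specialization to $T=[P]_{X/S}$ is then immediate from part (1).

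Part (3) is the only statement in which two different relative dimensions play a role and is therefore the most delicate from a sign-tracking point of view, although still elementary. I would evaluate both sides on a Chern power series $Q$ on $Y$. The left-hand side unfolds as
\[
(f_{\ast}T)^{\ast}(Q)=(f_{\ast}T)(Q^{\ast})^{(-1)^{m+1}}=T(f^{\ast}Q^{\ast})^{(-1)^{m+1}}=T((f^{\ast}Q)^{\ast})^{(-1)^{m+1}},
\]
using the compatibility of pullback with dualization of Chern power series. The right-hand side unfolds as
\[
f_{\ast}(T^{\ast})^{(-1)^{d}}(Q)=T^{\ast}(f^{\ast}Q)^{(-1)^{d}}=T((f^{\ast}Q)^{\ast})^{(-1)^{n+1+d}}.
\]
Since $d=n-m$, one has $(-1)^{n+1+d}=(-1)^{2n+1-m}=(-1)^{m+1}$, which matches the exponent in the previous display. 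The specialization to $T=[P]_{X/S}$ follows from parts (1) and (2); here the scalar $(-1)^{d}$ survives as a sign in front of $f_{\ast}[P^{\ast}]_{X/S}$ in the Picard category of $\QBbb$-line bundles.

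The main (and only) obstacle is to keep the three sources of signs aligned: the definition of $P^{\ast}$, the exponent attached to the dual distribution on a given base scheme, and the shift between the relative dimensions $n$ and $m$ under pushforward. The computation above shows that these three conspire in exactly the way stated. No input beyond the formal definitions recalled in \textsection\ref{subsec:int-bun-line-dist} and the identity \eqref{eq:star-jumps} is needed, so the proof is essentially algebraic and parallels the standard sign bookkeeping in graded categorical settings.
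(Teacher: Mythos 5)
Your verification is correct: the paper explicitly omits the proof of this lemma as a routine formal check, and your direct unwinding of \eqref{eq:dist-ast-1}--\eqref{eq:dist-ast-2} together with \eqref{eq:star-jumps} is exactly the intended argument. In particular you correctly identify that in part (3) the dual of $f_{\ast}T$ must be taken with the sign $(-1)^{m+1}$ attached to $Y\to S$, which is the only point where a sign error could plausibly occur, and your computation $(-1)^{n+1+d}=(-1)^{m+1}$ settles it.
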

\qed

Next, we consider the effect of the above duality formalism on categorical characteristic classes, cf. \cite[Section 5.4.2]{DRR1}. If $\phi$ is an additive or multiplicative categorical characteristic class determined by a formal power series $\phi(x)\in\QBbb\llbracket x\rrbracket$, such as $\chfrak$ or $\tdfrak$, then for every virtual vector bundle $E$
\begin{displaymath}
    \phi(E)^{\ast}=\phi^{\ast}(E),
\end{displaymath}
where $\phi^{\ast}(E)$ is the additive or multiplicative categorical characteristic class associated with the power series $\phi^{\ast}(x)=\phi(-x)$. One hence expects a relationship between $\phi^{\ast}(E)$ and $\phi(E^{\vee})$. This is the content of the following lemma.

\begin{lemma}\label{lemma:phi-star}
Let $X\to S$ be an $S$-scheme, and $\phi$ an additive or multiplicative categorical characteristic class determined by a formal power series. Then, for every virtual vector bundle $E$ on $X$, there exists a canonical isomorphism of line distributions
\begin{equation}\label{eq:phiast}
    [\phi^{\ast}(E)]_{X/S}\simeq [\phi(E^{\vee})]_{X/S},
\end{equation}
compatible with the additive or multiplicative structure of $\phi$ and $\phi^{\ast}$. It can be composed in a natural way with the isomorphisms in Theorem \ref{thm:cor86}, and commutes with them.  
\end{lemma}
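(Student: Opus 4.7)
The plan is to construct the isomorphism \eqref{eq:phiast} via the splitting principle for line distributions recalled in \textsection\ref{subsubsec:pic-cat-line-dist}\eqref{item:splitting-principle}. Both assignments $E\mapsto [\phi^{\ast}(E)]_{X/S}$ and $E\mapsto [\phi(E^{\vee})]_{X/S}$ define line functors in $E$, in the sense required by the splitting principle, so that any natural transformation between them may be constructed and checked after restriction to direct sums of line bundles. Before applying this reduction, I would first treat the case of a line bundle $L$: the trivialization $L\otimes L^{\vee}\simeq \Ocal_{X}$, combined with the tensor-product multiplicativity \eqref{eq:chern-tensor-product-line} and the vanishing $[\cfrak_{1}(\Ocal_{X})]_{X/S}\simeq 0$ obtained from the rank triviality of Theorem~\ref{thm:cor86}\eqref{item:prop-int-dist-rank}, yields a canonical isomorphism
\begin{displaymath}
    [\cfrak_{1}(L^{\vee})]_{X/S}\simeq [-\cfrak_{1}(L)]_{X/S}=[\cfrak_{1}(L)^{\ast}]_{X/S}.
\end{displaymath}
Since $\phi^{\ast}$ and $\phi$ are both determined by formal power series in the first Chern class when evaluated on a line bundle, this immediately upgrades to the desired isomorphism $[\phi^{\ast}(L)]_{X/S}\simeq [\phi(L^{\vee})]_{X/S}$, which respects by construction the additive or multiplicative structure.

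Next, for a direct sum $E=\bigoplus_{i=1}^{r}L_{i}$, the additive or multiplicative structure of $\phi$ combined with the Whitney isomorphism from Theorem~\ref{thm:cor86}\eqref{item:prop-int-dist-whitney} produces a canonical decomposition of $[\phi^{\ast}(E)]_{X/S}$ in terms of the $[\phi^{\ast}(L_{i})]_{X/S}$, and analogously for $[\phi(E^{\vee})]_{X/S}$ with respect to the decomposition $E^{\vee}=\bigoplus L_{i}^{\vee}$. Applying the line bundle case to each factor then assembles the isomorphism in the split case. The splitting principle then propagates the construction to an arbitrary virtual vector bundle, and ensures its naturality in $E$.

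The compatibility with the additive or multiplicative structure of $\phi$ and $\phi^{\ast}$ is built into the construction, since the line bundle case is compatible with tensor products and direct sums of line bundles. For the compatibility with the isomorphisms in Theorem~\ref{thm:cor86}, I would argue that \eqref{eq:phiast} is assembled entirely from the tensor-product multiplicativity of $\cfrak_{1}$, the Whitney-type isomorphism, and the rank triviality, all of which are known to commute with each another and with the remaining operations in Theorem~\ref{thm:cor86} by the main compatibility result of \cite[Section 7.3]{DRR1}.

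The main obstacle I foresee is book-keeping: the splitting principle produces the isomorphism only up to invoking the fundamental commutativity theorem of \cite[Section 7.3]{DRR1} for the various natural isomorphisms produced in the reduction, and one must in particular ensure that the result does not depend on the auxiliary choices entering the splitting construction. However, this is precisely the kind of compatibility problem for which the formalism of intersection distributions was designed, so each individual verification reduces to a diagram that is already known to commute in \cite{DRR1} and \cite{Eriksson-Freixas-Wentworth}.
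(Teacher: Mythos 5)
Your argument is essentially the paper's: the paper records the existence of \eqref{eq:phiast} as a restatement of \cite[Proposition 8.11]{DRR1}, whose proof is exactly the reduction you describe (splitting principle, Whitney and rank-triviality manipulations to reduce to a line bundle $L$, then the isomorphism $[\cfrak_{1}(L^{\vee})]\simeq -[\cfrak_{1}(L)]$), and the compatibility with Theorem \ref{thm:cor86} is justified, as you do, by the fact that every step is built from operations that commute by the commutativity statement of that theorem. The only small slip is attributing $[\cfrak_{1}(\Ocal_{X})]\simeq 0$ to rank triviality; it instead follows, e.g., from the restriction isomorphism of Theorem \ref{thm:cor86}\eqref{item:prop-int-dist-restriction} applied to the nowhere-vanishing section $1$ of $\Ocal_{X}$, and this does not affect the argument.
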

\begin{proof}
The lemma is a restatement of \cite[Proposition 8.11]{DRR1}. Only the last claim needs some justification. We note that the proof of \cite[Proposition 8.11]{DRR1} is based on the splitting principles, the Whitney isomorphism and the rank triviality isomorphism from Theorem \ref{thm:cor86}, so that one can suppose that $E=L$ is a line bundle and one can write $[\phi^{\ast}(L)]$ as a power series in $\cfrak_{1}(L)$. One then uses the isomorphism $[\cfrak_{1}(L^{\vee})]\simeq -[\cfrak_{1}(L)]$, which can also be derived from Theorem \ref{thm:cor86}. It follows from  the commutativity statement in Theorem \ref{thm:cor86} that the isomorphism \eqref{eq:phiast} can be naturally composed with the isomorphisms in that theorem, and commutes with them. 
\end{proof}
In the particular case of $\chfrak$, it easily follows from the construction in the previous lemma and the proof of \cite[Proposition 9.1]{DRR1} that \eqref{eq:phiast} is compatible with the multiplicativity behavior of $[\chfrak(\bullet)]$ with respect to the tensor product, recalled in \textsection\ref{subsubsec:mult-chern-Borel-Serre}.

Next, we consider the particular case of the duality operator $\ast$ acting on the line distribution associated with the categorical Todd genus.
\begin{lemma}\label{lemma:prelim-iso-duality}
Let $E$ be a virtual vector bundle on $X$. Then, there are canonical isomorphisms
\begin{equation}\label{eq:prelim-iso-duality}
    [\tdfrak^{\ast}(E)]_{X/S}\simeq [\chfrak(\det E^{\vee})\cdot\tdfrak(E)]_{X/S}
\end{equation}
and
\begin{equation}\label{eq:prelim-iso-duality-inverse}
   [\tdfrak^{\ast}(E)^{-1}]_{X/S}\simeq [\chfrak(\det E)\cdot\tdfrak(E)^{-1}]_{X/S}
\end{equation}
of line distributions, which are compatible with the multiplicative behavior with respect to exact sequences. They can be composed in a natural way with the isomorphisms in Theorem \ref{thm:cor86}, and commute with them.
\end{lemma}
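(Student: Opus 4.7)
The plan is to derive both isomorphisms via the splitting principle for line distributions (as recalled in \textsection\ref{subsubsec:pic-cat-line-dist}, item \eqref{item:splitting-principle}), reducing to the case of line bundles where the identities become formal manipulations of exponential power series. Concretely, the defining identity $\td(-x) = e^{-x}\td(x)$, viewed as formal power series, translates directly to the content of \eqref{eq:prelim-iso-duality} once interpreted in the Chern category, and the passage from line bundles to arbitrary virtual vector bundles is governed by the multiplicativity of both sides on short exact sequences.

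First, I would observe that the three functors $E \mapsto [\tdfrak^{\ast}(E)]_{X/S}$, $E \mapsto [\tdfrak(E)]_{X/S}$, and $E \mapsto [\chfrak(\det E^{\vee})]_{X/S}$ are multiplicative on short exact sequences as functors of commutative Picard categories. For $\tdfrak^{\ast}$ and $\tdfrak$, this is the multiplicativity of the categorical Todd genus. For $\chfrak(\det E^{\vee})$, this follows from the fact that $\det$ is a multiplicative functor on short exact sequences, combined with the multiplicativity of $[\chfrak(\bullet)]$ with respect to tensor product as recalled in equation~\eqref{eq:chern-tensor-product}. Since both sides of \eqref{eq:prelim-iso-duality} transform in the same way under direct sums, we are in the setting where the splitting principle for line distributions applies, permitting us to reduce the construction of the isomorphism to the case $E = L$, a line bundle.

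Second, for a line bundle $L$, let $x = \cfrak_{1}(L)$ and note that $\det L \simeq L$, so that $\chfrak(\det L^{\vee}) \simeq \chfrak(L^{\vee})$ corresponds to the power series $e^{-x}$ via the isomorphism $[\cfrak_{1}(L^{\vee})]_{X/S} \simeq -[\cfrak_{1}(L)]_{X/S}$ from \cite[Proposition 8.11]{DRR1} (used in the proof of Lemma \ref{lemma:phi-star}). On the other hand, $\tdfrak^{\ast}(L)$ is the Chern power series $\td(-x)$, and the formal identity
\begin{displaymath}
    \td(-x) = \frac{-x}{1-e^{x}} = \frac{x e^{-x}}{1 - e^{-x}} = e^{-x}\cdot\td(x)
\end{displaymath}
produces, at the level of line distributions, the required canonical isomorphism $[\tdfrak^{\ast}(L)]_{X/S}\simeq [\chfrak(L^{\vee})\cdot\tdfrak(L)]_{X/S}$. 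The second isomorphism \eqref{eq:prelim-iso-duality-inverse} follows either by the same procedure applied to the reciprocal power series, or by inverting the first one and using that, for a virtual bundle, $(\det E^{\vee})^{-1} \simeq \det E$.

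Finally, for the compatibility claims, I would note that the construction is assembled out of: (i) the splitting principle, (ii) the Whitney and rank-triviality isomorphisms of Theorem \ref{thm:cor86}, (iii) the isomorphism $[\cfrak_{1}(L^{\vee})]\simeq -[\cfrak_{1}(L)]$, and (iv) the multiplicativity of $[\chfrak(\bullet)]$ with respect to tensor products. All of these operations commute with each other and with the other natural isomorphisms in Theorem \ref{thm:cor86}, by the commutativity statement at the end of that theorem and by the analogous compatibilities recorded in \textsection\ref{subsubsec:mult-chern-Borel-Serre} for $[\chfrak(\bullet)]$ and the Borel--Serre isomorphism. The main subtlety is not in any single step but in keeping track that the constructed isomorphism is natural with respect to isomorphisms of $E$, additive on short exact sequences in a manner compatible with admissible filtrations, and stable under arbitrary base change --- all of which follow from the same properties of the intermediate ingredients used in the reduction to line bundles.
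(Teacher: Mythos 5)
Your proposal is correct and follows essentially the same route as the paper: reduce by multiplicativity and the splitting principle to the line bundle case, where the isomorphism comes from the formal identity $Q(-x)=e^{-x}Q(x)$ for the Todd series, and deduce the compatibility claims from the fact that every ingredient is built from the operations of Theorem \ref{thm:cor86}, which commute with one another. The only difference is cosmetic: the paper treats \eqref{eq:prelim-iso-duality-inverse} by the "same procedure" rather than by inverting \eqref{eq:prelim-iso-duality}, but both variants work.
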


\begin{proof}
We prove the first isomorphism, and omit the treatment of the second one, which is similar.

Recall from \textsection\ref{subsubsec:mult-chern-Borel-Serre} that $\chfrak$ behaves multiplicatively with respect to the tensor product, and $\det$ is multiplicative on exact sequences. Also, $\tdfrak$ and $\tdfrak^{\ast}$ are multiplicative on exact sequences. By the splitting principles from \cite[Section 2]{Eriksson-Freixas-Wentworth} and \cite[Section 5.4.2]{DRR1}, we may assume that $F$ is a direct sum of vector bundles, and then by the aforementioned multiplicativity we can reduce to suppose that $E=L$ has rank one. By the rank triviality isomorphism from Theorem \ref{thm:cor86}, we are led to compare Chern power series in $\cfrak_{1}(L)$. In this case, the result follows from the equality of formal power series
\begin{displaymath}
    Q(-x)=e^{-x}Q(x),\quad \text{where}\quad Q(x)=\frac{x}{1-e^{-x}}.
\end{displaymath}
For the last assertion of the lemma, it is enough to say that the so-constructed isomorphism ultimately relies on an applications of the splitting principle together with various isomorphisms in Theorem \ref{thm:cor86}, and hence it can be composed and commute with the latter, by the very same theorem.
\end{proof}

We finally consider the behavior of the right-hand side Riemann--Roch distribution with respect to the duality operator. 
    
\begin{lemma}\label{lemma:prelim-iso-duality-bis}
Let $X\to S$ and $Y\to S$ be $S$-schemes of relative dimensions $n$ and $m$, respectively. Let $f\colon X\to Y$ be an lci morphism over $S$, and define $d=n-m$ and $\omega_{X/Y}= \det T_{f}^{\vee}$. Then:
\begin{enumerate}
    \item\label{item:prelim-iso-duality-1} For every virtual perfect complex $E$ on $X$, there exists a canonical isomorphism
\begin{equation}\label{eq:prelim-iso-duality-bis-bis}
          [\chfrak(E)\cdot\tdfrak(T_{f})]^{\ast}_{X/S}\to [\chfrak(E^{\vee}\otimes\omega_{X/Y})\cdot\tdfrak(T_{f})]_{X/S},
\end{equation}
which induces an isomorphism of functors $V(\Pcal_{X})\to\Dcal(X/S)$ of commutative Picard categories (in $E$), compatible with base change and with bounded denominators. 
    \item The formation of \eqref{eq:prelim-iso-duality-bis-bis} is compatible with the composition of lci morphisms via the Whitney-type isomorphism $[\tdfrak(T_{gf})]\simeq [\tdfrak(T_{f})\cdot f^{\ast}\tdfrak(T_{g})]$ and the multiplicativity of $[\chfrak(\cdot)]$ with respect to the tensor product.
    \item The isomorphism \eqref{eq:prelim-iso-duality-bis-bis} induces an isomorphism
    \begin{equation}\label{eq:RHS-RR-duality}
    (f_{\ast}[\chfrak(E)\cdot\tdfrak(T_{f})]_{X/S})^{\ast}\to f_{\ast}[\chfrak(E^{\vee}\otimes\omega_{X/Y}[d])\cdot\tdfrak(T_{f})]_{X/S}
    \end{equation}
    of functors of commutative Picard categories $V(\Pcal_{X})\to\Dcal(Y/S)$, compatible with base change and with bounded denominators.
    \item\label{item:compat-RHS-RR-ast-projection} The isomorphism \eqref{eq:RHS-RR-duality} is compatible with the projection formula from Proposition \ref{prop:proj-for-RR-dist} \eqref{item:proj-for-RR-dist-2}. Precisely, if $F$ is a virtual vector bundle on $Y$, then there is a commutative diagram
    \begin{displaymath}
        \resizebox{\textwidth}{!}{
            \xymatrix{
                (f_{\ast}[\chfrak(E\otimes f^{\ast} F)\cdot\tdfrak(T_{f})]_{X/S})^{\ast}\ar[d]_-{\mathrm{Proposition} \ref{prop:proj-for-RR-dist} \eqref{item:proj-for-RR-dist-2}}\ar[rr]^-{\eqref{eq:RHS-RR-duality}}      &       &f_{\ast}[\chfrak(E^{\vee}\otimes f^{\ast}F^{\vee}\otimes\omega_{X/Y}[d])\cdot\tdfrak(T_{f})]_{X/S}\ar[d]^-{\mathrm{Proposition} \ref{prop:proj-for-RR-dist} \eqref{item:proj-for-RR-dist-2}}\\
                \chfrak^{\ast}(F)\cdot (f_{\ast}[\chfrak(E)\cdot\tdfrak(T_{f})]_{X/S})^{\ast}\ar[rr]^-{\eqref{lemma:phi-star}+\eqref{eq:RHS-RR-duality}}    &   &\chfrak(F^{\vee})\cdot f_{\ast}[\chfrak(E^{\vee}\otimes\omega_{X/Y}[d])\cdot\tdfrak(T_{f})]_{X/S}.
            }
            }
    \end{displaymath}
    where the lower horizontal arrow combines the isomorphism \eqref{eq:RHS-RR-duality} and the identification $[\chfrak^{\ast}(F)]\simeq [\chfrak(F^{\vee})]$ from Lemma \ref{lemma:phi-star}.
    \item\label{item:prelim-iso-duality-5} The isomorphism \eqref{eq:RHS-RR-duality} is compatible with the composition of lci morphisms. 
\end{enumerate}
\end{lemma}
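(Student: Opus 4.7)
My plan for the proof proceeds in the order of the five items, each reducing to manipulations that are already at our disposal.

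For item (1), I would begin by rewriting the left-hand side using the elementary formula $[P]^*_{X/S} = [P^*]_{X/S}$ from Lemma \ref{lemma:properties-ast}, giving $[\chfrak(E)\cdot\tdfrak(T_f)]^*_{X/S} \simeq [\chfrak^*(E)\cdot\tdfrak^*(T_f)]_{X/S}$. I would then apply Lemma \ref{lemma:phi-star} to identify $[\chfrak^*(E)]_{X/S} \simeq [\chfrak(E^\vee)]_{X/S}$, and Lemma \ref{lemma:prelim-iso-duality} to identify $[\tdfrak^*(T_f)]_{X/S} \simeq [\chfrak(\det T_f^\vee)\cdot\tdfrak(T_f)]_{X/S} = [\chfrak(\omega_{X/Y})\cdot\tdfrak(T_f)]_{X/S}$. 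Finally, using the multiplicativity of $[\chfrak(\cdot)]$ with respect to the tensor product from \eqref{eq:chern-tensor-product}, I would combine the factors $\chfrak(E^\vee)\cdot\chfrak(\omega_{X/Y})$ into $\chfrak(E^\vee\otimes\omega_{X/Y})$, yielding the desired isomorphism. The functoriality in $E$, the compatibility with base change, and the boundedness of denominators all follow from the corresponding properties of the constituent isomorphisms, together with the reduction to the divisorial setting afforded by Proposition \ref{prop:base-reduction}, in which $T_f$ may be represented as a finite complex of vector bundles.

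For item (2), I would rely on the fact that each intermediate isomorphism (the formula for $[\tdfrak^*(T_f)]$, the Whitney-type isomorphism $[\tdfrak(T_{gf})]\simeq[\tdfrak(T_f)\cdot f^*\tdfrak(T_g)]$ from \cite[Lemma 9.2]{DRR1}, and the tensor-product multiplicativity of $[\chfrak(\cdot)]$) is itself compatible with admissible filtrations of triangles in the tangent complex, and that these isomorphisms commute with each other by the conclusion of Theorem \ref{thm:cor86} and the last assertions of Lemmas \ref{lemma:phi-star} and \ref{lemma:prelim-iso-duality}. The verification amounts to chasing a diagram whose constituent squares commute by definition.

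For item (3), I would take $f_*$ of \eqref{eq:prelim-iso-duality-bis-bis} and combine it with the identity $(f_*T)^* = f_*(T^*)^{(-1)^d}$ from Lemma \ref{lemma:properties-ast} \eqref{item:properties-ast-3}, where $d = n - m$ is the relative dimension of $f$. The sign $(-1)^d$ on the right is absorbed by the shift in $E^\vee\otimes\omega_{X/Y}[d]$, using the fact that shifting by one in the virtual category introduces a sign $-1$ in $\chfrak$. Item (4) reduces to showing that the isomorphism \eqref{eq:prelim-iso-duality-bis-bis} intertwines multiplication by $\chfrak(f^*F)$ on the left with multiplication by $\chfrak(f^*F^\vee)$ after dualization; this follows from the identity $[\chfrak(E\otimes f^*F)]\simeq[\chfrak(E)\cdot\chfrak(f^*F)]$ being functorial in both variables and compatible with $\ast$, combined with the projection formula for intersection distributions \eqref{eq:proj-for-int-dist} and again Lemma \ref{lemma:phi-star}. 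Item (5) is the most delicate: one must verify that the composition of two instances of \eqref{eq:RHS-RR-duality} for $f\colon X\to Y$ and $g\colon Y\to Z$ matches the single instance for $g\circ f$. By item (2) at the level of Chern power series, this reduces to tracking how the signs $(-1)^d$ and $(-1)^e$ (with $e = m - \dim Z$) compose into the sign $(-1)^{d+e}$ arising from $g\circ f$, and how the pushforward-twice projection formula interchanges with $\ast$ via Lemma \ref{lemma:properties-ast} \eqref{item:properties-ast-3} applied to both $f$ and $g$.

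The main obstacle will be bookkeeping in item (5): ensuring that the intermediate application of the duality operator on the line distribution $f_*[\chfrak(E)\cdot\tdfrak(T_f)]_{X/S}$ (taken over $Y$) is compatible with a subsequent pushforward by $g$, in a way that agrees with the construction for the composite. Concretely, one has to verify that the natural identification $(g\circ f)_* T^* \simeq g_*(f_*T)^*$, implicit in the two ways of producing the right-hand side of \eqref{eq:RHS-RR-duality} for $g\circ f$, is compatible with the sign-tracking in Lemma \ref{lemma:properties-ast} \eqref{item:properties-ast-3}. This will be taken care of by reducing to the definitions and by invoking, where necessary, the compatibility statements for the Whitney isomorphism of the Todd class on the composition.
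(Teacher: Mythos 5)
Your proposal is correct and follows essentially the same route as the paper: after reducing to divisorial schemes and virtual vector bundles via Proposition \ref{prop:base-reduction}, the isomorphism \eqref{eq:prelim-iso-duality-bis-bis} is assembled from Lemma \ref{lemma:properties-ast}, Lemma \ref{lemma:phi-star}, Lemma \ref{lemma:prelim-iso-duality} and the tensor multiplicativity of $[\chfrak(\cdot)]$, item (3) follows from $F[d]\simeq(-1)^{d}F$ together with Lemma \ref{lemma:properties-ast} \eqref{item:properties-ast-3}, and items (2), (4), (5) rest on the commutativity statements of Theorem \ref{thm:cor86} (or, equivalently, the splitting principle and identification of power series in $\cfrak_{1}$) plus the formal sign bookkeeping you describe. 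No gaps.
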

\begin{proof}
 
By Proposition \ref{prop:base-reduction}, we can work over divisorial schemes and with virtual categories of vector bundles, and suppose that $f$ is projective and $T_{f}$ is a virtual vector bundle. Since $\omega_{X/Y}= (\det T_{f})^{\vee}$, the isomorphism \eqref{eq:prelim-iso-duality-bis-bis} follows from Lemma \ref{lemma:prelim-iso-duality}, together with the multiplicativity of $\chfrak$ with respect to the tensor product and the isomorphism  $[\chfrak(E^{\vee})]_{X/S}\simeq [\chfrak^{\ast}(E)]_{X/S}$ provided by Lemma \ref{lemma:phi-star}. The base change and bounded denominators claims are clear. 

The fact that \eqref{eq:prelim-iso-duality-bis-bis} is compatible with the composition of morphisms is deduced from the fact that the isomorphisms in Lemma \ref{lemma:prelim-iso-duality} commute with the isomorphisms in Theorem \ref{thm:cor86}, and because the construction of these isomorphisms is itself based on Theorem \ref{thm:cor86}. It can also be checked using the splitting principle applied to the vector bundles involved and identifying power series in $\cfrak_{1}$, together with the commutativity claim in Theorem \ref{thm:cor86}.

The isomorphism \eqref{eq:RHS-RR-duality} follows from \eqref{eq:prelim-iso-duality-bis-bis} by the identification $F[d]\simeq (-1)^{d}F$ for any virtual perfect complex $F$ and from Lemma \ref{lemma:properties-ast} \eqref{item:properties-ast-3}. 

The compatibility of \eqref{eq:RHS-RR-duality} with the projection formula can be checked by the splitting principle applied to the vector bundles involved in the diagram, and then using the constructions in Lemma \ref{lemma:phi-star} and Lemma \ref{lemma:prelim-iso-duality} based on identifying power series in $\cfrak_{1}$. 

The compatibility of \eqref{eq:RHS-RR-duality} with the composition of morphisms is a combination of the previous points and the formal properties in Lemma \ref{lemma:properties-ast}.
\end{proof}

To conclude this subsection, we remark that all the above constructions trivially exhibit a compatibility with isomorphisms of schemes, since they rely on constructions with the same property. We do not state this compatibility explicitly. 

\subsubsection{Compatibility with Grothendieck duality}
We are now in a position to state and prove the compatibility of the DRR isomorphism for the determinant of the cohomology with Grothendieck duality. Our sources for Grothendieck duality are \cite{Conrad:duality}, \cite{Grothendieck:residus}, \cite{Hartshorne:residues} and \cite[\href{https://stacks.math.columbia.edu/tag/0DWE}{0DWE}]{stacks-project}, but we mostly follow the latter. To conform with these references, we work over Noetherian base schemes, although this can likely be weakened to quasi-compact and quasi-separated schemes. We maintain the convention that all the $S$-schemes satisfy the condition $(C_{n})$, for some $n$, except possibly for base changes $S^{\prime}\to S$. 

Let $f\colon X\to Y$ be an lci morphism of $S$-schemes, where we assume that $X\to S$ and $Y\to S$ have relative dimensions $n$ and $m$, respectively. The morphism $f$ admits a relative dualizing complex, given in terms of $\omega_{X/Y}=\det T_{f}^{\vee}$ by $\omega_{X/Y}[d]$, where $d=n-m$ is possibly negative, cf. \cite[\href{https://stacks.math.columbia.edu/tag/0AU3}{0AU3}, \href{https://stacks.math.columbia.edu/tag/0BR0}{0BR0}, \href{https://stacks.math.columbia.edu/tag/0BRT}{0BRT}, \href{https://stacks.math.columbia.edu/tag/0ATX}{0ATX}]{stacks-project}. Let $E$ be a perfect complex on $X$ and $F$ a perfect complex on $Y$. They have finite Tor-amplitude by the Noetherian assumption. Grothendieck duality provides a canonical isomorphism of perfect complexes
\begin{equation}\label{eq:remainder-GD-1}
    Rf_{\ast}R\mathcal{H}om_{\Ocal_{X}}(E, Lf^{\ast}F\otimes\omega_{X/Y}[d])\simeq R\mathcal{H}om_{\Ocal_{S}}(Rf_{\ast}E, F)
\end{equation}
in the derived category of $\Ocal_{Y}$-modules, cf. \cite[\href{https://stacks.math.columbia.edu/tag/0A9Q}{0A9Q}, \href{https://stacks.math.columbia.edu/tag/0GEV}{0GEV}]{stacks-project}. This isomorphism is compatible with any base change $S^{\prime}\to S$, cf. \cite[\href{https://stacks.math.columbia.edu/tag/0AU3}{0AU3}, \href{https://stacks.math.columbia.edu/tag/0AAB}{0AAB}, \href{https://stacks.math.columbia.edu/tag/0B6S}{0B6S}, \href{https://stacks.math.columbia.edu/tag/0E2L}{0E2L}, \href{https://stacks.math.columbia.edu/tag/0B6M}{0B6M}, \href{https://stacks.math.columbia.edu/tag/0BZG}{0BZG}]{stacks-project}. To justify the base-change property from these citations, note two facts. First, if $S^{\prime}\to S$ is any morphism, and $Y^{\prime}\to S^{\prime}$ is the base change of $Y\to S$, then $X\to Y$ and $Y^{\prime}\to Y$ are Tor-independent. This uses the flatness of $X\to S$. Second, the complexes $E$, $Rf_{\ast}E$ and $F$ are perfect complexes \cite[Proposition 4.14]{DRR1}, so that they are locally quasi-isomorphic to bounded complexes of vector bundles and one can commute the $R\mathcal{H}om$ in \eqref{eq:remainder-GD-1} with the derived pullback, i.e. the natural base-change map by $X^{\prime}\to X$ for the $R\mathcal{H}om$ above (cf. \cite[\href{https://stacks.math.columbia.edu/tag/08I3}{08I3}]{stacks-project}) is an isomorphism. 

In the particular case where $F=\Ocal_{Y}$, we can write \eqref{eq:remainder-GD-1} as
\begin{equation}\label{eq:remainder-GD-2}
    Rf_{\ast}(E^{\vee}\otimes\omega_{X/Y}[d]) \simeq (Rf_{\ast}E)^{\vee},
\end{equation}
where the duality operator is understood in the derived sense. Since the complexes are perfect, the duals are locally given by the dual perfect complexes. This isomorphism induces a functorial isomorphism at the level of virtual categories, of the form
\begin{equation}\label{eq:GD-virtual}
    f_{!}(E^{\vee}\otimes\omega_{X/Y}[d])\simeq (f_{!}E)^{\vee},
\end{equation}
which is compatible with base change and the projection formula, cf. \eqref{eq:remainder-GD-1} together with \cite[\href{https://stacks.math.columbia.edu/tag/0ATN}{0ATN}]{stacks-project} and \cite[Proposition 4.12 (2)]{DRR1}. The formation of the isomorphism \eqref{eq:GD-virtual} is moreover compatible with the composition of morphisms, which in turn follows from the corresponding feature of Grothendieck duality, recast in \cite[\href{https://stacks.math.columbia.edu/tag/0ATX}{0ATX}, \href{https://stacks.math.columbia.edu/tag/0ATY}{0ATY}]{stacks-project} as the construction of a pseudo-functor $f\mapsto f^{!}$. In our case, we have $f^{!}(\bullet)\simeq Lf^{\ast}(\bullet)\otimes\omega_{X/Y}[d]$, which we have implicitly been using so far.

In the particular case that $Y=S$, further applying to \eqref{eq:GD-virtual} the determinant of the cohomology (cf. \eqref{eq:det-prel-bis} and \eqref{eq:det-coh-prelim}), we obtain a canonical isomorphism of line bundles
\begin{equation}\label{eq:GD-det}
    \lambda_{f}(E^{\vee}\otimes\omega_{X/S})^{(-1)^{n}}\simeq\lambda_{f}(E)^{-1},
\end{equation}
which is functorial in $E$, and whose formation is compatible with base change and the projection formula. 

The main result of this subsection shows that the DRR isomorphism is compatible with the isomorphism \eqref{eq:GD-det}.

\begin{theorem}\label{thm:Groth-duality}
Let $f\colon X\to S$ be an lci morphism over a Noetherian scheme $S$, satisfying the condition $(C_{n})$. Let $E$ be a virtual perfect complex on $X$. Then, the diagram
\begin{equation}\label{eq:groth-duality-diagram}
    \xymatrix{
        \lambda_{f}(E)\ar[rrr]^{\RR_{f}(E)}\ar[d]_{\eqref{eq:GD-det}}      &       &       &\langle\chfrak(E)\cdot\tdfrak(T_{f})\rangle_{X/S}\ar[d]^{\eqref{eq:prelim-iso-duality-bis-bis}}\\
        \lambda_{f}(E^{\vee}\otimes\omega_{X/S})^{(-1)^{n+1}}\ar[rrr]^{\RR_{f}(E^{\vee}\otimes\omega_{X/S})^{(-1)^{n+1}}} &   &     &\langle\chfrak(E^{\vee}\otimes\omega_{X/S})\cdot\tdfrak(T_{f})\rangle^{(-1)^{n+1}}
    }
\end{equation}   
commutes. Here, the left vertical arrow is the isomorphism \eqref{eq:GD-det} induced by Grothendieck duality, and the right vertical arrow is given by Lemma \ref{lemma:prelim-iso-duality-bis} \eqref{item:prelim-iso-duality-1}.
\end{theorem}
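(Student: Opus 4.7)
The plan is to apply the uniqueness assertion of Theorem \ref{thm:DRR-det-coh}. Define an alternative isomorphism $\RR'_f(E)\colon \lambda_f(E)\to\langle\chfrak(E)\cdot\tdfrak(T_f)\rangle_{X/S}$ by traversing the diagram \eqref{eq:groth-duality-diagram} clockwise: apply Grothendieck duality \eqref{eq:GD-det}, then a suitable $(-1)^{n+1}$-power of $\RR_f(E^\vee\otimes\omega_{X/S})$, then the inverse of the right vertical arrow coming from Lemma \ref{lemma:prelim-iso-duality-bis}\eqref{item:prelim-iso-duality-1}. Each of these three maps is functorial in $E$, compatible with base change, and has bounded denominators, so $\RR'_f$ defines an isomorphism of functors $V(\Pcal_X)\to\Picfr(S)_\QBbb$ of commutative Picard categories with the same formal properties. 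It then suffices to check that $\RR'_f$ satisfies the three characterizing properties in Theorem \ref{thm:DRR-det-coh}: compatibility with isomorphisms of $S$-schemes, compatibility with the projection formula, and the restriction property for regular closed immersions.

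Compatibility with isomorphisms of $S$-schemes is automatic, since all three factors defining $\RR'_f$ are built from Grothendieck duality, the duality operator of \S\ref{subsec:compat-Grothendieck-duality}, and the DRR isomorphism $\RR_f$ itself, all of which are functorial in the morphism. For the projection formula, one stacks three compatibilities: Grothendieck duality \eqref{eq:GD-det} is compatible with the projection formula (noted just after its construction); $\RR_f$ is compatible with the projection formula by Theorem \ref{thm:DRR-det-coh}; and Lemma \ref{lemma:prelim-iso-duality-bis}\eqref{item:compat-RHS-RR-ast-projection} provides the analogous compatibility for the right vertical arrow, the bridge between the two being the identification $[\chfrak^{\ast}(F)]\simeq[\chfrak(F^\vee)]$ from Lemma \ref{lemma:phi-star}. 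Assembling these in the evident large diagram gives the projection formula for $\RR'_f$.

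The main obstacle is the restriction property. Given a regular closed immersion $i\colon Y\to X$ of codimension $c$ with $g=f\circ i$, we must compare the restriction behavior of $\RR'_f(i_!E)$ with that of $\RR'_g(E)$. The fundamental input is the compatibility of Grothendieck duality with composition: the natural isomorphism $g^!\simeq i^!\circ f^!$ corresponds, in the present setting, to the Whitney-type relation
\begin{displaymath}
    \omega_{Y/S}\simeq i^{\ast}\omega_{X/S}\otimes (\det N_{Y/X})^{\vee},
\end{displaymath}
together with the shift by $[n-c]$ versus $[n]+[-c]$. At the level of virtual categories, \eqref{eq:GD-virtual} for $g$ factors through the corresponding isomorphisms for $i$ (which is an elementary computation from the Koszul resolution of $i_{\ast}\Ocal_{Y}$) and for $f$, by the pseudofunctoriality of $f\mapsto f^!$ in \cite[\href{https://stacks.math.columbia.edu/tag/0ATX}{0ATX}]{stacks-project}. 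One then subdivides the restriction diagram for $\RR'_f$ into three sub-diagrams: (i) the factorization of \eqref{eq:GD-virtual} along $g=f\circ i$, (ii) the restriction property for $\RR_f$ itself from Theorem \ref{thm:DRR-det-coh}\eqref{item:DRR-det-coh-3}, applied to $i_!(E^\vee\otimes\omega_{Y/S})\otimes i_!\Ocal_Y\otimes(\det N_{Y/X})^{\vee}$, and (iii) the compatibility of Lemma \ref{lemma:prelim-iso-duality-bis}\eqref{item:prelim-iso-duality-5} with the composition $g=f\circ i$, which is where the Whitney isomorphism for $\tdfrak$ and the multiplicativity of $[\chfrak(\bullet)]$ with respect to the tensor product are used in the necessary compatible manner. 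Each ingredient is already available in our formalism; the commutativity then follows by patient but mechanical assembly, which combined with the uniqueness of Theorem \ref{thm:DRR-det-coh} yields $\RR'_f=\RR_f$ and hence the commutativity of \eqref{eq:groth-duality-diagram}.
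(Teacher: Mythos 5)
Your overall strategy is the same as the paper's: define $\RR'_f$ by traversing the square and invoke the uniqueness of Theorem \ref{thm:DRR-det-coh}; your treatment of base change, bounded denominators, isomorphisms of $S$-schemes and the projection formula matches the paper's. The gap is in the restriction property. Your three sub-diagrams --- the factorization of Grothendieck duality \eqref{eq:GD-virtual} along $g=f\circ i$, the restriction property of $\RR_f$ applied to the dualized data, and the compatibility of Lemma \ref{lemma:prelim-iso-duality-bis} \eqref{item:prelim-iso-duality-5} with composition --- account for what the paper organizes as the rear and lower faces of a cube, but they do not contain the decisive ingredient: the compatibility of the DRR isomorphism for the regular closed immersion $i$ \emph{itself} with Grothendieck duality, i.e.\ that $\RR_i(E)^{\ast}$ agrees with $\RR_i(E^{\vee}\otimes N_i[-r])$ through the identification $(i_!E)^{\vee}\simeq i_!(E^{\vee}\otimes N_i[-r])$ and the duality isomorphisms \eqref{eq:phiast} and \eqref{eq:RHS-RR-duality}. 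This is Proposition \ref{prop:crazy-diagrams} in the paper (the ``front face''), and it does not follow formally from the ingredients you list: your item (iii) concerns only the right-hand side Riemann--Roch distributions and never involves $\RR_i$, while your item (ii) produces $\RR_i$ applied to the dualized complex but says nothing about how that compares with the \emph{dual} of $\RR_i$ applied to $E$. (Also, as stated, the restriction property takes as input a complex on $Y$, so applying it to $i_!(E^{\vee}\otimes\omega_{Y/S})\otimes\cdots$ does not typecheck; presumably you mean the complex $E^{\vee}\otimes\omega_{Y/S}$ on $Y$, which is what enters the lower face.)

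Establishing this front-face compatibility is where the real work lies, and it is not ``patient but mechanical assembly'': in the paper it requires a second characterization argument, this time via Theorem \ref{thm:DRRi-general} for closed immersions, reducing (through birational invariance and the structure of that characterization) to the case of a relative effective Cartier divisor and the trivial rank-one bundle, where the comparison is checked explicitly against the Borel--Serre and restriction isomorphisms (the subdiagrams \eqref{eq:first-piece} and \eqref{eq:second-piece}), ultimately resting on power-series identities of the type $Q(-x)=e^{-x}Q(x)$. As written, your sketch implicitly assumes at the level of the closed immersion the very compatibility the theorem is meant to prove; once Proposition \ref{prop:crazy-diagrams} (or an equivalent statement) is supplied and inserted as the missing face, the rest of your assembly goes through as in the paper.
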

\begin{proof}
We define an isomorphism of functors of commutative Picard categories, denoted by $E\mapsto \RR_{f}^{\prime}(E)$, as the composition of the left vertical arrow, the lower horizontal arrow, and the inverse of the right vertical arrow in \eqref{eq:groth-duality-diagram}. We need to check that the construction satisfies the characterization from Theorem \ref{thm:DRR-det-coh}. See the remark concerning the Noetherian assumption preceding that  statement. By a variant of Proposition \ref{prop:base-reduction} in the Noetherian setting (see the introduction of \textsection\ref{subsubsec:base-change-conventions} regarding the assumptions on the base schemes), we can work with Noetherian divisorial schemes and with virtual categories of vector bundles instead of perfect complexes.  

The construction of $\RR^{\prime}_{f}$ is compatible with base change, since Grothendieck duality is compatible with base change in our setting, as recalled above. It clearly has bounded denominators and is compatible with isomorphisms of schemes. For the projection formula, Grothendieck duality is compatible with it too, see \eqref{eq:remainder-GD-1}. As for \eqref{eq:prelim-iso-duality-bis-bis}, the compatibility with the projection formula is established in Lemma \ref{lemma:prelim-iso-duality-bis} \eqref{item:compat-RHS-RR-ast-projection}. It remains to study the compatibility of $\RR_{f}^{\prime}$ with the restriction along regular closed immersions.

Suppose that $Y\to X$ is a regular closed immersion of codimension $r$, such that $Y\to S$ is flat. Hence $Y\to S$ satisfies the condition $(C_{m})$ with $m=n-r$. It admits $N_{i}[-r]$ as a dualizing complex \cite[\href{https://stacks.math.columbia.edu/tag/0B4B}{0B4B}, \href{https://stacks.math.columbia.edu/tag/0BRT}{0BR0}]{stacks-project}. For a virtual vector bundle $E$ on $Y$, we write a commutative diagram of isomorphisms
\begin{equation}\label{eq:det-coh-restriction-duality}
    \xymatrix{
        \lambda_{f}(i_{!}E)\ar[r]\ar[d]     &\lambda_{f|_{Y}}(E)\ar[d]\\
        \lambda_{f}((i_{!}E)^{\vee}\otimes\omega_{X/S})^{(-1)^{n+1}} \ar[r]^{\alpha}          &\lambda_{f|_{Y}}(E^{\vee}\otimes \omega_{Y/S})^{(-1)^{m+1}}.
    }
\end{equation}
Here, the left (resp. right) vertical arrow is given by Grothendieck duality for $f$ (resp. $f|_{Y}$). The upper horizontal arrow is given by pushforward functoriality. The lower horizontal arrow, labeled $\alpha$, is given by Grothendieck duality for the closed immersion $Y\to X$, the projection formula, and pushforward functoriality. Precisely, consider the chain of natural isomorphisms
\begin{equation}\label{eq:chain-isos-Groth-dual}
    \begin{split}
        (i_{!}E)^{\vee}\otimes\omega_{X/S}[n]\overset{\substack{\mathrm{Grothendieck}\\ \mathrm{duality}\\ \mathrm{for}\ i\\ \hspace{1cm}}}{\simeq} & i_{!}(E^{\vee}\otimes N_{i}[-r])\otimes\omega_{X/S}[n]\\
        &\underset{\substack{\hspace{1cm}\\\mathrm{projection}\\ \mathrm{formula}}}{\simeq}i_{!}(E^{\vee}\otimes N_{i}\otimes i^{\ast}\omega_{X/S}[m])
        \simeq i_{!}(E^{\vee}\otimes\omega_{Y/S}[m]).
    \end{split}
\end{equation}
Then, we apply the determinant of the cohomology $\lambda_{f}$, and use $(f|_{Y})_{!}\simeq f_{!}\circ i_{!}$. The diagram \eqref{eq:det-coh-restriction-duality} commutes by the compatibility of Grothendieck duality with the composition of morphisms recalled above. 

Every vertex of \eqref{eq:det-coh-restriction-duality} is connected to a right-hand side Riemann--Roch bundle by the DRR isomorphism, which together with \eqref{eq:det-coh-restriction-duality} gives rise to a cube. We place \eqref{eq:det-coh-restriction-duality} at the rear face. We just saw that this face commutes. The upper face commutes by the restriction property of the DRR isomorphism. The commutativity of the left and right faces are the definitions of $\RR_{f}^{\prime}$ and $\RR_{f|_{Y}}^{\prime}$. It remains to show the commutativity of the lower face and the front face.

The lower face amounts to the following square:
\begin{equation}\label{eq:first-last-diag-duality}
    \xymatrix{
        \lambda_{f}((i_{!}E)^{\vee}\otimes\omega_{X/S})^{(-1)^{n+1}}\ar[d]_{\alpha}\ar[r]     &  
         \langle\chfrak((i_{!}E)^{\vee}\otimes\omega_{X/S})\cdot\tdfrak(T_{f})\rangle_{X/S}^{(-1)^{n+1}}\ar[d]^{\beta}  \\
        \lambda_{f|_{Y}}(E^{\vee}\otimes\omega_{Y/S})^{(-1)^{m+1}}\ar[r]      &\langle\chfrak(E^{\vee}\otimes\omega_{Y/S})\cdot\tdfrak(T_{f|_{Y}})\rangle_{Y/S}^{(-1)^{m+1}},      
    }
\end{equation}
where the horizontal arrows are the corresponding DRR isomorphisms, and the arrow $\beta$ is obtained by applying $[\chfrak(\cdot)]_{X/S}$ to \eqref{eq:chain-isos-Groth-dual}, followed by the DRR isomorphism for closed immersions (cf. Theorem \ref{thm:DRRi-general}) and the Whitney isomorphism $[\tdfrak(T_{f|_{Y}})]\simeq [\tdfrak(i^{\ast}T_{f})\cdot \tdfrak(N_{i})^{-1}]$. The commutativity of \eqref{eq:first-last-diag-duality} is ensured by the functoriality of the DRR isomorphism in the bundle, and by the restriction property. 

The front face involves only the right-hand side Riemann--Roch bundles:
\begin{equation}\label{eq:second-last-diag-duality}
    \xymatrix{
        \langle\chfrak(i_{!}E)\cdot\tdfrak(T_{f})\rangle_{X/S}\ar[r]^{\substack{\RR_{i}(E)\\ +\\ \mathrm{Whitney}}}\ar[d]_{\eqref{eq:prelim-iso-duality-bis-bis}}  &\langle\chfrak(E)\cdot\tdfrak(T_{f|_{Y}})\rangle_{Y/S}\ar[d]^{\eqref{eq:prelim-iso-duality-bis-bis}}\\
        \langle\chfrak((i_{!}E)^{\vee}\otimes\omega_{X/S})\cdot\tdfrak(T_{f})\rangle_{X/S}^{(-1)^{n+1}}\ar[r]^{\beta}     &\langle\chfrak(E^{\vee}\otimes \omega_{Y/S})\cdot\tdfrak(T_{f|_{Y}})\rangle_{Y/S}^{(-1)^{m+1}},
    }
\end{equation}
where $\beta$ is the same as in \eqref{eq:first-last-diag-duality}. To conclude the proof, we need to show that \eqref{eq:second-last-diag-duality} commutes. This is a consequence of the more general Proposition \ref{prop:crazy-diagrams} below, to the effect that $\RR_{i}$ is compatible with Grothendieck duality.
\end{proof}

\begin{lemma}\label{lemma:GD-birational}
In the setting \eqref{eq:cartesian-birational-invariance}, suppose furthermore that $\varphi$ is the identity and $\psi$ is an isomorphism in a neighborhood of $i(Y)$. Then, for every virtual perfect complex $E$ on $Y$, the Grothendieck duality isomorphism $(i_{!}E)^{\vee}\simeq i_{!}(E^{\vee}\otimes N_{i}[-r])$ commutes with the pullback by $\psi$.
\end{lemma}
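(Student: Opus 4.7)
The plan is to reduce to the case where $\psi$ is itself an isomorphism, where the assertion follows from the naturality of Grothendieck duality with respect to isomorphisms of schemes. The key observation is that both sides of the duality isomorphism $(i_! E)^\vee \simeq i_!(E^\vee \otimes N_i[-r])$ are derived-category objects supported on the closed subscheme $i(Y) \subset X$. By hypothesis $i(Y)$ is contained in an open $U \subset X$ on which $\psi$ restricts to an isomorphism $\psi|_{U'} \colon U' := \psi^{-1}(U) \xrightarrow{\sim} U$. Since the dualizing complex of a regular closed immersion is local on the ambient scheme (it is the shifted determinant of the conormal bundle, cf. \cite[\href{https://stacks.math.columbia.edu/tag/0BR0}{0BR0}]{stacks-project}), and all the functors entering the duality isomorphism ($Rf_\ast$, $R\mathcal{H}om$, $\otimes$) commute with restriction to open subschemes, the formation of the Grothendieck duality isomorphism is compatible with passing from $X$ to $U$ and from $X'$ to $U'$. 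I would therefore replace the Cartesian square by its restriction over $U$ and $U'$, in which $\psi$ has become a global isomorphism.

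In this reduced setting, pullback by $\psi$ is an exact equivalence of derived categories, trivially commuting with each of the functors listed above. Moreover, when $\psi$ is an isomorphism the canonical surjection $\varphi^\ast N_i \to N_{i'}$ of \eqref{eq:iso-normal-bundles} is an isomorphism, induced by the identification of the defining ideal sheaves $\Jcal_{i'} \simeq \psi^\ast \Jcal_i$ which comes from the Cartesian property. Consequently the Grothendieck duality isomorphism for $i'$ is literally the pullback under $\psi$ of the Grothendieck duality isomorphism for $i$, under the base-change identifications $\psi^\ast i_! E \simeq i'_! E$ and $\psi^\ast N_i \simeq N_{i'}$. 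This is exactly the commutativity asserted by the lemma.

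The only genuine care required is the bookkeeping around the normal bundle identification \eqref{eq:iso-normal-bundles}: one must check that it matches the canonical identification of dualizing complexes $N_{i'}[-r] \simeq \psi^\ast(N_i[-r])$ induced by the pseudofunctoriality of $f^!$ (cf. \cite[\href{https://stacks.math.columbia.edu/tag/0ATX}{0ATX}]{stacks-project}). Since both identifications are derived from the canonical isomorphism of conormal sheaves $\Jcal_{i'} / \Jcal_{i'}^2 \simeq \psi^\ast(\Jcal_i / \Jcal_i^2)$, they coincide. This is the only point where any nontrivial verification is needed, and it is essentially a formal compatibility rather than a substantive obstacle.
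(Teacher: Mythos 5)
Your argument is correct and coincides with the paper's own ``alternative, more concrete'' proof: one localizes to a neighborhood of $i(Y)$ on which $\psi$ is an isomorphism, the key point (which you correctly identify) being that both sides of the duality isomorphism are supported on $i(Y)$, so nothing needs to be checked on the complement, and the normal-bundle identification matches the identification of dualizing complexes there. The paper's first argument instead invokes the general Tor-independent base-change compatibility of Grothendieck duality (Tor-independence of $i$ and $\psi$ holding because $\psi$ is an isomorphism near $i(Y)$) together with perfectness of the complexes to commute $L\psi^{\ast}$ with $R\mathcal{H}om$, but your localization route is exactly the paper's stated alternative.
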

\begin{proof}
It is enough to verify the statement for \eqref{eq:remainder-GD-1} with $f=i$. Since $\psi$ is an isomorphism in a neighborhood of $i(Y)$, in particular $i$ and $\psi$ are Tor-independent. One concludes as for the commutativity of \eqref{eq:remainder-GD-1} with base change, namely by \cite[\href{https://stacks.math.columbia.edu/tag/0E2L}{0E2L}, \href{https://stacks.math.columbia.edu/tag/0B6M}{0B6M}, \href{https://stacks.math.columbia.edu/tag/0BZG}{0BZG}]{stacks-project} and because all the complexes are perfect, so that $L\psi^{\ast}$ commutes with $R\mathcal{H}om$. Alternatively, one can reason more concretely as follows. One can argue locally over $X$, cf. \cite[\href{https://stacks.math.columbia.edu/tag/0A9P}{0A9P}]{stacks-project}. On a suitable open neighborhood of $i(Y)$, it is clear that $L\psi^{\ast}$ commutes with \eqref{eq:remainder-GD-1}, since $\psi$ is an isomorphism. On an open subset disjoint from $i(Y)$, both sides of \eqref{eq:remainder-GD-1} are naturally trivial, and the analogous fact holds after pulling back by $\psi$, so there is nothing to say. This concludes the proof of the lemma.
\end{proof}

\begin{proposition}\label{prop:crazy-diagrams}
Let $X\to S$ and $Y\to S$ be morphisms of $S$-schemes, of relative dimensions $n$ and $m$, respectively. Let $i\colon Y\to X$ be a regular closed immersion of codimension $r=n-m$. Then, for every virtual perfect complex $E$ on $Y$, the diagram
\begin{equation}\label{eq:GD-immersion}
    \xymatrix{
        [\chfrak(i_{!}E)]_{X/S}^{\ast}\ar[rr]^-{\RR_{i}(E)^{\ast}}\ar[d]_{\eqref{eq:phiast}}  &   &(i_{\ast}[\chfrak(E)\cdot\tdfrak(N_{i})^{-1}]_{Y/S})^{\ast}\ar[d]^{\eqref{eq:RHS-RR-duality}}\\
        [\chfrak((i_{!}E)^{\vee})]_{X/S}\ar[rr]^-{\substack{\RR_{i}(E^{\vee}\otimes N_{i}[-r])\\ \vspace{0.1cm}}}   &  &i_{\ast}[\chfrak(E^{\vee}\otimes N_{i}[-r])\cdot\tdfrak(N_{i})^{-1}]_{Y/S}
    }
\end{equation}
commutes. Here, the lower horizontal arrow is induced by $\RR_{i}(E^{\vee}\otimes N_{i}[-r])$ after applying Grothendieck duality $(i_{!}E)^{\vee}\simeq i_{!}(E^{\vee}\otimes N_{i}[-r])$.
\end{proposition}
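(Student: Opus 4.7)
The plan is to invoke the characterization of $\RR_i$ established in Theorem \ref{thm:DRRi-general}. Concretely, define an alternative candidate $\widetilde{\RR}_i(E)\colon [\chfrak(i_!E)]_{X/S}\to i_\ast[\chfrak(E)\cdot\tdfrak(N_i)^{-1}]_{Y/S}$ as follows: set $F = E^\vee \otimes N_i[-r]$ and let $\widetilde{\RR}_i(E)$ be the composition
\begin{displaymath}
[\chfrak(i_! E)] \xrightarrow{\mathrm{GD},\,\eqref{eq:phiast}^{-1}} [\chfrak(i_! F)]^{\ast(-1)^{n+1}}  \xrightarrow{\RR_i(F)^{\ast(-1)^{n+1}}} (i_\ast[\chfrak(F)\cdot\tdfrak(N_i)^{-1}])^{\ast(-1)^{n+1}} \xrightarrow{\eqref{eq:RHS-RR-duality}^{-1}} i_\ast[\chfrak(E)\cdot\tdfrak(N_i)^{-1}],
\end{displaymath}
where GD denotes the Grothendieck duality isomorphism \eqref{eq:GD-virtual} at the level of virtual categories. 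The proposition is then exactly the assertion $\widetilde{\RR}_i = \RR_i$. I will verify this by showing that $\widetilde{\RR}_i$ satisfies the two defining properties in Theorem \ref{thm:DRRi-general}.

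First I would check that $\widetilde{\RR}_i$ qualifies as a candidate, i.e.\ is a DRR isomorphism for regular closed immersions: functoriality and additivity in $E$ follow since each of the four arrows in the composition is functorial and additive, the last two by Lemma \ref{lemma:phi-star} and Lemma \ref{lemma:prelim-iso-duality-bis}; compatibility with base change and boundedness of denominators follow for the same reasons, using that \eqref{eq:GD-virtual} is compatible with base change in our setting; and functoriality in the morphism $i$ is inherited. The compatibility with the projection formula is the nontrivial ingredient here, and is handled via the compatibility of Grothendieck duality with the projection formula (cf.\ \eqref{eq:remainder-GD-1} with $f = i$) combined with Lemma \ref{lemma:prelim-iso-duality-bis} \eqref{item:compat-RHS-RR-ast-projection} and the multiplicativity of $[\chfrak(\bullet)]$ recalled in \textsection\ref{subsubsec:mult-chern-Borel-Serre}.

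Second, I would check birational invariance in the sense of Theorem \ref{thm:DRRi-general} \eqref{item:DRR-4}. In the setting of \eqref{eq:cartesian-birational-invariance} with $\varphi=\id_Y$ and $\psi$ an isomorphism near $i(Y)$, Lemma \ref{lemma:GD-birational} gives the required compatibility of Grothendieck duality with pullback by $\psi$, while the isomorphisms \eqref{eq:phiast} and \eqref{eq:RHS-RR-duality} are compatible with birational invariance by the commutativity statements in Theorem \ref{thm:cor86} and Lemma \ref{lemma:prelim-iso-duality} (both of whose constructions commute with the operations of Theorem \ref{thm:cor86}). Since $\RR_i$ itself has birational invariance, $\widetilde{\RR}_i$ inherits it.

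The main obstacle is the third step: verifying that for a relative effective Cartier divisor $i\colon D\hookrightarrow X$ one has $\widetilde{\RR}_i(\Ocal_D) = \RR_{\mathrm{sec}}(\Ocal_X,1)$, where $1$ is the canonical section of $\Ocal_X(D)$. Here $r=1$, $N_i \simeq i^\ast \Ocal(D)$, and the dual object is $F = \Ocal_D\otimes N_i[-1] = N_i[-1]$, with $i_! F \simeq \Ocal_X - \Ocal_X(D)$ via Grothendieck duality. By the projection formula, proven in the previous step to be compatible with $\widetilde{\RR}_i$, we reduce to extending $F$ to $-\Ocal_X(D)$ on $X$; then $\RR_i(F) = \RR_{\mathrm{sec}}(-\Ocal_X(D), 1')$ for the canonical section $1'$ dual to $1$, cf.\ Proposition \ref{prop:RRres=RR} and the description in \textsection\ref{subsubsec:RRimmersionD}. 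The required identity $\widetilde{\RR}_i(\Ocal_D) = \RR_{\mathrm{sec}}(\Ocal_X, 1)$ then amounts to tracing through the Borel--Serre isomorphism \eqref{eq:RRimmersionD-2}, the restriction isomorphism \eqref{eq:RRimmersionD-3}, and the dualization isomorphisms of Lemmas \ref{lemma:phi-star}--\ref{lemma:prelim-iso-duality-bis}, using as key input the duality identity $[\tdfrak^\ast(L)^{-1}] \simeq [\chfrak(L)\cdot\tdfrak(L)^{-1}]$ from Lemma \ref{lemma:prelim-iso-duality} applied to $L = \Ocal(D)$, together with $[\cfrak_1(L^\vee)] \simeq -[\cfrak_1(L)]$ and the compatibility of all these isomorphisms with the restriction along the regular section $1$ ensured by the commutativity statement in Theorem \ref{thm:cor86}. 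Once this explicit computation is settled, Theorem \ref{thm:DRRi-general} applies and yields $\widetilde{\RR}_i = \RR_i$, which is the commutativity of \eqref{eq:GD-immersion}.
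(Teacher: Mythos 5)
Your strategy is the same as the paper's: build the candidate isomorphism $\widetilde{\RR}_{i}$ out of the other three sides of \eqref{eq:GD-immersion} (Grothendieck duality plus \eqref{eq:phiast}, the dualized $\RR_{i}(E^{\vee}\otimes N_{i}[-r])$, and the inverse of \eqref{eq:RHS-RR-duality}), check that it is a DRR isomorphism for regular closed immersions satisfying the two extra conditions of Theorem \ref{thm:DRRi-general}, and conclude by uniqueness; the final Cartier-divisor computation you sketch (Borel--Serre, restriction, $[\tdfrak^{\ast}(L)^{-1}]\simeq[\chfrak(L)\cdot\tdfrak(L)^{-1}]$, $[\cfrak_{1}(L^{\vee})]\simeq-[\cfrak_{1}(L)]$, commutation with restriction) is exactly the paper's reduction to an identity of power series in $\cfrak_{1}$ plus the compatibility of all operations with restriction along $D$.

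There is, however, one missing verification that the uniqueness statement actually requires: you never check that $\widetilde{\RR}_{i}$ is compatible with the composition of regular closed immersions. This is one of the axioms in Definition \ref{def:DRRisoforclassofmorphisms}, and it is used essentially in the uniqueness proof of Theorem \ref{thm:DRRi-general}: after linearizing, one chooses a complete flag of $N$ and factors $Y\to\PBbb(N^{\vee}\oplus 1)$ as a chain of relative effective Cartier divisor inclusions, and only the compatibility with composition lets one reduce the candidate to the divisor case fixed by \eqref{item:DRR-5}. So without it the characterization does not apply to $\widetilde{\RR}_{i}$. The fix is available from the paper's own toolkit — the pseudofunctoriality of Grothendieck duality under composition (as recalled before Theorem \ref{thm:Groth-duality}) together with Lemma \ref{lemma:prelim-iso-duality-bis} \eqref{item:prelim-iso-duality-5} and the multiplicativity in Lemma \ref{lemma:phi-star} — but it must be stated and checked, just as you did for the projection formula. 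Two smaller points: in the divisor step there is no ``dual section $1'$''; the section entering $\RR_{sec}$ is always the canonical section $1$ of $\Ocal_{X}(D)$ cutting out $D$, and the extension of $F=N_{i}[-1]$ to $\Ocal_{X}(D)[-1]=-\Ocal_{X}(D)$ is automatic via Proposition \ref{prop:RRres=RR} (no projection formula is needed for that reduction, only to pass from $\Ocal_{D}$ to twists). Also the exponent in your notation $[\chfrak(i_{!}F)]^{\ast(-1)^{n+1}}$ is redundant: the sign $(-1)^{n+1}$ is already built into the definition \eqref{eq:dist-ast-2} of the dual distribution, and with that convention $[P]^{\ast}=[P^{\ast}]$, so the candidate should simply pass through $[\chfrak(i_{!}F)]^{\ast}$.
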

\begin{proof}
Applying first the operation $\ast$ to diagram \eqref{eq:GD-immersion}, the composition of the resulting left vertical arrow, the lower horizontal arrow and the inverse of the right vertical arrow, provides a DRR isomorphism for closed immersions. Indeed, from the axioms of a DRR isomorphism (Definition \ref{def:DRRisoforclassofmorphisms}), the only non-trivial facts to check are the base change property, the compatibility with the composition of immersions and the compatibility with the projection formula. These properties are derived from the corresponding properties of Grothendieck duality and from Lemma \ref{lemma:phi-star} and Lemma \ref{lemma:prelim-iso-duality-bis}. By the characterization provided in Theorem \ref{thm:DRRi-general}, we are led to treat the birational invariance therein and the commutativity of \eqref{eq:GD-immersion} for the immersion of a relative effective Cartier divisor and the trivial vector bundle of rank one. The argument for the birational invariance is formal and we omit the details. We just indicate that it is obtained as a combination of the birational invariance of $\RR_{i}$ together with Lemma \ref{lemma:ignorepart}, Lemma \ref{lemma:phi-star}, Lemma \ref{lemma:prelim-iso-duality}, Lemma \ref{lemma:prelim-iso-duality-bis}, Lemma \ref{lemma:GD-birational} and the constructions in the respective proofs. One also needs the analogue of the isomorphism \eqref{eq:statement-ignore-1} in Lemma \ref{lemma:ignorepart} for $[\chfrak((i_{!}E)^{\vee})]_{X/S}$, established in a similar way.

Henceforth, we focus on the commutativity of \eqref{eq:GD-immersion} in the case of a relative effective Cartier divisor $D$ and $E=\Ocal_{D}$. Hence $\RR_{i}$ is given by $\RR_{sec}(\Ocal_{X},1)$, where $1$ is the canonical section of $\Ocal(D)$. The latter is described in \textsection\ref{subsubsec:RRimmersionD}. We will decompose \eqref{eq:GD-immersion} into two subdiagrams, according to the Borel--Serre and restriction isomorphisms in \textsection\ref{subsubsec:RRimmersionD}. In preparation, we begin with some observations. By the resolution $\Ocal(-D)\to\Ocal_{X}$ of $i_{\ast}\Ocal_{D}$, we have  natural isomorphisms
\begin{equation}\label{eq:i!E}
    i_{!}\Ocal_{D}\simeq \Ocal_{X}-\Ocal(-D)
\end{equation}
and
\begin{equation}\label{eq:i!Edual}
    (i_{!}\Ocal_{D})^{\vee}\simeq \Ocal_{X}-\Ocal(D).
\end{equation}
Grothendieck duality for $i$ is described in terms of \eqref{eq:i!Edual} as (cf. \cite[\href{https://stacks.math.columbia.edu/tag/0B4A}{0B4A}]{stacks-project})
\begin{equation}\label{eq:i!EGroth}
   \begin{split}
    (i_{!}\Ocal_{D})^{\vee}\overset{\eqref{eq:i!Edual}}{\simeq}& \Ocal_{X}-\Ocal(D)\simeq \Ocal(D)[-1]\otimes (\Ocal_{X}-\Ocal(-D))\\
    &\overset{\eqref{eq:i!E}}{\simeq} \Ocal(D)[-1]\otimes i_{!}\Ocal_{D}\simeq i_{!}i^{\ast}(\Ocal(D)[-1])= i_{!}(N_{i}[-1]).
    \end{split}
\end{equation}
This can be used to render the lower horizontal arrow in \eqref{eq:GD-immersion} explicit.

Taking into account \eqref{eq:i!E} and \eqref{eq:i!Edual}, the first subdiagram of \eqref{eq:GD-immersion} is

\begin{equation}\label{eq:first-piece}
 \xymatrix{
        [\chfrak(\Ocal_{X}-\Ocal(-D))]_{X/S}^{\ast}\ar[rr]^-{\text{Borel--Serre}}\ar[d]_{\eqref{lemma:phi-star}}  &  &[\cfrak_{1}(\Ocal(D))\cdot\tdfrak(\Ocal(D))^{-1}]_{X/S}^{\ast}\ar[d]^{\eqref{eq:phiast}+\eqref{eq:prelim-iso-duality-inverse}}\\
        [\chfrak(\Ocal_{X}-\Ocal(D))]_{X/S}\ar[rr]^-{\text{Borel--Serre}}  &    &[\chfrak(\Ocal(D))\cdot\cfrak_{1}(\Ocal(D))\cdot\tdfrak(\Ocal(D))^{-1}]_{X/S}^{-1}.
    } 
\end{equation}
Here, the left vertical arrow corresponds to the left vertical arrow in \eqref{eq:GD-immersion}. The right vertical arrow combines \eqref{eq:phiast} and \eqref{eq:prelim-iso-duality-inverse}. We claim that this diagram commutes. By the very construction of the Borel--Serre isomorphism, Lemma \ref{lemma:phi-star} and Lemma \ref{lemma:prelim-iso-duality}, after using the rank triviality isomorphism and $[\cfrak_{1}(\Ocal(-D))]\simeq -[\cfrak_{1}(\Ocal(D))]$, we reduce to check an equality of power series in $\cfrak_{1}$, which is easily seen to hold.

The second subdiagram is
\begin{equation}\label{eq:second-piece}
    \xymatrix{
        [\cfrak_{1}(\Ocal(D))\cdot\tdfrak(\Ocal(D))^{-1}]_{X/S}^{\ast}\ar[d]_{\eqref{eq:phiast}+\eqref{eq:prelim-iso-duality-inverse}}\ar[rr]^-{\text{restriction along } D}    &   & (i_{\ast}[\tdfrak(N_{i})^{-1}]_{D/S})^{\ast}\ar[d]^{\eqref{eq:prelim-iso-duality-inverse}+\text{Lemma} \ref{lemma:properties-ast} \eqref{item:properties-ast-3}} \\
        [\chfrak(\Ocal(D))\cdot\cfrak_{1}(\Ocal(D))\cdot\tdfrak(\Ocal(D))^{-1}]_{X/S}^{-1}\ar[rr]^-{\substack{\text{restriction along } D\\ \vspace{0.1cm}}}    &   &i_{\ast}[\chfrak(N_{i}[-1])\cdot\tdfrak(N_{i})^{-1}]_{D/S}.\\
    }
\end{equation}
The left vertical arrow is the same as the right vertical arrow in \eqref{eq:first-piece}. The right vertical arrow uses \eqref{eq:prelim-iso-duality-inverse} and Lemma \ref{lemma:properties-ast} \eqref{item:properties-ast-3} to commute $i_{\ast}$ and the duality operator. The latter produces a minus sign, corresponding to the codimension of $D\to X$, which is hidden in the shift in $N_{i}[-1]$. This diagram commutes by Lemma \ref{lemma:phi-star} and Lemma \ref{lemma:prelim-iso-duality}, which state that the involved operations commute with restriction along $D$.

Finally, by the construction of \eqref{eq:RHS-RR-duality} and the description of Grothendieck duality in \eqref{eq:i!EGroth}, we see that \eqref{eq:GD-immersion} for $i\colon D\to X$ and $E=\Ocal_{D}=i^{\ast}\Ocal_{X}$ is the concatenation of the subdiagrams \eqref{eq:first-piece} and \eqref{eq:second-piece}. These commute, thus completing the proof.
\end{proof}

In contrast with Proposition \ref{prop:crazy-diagrams}, we did not verify whether the DRR isomorphism for projective bundles constructed in Section \ref{section:projective-bundles} is compatible with Grothendieck duality, although this seems likely to be the case. If this were true, the general DRR isomorphism of Theorem \ref{thm:general-DRR} would also be compatible with Grothendieck duality. Since the characterization in Theorem \ref{thm:DRR-det-coh} does not require any condition on projective bundles, but only regular closed immersions, Proposition \ref{prop:crazy-diagrams} suffices to prove Theorem \ref{thm:Groth-duality}. It would be interesting to know if the DRR isomorphism for projective bundles is indeed compatible with Grothendieck duality. 

\section{Consequences}\label{sec:compatibilities}
In this section we provide several applications of the DRR isomorphism. On the one hand, we can recover or complete existing results in the literature, such as the original DRR isomorphism for curves, the Knudsen--Mumford expansion, or the key formula of Moret-Bailly. On the other hand, we prove new results, such as intersection bundle expressions for the determinant of the de Rham cohomology requested by Saito and Terasomo \cite{Saito-Terasoma}, or the BCOV isomorphism conjectured in \cite{cdg3} in connection with mirror symmetry. 

\subsection{Relationship with other works}
We discuss how our results relate to previous works, such as the original Deligne--Riemann--Roch isomorphism for families of curves and the Knudsen--Mumford expansion of the determinant of the cohomology of the powers of a line bundle.

\subsubsection{The Deligne--Riemann--Roch isomorphism for curves}
As recalled in the Introduction, Deligne established a functorial Riemann--Roch-type isomorphism for families of curves. See \cite[Appendice C]{Deligne:letter-Quillen} and \cite{Deligne-determinant}. Let $f\colon C\to S$ be a smooth proper morphism of relative dimension one, whose geometric fibers are connected and of genus $g$. In particular, $f$ satisfies the condition $(C_{1})$. Let $E$ be a vector bundle on $C$. Then, the original Deligne--Riemann--Roch isomorphism provides a canonical isomorphism of line bundles

\begin{equation}\label{eq:DRRforcurvesgeneral}
    \lambda_f(E)^{12} \simeq \langle \omega_{C/S}, \omega_{C/S} \rangle_{C/S}^{\rk E} \otimes \langle \det E, \det E \otimes \omega^{-1}_{C/S} \rangle^{6}_{C/S} \otimes \langle \cfrak_2(E) \rangle^{-12}_{C/S}.
\end{equation}
The formation of this isomorphism is compatible with base change, and it is naturally multiplicative on exact sequences. It is built out of three isomorphisms:

\begin{enumerate}
    \item $\lambda_f(\Ocal_C)^{12} \simeq \langle \omega_{C/S}, \omega_{C/S} \rangle_{C/S}$.
    \item For a line bundle $L$ on $X$, 
    \begin{displaymath} 
    \lambda_f(L-\Ocal_C)^{2} \simeq \langle L, L \otimes \omega^{-1}_{C/S} \rangle_{C/S}.
    \end{displaymath}
    \item For a vector bundle $E$, there is an isomorphism 
    \begin{displaymath}
        \lambda_f(E) \simeq \lambda_f(\Ocal_C)^{\rk E-1} \otimes \lambda_f(\det E) \otimes \langle \cfrak_2(E) \rangle^{-1}_{C/S}.
    \end{displaymath}
\end{enumerate}
The first one is a reformulation of Mumford's isomorphism when $g \geq 2$, recalled in the Introduction. For $g=0,1$, Deligne  performs special constructions in \cite[Appendice C]{Deligne:letter-Quillen}. The second construction combines an interpretation of Deligne pairings via the determinant of the cohomology (cf. \textsection \ref{subsubsec:Ducrot} below) with Grothendieck duality. The last isomorphism is one of the equivalent presentations of $\langle \cfrak_2(E) \rangle_{C/S}$ discussed in \cite{Deligne-determinant}. We note that Deligne's definition of this bundle is different from Elkik's approach \cite{Elkikfib} that we use here. However, they are both canonically isomorphic, cf. \cite[Theorem 3.22]{Eriksson-Freixas-Wentworth}.  

On the other hand, in this setting, Theorem \ref{thm:A} can be simplified by means of the isomorphisms in Theorem \ref{thm:cor86}, and then it takes the same shape as \eqref{eq:DRRforcurvesgeneral}, as an isomorphism of $\QBbb$-line bundles. They in fact agree, as we now prove.

\begin{proposition}\label{prop:comparison-with-Deligne}
  For families of smooth projective curves over a quasi-compact base as above, the isomorphism in Theorem \ref{thm:A} coincides with the original Deligne--Riemann--Roch isomorphism  \eqref{eq:DRRforcurvesgeneral}, as isomorphisms of $\QBbb$-line bundles. 
\end{proposition}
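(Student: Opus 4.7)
The plan is to reduce the comparison to a universal setting in which the only obstruction to the equality of the two isomorphisms is a sign, which is killed by passing to $\QBbb$-line bundles. Both Deligne's isomorphism \eqref{eq:DRRforcurvesgeneral} and our isomorphism from Theorem \ref{thm:A} are: (i) canonically attached to the datum $(f\colon C\to S,E)$; (ii) compatible with arbitrary base change on $S$; and (iii) additive with respect to short exact sequences in $E$, as recorded for our construction in Theorem \ref{thm:A} and observed for Deligne's one in the constructions recalled in the discussion preceding this statement. Consequently, composing one isomorphism with the inverse of the other yields an automorphism $\varphi(E)$ of the $\QBbb$-line bundle $\lambda(E)^{12}$, equivalently a section of $\Gm\otimes\QBbb$ on $S$, which inherits properties (ii) and (iii).

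First, I would apply the splitting principles for line functors from \cite{Eriksson-Freixas-Wentworth}, summarized in \textsection\ref{subsubsec:pic-cat-line-dist}\eqref{item:splitting-principle}, together with the additivity of $\varphi$ on exact sequences, to reduce the comparison to the case where $E=L$ is a line bundle on $C$. By the multilinearity of Deligne pairings in the $L_i$ and the multiplicativity of both isomorphisms in $L$ on the tensor product, one further reduces to a universal rigidified line bundle of each possible degree on the universal curve of genus $g$.

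Next, using base-change functoriality (ii), I would pull back $\varphi$ along the classifying map $S\to\mathcal{P}$, where $\mathcal{P}$ denotes an appropriate moduli scheme over $\ZBbb$ parametrizing triples (smooth projective curve of genus $g$, a rigidified line bundle $L$ of fixed degree, enough level structure to guarantee representability). Such a $\mathcal{P}$ can be taken to be an integral, smooth, finite-type scheme over $\ZBbb$ with geometrically connected fibers, so that $\Gamma(\mathcal{P},\Gm)=\{\pm 1\}$. Therefore $\varphi$ on $\mathcal{P}$ is $\pm 1$, and its image in $\Gm\otimes\QBbb$ is trivial. Since $\varphi$ is base-change-compatible, this triviality propagates to every $S$.

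The main obstacles I anticipate are two. First, one must verify that the chain of reductions (splitting principle, passage to rigidified line bundles, descent to the universal family) applies coherently to \emph{both} the left and right hand sides of \eqref{eq:DRRforcurvesgeneral}, so that the universal quotient $\varphi$ is itself additive and base-change compatible in a way to which the splitting principle applies; this hinges on the correct interpretation of Deligne's ad hoc constructions for the Mumford-type factor $\lambda(\Ocal_C)^{12}\simeq\langle\omega_{C/S},\omega_{C/S}\rangle$ and the pairing factor $\lambda(L-\Ocal_C)^2\simeq\langle L,L\otimes\omega_{C/S}^{-1}\rangle$ within the intersection distribution formalism. Second, the low-genus cases $g=0,1$ require care, because Deligne's construction there is not obtained from the Mumford isomorphism; either one picks $g\geq 2$ and uses the fact that line bundles of all degrees and all genera are covered by the relevant moduli, or one handles $g=0,1$ directly by noting that the respective universal bases still have $\Gamma(\Gm)=\{\pm 1\}$.
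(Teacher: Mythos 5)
Your overall architecture is the same as the paper's: reduce to line bundles by the splitting principle, transport the comparison to a universal family over a base defined over $\ZBbb$, observe that the discrepancy is an invertible function there equal to $\pm 1$, and kill the sign by working with $\QBbb$-line bundles (the paper takes a further square power). However, the step on which everything hinges is not sound as you state it. You replace the moduli stack by a fine moduli \emph{scheme} $\mathcal{P}$ with level structure and claim that, being integral, smooth, of finite type over $\ZBbb$ with geometrically connected fibers, it satisfies $\Gamma(\mathcal{P},\mathbb{G}_m)=\{\pm 1\}$. That implication is false in general ($\Spec\ZBbb[t,t^{-1}]$ has all these properties), and it is genuinely problematic for the schemes you propose: imposing level structure typically forces inverting an integer $N$, so the base lives over $\ZBbb[1/N]$ and its unit group contains non-torsion elements (the primes dividing $N$, and, e.g., modular units on level covers in low genus). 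Such units do \emph{not} die after tensoring with $\QBbb$, so your final step "the image in $\mathbb{G}_m\otimes\QBbb$ is trivial" breaks down precisely where it is needed. The triviality of units up to sign is a special, nontrivial property of the specific moduli spaces used, not a consequence of the adjectives you list.

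The paper avoids this by never leaving the stacky world: since both isomorphisms are base-change functorial and the universal curve over $\Mcal_{g,n}$ is representable, the two universal isomorphisms are defined directly over the Picard stack $\Pcal=\Pic^{d}(\Ccal/\Mcal_{g,n})$ (with $n=0,1,3$ for $g\geq 2$, $g=1$, $g=0$, after Zariski localization to fix the degree $d$ and fpqc localization to produce the sections needed in genus $0$ and $1$). The discrepancy is an invertible function on $\Pcal$, which descends to $\Mcal_{g,n}$ because $\Pcal\to\Mcal_{g,n}$ is smooth and proper with geometrically connected fibers, and then one invokes Moret-Bailly's Lemme~2.3.3 (for $g\geq 1$) together with $\Mcal_{0,3}\simeq\Spec\ZBbb$ to conclude the function is $\pm 1$. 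If you want to salvage your version, you must either prove the unit-group statement for your particular level moduli over $\ZBbb$ (which is not available in the generality you need), or descend the discrepancy back to $\Mcal_{g,n}$ using equivariance under the group changing level structures, at which point you have reproduced the stack argument. Your intermediate reduction via "multiplicativity of both isomorphisms in $L$ under tensor product" is also not justified as stated (Deligne's right-hand side is quadratic in $L$), but it is unnecessary once one compares the universal isomorphisms over the Picard stack directly.
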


\begin{proof} 
We denote by $\RR_{C/S}(E)$ our isomorphism, and by $\RR_{C/S}^{\prime}(E)$ Deligne's original one. The proof proceeds by a uniqueness argument.

By the splitting principles from \cite[Section 5.4]{DRR1}, one can reduce to the line bundle case $E=L$. Since the base scheme is quasi-compact, for the purpose of comparing isomorphisms of $\QBbb$-line bundles, we may work both Zariski and \emph{fpqc} locally over $S$. We may therefore assume: (1) the fiberwise degree of $L$ is constant and equal to $d$; (2) if $g=0$, then $C\to S$ is isomorphic to $\PBbb^{1}_{S}\to S$, and in particular it admits three disjoint sections, which we fix; ($2^{\prime}$) if $g=1$, then $C\to S$ admits a section, which we fix. For the first property, it suffices to localize for the Zariski topology. For (2) and ($2^{\prime}$), note that after base-changing by $C\to S$, which is \emph{fpqc}, we have the diagonal section. In the case where $g=0$ and $C\to S$ has a section, the family is Zariski-locally (over $S$) isomorphic to $\PBbb^{1}_{S}$.

Denote by $\Mcal_{g,n}$ the Deligne--Mumford stack over $\ZBbb$ of curves of genus $g$ with $n$ marked points \cite{Knudsen:Mgn-1}, where we assume that $n=0$ if $g\geq 2$, $n=1$ if $g=1$, and $n=3$ if $g=0$. Let $\Ccal$ be the universal curve over $\Mcal_{g,n}$. Let also $\Pcal=\Pic^d(\Ccal/\Mcal_{g,n})$ be the Deligne--Mumford stack parametrizing line bundles of fiberwise degree $d$ on $\Ccal$. Then, our geometric data, given by the family of curves with the chosen sections together with the line bundle $L$, provides a classifying map $\varphi\colon S\to\Pcal$. 

Over $\Pcal$, we can define universal isomorphisms $\RR_{\Ccal/\Pcal}(\Lcal)$ and $\RR_{\Ccal/\Pcal}^{\prime}(\Lcal)$, where we continue to denote by $\Ccal$ the pullback to $\Pcal$ of the universal curve and by $\Lcal$ the universal line bundle. This is possible by the base-change functoriality of both constructions, and since $\Ccal\to\Mcal_{g,n}$ is representable. Then, $\RR_{C/S}(L)$ and $\RR_{C/S}^{\prime}(L)$ are the pullbacks by $\varphi$ of these universal isomorphisms. We must thus compare the universal isomorphisms. 

For an appropriate integer $N\geq 1$, the isomorphisms $\RR_{\Ccal/\Pcal}(\Lcal)^{N}$ and $\RR_{\Ccal/\Pcal}^{\prime}(\Lcal)^{N}$ are both isomorphisms of line bundles, and they can be compared. They differ by a nowhere-vanishing invertible regular function, which descends to $\Mcal_{g,n}$, since $\Pcal\to\Ccal$ and $\Ccal\to\Mcal_{g,n}$ are smooth and proper, with geometrically connected fibers. In all the cases under consideration, the only such  functions defined over $\ZBbb$ are $\lbrace\pm 1\rbrace$, by \cite[Lemme 2.3.3]{Moret-Bailly} if $g\geq 1$, and since $\Mcal_{0,3}\simeq\Spec\ZBbb$ if $g=0$. Taking an additional square power of the universal isomorphisms, we conclude that they coincide. 
\end{proof}

\subsubsection{Moret-Bailly's formule clé}
Let $S$ be a quasi-compact scheme and $f\colon A\to S$ an abelian scheme of relative dimension $g$, which we assume is locally projective over $S$. It thus satisfies the condition $(C_{g})$. We denote by $e\colon S\to A$ the zero section and we define $\underline{\omega}_{A/S}=\det e^{\ast}\Omega_{A/S}$. The following statement is a generalization of the \emph{formule cl\'e canonique} of Moret-Bailly. We refer to \cite[Chapitre VIII, Section 1.2]{Moret-Bailly:pinceaux} for the definition and also Appendice 2, Remarque 1.5 in \emph{op. cit.} where potential generalizations are suggested, as the one we propose. 

\begin{proposition}
With the assumptions as above, let $L$ be a line bundle on $A$, which is symmetric and rigidified along the zero section. Then, there exists a canonical isomorphism of $\QBbb$-line bundles
\begin{equation}\label{eq:MB-key-formula-statement}
    \lambda_{f}(L)^{2}\simeq\underline{\omega}_{A/S}^{-d},
\end{equation}
which is compatible with base change. Here, $d$ is the fiberwise Euler characteristic of $L$.
\end{proposition}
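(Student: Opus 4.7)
The plan is to apply the Deligne--Riemann--Roch isomorphism of Theorem \ref{thm:A} to the morphism $f\colon A\to S$ and the line bundle $L$, and then to simplify the right-hand side using the special structure of abelian schemes. The starting point is the canonical isomorphism of $\QBbb$-line bundles
\begin{displaymath}
    \lambda_f(L)\simeq \langle \chfrak(L)\cdot \tdfrak(T_f)\rangle_{A/S}.
\end{displaymath}
The fundamental input specific to abelian schemes is that left translation provides a canonical trivialization $T_f\simeq f^{\ast}e^{\ast}T_f$, so that $\tdfrak(T_f)\simeq f^{\ast}\tau$ with $\tau:=\tdfrak(e^{\ast}T_f)\in\CHfrak(S)_\QBbb$. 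We will also use that $\underline{\omega}_{A/S}=(\det e^{\ast}T_f)^{\vee}$, which in particular gives $\langle \tau_1\rangle_{S/S}\simeq \underline{\omega}_{A/S}^{-1/2}$ as a $\QBbb$-line bundle, where $\tau_1=\cfrak_1(e^{\ast}T_f)/2$ is the degree-one component of $\tau$.

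The next step is to expand $(\chfrak(L)\cdot f^{\ast}\tau)^{(g+1)}=\sum_{k=0}^{g+1}\tfrac{1}{k!}\cfrak_1(L)^{k}\cdot f^{\ast}\tau_{g+1-k}$, and to apply the projection formula for intersection distributions (Theorem \ref{thm:cor86}\eqref{item:cor86-1}) term by term. The only surviving contributions correspond to $k=g+1$ and $k=g$, because for $k<g$ the pushforward of a class of degree strictly less than the relative dimension is zero. The $k=g+1$ term yields $\tfrac{1}{(g+1)!}\langle L,\ldots,L\rangle_{A/S}$ (with $g+1$ entries), and the $k=g$ term yields $\tau_1^{\otimes d}$, where $d=\int_{A/S}\cfrak_1(L)^{g}/g!$ is the fiberwise Euler characteristic by Hirzebruch--Riemann--Roch (noting that $\tau$ restricts to $1$ on any geometric fiber, since the Chern classes of a vector bundle on a point vanish). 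Interpreting $\tau_1^{\otimes d}$ as a $\QBbb$-line bundle on $S$ gives $\underline{\omega}_{A/S}^{-d/2}$. Squaring, we obtain
\begin{displaymath}
    \lambda_f(L)^{\otimes 2}\simeq \langle L,\ldots,L\rangle_{A/S}^{\otimes 2/(g+1)!}\otimes \underline{\omega}_{A/S}^{-d}.
\end{displaymath}

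It remains to show that $\langle L,\ldots,L\rangle_{A/S}$ is canonically trivial as a $\QBbb$-line bundle, compatibly with base change. This is where symmetry and rigidification enter. The rigidification $e^{\ast}L\simeq\Ocal_S$ combined with the symmetry $[-1]^{\ast}L\simeq L$ yields, for every $n\geq 1$, a canonical isomorphism $[n]^{\ast}L\simeq L^{\otimes n^{2}}$. Combining the compatibility of Deligne pairings with the multiplication-by-$n$ isogeny $[n]\colon A\to A$ of degree $n^{2g}$ (cf.\ \cite[Proposition 6.10]{DRR1}) with the multilinearity of the pairing, we get canonical isomorphisms
\begin{displaymath}
    \langle L,\ldots,L\rangle_{A/S}^{\otimes n^{2(g+1)}}\simeq \langle [n]^{\ast}L,\ldots,[n]^{\ast}L\rangle_{A/S}\simeq \langle L,\ldots,L\rangle_{A/S}^{\otimes n^{2g}},
\end{displaymath}
so that $\langle L,\ldots,L\rangle_{A/S}^{\otimes n^{2g}(n^{2}-1)}$ is canonically trivial. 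Taking $n=2$ shows that this Deligne pairing is torsion in $\Pic(S)$, and the canonical trivialization descends to a canonical trivialization in $\Pic(S)\otimes\QBbb$, compatibly with base change. Inserting this into the previous display yields the desired isomorphism $\lambda_f(L)^{\otimes 2}\simeq \underline{\omega}_{A/S}^{-d}$.

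The main obstacle will be administrative rather than conceptual: one has to verify that each simplification is performed \emph{canonically} inside the categorical Chern formalism, and that the various functorialities (base change, the projection formula for intersection distributions, the $[n]$-compatibility of Deligne pairings, and the compatibility of the rigidification/symmetry isomorphisms with these) match up coherently. The splitting of $\tdfrak(T_f)=f^{\ast}\tau$ and the resulting reduction from a top-degree product on $A$ to the combination of a Deligne pairing and a pushforward of degree $g$, both computed intrinsically on $S$, is the key technical step that makes the argument work.
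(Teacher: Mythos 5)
Your proposal is correct and follows essentially the same route as the paper: apply Theorem \ref{thm:A}, use $T_{f}\simeq f^{\ast}e^{\ast}T_{f}$ and the projection formula to reduce the right-hand side to $\underline{\omega}_{A/S}^{-d/2}$ times a power of the Deligne pairing $\langle L,\ldots,L\rangle_{A/S}$, and then trivialize that pairing using the canonical isomorphism $[n]^{\ast}L\simeq L^{n^{2}}$ together with the projection formula for the isogeny $[n]$. The only (inessential) divergences are bookkeeping: you retain the factor $1/(g+1)!$ on the pairing, which the paper suppresses since the pairing is trivialized anyway, and you trivialize using a single $n=2$ followed by descent to $\QBbb$-coefficients, whereas the paper combines coprime exponents (e.g.\ $n=2,3$) to obtain the trivialization without extracting roots of isomorphisms.
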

\begin{proof}
We apply Theorem \ref{thm:A} to $L$. To simplify the right-hand side of the isomorphism, we combine the canonical isomorphism $T_{f}\simeq f^{\ast}e^{\ast}T_{f}$ with several properties from Theorem \ref{thm:cor86}: the rank triviality isomorphism for $L$, the first Chern class isomorphism for $T_{f}$ and the projection formula. We also use the isomorphism $[\cfrak_{1}(M^{\vee})]\simeq -[\cfrak_{1}(M)]$ from Lemma \ref{lemma:phi-star} and \cite[Proposition 9.11]{DRR1}. We obtain canonical isomorphisms, compatible with base change, 
\begin{equation}\label{eq:MB-key-formula}
    \lambda_{f}(L)\simeq \langle\chfrak(L)\cdot f^{\ast}\tdfrak(e^{\ast}T_{f})\rangle_{A/S}\simeq \underline{\omega}_{A/S}^{\kappa}\otimes \langle L,\ldots, L\rangle_{A/S},
\end{equation}
where
\begin{displaymath}
    \kappa=-\frac{1}{2g!}\int_{A/S} c_{1}(L)^{g}.
\end{displaymath}
The integral is evaluated fiberwise, and its value is given by the Grothendieck--Riemann--Roch theorem, resulting in $\kappa=-d/2$. To conclude, we will show that the Deligne pairing $\langle L,\ldots, L\rangle_{A/S}$ can be canonically trivialized. 

Let $n\geq 2$ be an integer. Then, we have canonical isomorphisms, compatible with base change,
\begin{equation}\label{eq:MB-1}
    \langle L,\ldots, L\rangle_{A/S}^{n^{2g+2}}\simeq \langle L^{n^{2}},\ldots,L^{n^{2}}\rangle_{A/S}\simeq
    \langle [n]^{\ast}L,\ldots,[n]^{\ast}L\rangle_{A/S},
\end{equation}
where we use that $L$ is symmetric and rigidified to ensure the existence of a canonical isomorphism $[n]^{\ast}L\simeq L^{n^{2}}$. Since $A\to S$ is locally projective of finite presentation, and $[n]$ is finite and flat, we deduce that $[n]$ satisfies the condition $(C_{0})$. We can thus apply the projection formula from Theorem \ref{thm:cor86}, and we find a canonical isomorphism
\begin{equation}\label{eq:MB-2}
     \langle [n]^{\ast}L,\ldots,[n]^{\ast}L\rangle_{A/S}\simeq \langle L,\ldots,L\rangle_{A/S}^{n^{2g}},
\end{equation}
compatible with base change. Here the exponent $n^{2g}$ arises as the degree of the isogeny $[n]$. Combining \eqref{eq:MB-1} and \eqref{eq:MB-2}, we conclude that there is a canonical isomorphism
\begin{displaymath}
    \langle L,\ldots,L\rangle_{A/S}^{n^{2}}\simeq \Ocal_{S}.
\end{displaymath}
Specializing to coprime values of $n$, such as $2$ and $3$, we deduce that the Deligne pairing is canonically trivial, in a way compatible with base change. 
 
\end{proof}

When the line bundle in the proposition is further assumed to be relatively ample and $S$ is Noetherian, we have $R^{k}f_{\ast}L=0$ for $k>0$ and \eqref{eq:MB-key-formula} becomes
\begin{equation}\label{eq:qisoformulecle}
    (\det f_{\ast}L)^{2}\simeq \underline{\omega}_{A/S}^{-d },
\end{equation}
where $d$ is the rank of $f_{\ast}L$. This compares with the original result of Moret-Bailly, cf. \cite[Chapitre VIII, Th\'eor\`eme 3.4]{Moret-Bailly:pinceaux} in the abelian case. Concretely, Moret-Bailly needs to suppose that $d$ is invertible on $S$. Hence, Theorem \ref{thm:A} allows one to bypass this assumption, as well as to drop the positivity condition on $L$. The key advantage over \cite{Moret-Bailly} is that we do not need any moduli-space argument. 

 We also mention the result of \cite{MR:abelian-schemes} which determines that the two sides of \eqref{eq:qisoformulecle}, to the 12th power, are equal integrally in the Picard group of $S$, i.e. without tensoring with $\QBbb$. We do not know if an isomorphism such as \eqref{eq:qisoformulecle} can be made integrally so that it also commutes with base change, with the same power.

\subsubsection{The Knudsen--Mumford expansion and higher CM line bundles}

Suppose $f: X\to S$ is a projective, flat and finitely presented morphism of schemes of relative dimension $n$, endowed with a line bundle $L$. In this setting, it follows from \cite{KnudsenMumford} (cf. \cite[Corollary A.23]{Boucksom-Eriksson} for the generalization of the assumptions, ultimately relying on the work of \cite{Ducrot}), that there exist natural line bundles $\mathcal{M}_\ell, \ell = 0, \ldots, n+1$ on $S$, and a canonical isomorphism
\begin{equation}\label{eq:KnudsenMumfordiso}
    \lambda_{f}(L^k) \simeq \bigotimes_{\ell=0}^{n+1} \mathcal{M}_\ell^{k \choose \ell},
\end{equation}
referred to as the Knudsen--Mumford expansion. It is moreover asserted that the dominant term $\Mcal_{n+1}$ is related to the Chow divisor of the family. In \cite{Zhangheights} there is a canonical isomorphism
\begin{equation}\label{eq:dom-coeff-KM}
    \Mcal_{n+1} \simeq \langle L, \ldots, L \rangle_{X/S}.
\end{equation}
If $X$ and $S$ are assumed to be smooth quasi-projective varieties, in \cite{PhongSturmRoss} the second term is shown to satisfy a canonical isomorphism
\begin{equation}\label{eq:subdom-coeff-KM}
    \Mcal_{n}^{2}\simeq \langle L^{n} K_{X/S}^{-1},L,\ldots, L\rangle_{X/S}.
\end{equation}

The following is a direct application of our main theorem. 
\begin{theorem} \label{thm:KMourtheorem}
    Suppose that $X \to S$ is a projective, flat and finitely presented morphism of relative dimension $n$, which is also lci. Then, there are canonical isomorphisms of  $\QBbb$-line bundles
    \begin{equation}\label{eq:general-KM-RR}
        \mathcal M_{\ell} \simeq  \langle \chfrak((L-1))^\ell \cdot\tdfrak(T_{f}) \rangle_{X/S}.
    \end{equation}
\end{theorem}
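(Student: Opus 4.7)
The plan is to derive Theorem \ref{thm:KMourtheorem} from Theorem \ref{thm:A} combined with the uniqueness of the Knudsen--Mumford expansion and some formal manipulations at the level of intersection distributions. First, I would apply Theorem \ref{thm:A} to each power $L^{k}$, for $k \geq 0$, which produces a canonical isomorphism of $\QBbb$-line bundles, compatible with base change,
\begin{displaymath}
    \lambda(L^{k}) \simeq \langle \chfrak(L^{k}) \cdot \tdfrak(T_{f}) \rangle_{X/S}.
\end{displaymath}

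The central step is a binomial expansion of $\chfrak(L^{k})$ in terms of $\chfrak(L-1)$, where $L-1$ denotes the virtual bundle $L-\Ocal_{X}$ in $V(X)$. The additivity of $\chfrak$ on short exact sequences gives $\chfrak(L) \simeq 1 + \chfrak(L-1)$ in $\CHfrak(X)_{\QBbb}$, while iterated applications of the tensor-product multiplicativity isomorphism \eqref{eq:chern-tensor-product} yield $[\chfrak(L^{k})]_{X/S} \simeq [\chfrak(L)^{k}]_{X/S}$. Expanding the latter via the ring structure of $\CHfrak(X)_{\QBbb}$ and the additivity of line distributions in their Chern-power-series argument, I would obtain a canonical isomorphism
\begin{displaymath}
    [\chfrak(L^{k})]_{X/S} \simeq \sum_{\ell = 0}^{n+1} \binom{k}{\ell}\,[\chfrak(L-1)^{\ell}]_{X/S},
\end{displaymath}
the truncation at $\ell = n+1$ being automatic because $\chfrak(L-1)$ has trivial constant term. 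Multiplying by $\tdfrak(T_{f})$, evaluating as intersection bundles, and combining with the first step would give
\begin{displaymath}
    \lambda(L^{k}) \simeq \bigotimes_{\ell = 0}^{n+1} \langle \chfrak(L-1)^{\ell} \cdot \tdfrak(T_{f}) \rangle_{X/S}^{\binom{k}{\ell}}.
\end{displaymath}

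To finish, I would invoke the uniqueness of the Knudsen--Mumford expansion \eqref{eq:KnudsenMumfordiso}, exploited through the M\"obius-type inversion $\Mcal_{\ell} \simeq \bigotimes_{k=0}^{\ell}\lambda(L^{k})^{(-1)^{\ell-k}\binom{\ell}{k}}$ combined with the combinatorial identity $\sum_{k=0}^{\ell}(-1)^{\ell-k}\binom{\ell}{k}\binom{k}{j} = \delta_{\ell j}$, which isolates $\Mcal_{\ell}$ and delivers the desired canonical isomorphism. The main technical obstacle is the binomial expansion step: promoting the formal polynomial identity $(1+x)^{k} = \sum_{\ell} \binom{k}{\ell} x^{\ell}$ into a canonical chain of isomorphisms of line distributions, coherent with Theorem \ref{thm:A} and with base change. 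This requires assembling the Whitney-type additivity, the tensor-product multiplicativity from \eqref{eq:chern-tensor-product}, the rank-triviality, and the associativity constraints in $\CHfrak(X)_{\QBbb}$ into a coherent comparison, where the commutativity properties guaranteed by Theorem \ref{thm:cor86} play a crucial role.
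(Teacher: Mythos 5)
Your proposal is correct and follows essentially the route the paper intends: the paper gives no details beyond calling Theorem \ref{thm:KMourtheorem} a direct application of the main theorem, and the intended argument is exactly your expansion of $[\chfrak(L^{k})]$ via \eqref{eq:chern-tensor-product} into $\sum_{\ell}\binom{k}{\ell}[\chfrak(L-1)^{\ell}]$ (truncated in degree $n+1$), followed by comparison with the Knudsen--Mumford expansion and binomial inversion to isolate $\Mcal_{\ell}$.
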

\qed 

In particular, by making the left-hand side of \eqref{eq:general-KM-RR} explicit via Theorem \ref{thm:cor86}, we recover canonical isomorphisms of $\QBbb$-line bundles of the form \eqref{eq:dom-coeff-KM} and \eqref{eq:subdom-coeff-KM}. For \eqref{eq:subdom-coeff-KM}, our geometric assumptions are much more general than in \cite{PhongSturmRoss}. In a similar vein, one can exhibit expressions for lower order terms.  For instance, we obtain:
\begin{corollary}\label{cor:Knudsen-Mumford-subsubdominant}
Let the assumptions be as in Theorem \ref{thm:KMourtheorem}. Then, there exists a canonical isomorphism of $\QBbb$-line bundles
\begin{displaymath}
    \Mcal_{n-1}^{12}\simeq \Mcal_{n+1}^{-12}\otimes\Mcal_{n}^{6}\otimes\langle K_{X/S}, K_{X/S},L,\ldots, L\rangle_{X/S}
    \otimes \langle\cfrak_{2}(T_{f})\cdot\cfrak_{1}(L)^{n-1}\rangle_{X/S}.
\end{displaymath}
\end{corollary}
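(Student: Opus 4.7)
The plan is to apply Theorem~\ref{thm:KMourtheorem} in the three cases $\ell = n-1,n,n+1$, which provides canonical $\QBbb$-isomorphisms $\Mcal_{\ell} \simeq \langle \chfrak(L-1)^{\ell} \cdot \tdfrak(T_f) \rangle_{X/S}$, and then to compare these by expanding the power series and extracting the degree-$(n+1)$ component. Concretely, I would first write $\chfrak(L-1) = e^{\cfrak_1(L)} - 1 = \cfrak_1(L)(1 + \cfrak_1(L)/2 + \cfrak_1(L)^2/6 + \cdots)$, raise this to the $(n-1)$-th power via the binomial theorem keeping terms up to order $\cfrak_1(L)^{n+1}$, and multiply by $\tdfrak(T_f) = 1 + \cfrak_1(T_f)/2 + (\cfrak_1(T_f)^2 + \cfrak_2(T_f))/12 + \cdots$. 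Extracting the degree-$(n+1)$ component of the product presents $12\,\Mcal_{n-1}$ as a $\ZBbb$-linear combination of the four intersection bundles $\langle \cfrak_1(T_f)^2 \cfrak_1(L)^{n-1}\rangle$, $\langle \cfrak_2(T_f) \cfrak_1(L)^{n-1}\rangle$, $\langle \cfrak_1(T_f) \cfrak_1(L)^n\rangle$, and $\langle \cfrak_1(L)^{n+1}\rangle$, with the two middle terms contributing the two "new" intersection bundles appearing on the right-hand side of the claimed isomorphism.

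Next, I would translate these terms into the form of the statement. Using $K_{X/S} = (\det T_f)^{\vee}$ and thus $\cfrak_1(T_f) = -\cfrak_1(K_{X/S})$, together with the first Chern class isomorphism of Theorem~\ref{thm:cor86}, the first summand identifies canonically with $\langle K_{X/S}, K_{X/S}, L, \ldots, L\rangle$. The dominant term \eqref{eq:KM-dominant-intro}, namely $\Mcal_{n+1} \simeq \langle L, \ldots, L\rangle$, absorbs the top Deligne pairing $\langle \cfrak_1(L)^{n+1}\rangle$. For the mixed term, I would combine the subdominant isomorphism \eqref{eq:KM-subdominant-intro} with the multilinearity of Deligne pairings (itself a consequence of the first Chern class and Whitney isomorphisms of Theorem~\ref{thm:cor86}) to obtain $\Mcal_n^{2} \simeq \Mcal_{n+1}^{n} \otimes \langle \cfrak_1(T_f) \cfrak_1(L)^n\rangle$, which allows $\langle \cfrak_1(T_f) \cfrak_1(L)^n\rangle$ to be eliminated in favor of $\Mcal_n$ and $\Mcal_{n+1}$. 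Substituting these identifications into the expression for $12\,\Mcal_{n-1}$ and rearranging produces an isomorphism of the shape claimed in the statement. Compatibility with base change is inherited term by term from the corresponding property of Theorem~\ref{thm:KMourtheorem}, Theorem~\ref{thm:cor86}, and \eqref{eq:KM-dominant-intro}--\eqref{eq:KM-subdominant-intro}.

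The main obstacle is not conceptual but combinatorial: tracking the rational coefficients in the power-series expansion and verifying that multiplying through by $12$ suffices to clear denominators so that the resulting integer exponents on $\Mcal_{n+1}$ and $\Mcal_n$ are as stated. All other ingredients---the categorical Chern and Todd classes, the Deligne pairing formalism, and the functorial isomorphisms of Theorem~\ref{thm:cor86}, together with the fact that these operations commute with each other---are already in place, so no further compatibility lemmas are required beyond careful polynomial arithmetic.
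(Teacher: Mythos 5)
Your proposal follows essentially the same route as the paper: the corollary is obtained there directly from Theorem \ref{thm:KMourtheorem} by rendering the bundle $\langle\chfrak(L-1)^{\ell}\cdot\tdfrak(T_{f})\rangle_{X/S}$ explicit via Theorem \ref{thm:cor86}, which is exactly the power-series expansion, degree-$(n+1)$ extraction, use of $K_{X/S}=\det T_{f}^{\vee}$, and substitution of \eqref{eq:KM-dominant-intro}--\eqref{eq:KM-subdominant-intro} that you describe. The coefficient bookkeeping you defer is likewise left implicit in the paper, so your outline is as complete as the paper's own argument.
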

\qed

The Knudsen--Mumford expansion \eqref{eq:KnudsenMumfordiso} can be rearranged into a polynomial expansion in powers of $k$, with $\QBbb$-line bundle coefficients, in the form
\begin{displaymath}
    \lambda_f(L^k) \simeq \bigotimes_{\ell=0}^{n+1} \Ncal_{\ell}^{k^{\ell}}.
\end{displaymath}
Whenever $f: X\to S$ is moreover assumed to be lci, we hence obtain a canonical functorial isomorphism,
\begin{equation}\label{eq:Ncal_ell}
    \Ncal_{\ell}=\langle\cfrak_{1}(L)^{\ell}\cdot\tdfrak(T_{f})\rangle_{X/S}^{1/\ell!}.
\end{equation}
This allows us to exhibit explicit expressions for the higher CM line bundles in \cite{Vedova-Zuddas}, denoted by $\lambda_{CM,\ell}(X/S)$, which in turn generalize the classical CM line bundle of \cite{Paul-Tian, PhongSturm1}, and play a key role in the study of asymptotic Chow stability. Concretely, we have:
\begin{proposition}
Let $f\colon X\to S$ be as in Theorem \ref{thm:KMourtheorem}. Then, there are canonical isomorphisms of $\QBbb$-line bundles
\begin{equation}\label{eq:higher-CM}
    \lambda_{CM,\ell}(X/S) \simeq \Ncal_{n+1-\ell}^{\frac{1}{a_{n}}}\otimes \Ncal_{n+1}^{-\frac{a_{n-\ell}}{a_{n}^{2}}},
\end{equation}
where the $\Ncal_{\ell}$ are defined as in \eqref{eq:Ncal_ell} and the exponents are given by the coefficients of the Hilbert polynomial of $L$, written in the form $a_{n}k^{n}+a_{n-1}k^{n-1}+\cdots+a_{0}$, provided that $a_{n}\neq 0$ (e.g. if $L$ is relatively nef and big).
\end{proposition}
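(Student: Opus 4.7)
The plan is to reduce the claim to a purely formal manipulation built on top of Theorem \ref{thm:KMourtheorem} and the identification \eqref{eq:Ncal_ell}. First I would recall, following \cite{Vedova-Zuddas}, that the higher CM line bundle $\lambda_{CM,\ell}(X/S)$ is defined so that under the Knudsen--Mumford expansion
\begin{displaymath}
    \lambda(L^{k})\simeq \bigotimes_{\ell=0}^{n+1}\Ncal_{\ell}^{k^{\ell}}
\end{displaymath}
and the Hilbert polynomial $\chi(X_{s},L_{s}^{k})=a_{n}k^{n}+a_{n-1}k^{n-1}+\cdots+a_{0}$, $\lambda_{CM,\ell}$ is precisely the $\QBbb$-line bundle
\begin{displaymath}
    \lambda_{CM,\ell}(X/S)=\Ncal_{n+1-\ell}^{1/a_{n}}\otimes\Ncal_{n+1}^{-a_{n-\ell}/a_{n}^{2}}.
\end{displaymath}
This normalization generalizes the classical CM line bundle, recovered at $\ell=1$, and was introduced so that the degrees match those of the higher Donaldson--Futaki invariants appearing in asymptotic Chow stability. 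The hypothesis $a_{n}\neq 0$ is needed only to ensure that this rational exponent is well-defined, and holds for instance when $L$ is relatively nef and big by the asymptotic Riemann--Roch theorem for the fibers.

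Next, I would invoke \eqref{eq:Ncal_ell}, which identifies $\Ncal_{\ell}$ with the intersection bundle $\langle\cfrak_{1}(L)^{\ell}\cdot\tdfrak(T_{f})\rangle_{X/S}^{1/\ell!}$. Substituting these expressions directly into the formula above yields the desired isomorphism, and the claim is then established as an equality of rational exponents of canonical $\QBbb$-line bundles. The compatibility with base change is automatic from the base-change properties of the Knudsen--Mumford expansion and of Theorem \ref{thm:KMourtheorem}.

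The only point deserving verification is that our normalization of $\Ncal_{\ell}$, obtained from \eqref{eq:KnudsenMumfordiso} by the binomial change of basis $\binom{k}{\ell}\leftrightarrow k^{\ell}/\ell!+\text{lower order}$, matches the one used in \cite{Vedova-Zuddas}. This is the main (and essentially only) obstacle in the proof: one must carefully compare the two conventions used to rewrite the Knudsen--Mumford expansion in the monomial basis $1,k,k^{2},\ldots$ versus the binomial basis $\binom{k}{0},\binom{k}{1},\ldots$. Since both sides of the comparison are governed by the same $\QBbb$-valued universal transformation matrix with integer entries (Stirling numbers), this is a bookkeeping verification. Once matched, the proposition follows directly by unwinding the definitions.
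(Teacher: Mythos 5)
Your proposal matches the paper's (implicit) argument: the paper states this proposition with no further proof, treating it exactly as you do — as an unwinding of the definition of the higher CM line bundles of Della Vedova--Zuddas in terms of the monomial-basis coefficients of the Knudsen--Mumford expansion and the Hilbert polynomial, with the actual content supplied by the identification \eqref{eq:Ncal_ell} of $\Ncal_{\ell}$ as an intersection bundle via Theorem \ref{thm:KMourtheorem}, and base-change compatibility inherited from those ingredients. Your added remark about reconciling the binomial versus monomial normalizations is the right (and only) point needing care, so the proposal is correct and essentially the same as the paper's route.
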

\qed

\subsubsection{Ducrot's interpretation of Deligne pairings}\label{subsubsec:Ducrot}
In \cite{Ducrot}, Ducrot showed that the Deligne pairing of line bundles, as developed in the work of Elkik \cite{Elkikfib}, admits a description via the determinant of the cohomology, which generalizes the corresponding property for families of curves already obtained by Deligne in \cite{Deligne-determinant}. Ducrot's results, while initially limited to locally Noetherian base schemes, extend to morphisms satisfying the condition $(C_{n})$, after the work of Boucksom and Eriksson \cite[Appendix A]{Boucksom-Eriksson}.

Let $f\colon X\to S$ be a morphism satisfying the condition $(C_{n})$. Ducrot's work provides a canonical isomorphism
\begin{equation}\label{eq:Ducrot-iso}
    \lambda_{f}((L_{0}-\Ocal_{X})\otimes\cdots \otimes (L_{n}-\Ocal_{X}))\simeq \langle L_{0},\ldots,L_{n}\rangle_{X/S}.
\end{equation}
It is compatible with base change and with the basic features of Deligne pairings recalled in \cite[Proposition 6.1]{DRR1}, notably the multilinearity and the restriction properties. A delicate point in Ducrot's work is the control of signs in the commutativity constraints. Below, since we work in the category of $\QBbb$-line bundles, these sign issues are irrelevant and can be ignored.

On the other hand, if $f$ is lci and $S$ is quasi-compact, Theorem \ref{thm:A} can be applied to $(L_{0}-\Ocal_{X})\otimes\cdots\otimes (L_{n}-\Ocal_{X})$, thought of as an object of the virtual category. By the multiplicativity of $\chfrak$ with respect to the tensor product \eqref{eq:chern-tensor-product} and the canonical isomorphism $[\chfrak(\Ocal_{X})]_{X/S}\simeq 1$ from \eqref{eq:chern-trivial} and \cite[Proposition 9.1]{DRR1}, we find a canonical isomorphism
\begin{equation}\label{eq:DRR-deligne-pairing}
    \begin{split}
           \langle\chfrak((L_{0}-\Ocal_{X})\otimes &\cdots \otimes(L_{n}-\Ocal_{X}))\cdot\tdfrak(T_{f})\rangle_{X/S}\\
            \simeq &\langle (\cfrak_{1}(L_{0})\cdots\cfrak_{1}(L_{n})+\mathrm{h.o.})\cdot (1+\frac{1}{2}\cfrak_{1}(T_{f})+\mathrm{h.o.})\rangle_{X/S}\\
            &=\langle L_{0},\ldots, L_{n}\rangle_{X/S},
    \end{split}
\end{equation}
where h.o. is a shorthand for higher order terms. We thus obtain an isomorphism of $\QBbb$-line bundles of the shape \eqref{eq:Ducrot-iso}. Our aim is to show that they coincide.

\begin{proposition}\label{prop:Ducrot-DRR}
Let $f\colon X\to S$ be an lci morphism of quasi-compact schemes, satisfying the condition $(C_{n})$. Ducrot's isomorphism \eqref{eq:Ducrot-iso} coincides with the analogous isomorphism deduced from Theorem \ref{thm:A} as above, as isomorphisms of $\QBbb$-line bundles. 
\end{proposition}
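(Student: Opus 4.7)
The plan is to verify that the two isomorphisms satisfy the same universal characterization and therefore coincide, in the spirit of the comparison with Deligne's original DRR isomorphism in Proposition \ref{prop:comparison-with-Deligne}. By construction, both Ducrot's isomorphism \eqref{eq:Ducrot-iso} and the DRR-derived isomorphism obtained from \eqref{eq:DRR-deligne-pairing} are compatible with arbitrary base change $S^{\prime}\to S$, functorial in isomorphisms of the $L_{i}$, and multilinear in each $L_{i}$. The first two properties follow on the Ducrot side from his original construction (extended to our setting via \cite{Boucksom-Eriksson}) and on the DRR side from Theorem \ref{thm:A} together with the fact that the multiplicative isomorphism of $[\chfrak(\cdot)]$ recalled in \textsection\ref{subsubsec:mult-chern-Borel-Serre} is itself multilinear in each factor.

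First, I would argue that both isomorphisms are compatible with the restriction isomorphism of Deligne pairings. Concretely, given a relative effective Cartier divisor $i\colon D\to X$ with $D\to S$ satisfying condition $(C_{n-1})$, and taking $L_{0}=\Ocal(D)$, there is a natural isomorphism $\langle \Ocal(D),L_{1},\ldots, L_{n}\rangle_{X/S}\simeq \langle L_{1}|_{D},\ldots, L_{n}|_{D}\rangle_{D/S}$. On the left-hand side of \eqref{eq:Ducrot-iso}, using the resolution $0\to\Ocal(-D)\to\Ocal_{X}\to i_{\ast}\Ocal_{D}\to 0$ and the projection formula yields a natural identification $(L_{0}-\Ocal_{X})\otimes\prod_{i\geq 1}(L_{i}-\Ocal_{X})\simeq i_{!}(L_{0}|_{D}\otimes\prod_{i\geq 1}(L_{i}|_{D}-\Ocal_{D}))$ in the virtual category. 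The compatibility of Ducrot's isomorphism with restriction is a defining feature of his construction, and on the DRR side, it follows from the restriction property characterizing the DRR isomorphism, namely item \eqref{item:DRR-det-coh-3} of Theorem \ref{thm:DRR-det-coh}, applied to $i\colon D\to X$. This opens the way to an induction on $n$, whose base case $n=0$ reduces, for a finite flat morphism, to the canonical identification $\det f_{\ast}(L)\otimes (\det f_{\ast}\Ocal_{X})^{-1}\simeq N_{X/S}(L)=\langle L\rangle_{X/S}$, which matches both constructions by direct inspection. To bring oneself into a situation where $L_{0}=\Ocal(D)$ as above, one argues Zariski-locally on $S$ and uses multilinearity to tensor $L_{0}$ with a sufficiently positive relatively ample line bundle.

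The hard part will be the careful matching of the two restriction compatibilities at each inductive step. On Ducrot's side, the restriction is essentially built into the definition. On the DRR side, one must trace through the projection formula of Proposition \ref{prop:proj-for-RR-dist}, the Whitney isomorphism $[\tdfrak(T_{f|_{D}})]\simeq [\tdfrak(i^{\ast}T_{f})\cdot\tdfrak(N_{i})^{-1}]$, and the simplifications leading to \eqref{eq:DRR-deligne-pairing}. All the commutativities required are provided by Theorem \ref{thm:cor86}, together with the naturality statements in Section \ref{section:closed-immersions}, most notably Proposition \ref{prop:RRres=RR} and Corollary \ref{cor:projformula}. Should this direct matching prove too cumbersome, a cleaner fallback is the classifying-space rigidity argument used in Proposition \ref{prop:comparison-with-Deligne}: after raising both isomorphisms to a sufficiently large common power so that they become genuine line bundle isomorphisms, their quotient is an invertible function on $S$ which, pulled back to the universal multi-Picard stack $\Pic(X/S)^{n+1}$ and then to a suitable moduli of $(C_{n})$-families equipped with $(n+1)$ line bundles, is rigidified to a root of unity. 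The latter is killed by a final squaring of both sides.
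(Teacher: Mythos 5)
Your overall strategy---pinning both isomorphisms down by base-change compatibility together with the restriction property along relative effective Cartier divisors---is the same rigidity the paper exploits, and the ``hard part'' you single out (the restriction compatibility of the DRR-derived isomorphism, via Theorem \ref{thm:DRR-det-coh} \eqref{item:DRR-det-coh-3}, the projection formula and Theorem \ref{thm:cor86}) is exactly the content of the paper's diagram \eqref{eq:big-diag-Ducrot} and the unravelling of $\RR_{sec}(\Ocal_{X},1)$ for a divisor. The structural difference is that the paper does not run its own induction on the relative dimension: it checks that the DRR trivializations define an $(n+2)$-cube structure satisfying (C0)--(C2), base change and the divisor-restriction property, and then invokes the uniqueness in Ducrot's theorem \cite[Theorem 4.2]{Ducrot}, which already packages the rigidity your induction is meant to supply; no base case is needed.

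This is where your argument has a genuine gap: your induction requires the case $n=0$, and the assertion that for a finite flat lci morphism the DRR-derived isomorphism $\lambda_{f}(L-\Ocal_{X})\simeq N_{X/S}(L)$ ``matches by direct inspection'' the classical norm isomorphism is not a proof. In relative dimension $0$ the DRR isomorphism is still built from a projective factorization, the deformation to the normal cone and the Borel--Serre isomorphism; identifying the outcome with the norm/trace-form isomorphism is precisely what the paper deduces \emph{afterwards}, as a consequence of Proposition \ref{prop:Ducrot-DRR} together with Theorem \ref{thm:Groth-duality} (see the subsection on finite morphisms), so taking it as an input is circular, or at best leaves the essential computation undone. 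The proposed fallback is not viable either: the rigidity argument of Proposition \ref{prop:comparison-with-Deligne} rests on the very special fact that the only invertible functions on the relevant stack over $\ZBbb$ are $\pm 1$ (Moret-Bailly), and there is no moduli of $(C_{n})$-families equipped with $n+1$ line bundles in higher dimension for which any such control of units holds, so the discrepancy unit cannot be rigidified to a root of unity in this way. A minor further point: producing $L_{0}=\Ocal(D)$ with $D$ a relative effective Cartier divisor flat over $S$ generally requires more than Zariski localization (finite residue fields), but this is harmless since the discrepancy unit may be tested after an fppf or \'etale cover.
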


Before the proof, we briefly review and reformulate Ducrot's approach to \eqref{eq:Ducrot-iso}, based on the notion of $(n+2)$-cube structure. He introduces a category of $k$-cubes of line bundles and a subcategory of decorated $k$-cubes, cf. \cite[Sections 1.3--1.5]{Ducrot}. To a $k$-cube $A$, one can associate an object $[A]$ in the virtual category. This is done in such a way that for a $(k+1)$-cube of the form $A-B$, where $A$ and $B$ are $k$-cubes, we have $[A-B]=[B]-[A]$. For the associated determinant of the cohomology, we will simply write $\lambda_{f}(A)=\lambda_{f}([A])$. In the virtual category, any decorated $k$-cube is naturally isomorphic to an object of the form $L\otimes (L_{1}-\Ocal_{X})\otimes\cdots\otimes (L_{k}-\Ocal_{X})$, cf. \cite[Example 1.5.1 (d)]{Ducrot}. An $(n+2)$-cube structure on the determinant of the cohomology consists of giving a trivialization $s_{K}$ of $\lambda_{f}(K)$, for any $(n+2)$-cube $K$, satisfying several properties:
\begin{enumerate}
    \item[(C0)] \emph{Compatibility with isomorphisms of decorated cubes.} These are induced by isomorphisms of line bundles, and induce isomorphisms of the associated objects in the virtual category and of their determinant of the cohomology.
    \item[(C1)] \emph{Gluing.} Consider two decorated $(n+2)$-cubes of the form $A-B$ and $C-D$, where $A,B,C,D$ are $(n+1)$-cubes. Suppose we are given an isomorphism $u\colon B\to C$. Then, $A- D$ inherits a decoration, and we thus have trivializations $s_{A-B}$, $s_{C-D}$ and $s_{A-D}$ of the corresponding determinant bundles. We require that $s_{A-B}\otimes s_{C-D}$ corresponds to $s_{A-D}$ through the isomorphism $\lambda_{f}(A-B)\otimes\lambda_{f}(C-D)\to\lambda_{f}(A-D)$ induced by $u$.
    \item[(C2)] \emph{Normalization.} If $A$ is an $(n+1)$-cube, then $A-A$ is canonically decorated, and the trivialization $s_{A-A}$ corresponds to the canonical trivialization through the canonical isomorphism $\lambda_{f}(A-A)\to\Ocal_{S}$.
\end{enumerate}
Ducrot's main result \cite[Theorem 4.2]{Ducrot} is the existence of a unique $(n+2)$-cube structure which is compatible with base change and such that, for every decorated $(n+1)$-cube $K$ and any relative effective Cartier divisor $D$ in $X$ (or any base change thereof), the trivialization $s_{K-K(D)}$ corresponds to the trivialization $s_{K(D)|_{D}}$ via the restriction isomorphism
\begin{displaymath}
    \lambda_{f}(K(D))\otimes\lambda_{f}(K)^{-1}\simeq \lambda_{f|_{D}}(K(D)|_{D}).
\end{displaymath}
These two properties characterize the construction of the cube structure. This result is the key to the comparison isomorphism \eqref{eq:Ducrot-iso}, cf. \cite[Section 5]{Ducrot}. It is formal to adapt Ducrot's work and consider a cube structure on the determinant of the cohomology as a $\QBbb$-line bundle, which is a weaker notion. 

\begin{proof}[Proof of Proposition \ref{prop:Ducrot-DRR}]
We need to construct, via the Deligne--Riemann--Roch isomorphism, an $(n+2)$-cube structure on the determinant of the cohomology, as a $\QBbb$-line bundle. We may assume that all the schemes are divisorial, and work with virtual categories of vector bundles. 

By the functoriality of the DRR isomorphism in the bundle, for the purpose of constructing the trivialization defining the cube structure, we can reduce to an object of the form 
\begin{displaymath}
    E=L\otimes (L_{0}-\Ocal_{X})\otimes\cdots\otimes (L_{n+1}-\Ocal_{X})
\end{displaymath}in the virtual category. A similar argument as in \eqref{eq:DRR-deligne-pairing} shows that
\begin{equation}\label{eq:RR-distribution-trivial-for-degree-reasons}
    \langle\chfrak(E)\cdot\tdfrak(T_{f})\rangle_{X/S} \simeq \langle (\mathrm{h.o.})\cdot\tdfrak(T_{f})\rangle_{X/S}=\Ocal_{S}.
\end{equation}
Therefore, by the DRR isomorphism, we obtain a trivialization $t_{E}$ of $\lambda_{f}(E)$ as a $\QBbb$-line bundle. Note that this is well defined after taking an appropriate power of $\lambda_{f}(E)$, which only depends on $f$ and not on $E$. We first check that $t_{E}$ satisfies the properties corresponding to $\mathrm{(C0)}$--$\mathrm{(C2)}$ above.

Property $\mathrm{(C0)}$ is a consequence of the construction of $t_{E}$ and the functoriality of the DRR isomorphism in the bundle. Property $\mathrm{(C1)}$ is obtained as a combination of the multiplicativity of the DRR isomorphism and the functoriality in the bundle. Finally, for $\mathrm{(C2)}$, in the virtual category, an object of the form $E=F-F$ is isomorphic to $0$, and both sides of the DRR are trivial. These trivializations correspond to each other via the DRR isomorphism, since it is an isomorphism of functors of commutative Picard categories, which includes a compatibility with the implicit trivializations of the functors evaluated on the 0 object. We next need to show that the construction of $t_{E}$ is compatible with base change and has the restriction property. 

The compatibility of $t_{E}$ with base change follows from the base-change compatibility of the DRR isomorphism. For the restriction property, let $K$ be a decorated $(n+1)$-cube, and denote by $F$ the associated object in the virtual category. As reviewed above, we may assume that 
\begin{displaymath} F = L\otimes (L_{0}-\Ocal_{X})\otimes \cdots \otimes (L_{n}-\Ocal_{X}).
\end{displaymath} 
Suppose that $D$ is a relative effective Cartier divisor in $X$, and $i\colon D\to X$ the associated closed immersion. Then, $K-K(D)$ is a decorated $(n+2)$-cube, which corresponds to $E=F(D)-F$ in the virtual category. Note that there is a canonical isomorphism $F(D)-F\simeq i_{!}i^{\ast}F(D)$. We write down the following diagram, with horizontal arrows given by the DRR isomorphism:
\begin{equation}\label{eq:big-diag-Ducrot}
    \xymatrix{
            \lambda_{f|_{D}}(i^{\ast}F(D)) \ar[r]\ar[d]                &\langle\chfrak(i^{\ast}F(D))\cdot\tdfrak(T_{f|_{D}})\rangle_{D/S}\ar[d]\\
            \lambda_{f}(i_{!}i^{\ast}F(D)) \ar[r]\ar[d]                  &\langle\chfrak(i_{!}i^{\ast}F(D))\cdot\tdfrak(T_{f})\rangle_{X/S}\ar[d]\\
            \lambda_{f}(F(D)-F) \ar[r]    &\langle\chfrak(F(D)-F)\cdot\tdfrak(T_{f})\rangle_{X/S}.
    }
\end{equation}
The top square commutes, by the compatibility of the DRR isomorphism with the composition of morphisms, applied here to the composition $D\to X\to S$, cf. Definition \ref{def:compositionofmorphisms} and Theorem \ref{thm:general-DRR}. The bottom square commutes by the functoriality of the DRR isomorphism in the bundle. The up-right and down-right corners are trivial, and we need to show that the trivializations correspond to each other via the composition of the right vertical arrows. For this, we will unravel the construction of this composition. 

The composition of the right vertical arrows in \eqref{eq:big-diag-Ducrot} is obtained as in Definition \ref{def:compositionofmorphisms}, and involves the DRR isomorphism for the closed immersion $D\to X$. It is given by Construction/Definition \ref{construction-definition-RRsec} applied to $F(D)$. The latter is based on the projection formula and the isomorphisms (cf. \textsection\ref{subsubsec:RRimmersionD}) 

\begin{equation}\label{eq:DRRi-Ducrot-1}
    [\chfrak(\Ocal_{X}-\Ocal(-D))]_{X/S}\overset{\substack{\text{Borel--Serre}\\ \vspace{0.1cm}}}{\simeq} [\cfrak_{1}(\Ocal(D))\cdot\tdfrak(\Ocal(D))^{-1}]_{X/S}\overset{\substack{\text{restriction}\\ \vspace{0.1cm}}}{\simeq} [\tdfrak(\Ocal(D)|_{D})^{-1}]_{D/S}.
\end{equation}
 To conclude, it suffices to show that if we multiply \eqref{eq:DRRi-Ducrot-1} by $\chfrak(F(D))$, then all the terms become trivial by the same procedure as in \eqref{eq:RR-distribution-trivial-for-degree-reasons}, and that the trivializations thus produced correspond to each other via the isomorphism. 
 
 After applying the multiplicativity of $\chfrak$ to $F(D)$, the isomorphism $[\chfrak(\Ocal_{X})]_{X/S}\simeq 1$ and the rank triviality, we see that the middle term in \eqref{eq:DRRi-Ducrot-1} is isomorphic to some $[P]_{X/S}$, where $P$ has degree $\geq n+2$, and hence it is trivial. By the same token, the rightmost term in \eqref{eq:DRRi-Ducrot-1} multiplied with $\chfrak(F(D))$ is isomorphic to some $[Q]_{D/S}$, where $Q$ has degree $\geq n+1$, and hence it is trivial too. Finally, the operations performed in obtaining these trivializations commute with the restriction isomorphism, from which we can infer that the trivializations correspond to each other. This completes the proof. 
\end{proof}

\subsubsection{The DRR isomorphism for finite morphisms}
As an application of Theorem \ref{thm:Groth-duality} and Proposition \ref{prop:Ducrot-DRR}, we can describe the DRR isomorphism for the determinant of the cohomology in relative dimension $0$. 

Let $\pi\colon T\to S$ be a finite, flat, lci morphism of Noetherian schemes. Since the DRR isomorphism is compatible with the projection formula, and any vector bundle on $T$ is a pullback by $\pi$ locally over $S$, it suffices to describe the DRR isomorphism for $\Ocal_{T}$. In this case, using $[\cfrak_{1}(\Ocal_{T})]_{T/S}\simeq 0$ and $[\cfrak_{1}(T_{f})]_{T/S}\simeq -[\cfrak_{1}(\omega_{T/S})]_{T/S}$, we find
\begin{equation}\label{eq:DRR-finite-1}
    \RR_{\pi}(\Ocal_{T})^{2}\colon (\det\pi_{\ast}\Ocal_{T})^{2}\simeq \langle \omega_{T/S}\rangle_{T/S}^{-1}=\Delta_{T/S},
\end{equation}
where we define $\Delta_{T/S}=N_{T/S}(\omega_{T/S})^{-1}$. We would like to understand \eqref{eq:DRR-finite-1}.

On the one hand, by Proposition \ref{prop:Ducrot-DRR}, the DRR isomorphism for $\omega_{T/S}-\Ocal_{T}$ is induced by Ducrot's isomorphism
\begin{equation}\label{eq:DRR-finite-2}
    \det\pi_{\ast}(\omega_{T/S}-\Ocal_{T})\simeq\langle\omega_{T/S}\rangle_{T/S}=\Delta^{-1}_{T/S}.
\end{equation}
On the other hand, the DRR isomorphism for $\omega_{T/S}+\Ocal_{T}$ yields a trivialization
\begin{equation}\label{eq:DRR-finite-3}
    \det\pi_{\ast}(\omega_{T/S}+\Ocal_{T})\simeq \Ocal_{S},
\end{equation}
which by Theorem \ref{thm:Groth-duality} is given by Grothendieck duality. Since the DRR isomorphism is additive in the bundle, we see that $\eqref{eq:DRR-finite-3}\otimes\eqref{eq:DRR-finite-2}^{-1}$ amounts to \eqref{eq:DRR-finite-1}. In other words, given Ducrot's interpretation of Deligne pairings, the DRR isomorphism $\RR_{\pi}(\Ocal_{T})$ is equivalent to and given by Grothendieck duality for $\det\pi_{\ast}\Ocal_{T}$. 

The isomorphism above can be described explicitly, at least locally, by the theory of differents and discriminants. For the latter, we follow \cite[\href{https://stacks.math.columbia.edu/tag/0DWH}{0DWH}]{stacks-project}, which in particular discusses the equivalence with Grothendieck duality in the Gorenstein (e.g. lci) case. Suppose that $\pi$ is as above, with $S = \Spec A$ and $T=\Spec B$, where $B$ is a free $A$-algebra of rank $d$. Consider the pairing given by 
\begin{displaymath}
    \begin{split}
        Q\colon B\times B &\longmapsto A\\
        (e,f) &\longmapsto\Tr_{B/A}(ef).
    \end{split}
\end{displaymath}
If $\pi$ is \'etale, this pairing is non-degenerate and induces an isomorphism
\begin{equation}\label{eq:DRRfinite-morphisms}
        \begin{split}
            (\operatorname{det}_{A}B)^{\otimes 2}    &\longrightarrow A\\
            (e_1 \wedge \ldots \wedge  e_d)\otimes (f_1 \wedge \ldots \wedge f_d) &\longmapsto \det (\Tr_{B/A} (e_i f_j))_{i,j}.
        \end{split}
\end{equation}
See \cite[\href{https://stacks.math.columbia.edu/tag/0BVH}{0BVH}]{stacks-project}. More generally, by \emph{loc. cit.}, the image of the pairing is the discriminant ideal. If $\pi$ is generically \'etale, i.e. \'etale at the associated points of $T$, then by \cite[\href{https://stacks.math.columbia.edu/tag/0BTC}{0BTC}, \href{https://stacks.math.columbia.edu/tag/0DWM}{0DWM}, \href{https://stacks.math.columbia.edu/tag/0C14}{0C14}]{stacks-project}, the discriminant ideal identifies with $\Delta_{T/S}$ as defined above. Altogether, \eqref{eq:DRRfinite-morphisms} induces a natural isomorphism
\begin{displaymath}
    (\det\pi_{\ast}\Ocal_{T})^{\otimes 2}\to\Delta_{T/S},
\end{displaymath}
which coincides with \eqref{eq:DRR-finite-1}.

\subsection{The determinant of the de Rham cohomology}\label{subsec:det-dR-coh}
Deligne’s work  has influenced the study of determinants of de Rham cohomology for local systems and related objects. Some developments include Patel’s formalism of de Rham $\varepsilon$-factors \cite{Patel} and the results of Saito and Terasoma \cite{Saito-Terasoma} on determinants of period integrals. These motivate a specialization of Theorem \ref{thm:A} in this setting.

Let $X$ be a smooth projective variety over a field $k$ of characteristic $0$, of pure dimension $n$, and $D$ a simple normal crossings divisor in $X$. Let $E$ be a coherent sheaf on $X$, endowed with a flat logarithmic connection $\nabla\colon E \to E\otimes\Omega_{X/k}(\log D)$. The restriction of $E$ to $U = X \setminus D$ is necessarily locally free, and we assume it has constant rank $r$. Denote by $\Ecal^{\bullet}$ the logarithmic de Rham complex with $\Ecal^p = E \otimes \Omega^p_{X/k}(\log D)$ and differential induced by $\nabla$. We have a canonical isomorphism:
\begin{displaymath}
    \det H^{\bullet}(X,\Ecal^{\bullet})\simeq \bigotimes_{p}\det H^{\bullet}(X,E\otimes\Omega_{X/k}^{p}(\log D))^{(-1)^{p}}.
\end{displaymath}
Applying Theorem \ref{thm:A}, together with the Borel--Serre-type isomorphism for intersection distributions from \cite[Theorem 9.5]{DRR1} and the first Chern class isomorphism from Theorem \ref{thm:cor86}, we obtain:
\begin{proposition}\label{prop:C}
With the assumptions and notation as above, there exists a canonical isomorphism of one-dimensional $k$-vector spaces
\begin{equation}\label{eq:GRR-dR}
    \det H^{\bullet}(X,\Ecal^{\bullet})\otimes\det H^{\bullet}(X,\Omega_{X/k}^{\bullet}(\log D))^{-r}\simeq\langle\cfrak_{1}(\det E)\cdot\cfrak_{n}(\Omega_{X/k}(\log D))\rangle_{X/k}^{(-1)^{n}},
\end{equation}
well defined up to a power. The isomorphism is compatible with base field extensions and is multiplicative on exact sequences. 
\end{proposition}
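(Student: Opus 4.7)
The strategy is to apply Theorem \ref{thm:A} to the lci morphism $f\colon X\to\Spec k$ and to the two virtual perfect complexes
\begin{displaymath}
    [\Ecal^{\bullet}]=\sum_{p}(-1)^{p}[E\otimes\Omega_{X/k}^{p}(\log D)],\qquad [\Omega_{X/k}^{\bullet}(\log D)]=\sum_{p}(-1)^{p}[\Omega_{X/k}^{p}(\log D)]
\end{displaymath}
of $V(\Pcal_{X})$, and then to reduce the resulting intersection bundles using the formal identities for intersection distributions until one recovers the right-hand side of the statement. Since $X$ is smooth over $k$, it is divisorial, every coherent sheaf is perfect, and $V(X)\simeq V(\Pcal_{X})$, so $\chfrak$ is defined on all our objects; the two identities above hold by additivity of the virtual category applied to the truncation filtration on each complex. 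Combining Theorem \ref{thm:A} with the multiplicativity of the Chern character under tensor product \eqref{eq:chern-tensor-product}, and the Borel--Serre isomorphism \eqref{eq:Borel-Serre-prelim} applied to the rank-$n$ bundle $\Omega_{X/k}(\log D)$, namely
\begin{displaymath}
    \sum_{p}(-1)^{p}[\chfrak(\Omega_{X/k}^{p}(\log D))]_{X/k}\simeq [\cfrak_{n}(T_{X/k}(-\log D))\cdot\tdfrak(T_{X/k}(-\log D))^{-1}]_{X/k},
\end{displaymath}
would produce canonical isomorphisms for both $\det H^{\bullet}(X,\Ecal^{\bullet})$ and $\det H^{\bullet}(X,\Omega_{X/k}^{\bullet}(\log D))$.

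Taking the quotient of the two resulting expressions, one obtains a canonical isomorphism
\begin{displaymath}
    \det H^{\bullet}(X,\Ecal^{\bullet})\otimes \det H^{\bullet}(X,\Omega_{X/k}^{\bullet}(\log D))^{-r}\simeq \langle (\chfrak(E)-r)\cdot \cfrak_{n}(T_{X/k}(-\log D))\cdot\tdfrak(T_{X/k})\cdot\tdfrak(T_{X/k}(-\log D))^{-1}\rangle_{X/k},
\end{displaymath}
and a degree count inside the intersection bundle finishes the proof. Since the bundle retains only the degree-$(n+1)$ part, $\cfrak_{n}(T_{X/k}(-\log D))$ already supplies degree $n$, the Todd ratio has constant term $1$, and $\chfrak(E)-r$ vanishes in degree zero with degree-$1$ component $\cfrak_{1}(E)$, only the product $\cfrak_{1}(E)\cdot\cfrak_{n}(T_{X/k}(-\log D))$ survives. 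Upgrading $\cfrak_{1}(E)$ to $\cfrak_{1}(\det E)$ via Theorem \ref{thm:cor86}\eqref{item:prop-int-dist-c1-det}, and passing from $\cfrak_{n}(T_{X/k}(-\log D))$ to $\cfrak_{n}(\Omega_{X/k}(\log D))$ via Lemma \ref{lemma:phi-star} and the splitting principle (which introduces a sign $(-1)^{n}$), produces the claimed isomorphism. The sign is absorbed by the ``up to a power'' clause after squaring, and multiplicativity on short exact sequences of $(E,\nabla)$ together with compatibility with base field extensions follow from the analogous properties of Theorem \ref{thm:A} and of the intermediate isomorphisms, all realized as functors of commutative Picard categories on $V(\Pcal_{X})$.

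The main delicate point will be arranging these manipulations canonically inside the intersection-distribution formalism: one has to invoke the commutativity assertions of Theorem \ref{thm:cor86} and the compatibilities recorded in \textsection\ref{subsubsec:mult-chern-Borel-Serre} to verify that the Borel--Serre simplification, the first Chern class isomorphism, and the duality/rank-triviality passage between $T_{X/k}(-\log D)$ and $\Omega_{X/k}(\log D)$ all commute with one another and with base change, yielding a well-defined composite isomorphism whose denominator is uniformly controlled by $n$ and by the formal power series defining $\chfrak$ and $\tdfrak$.
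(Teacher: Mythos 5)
Your argument is correct and is essentially the paper's own (very briefly sketched) proof: apply Theorem \ref{thm:A} to the virtual complex $[\Ecal^{\bullet}]-r[\Omega_{X/k}^{\bullet}(\log D)]$, simplify the right-hand side via the multiplicativity of $\chfrak$ under tensor product, the Borel--Serre isomorphism applied to $\Omega_{X/k}(\log D)$, and the first Chern class isomorphism of Theorem \ref{thm:cor86}, and conclude by the degree-$(n+1)$ count. The one caveat is that the factor coming from $\cfrak_{n}(T_{X/k}(-\log D))\simeq(-1)^{n}\cfrak_{n}(\Omega_{X/k}(\log D))$ is an exponent $(-1)^{n}$ on the intersection bundle (a dualization when $n$ is odd) rather than a scalar sign, so it is not literally absorbed by squaring --- but this point affects the paper's statement in exactly the same way and does not distinguish your argument from the paper's.
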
\qed

The right-hand side of \eqref{eq:GRR-dR} can be seen as a global de Rham $\varepsilon$-factor, and the goal of the theory of de Rham $\varepsilon$-factors consists in factoring it in terms of local contributions. For instance, if $n=1$ and $\theta$ is a meromorphic differential form, whose divisor as a section of $\Omega_{X/k}(\log D)$ is written as $\sum_{i}n_{i}P_{i}$, then the restriction property of Deligne pairings yields
\begin{displaymath}
   \langle\cfrak_{1}(\det E)\cdot\cfrak_{1}(\Omega_{X/k}(\log D))\rangle_{X/k}\simeq \bigotimes_{i}(\det E_{P_{i}})^{n_{i}}.
\end{displaymath}
The analogous remark applies for general $n$ if $\theta$ is any holomorphic section of $\Omega_{X/k}(\log D)$, with at most isolated zeros.

In the case $k=\CBbb$, we may further relate the algebraic logarithmic de Rham cohomology to the cohomology of a local system. We follow \cite[Chapter II]{connexionsregulieres} and \cite[Section 1]{Saito-Terasoma}. To lighten notation, by an abuse of notation we identify $X$, $D$, $U=X\setminus D$, etc. with the associated complex analytic counterparts. Suppose that $\mathbb{V}$ is a local system on $U$, of rank $r$, and introduce the corresponding flat vector bundle $\Vcal=\mathbb{V}\otimes_{\underline{\CBbb}_U}\Ocal_{U}$, with flat connection $\nabla$. Then, there exists a natural extension of $\Vcal$ to a vector bundle $E$ on $X$, in such a way that $\nabla$ extends to a flat logarithmic connection on $E$ and the associated de Rham complex $\Ecal^{\bullet}$ satisfies 
\begin{equation}\label{eq:comparison-dR-Deligne-canonical}
    R\Gamma(X,\Ecal^{\bullet})\simeq R\Gamma(U,\mathbb{V})
\end{equation}
via a natural morphism $\Ecal^{\bullet}\to Rj_{\ast}\mathbb{V}$ in the derived category, where $j\colon U\to X$ is the open immersion. The extension $E$ is singled out by imposing that the residue eigenvalues of $\nabla$ have real part in the interval $[0,1)$. It is usually called the Deligne canonical extension of $\Vcal$.

Given the comparison between analytic and algebraic de Rham cohomology, we can combine the isomorphism \eqref{eq:comparison-dR-Deligne-canonical} with Proposition \ref{prop:C} and obtain a canonical isomorphism
\begin{equation}\label{eq:GRR-dR-top}
    \begin{split}   
         &\det R\Gamma(U,\mathbb{V})\otimes \det R\Gamma(U,\underline{\CBbb}_{U})^{-r}\simeq \langle\cfrak_{1}(\det E)\cdot\cfrak_{n}(\Omega_{X/\CBbb}(\log D))\rangle_{X/\CBbb}^{(-1)^{n}},
    \end{split}
\end{equation} 
well defined up to a power, which is multiplicative on exact sequences. An analogous statement holds for the compactly supported cohomology of $\mathbb{V}$. The compatibility of the DRR isomorphism with duality (cf. Theorem \ref{thm:Groth-duality}) implies that \eqref{eq:GRR-dR-top} and its compactly supported counterpart are similarly compatible via Poincar\'e duality. This proves Proposition \ref{prop:det-dR-intro} from the Introduction, and answers the question of Saito and Terasoma in \cite[Remark, Section 6 (b), p. 925]{Saito-Terasoma}.

\subsection{The BCOV isomorphism for Calabi--Yau families}\label{subsubsec:BCOV-isomorphism}
In this subsection, we address the BCOV isomorphism for Calabi--Yau families announced in the Introduction, cf. \textsection\ref{subsubsec:BCOV-iso}. 

By a Calabi--Yau variety over a field $k$, we mean a smooth geometrically connected projective variety $Y$ over $k$, such that for some integer $m\geq 1$, the $m$-pluricanonical bundle $K_{Y}^{m}$ is trivial. The following lemma deals with the Calabi--Yau condition in families.

\begin{lemma}\label{lemma:evaluationmap}
Let $f\colon X\to S$ be a smooth projective morphism of locally Noetherian schemes, with geometrically connected fibers.
\begin{enumerate}
    \item Suppose that, for some integer $m\geq 1$, the evaluation map
    \begin{equation}\label{eq:evaluationmKXS}
        f^{\ast}f_{\ast}K_{X/S}^{m}\to K_{X/S}^{m}
    \end{equation}
    is an isomorphism. Then:
    \begin{enumerate}
        \item\label{item:evaluationmap-1a} The direct image $f_{\ast}K_{X/S}^{m}$ is an invertible sheaf over $S$, whose formation commutes with base change.
        \item\label{item:evaluationmap-1b} The formation of \eqref{eq:evaluationmKXS} commutes with base change.
    \end{enumerate}
    \item\label{item:evaluationmap-2} The evaluation map \eqref{eq:evaluationmKXS} is an isomorphism in either of the following cases:
    \begin{enumerate}
        \item\label{item:evaluationmap-2a} The scheme $S$ is reduced and $K_{X/S}^{m}$ is trivial on fibers. 
        \item\label{item:evaluationmap-2b} The relative canonical bundle $K_{X/S}^{m}$ is trivial on fibers and $H^{1}(X_{s},\Ocal_{X_{s}})=0$ for every $s\in S$. 
    \end{enumerate}
\end{enumerate}
Moreover, the analogues of the above assertions hold in the complex analytic category.
\end{lemma}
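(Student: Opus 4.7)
The plan is to treat the three assertions in order, using the projection formula and faithful flatness for part (1), Grauert's theorem for (2a), and formal deformation theory combined with Grothendieck's existence theorem for (2b). The main obstacle will be part (2b), since the non-reducedness of $S$ prevents a direct appeal to Grauert.

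For part (1), I would set $\Lcal = f_{\ast}K_{X/S}^{m}$ and read the assumption as an isomorphism $K_{X/S}^{m} \simeq f^{\ast}\Lcal$. Applying $f_{\ast}$ and using both the projection formula and the identity $f_{\ast}\Ocal_{X} = \Ocal_{S}$ (valid since $f$ is smooth proper with geometrically connected fibers) recovers $\Lcal \simeq f_{\ast}K_{X/S}^{m}$, while the invertibility of $K_{X/S}^{m}$ descends along the faithfully flat morphism $f$ to invertibility of $\Lcal$. For an arbitrary base change $g\colon S' \to S$ with projection $g'\colon X' \to X$, the base-change compatibility of the relative canonical gives $K_{X'/S'}^{m} \simeq f'^{\ast}g^{\ast}\Lcal$, from which applying $f'_{\ast}$ together with $f'_{\ast}\Ocal_{X'} = \Ocal_{S'}$ yields $f'_{\ast}K_{X'/S'}^{m} \simeq g^{\ast}\Lcal$; unraveling the definitions then shows that the evaluation map for $f'$ is the $g'^{\ast}$-pullback of the one for $f$, establishing both (a) and (b).

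For part (2a), the hypothesis that $K_{X/S}^{m}$ is trivial on every geometrically connected fiber gives $h^{0}(X_s, K_{X_s}^{m}) = 1$ constantly, so Grauert's theorem on the reduced base $S$ produces a line bundle $f_{\ast}K_{X/S}^{m}$ whose formation commutes with base change. The evaluation map then restricts on each fiber $X_s$ to a morphism $\Ocal_{X_s} \to K_{X_s}^{m}$ given by a nowhere vanishing generator of global sections, hence is fiberwise an isomorphism; a standard Nakayama argument promotes this to a global isomorphism on $X$.

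For part (2b), my plan is to show, formally locally at each $s \in S$, that $K_{X/S}^{m}$ is trivial. Writing $A = \widehat{\Ocal}_{S,s}$ with maximal ideal $\mathfrak{m}$, I would argue by induction on $n$ that $K_{X/S}^{m}$ is trivial on each thickening $X \times_S \Spec(A/\mathfrak{m}^n)$: the ambiguity between two lifts of a line bundle through the square-zero extension $X \times_S \Spec(A/\mathfrak{m}^n) \hookrightarrow X \times_S \Spec(A/\mathfrak{m}^{n+1})$ is classified by $H^{1}(X_s, \mathfrak{m}^{n}/\mathfrak{m}^{n+1} \otimes_{k(s)} \Ocal_{X_s})$, which vanishes by hypothesis. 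Grothendieck's existence theorem for the proper morphism $X \times_S \Spec A \to \Spec A$ will then algebraize these trivializations to yield that the restriction of $K_{X/S}^{m}$ is trivial on the whole of $X \times_S \Spec A$. Flat base change identifies $f_{\ast}K_{X/S}^{m} \otimes_{\Ocal_{S,s}} A$ with $A$, and faithful flatness of $A$ over $\Ocal_{S,s}$ then forces $f_{\ast}K_{X/S}^{m}$ to be locally free of rank one near $s$ and makes the canonical map $f_{\ast}K_{X/S}^{m} \otimes k(s) \to H^{0}(X_s, K_{X_s}^{m})$ an isomorphism. The evaluation map is then fiberwise an isomorphism as in (2a), and part (1) takes care of the base-change compatibilities. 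The analytic case follows the same template, invoking the analytic analogues of Grauert's theorem, the (identical) deformation-theoretic obstruction calculus, and the formal existence theorem for proper holomorphic maps; alternatively, in the analytic setting one may bypass the algebraization step by noting that the exponential sequence combined with $H^{1}(X_s, \Ocal_{X_s}) = 0$ already forces the local Picard group to be discrete, so fiberwise triviality automatically propagates to a neighborhood.
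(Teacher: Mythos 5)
Your proposal is correct in substance, but it takes a genuinely different and, in part (2b), considerably heavier route than the paper. The paper handles everything with the semicontinuity package: for (1) it restricts the isomorphism $f^{\ast}f_{\ast}K_{X/S}^{m}\to K_{X/S}^{m}$ to a fiber to see that $(f_{\ast}K_{X/S}^{m})\otimes k(s)$ is one-dimensional and that the comparison map to $H^{0}(X_{s},K_{X_{s}}^{m})$ is surjective, then invokes cohomology and base change; for (2) it notes that fiberwise triviality gives $H^{1}(X_{s},K_{X_{s}}^{m})\simeq H^{1}(X_{s},\Ocal_{X_{s}})$, so under either hypothesis the claim is a standard application of Grauert and cohomology and base change (case (2a) being literally Vakil's Proposition 25.1.11), and the analytic statements follow by quoting the analytic analogues of these theorems in B\u{a}nic\u{a}--St\u{a}n\u{a}\c{s}il\u{a}. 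Your treatment of (1) by fppf descent of invertibility plus the projection formula (using $f'_{\ast}\Ocal_{X'}=\Ocal_{S'}$ universally) is a valid alternative and makes the base-change compatibility transparent, provided you also verify that the isomorphism you produce is the canonical base-change map (a short unit/counit naturality chase). For (2b) you miss the one-line observation that $K_{X_{s}}^{m}\simeq\Ocal_{X_{s}}$ turns the hypothesis $H^{1}(X_{s},\Ocal_{X_{s}})=0$ into $H^{1}(X_{s},K_{X_{s}}^{m})=0$, which immediately yields local freeness and base-change compatibility of $f_{\ast}K_{X/S}^{m}$; your deformation-theoretic argument does work, but as written it has a small gap: to apply Grothendieck existence you need an isomorphism of formal sheaves, i.e.\ a \emph{compatible} system of trivializations on the thickenings, which requires lifting global units along $H^{0}(X_{n+1},\Ocal^{\ast})\to H^{0}(X_{n},\Ocal^{\ast})$ (again using $H^{1}(X_{s},\Ocal_{X_{s}})=0$), not merely uniqueness of lifts of the line bundle. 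Finally, in the analytic category your primary route (formal existence for proper holomorphic maps) is more delicate than needed, and the exponential-sequence alternative does not by itself handle nilpotents in the base, which is exactly the point of (2b); the paper's argument transfers verbatim via the analytic Grauert and base-change theorems.
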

\begin{proof}
Assume first that \eqref{eq:evaluationmKXS} is an isomorphism. By restricting on fibers, we see that for every $s\in S$, the vector space $(f_{\ast}K_{X/S}^{m})\otimes k(s)$ is necessarily one-dimensional and $K_{X_{s}}^{m}\simeq\Ocal_{X_{s}}$. Because the fibers are proper and geometrically connected and \eqref{eq:evaluationmKXS} is an isomorphism, the natural map $(f_{\ast}K_{X/S}^{m})\otimes k(s)\to H^{0}(X_{s},K_{X_{s}}^{m})$ is an isomorphism. Then \eqref{item:evaluationmap-1a} and \eqref{item:evaluationmap-1b} follow from the base change theorem \cite[Theorem 25.1.6]{Vakil}.

Similarly, the second point is a standard application of the base change theorems \cite[Theorem 25.1.5 \& Theorem 25.1.6]{Vakil}, and we omit the details. See also Proposition 25.1.11 in \emph{op. cit.}, which addresses \eqref{item:evaluationmap-2a}.

In the complex analytic category, the argument is analogous, replacing \cite[Theorem 25.1.5]{Vakil} by \cite[Theorem 4.12]{Banica} and \cite[Theorem 25.1.6]{Vakil} by \cite[Theorem 4.10]{Banica}.

\end{proof}

\begin{definition}[Calabi--Yau family]\label{def:CY-family}
By a Calabi--Yau family we mean a smooth projective morphism of locally Noetherian schemes $f\colon X \to S$ such that, for some integer $m \geq 1$, the evaluation map \eqref{eq:evaluationmKXS} is an isomorphism. The same definition applies in the complex analytic category.
\end{definition}

Lemma \ref{lemma:evaluationmap} provides conditions which guarantee the Calabi--Yau family condition from the more familiar Calabi--Yau property on fibers.

We now recall the definition of the BCOV bundle.
\begin{definition}[BCOV bundle]\label{def:BCOVbundle}
Let $f: X \to S$ be a smooth locally projective family with geometrically connected fibers of dimension $n$, with $S$ a quasi-compact scheme. The associated BCOV bundle is defined as the combination of determinant bundles
\begin{displaymath}
    \lambda_{BCOV}(X/S)=\bigotimes_{p=0}^{n}\lambda_f(\Omega_{X/S}^p)^{ (-1)^p p }.
\end{displaymath}
\end{definition}
Note that, because the determinant of the cohomology and the sheaves of differentials commute with base change, the formation of the BCOV bundle commutes with base change too. 

The Deligne--Riemann--Roch isomorphism provides the following expression for the BCOV bundle, which does not require the Calabi--Yau condition, and covers Proposition \ref{prop:pre-bcov-iso-intro} in the introduction.

\begin{proposition}\label{prop:BCOViso} Let $f\colon X\to S$ be as in Definition \ref{def:BCOVbundle}. Then, there is a canonical isomorphism of $\QBbb$-line bundles
    \begin{displaymath}
        \lambda_{BCOV}(X/S)^{12} \simeq \left\langle \cfrak_{1}(\Omega_{X/S}) \cdot \cfrak_n(\Omega_{X/S}) \right\rangle^{(-1)^{n}}_{X/S},
    \end{displaymath}
compatible with base change.
\end{proposition}

\begin{proof}
    The BCOV bundle is the determinant of the cohomology of the virtual bundle 
    \begin{displaymath}
        \sum (-1)^p p [\Omega_{X/S}^p] 
    \end{displaymath}
    and we apply the Riemann--Roch isomorphism to this. An application of the splitting principle for $\Omega_{X/S}$ along the lines of \cite[Lemma 4.6]{cdg} and the proof of the Borel--Serre isomorphism \cite[Theorem 9.5]{DRR1} yields the isomorphism.
\end{proof}

We now proceed to specialize the previous lemma in the Calabi--Yau setting. In preparation for the statement, we recall that for a smooth projective variety $Y$ of dimension $n$ over a field, one can define the topological Euler characteristic by means of intersection theory as $\chi(Y)=\int_{Y} c_{n}(T_{Y})$. This actually agrees with the $\ell$-adic or Betti topological Euler characteristic, by the Lefschetz fixed point formula, and is locally constant in flat families.

\begin{theorem}[BCOV isomorphism]\label{thm:bcoviso}
    Let $f\colon X\to S$ be a Calabi--Yau family over a locally Noetherian scheme $S$, as in Definition \ref{def:CY-family}. Then, there is a canonical isomorphism of $\QBbb$-line bundles 
    \begin{displaymath}
        \lambda_{BCOV}(X/S)^{12m} \simeq  \left(f_\ast K_{X/S}^{m}\right)^{\chi}
    \end{displaymath}
    compatible with base change.
\end{theorem}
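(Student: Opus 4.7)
The plan is to specialize Lemma \ref{lemma:BCOViso} under the Calabi--Yau hypothesis, by using the relation $K_{X/S}^{m} \simeq f^{\ast}M$ with $M = f_{\ast}K_{X/S}^{m}$ to absorb the factor $\cfrak_{1}(\Omega_{X/S})$ into a pullback from $S$ and then eliminate it via the projection formula for intersection bundles.

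First, I would rewrite the right-hand side of Lemma \ref{lemma:BCOViso}. Since $K_{X/S} = \det \Omega_{X/S}$, the first Chern class isomorphism from Theorem \ref{thm:cor86} \eqref{item:prop-int-dist-c1-det} gives a canonical isomorphism
\begin{displaymath}
    \lambda_{BCOV}(X/S)^{12} \simeq \langle \cfrak_{1}(K_{X/S}) \cdot \cfrak_{n}(\Omega_{X/S}) \rangle_{X/S}^{(-1)^{n}},
\end{displaymath}
compatible with base change. Raising to the $m$-th power and using multilinearity of Deligne pairings together with the canonical identification $[\cfrak_{1}(K_{X/S}^{m})] \simeq m\,[\cfrak_{1}(K_{X/S})]$ (from \cite[Proposition 8.11]{DRR1} or equation \eqref{eq:chern-tensor-product-line}), I obtain
\begin{displaymath}
    \lambda_{BCOV}(X/S)^{12m} \simeq \langle \cfrak_{1}(K_{X/S}^{m}) \cdot \cfrak_{n}(\Omega_{X/S}) \rangle_{X/S}^{(-1)^{n}}.
\end{displaymath}

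The Calabi--Yau condition now enters: by Lemma \ref{lemma:evaluationmap} \eqref{item:evaluationmap-1a}, $M := f_{\ast}K_{X/S}^{m}$ is a line bundle on $S$ whose formation commutes with base change, and the evaluation map provides a canonical isomorphism $f^{\ast}M \xrightarrow{\sim} K_{X/S}^{m}$. Substituting, functoriality of intersection bundles in the vector bundle argument yields
\begin{displaymath}
    \lambda_{BCOV}(X/S)^{12m} \simeq \langle \cfrak_{1}(f^{\ast}M) \cdot \cfrak_{n}(\Omega_{X/S}) \rangle_{X/S}^{(-1)^{n}}.
\end{displaymath}
Next, I apply the projection formula from Theorem \ref{thm:cor86} \eqref{item:cor86-1}: since $\cfrak_{n}(\Omega_{X/S})$ has pure degree $n$ equal to the relative dimension of $f$, and the fiberwise intersection number $\int_{X/S} c_{n}(\Omega_{X/S}) = (-1)^{n}\chi$ is locally constant by standard intersection theory and the Lefschetz fixed point formula, this formula produces a canonical isomorphism
\begin{displaymath}
    \langle \cfrak_{1}(f^{\ast}M) \cdot \cfrak_{n}(\Omega_{X/S}) \rangle_{X/S} \simeq M^{(-1)^{n}\chi},
\end{displaymath}
compatible with base change. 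Combining the two exponents $(-1)^{n}$ and $(-1)^{n}\chi$ gives $(-1)^{2n}\chi = \chi$, producing the desired isomorphism $\lambda_{BCOV}(X/S)^{12m} \simeq M^{\chi}$.

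Finally, the compatibility with base change follows from the base-change compatibility of every ingredient: Lemma \ref{lemma:BCOViso} is compatible with base change, the first Chern class isomorphism and multilinearity are by construction, the evaluation map $f^{\ast}M \to K_{X/S}^{m}$ commutes with base change by Lemma \ref{lemma:evaluationmap} \eqref{item:evaluationmap-1b}, and the projection formula for intersection bundles is compatible with base change by Theorem \ref{thm:cor86}. The mildest subtlety in the argument is the careful accounting of the sign $(-1)^{n}$ arising from the duality between $\Omega_{X/S}$ and $T_{X/S}$, which I would handle at the very start by rewriting everything in terms of $\cfrak_{n}(\Omega_{X/S})$ so that only one sign ever appears. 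There is no serious obstacle beyond this bookkeeping, since all the geometric content has been done in Lemma \ref{lemma:BCOViso} and the Calabi--Yau hypothesis directly trivializes the $\cfrak_{1}$ factor up to a pullback from~$S$.
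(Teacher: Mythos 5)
Your proposal is correct and follows essentially the same route as the paper: start from Lemma \ref{lemma:BCOViso}, use the Calabi--Yau evaluation isomorphism $f^{\ast}f_{\ast}K_{X/S}^{m}\simeq K_{X/S}^{m}$ (via Lemma \ref{lemma:evaluationmap}) to replace $\cfrak_{1}(\Omega_{X/S})$ by a pullback from $S$, apply the projection formula of Theorem \ref{thm:cor86} with $\int_{X/S}c_{n}(\Omega_{X/S})=(-1)^{n}\chi$, and track base-change compatibility of each step. The only cosmetic difference is that the paper works directly with fractional $\QBbb$-exponents ($\tfrac{1}{12m}$) while you clear denominators by raising to the $12m$-th power first.
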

\begin{proof}
Because the evaluation map $f^{\ast}f_{\ast}K_{X/S}^{m}\to K_{X/S}^{m}$ is an isomorphism, we have 
    \begin{displaymath}
        [\cfrak_1(\Omega_{X/S})] \simeq [\cfrak_1(K_{X/S})] \simeq \frac{1}{m}[\cfrak_1(K_{X/S}^m)]\simeq \frac{1}{m}[f^{\ast}\cfrak_{1}(f_{\ast}K_{X/S}^{m})].
    \end{displaymath}
    This isomorphism commutes with base change, by Lemma \ref{lemma:evaluationmap}. By an application of Proposition \ref{prop:BCOViso} and the projection formula, we find an isomorphism
    \begin{displaymath}
         \lambda_{BCOV}(X/S) \simeq \left(f_\ast K_{X/S}^m\right)^{(-1)^n \frac{1}{12m}\int_{X/S} c_n(\Omega_{X/S})},
    \end{displaymath}
    compatible with base change. We conclude since $\int_{X/S} c_n(\Omega_{X/S}) = (-1)^n \chi$. 
\end{proof}
We note that the bounded denominator property of the DRR isomorphism guarantees that the BCOV isomorphism has a denominator depending only on the dimension of a projective embedding of $X\to S$, and it is invariant under base change.

\begin{corollary}
 Let $f\colon X\to S$ be a Calabi--Yau family over a locally Noetherian scheme $S$, whose fibers have $\chi=0$. Then, for some integer $N\geq 1$, the $N$-th power $\lambda_{BCOV}(X/S)^{N}$ is canonically trivial. 
\end{corollary}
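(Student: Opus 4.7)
The plan is to deduce the corollary directly from Theorem \ref{thm:bcoviso}, which provides a canonical isomorphism of $\QBbb$-line bundles
\begin{displaymath}
    \lambda_{BCOV}(X/S)^{12m} \simeq (f_\ast K_{X/S}^m)^\chi,
\end{displaymath}
compatible with base change. The first step is to observe that the exponent $\chi\colon S\to\ZBbb$ is a locally constant function on $S$, since it computes the topological (or $\ell$-adic) Euler characteristic of the fibers of the flat, proper morphism $f$, as recalled in the paragraph preceding Theorem \ref{thm:bcoviso}. The hypothesis that every fiber has $\chi=0$ then forces $\chi\equiv 0$ identically on $S$, so that the right-hand side is canonically trivial: $(f_\ast K_{X/S}^m)^{0}\simeq \Ocal_{S}$.

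The second step is to upgrade the resulting canonical isomorphism of $\QBbb$-line bundles $\lambda_{BCOV}(X/S)^{12m}\simeq \Ocal_{S}$ to an honest isomorphism of line bundles. For this one invokes the bounded-denominator property of the Deligne--Riemann--Roch isomorphism recalled after the statement of Theorem \ref{thm:A} and used throughout the paper, applied here in the form used to establish Theorem \ref{thm:bcoviso}: there exists an integer $N'\geq 1$, depending only on $f$ (in particular not on the base), such that raising both sides to the $N'$-th power yields an isomorphism of genuine line bundles. Setting $N=12mN'$ then produces a canonical trivialization $\lambda_{BCOV}(X/S)^{N}\simeq\Ocal_{S}$.

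There is in fact no serious obstacle here; the only point requiring a brief comment is precisely the bookkeeping of denominators and the local constancy of $\chi$. The base-change compatibility of the whole construction ensures that the trivialization is canonical and is preserved under pullback along any morphism $S'\to S$, which is implicit in the statement.
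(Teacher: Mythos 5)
Your proposal is correct and matches the paper's (implicit) argument: the corollary is an immediate consequence of Theorem \ref{thm:bcoviso}, since $\chi=0$ makes the right-hand side $(f_\ast K_{X/S}^m)^{0}\simeq\Ocal_S$, and the bounded-denominator property of the BCOV isomorphism (noted right after the theorem) upgrades the resulting $\QBbb$-line bundle trivialization of $\lambda_{BCOV}(X/S)^{12m}$ to a genuine trivialization of some power $\lambda_{BCOV}(X/S)^{N}$. The remark on local constancy of $\chi$ is harmless but not needed, as the hypothesis already imposes $\chi=0$ on all fibers.
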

\qed

We conclude with the counterpart of the BCOV isomorphism in the complex analytic category.
\begin{corollary}\label{cor:BCOV-analytic}
Let $f\colon X\to S$ be a Calabi--Yau family of complex analytic spaces, satisfying one of the conditions in Lemma \ref{lemma:evaluationmap} \eqref{item:evaluationmap-2}. Then, the analogue of Theorem \ref{thm:bcoviso} holds for $f$.
\end{corollary}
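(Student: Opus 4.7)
The plan is to reduce the corollary to its algebraic counterpart, Theorem \ref{thm:bcoviso}, by combining the analytification functor with a local algebraization argument on the base. The BCOV bundle $\lambda_{BCOV}(X/S)$, the direct image $f_{\ast}K_{X/S}^{m}$, the evaluation map of Lemma \ref{lemma:evaluationmap} and the topological Euler characteristic $\chi$ all commute with analytification in the obvious way; in particular, the conditions of Lemma \ref{lemma:evaluationmap} \eqref{item:evaluationmap-2} transfer between the two settings. Hence the only datum that actually needs to be lifted to the analytic category is the Deligne--Riemann--Roch isomorphism itself, after which the algebraic argument of Theorem \ref{thm:bcoviso} — Lemma \ref{lemma:BCOViso} together with the identification $[\cfrak_{1}(\Omega_{X/S})]\simeq \frac{1}{m}[f^{\ast}\cfrak_{1}(f_{\ast}K_{X/S}^{m})]$ and the projection formula — carries over verbatim.

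First I would work locally on $S$, reducing to the case where $S$ is a Stein space and $f$ factors through a closed immersion into a relative projective space $\PBbb^{N}_{S}$. In this setting, relative GAGA (Grauert's direct image theorem combined with a Serre-type comparison for projective morphisms) shows that $f$ is locally the analytification of a projective morphism of algebraic schemes, and that coherent sheaves in this setting admit compatible algebraic models. One can then apply Theorem \ref{thm:general-DRR} and Theorem \ref{thm:DRR-det-coh} algebraically and analytify.

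Next, I would observe that the entire formalism of virtual categories, Chern categories, intersection distributions and of the DRR isomorphism developed in \textsection\ref{section:preliminaries}--\textsection\ref{sec:construction-DRR} admits a straightforward analytic analogue: all the constructions are formulated in terms of vector bundles, coherent sheaves, Koszul resolutions, projective bundles and the deformation to the normal cone, which behave identically in the analytic category. The characterization of the DRR isomorphism given by Theorem \ref{thm:DRR-det-coh}, which involves only isomorphisms of $S$-schemes, base change, the projection formula, and restriction along regular closed immersions, therefore lifts to the analytic setting. This characterization guarantees that the analytifications of the locally defined algebraic DRR isomorphisms are independent of the chosen algebraization, and that they glue over a Stein cover of $S$ to a global analytic DRR isomorphism, compatibly with the analytification functor. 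The BCOV computation of Theorem \ref{thm:bcoviso} then produces the claimed canonical analytic isomorphism $\lambda_{BCOV}(X/S)^{12m}\simeq (f_{\ast}K_{X/S}^{m})^{\chi}$.

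The main obstacle is not the BCOV rearrangement itself, which is essentially formal once the DRR isomorphism is available, but rather the careful verification that the analytic analogue of the formalism of line distributions satisfies the same universal characterization as its algebraic counterpart, with uniformly bounded denominators controlled by a projective factorization. This is a matter of bookkeeping — transcribing the proofs of Section \ref{sec:construction-DRR} in the analytic setting and checking compatibility with analytification at each step — rather than a genuinely new mathematical difficulty, but it is essential to ensure that the locally defined analytic isomorphisms glue into a canonical global one compatible with arbitrary base change.
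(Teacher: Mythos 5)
There is a genuine gap, and it lies exactly where you declare the difficulty to be "bookkeeping." Your reduction rests on two claims, neither of which holds as stated. First, the local algebraization step: a projective morphism of complex analytic spaces over a Stein base is \emph{not}, even locally on $S$, the analytification of a projective morphism of algebraic schemes — relative GAGA gives you algebraicity of the fibers and of coherent sheaves relative to $\PBbb^{N}_{S}$, but it does not algebraize the base $S$, which is an arbitrary analytic space and has no local algebraic model. So you cannot "apply Theorem \ref{thm:DRR-det-coh} algebraically and analytify" over a Stein cover. Second, the fallback claim that the whole formalism of virtual categories, intersection distributions and the DRR isomorphism "admits a straightforward analytic analogue" satisfying the same characterization is unsubstantiated: the constructions rely on divisoriality, $K$-theoretic projective bundle formulas, and reductions that are not available verbatim in the analytic category (the paper's introduction explicitly flags that analytic extensions require Hilbert scheme techniques), and using the algebraic characterization of Theorem \ref{thm:DRR-det-coh} to control analytic gluings is circular unless that analytic formalism has actually been built. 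The corollary only asserts an isomorphism of $\QBbb$-line bundles, so no analytic DRR formalism is needed — but then your proposal never produces the isomorphism.

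The paper's proof takes a different and more economical route that your proposal misses: it never algebraizes $S$ and never develops analytic distributions. For a fixed projective embedding one takes the Hilbert scheme $\Hcal$ with its universal family (with reduced structure if $S$ is reduced), shows via base-change theorems and Siu's invariance of plurigenera that the Calabi--Yau conditions of Lemma \ref{lemma:evaluationmap} \eqref{item:evaluationmap-2} cut out an open subscheme $\Ucal\subset\Hcal$ containing the image of the classifying map $\varphi\colon S\to\Hcal^{\an}$, applies the algebraic Theorem \ref{thm:bcoviso} to the universal family over $\Ucal$, analytifies that isomorphism, and pulls it back along $\varphi$. Independence of the chosen projective embedding is then checked by restricting to finite analytic subspaces $T\subset S$, where the family algebraizes and the base-change compatibility of the algebraic isomorphism applies; here the uniform bound on denominators (depending only on the embedding dimension) is essential. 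If you want to salvage your write-up, replace the Stein-local algebraization and the analytic-formalism transfer by this classifying-map argument; the openness of the Calabi--Yau locus and the finite-subspace comparison are the two ingredients you would still need to supply.
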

\begin{proof}
    We begin with a preliminary observation. Let $g\colon\Xcal\to\Hcal$ be a flat projective morphism of locally Noetherian schemes. Then:
    \begin{enumerate}
        \item\label{item:BCOV-iso-anal-obs-1} If $\Hcal$ is locally of finite type over $\CBbb$ and reduced, the locus of points $t\in\Hcal$ where $\Xcal_{t}$ is smooth, geometrically connected and $K_{\Xcal_{t}}^{m}$ is trivial, is open.
        \item\label{item:BCOV-iso-anal-obs-2} The locus of points $t\in\Hcal$ where $\Xcal_{t}$ is smooth, geometrically connected,  $K_{\Xcal_{t}}^{m}$ is trivial and $H^{1}(\Xcal_{t},\Ocal_{\Xcal_{t}})=0$, is open. 
    \end{enumerate}
    The locus where $g$ is smooth and has geometrically connected fibers is open by \cite[\href{https://stacks.math.columbia.edu/tag/01V9}{01V9}, \href{https://stacks.math.columbia.edu/tag/0E0N}{0E0N}]{stacks-project}. Then, the first point follows from the invariance of the plurigenera \cite[Corollary 0.3]{Siu:plurigenera-CY} and Grauert's base change theorem \cite[Theorem 25.1.5]{Vakil}, and the second point follows from the base change theorem \cite[Theorem 25.1.6]{Vakil}.
    
    Now, for the proof of the corollary. Without loss of generality, we may assume that $S$ is connected. For a fixed projective embedding, we can consider the associated Hilbert scheme $\Hcal$ and its universal family $g\colon\Xcal\to\Hcal$. If moreover $S$ is reduced, we endow $\Hcal$ with the reduced scheme structure. We have a classifying map $\varphi\colon S\to\Hcal^{\an}$, where $\Hcal^{\an}$ is the analytification of the Hilbert scheme; cf. \cite[Proposition 5.3]{Simpson:moduli-1} for the analytic properties of the latter. By \eqref{item:BCOV-iso-anal-obs-1} and \eqref{item:BCOV-iso-anal-obs-2} above, there is an open subscheme $\Ucal\subset\Hcal$ where the corresponding conditions are satisfied and such that $\varphi(S)\subseteq \Ucal^{\an}$. We next apply Theorem \ref{thm:bcoviso} to the universal family $\Xcal$ restricted to $\Ucal$, and then take the analytification of the resulting BCOV isomorphism. We finally pull back this isomorphism by $\varphi$. This is the sought analytic BCOV isomorphism, provided we show it is independent of the projective embedding. 

    To verify the independence of the projective embedding, we proceed along the lines of \cite[Proposition 3.26]{Eriksson-Freixas-Wentworth}, which deals with the analytification of Deligne pairings. By a completion argument and the compatibility with arbitrary base change in Theorem \ref{thm:bcoviso}, it is enough to check the independence claim after base change to a finite analytic subspace $T$ of $S$, in which case the projective morphism $X_{T}\to T$ is algebraizable and corresponds to an algebraic classifying map $T\to\Ucal$. The claim then follows, since the algebraic BCOV isomorphism commutes with base change and does not depend on the projective embedding. We note that it is crucial that the isomorphism has a denominator which depends only on the relative dimension of the projective embedding and is invariant under base change.  
\end{proof}

\providecommand{\bysame}{\leavevmode\hbox to3em{\hrulefill}\thinspace}
\providecommand{\MR}{\relax\ifhmode\unskip\space\fi MR }
\providecommand{\MRhref}[2]{%
  \href{http://www.ams.org/mathscinet-getitem?mr=#1}{#2}
}

\end{document}